\documentclass[10pt,reqno]{amsart}

\topmargin 0.05in \textheight 8.5in \flushbottom
\setlength{\textwidth}{7.in}  
\setlength{\oddsidemargin}{-.3in} 
\setlength{\evensidemargin}{-.3in} 

\textwidth14cm
\textheight21.5cm
\evensidemargin1.1cm
\oddsidemargin1.1cm

\addtolength{\headheight}{5.2pt}    

\usepackage{amssymb}
\usepackage{amsmath}
\usepackage{amsfonts, dsfont}
\usepackage{kotex}
\counterwithin{figure}{section}
\usepackage[colorlinks=true, pdfstartview=FitV, linkcolor=black, citecolor=black, urlcolor=black]{hyperref} 



\usepackage{amsthm} 
\usepackage[utf8]{inputenc} 
\usepackage{enumitem} 

\usepackage{graphicx}



\usepackage[utf8]{inputenc}
\usepackage{anyfontsize}

\usepackage{tikz}
\usetikzlibrary{
	calc,intersections,arrows.meta,patterns,
	decorations.pathmorphing,
	decorations.fractals
}

\DeclareGraphicsExtensions{.pdf,.png,.jpg}

\usepackage{color}

\theoremstyle{plain}

\newtheorem{thm}{Theorem}[section]
\newtheorem*{thm*}{Theorem}
\newtheorem{corollary}[thm]{Corollary}

\newtheorem{lemma}[thm]{Lemma}
\newtheorem{prop}[thm]{Proposition}

\newtheorem*{statement*}{Statement}
\newtheorem{assumption}[thm]{Assumption}

\theoremstyle{definition}   
\newtheorem{defn}[thm]{Definition}

\theoremstyle{remark}  
\newtheorem{remark}[thm]{Remark}
\newtheorem{example}[thm]{Example}

\providecommand{\customgenericname}{}

\newcommand{\newcustomtheorem}[2]{\newenvironment{#1}[1]
	{\renewcommand\customgenericname{#2}
		\renewcommand\theinnercustomgeneric{##1}\innercustomgeneric}{\endinnercustomgeneric}}

\newcustomtheorem{customthm}{Theorem}

\newcommand\trho{\widetilde{\rho}}

\newcommand\bL{\mathbb{L}}

\newcommand\bR{\mathbb{R}}

\newcommand\bH{\mathbb{H}}
\newcommand\bZ{\mathbb{Z}}
\newcommand\bW{\mathbb{W}}

\newcommand\bS{\mathbb{S}}

\newcommand\bN{\mathbb{N}}

\newcommand\cA{\mathcal{A}}

\newcommand\cD{\mathcal{D}}
\newcommand\cF{\mathcal{F}}

\newcommand\cH{\mathcal{H}}
\newcommand\cI{\mathcal{I}}

\newcommand\cL{\mathcal{L}}

\newcommand\cM{\mathcal{M}}

\newcommand\cO{\mathcal{O}}

\newcommand{\domain}{\mathcal{O}}

\newcommand\la{\langle}
\newcommand\ra{\rangle}
\newcommand\dd{\,\mathrm{d}}
\newcommand\ee{\mathrm{e}}

\newcommand{\mysection}[1]{\section{#1}
	\setcounter{equation}{0}}


\newcount\pgfdecorationorder
\pgfkeys{/pgf/decoration/.cd,
	Koch angle/.store in=\pgfkochangle, Koch angle=85,
	Koch order/.code={\global\pgfdecorationorder=#1}, Koch order=1
}
\pgfdeclaredecoration{Koch}{calculate}{
	\state{calculate}[width=0pt, next state=draw, persistent precomputation={
		\pgfmathparse{\pgfdecoratedinputsegmentlength/(2*(1+cos(\pgfkochangle)))}%
		\let\pgfkochsegmentlength=\pgfmathresult%
		\pgfmathparse{\pgfkochsegmentlength*sin(\pgfkochangle)}%
		\let\pgfkochy=\pgfmathresult%
		\pgfmathparse{\pgfkochsegmentlength*(1 + cos(\pgfkochangle))}%
		\let\pgfkochxa=\pgfmathresult%
		\pgfmathparse{\pgfkochsegmentlength*(1 + 2*cos(\pgfkochangle))}%
		\let\pgfkochxb=\pgfmathresult%
	}]{}
	\state{draw}[width=\pgfdecoratedinputsegmentlength]{
		\pgfpathmoveto{\pgfpointorigin}%
		\pgfpathlineto{\pgfqpoint{\pgfkochsegmentlength pt}{0pt}}%
		\pgfpathlineto{\pgfqpoint{\pgfkochxa pt}{\pgfkochy pt}}%
		\pgfpathlineto{\pgfqpoint{\pgfkochxb pt}{0pt}}%
		\pgfpathlineto{\pgfqpoint{\pgfdecoratedinputsegmentlength}{0pt}}%
	}
	\state{final}{
		\global\advance\pgfdecorationorder by -1\relax%
		\ifnum\pgfdecorationorder>0\relax%
		\pgfgetpath\decoratedpath%
		\pgfsetpath\empty%
		\begin{pgfdecoration}{{Koch}{\pgfdecoratedpathlength}}%
			\pgfsetpath\decoratedpath
		\end{pgfdecoration}%
		\fi
}}

\begin{document}

	\title[$L_p$-theory for PDEs in non-smooth domains]
	{Weighted Sobolev space theory for the heat equation and the time-fractional heat equation in non-smooth domains}

	\thanks{The author was supported by a KIAS Individual Grant (MG095802) at Korea Institute for Advanced Study.}

	\author[J. Seo]{Jinsol Seo}
	\address[J. Seo]{School of Mathematics, Korea Institute for Advanced Study, 85 Hoegiro Dongdaemun-gu, Seoul 02455, Republic of Korea}
	\email{seo9401@kias.re.kr}

	\subjclass[2020]{35K05; 35R11, 42B35, 31B05, 26D10}

	\keywords{heat equation; time-fractional heat equation, weighted Sobolev space, superharmonic functions, Hardy's inequality}
	
	\begin{abstract}
		We present a general $L_p$-solvability framework for both the classical and time-fractional heat equations in non-smooth domains under the zero Dirichlet boundary condition.
		We consider domains $\Omega$ admitting the Hardy inequality: There exists a constant $N>0$ such that 
		$$
		\int_{\Omega}\Big|\frac{f(x)}{d(x,\partial\Omega)}\Big|^2\dd x\leq N\int_{\Omega}|\nabla f|^2 \dd x\quad\text{for any}\quad f\in C_c^{\infty}(\Omega)\,.
		$$
		To illustrate the boundary behavior of solutions in a general framework, we employ a weight system composed of a superharmonic function and a distance function to the boundary.
		Further, we investigate applications to various non-smooth domains, including convex domains, domains with exterior cone condition, totally vanishing exterior Reifenberg domains, and domains $\Omega\subset\bR^d$ for which the Aikawa dimension of $\Omega^c$ is less than $d-2$.
		By using superharmonic functions tailored to the geometric conditions of the domain, we derive weighted $L_p$-solvability results for various non-smooth domains, with specific weight ranges that differ for each domain condition.
		In addition, we provide an application to the H\"older continuity of solutions in domains with the volume density condition, as well as pointwise estimates for solutions in Lipschitz cones.
	\end{abstract}
	
	\maketitle

	\setcounter{tocdepth}{3}
	
	\let\oldtocsection=\tocsection
	
	\let\oldtocsubsection=\tocsubsection
	
	\let\oldtocsubsubsection=\tocsubsubsectio
	
	\renewcommand{\tocsection}[2]{\hspace{0em}\oldtocsection{#1}{#2}}
	\renewcommand{\tocsubsection}[2]{\hspace{1em}\oldtocsubsection{#1}{#2}}
	\renewcommand{\tocsubsubsection}[2]{\hspace{2em}\oldtocsubsubsection{#1}{#2}}
	
	\tableofcontents
	
	\mysection{Introduction}\label{sec:Introduction}
	The heat equation is one of the most classical partial differential equations.
	Historically, $L_p$-theory for this equation in $\bR^d$ and $C^2$-domains was developed in parallel with Schauder- and $L_2$-theories.
	Extensions have been extensively developed in various directions, including variable coefficients \cite{Krylov2007}, semigroups \cite{Pseudodiff, semigroup}, and non-smooth domains.
	
	In this paper, we develop a general $L_p$-theory for the heat equation in \textit{non-smooth} domains $\Omega$, under the zero-Dirichlet boundary condition:
	\begin{align}
		\partial_tu=\Delta u+f\quad\text{in}\,\,\,(0,\infty)\times \Omega\quad;\quad u(0,\cdot)=u_0\,\,,\,\,u|_{(0,\infty)\times \partial\Omega}\equiv 0\label{para}\,.
	\end{align}
	Unweighted and weighted $L_p$-theories for \eqref{para} have been developed on various types of domains, such as $C^1$-domains \cite{KK2004}, Reifenberg domains \cite{Rparabolic}, convex domains and Lipschitz domains \cite{IW}, smooth cones \cite{Kozlov,Naza,Sol}.
	
	In addition to the classical heat equation, we also consider the following time-fractional heat equation of order $0<\alpha<1$ within a unified framework:
	\begin{alignat}{3}
		\partial_t^{\alpha}u=\Delta u+f&\quad\text{in}\,\,\,(0,\infty)\times \Omega\quad&&;\quad u(0,\cdot)=u_0\,\,,\,\,u|_{(0,\infty)\times \partial\Omega}\equiv 0\label{parafrac}\,,
	\end{alignat}
	where $\partial_t^{\alpha}$ is the Caputo fractional derivative (see Definition \ref{240907807}).
	In physical terms, equation \eqref{parafrac} describes subdiffusion.
	While normal diffusion, modeled by the classical heat equation, describes particle systems where the mean squared displacement is proportional to time $t$, subdiffusion characterizes systems in which the mean-squared displacement grows proportionally to $t^\alpha$.
	This model has numerous important applications in probability and mechanics; for further discussion, we refer the reader to \cite{metzler2014anomalous,metzler2000random,metzler2004restaurant}.
	
	The $L_p$-theory for \eqref{parafrac} has been developed via various approaches, including a functional analytic and $H^\infty$ calculus methods \cite{pruss2012evolutionary, zacher2005maximal}, a kernel approach \cite{HAN20203515, HKP2021, KIM2017123}, and a kernel-free approach \cite{DONG2019289,DongKim2021}.
	Regarding the domain $\Omega$ appearing in \eqref{parafrac}, these studies mainly consider the whole space $\bR^d$, as well as $C^2$- and $C^1$-domains.
	Although many other relevant references are available, we omit a detailed discussion to focus on our primary interest.	
	
	In \cite{Seo202404}, the present author developed a general weighted $L_p$-theory for the Poisson equation in non-smooth domains.
	As a natural continuation of that work, we extend this framework and establish a weighted $L_p$-theory for \eqref{para} and \eqref{parafrac} in non-smooth domains.
	We further develop the framework of \cite{Seo202404}, obtaining nontrivial results for time-measurable coefficients (see Sections \ref{convex} and \ref{ERD}) and establishing pointwise estimates for homogeneous solutions on Lipschitz cones (see Section \ref{0074}).
	
	We consider domains $\Omega\subsetneq\bR^d$ admitting the Hardy inequality: There exists a constant $\mathrm{C}_0(\Omega)>0$ such that
	\begin{align}\label{hardy}
		\int_{\Omega}\Big|\frac{f(x)}{d(x,\partial\Omega)}\Big|^2\dd x\leq \mathrm{C}_0(\Omega)\int_{\Omega}|\nabla f(x)|^2 \dd x\quad\text{for all}\quad f\in C_c^{\infty}(\Omega)\,.
	\end{align}
	A sufficient geometric condition ensuring \eqref{hardy} is the volume density condition:
	\begin{align}\label{230212413}
		\inf_{\substack{p\in\partial\Omega\\r>0}}\frac{\big|\Omega^c \cap B_r(p)\big|}{\big|B_r(p)\big|}>0
	\end{align}
	(see Proposition \ref{240928313} and \eqref{Acondition}).
	We establish that for equations \eqref{para} and \eqref{parafrac} with a domain $\Omega$ admitting \eqref{hardy}, each \textit{superharmonic Harnack function} $\psi$ 
	immediately leads to a weighted $L_p$-solvability result associated with $\psi$, for general $p\in(1,\infty)$ (see \eqref{2401301249}).
	Here, a superharmonic Harnack function is a locally integrable function $\psi$ that satisfies the following conditions:
	\begin{align*}
		\begin{gathered}
			\text{$\Delta \psi\leq 0$ in the sense of distributions, and}\\
			\text{$\sup_{B(x,\rho(x)/2)}\psi\lesssim \inf_{B(x,\rho(x)/2)}\psi$ for all $x\in\Omega$,}
		\end{gathered}
	\end{align*}
	where $\rho(x):=d(x,\partial\Omega)$.
	In our results, $\psi$ describes the boundary behavior of solutions.
	We apply our result to various types of non-smooth domains through appropriate superharmonic functions.
	We discuss our main results and their applications in detail in Section \ref{0003}.
	
	\subsection{Historical remarks for the classical heat equation.}\label{230214201}
	In studying the theory of the heat equation in non-smooth domains, it is instructive to recall previous works on the Poisson equation with zero Dirichlet boundary conditions.
	Notable contributions include those by Jerison and Kenig \cite{kenig} in Lipschitz domains, Adolfsson \cite{convexAdo} and Fromm \cite{convexFromm} in convex domains, studies in smooth cones and polyhedrons presented in the monographs \cite{BK2006, MNP, MR}, and Byun and Wang \cite{Relliptic} in Reifenberg domains.
	We refer the reader to \cite{Seo202404} for further discussion of these works, and next focus on the heat equation \eqref{para}.
	
	The extension of the results for the Poisson equation to the heat equation in Lipschitz and convex domains was established by Wood \cite[Theorem 6.1]{IW}, which provides a universal range of $p$ for the unweighted $L_p$-theory.
	However, the impossibility of establishing a general unweighted $L_p$-theory for all $1<p<\infty$ for the Poisson equation, as shown in \cite[Theorem A]{kenig} (see also \cite[Theorem 7.1]{IW}), also extends to the heat equation.
	Given these limitations in unweighted Sobolev spaces, it is natural to turn to theories in weighted Sobolev spaces.
	
	There are several works on the $L_p$-theory for the heat equation in smooth cones, including those by Solonnikov \cite{Sol}, Nazarov \cite{Naza}, and Kozlov and Nazarov \cite{Kozlov}.
	Here, a smooth cone is defined by $\Omega:=\{r\sigma\,:\,r>0\,,\,\, \sigma\in\cM\}$, where $\cM$ is a smooth subdomain of $\bS^{d-1}$.
	For such domains, scholars have investigated the unique solvability of the heat equation in specific types of weighted $L_p$-Sobolev spaces for general $p\in(1,\infty)$.
	The weight system in these spaces is based on the distance function from the vertex.
	The admissible range of weights ensuring unique solvability is closely related to the \textit{eigenvalues} of the spherical Laplacian on $\cM$.
	
	These studies suggest that developing a unified framework for the $L_p$-solvability of the heat equation in general non-smooth domains requires adopting a weight system associated with the Laplace operator and the geometric features of each domain.
	
	In the context of the weighted $L_p$-theory, we focus on the localization argument developed by Krylov \cite{Krylov1999-1}, which was originally devised to establish the $L_p$-theory of the stochastic heat equation in smooth domains (see \cite{Kim2004,Krylov1994}).
	Krylov's work focuses on the heat equation in half-spaces and was later extended to $C^1$-domains by Kim and Krylov \cite{KK2004}.
	Although these works were originally confined to half-spaces and $C^1$ domains, the localization argument was later extended to non-smooth domains, as demonstrated by \cite{Kim2014, ConicPDE}.
	In the study of smooth cones \cite{ConicPDE}, the argument was adapted to a broader weight system and combined with heat kernel estimates.
	Additionally, Kim \cite{Kim2014} established a connection between Krylov's approach \cite{Krylov1999-1} and the classical Hardy inequality \eqref{hardy}.

	However, the approaches in \cite{Kim2014, ConicPDE} have some limitations.
	The method in \cite{ConicPDE} relies on sharp kernel estimates, and the heat kernels have only been investigated for a few classes of domains. 
	In \cite{Kim2014}, the range of weights requires further refinement to adequately capture the boundary behavior of solutions.
	In particular, the results in \cite{Kim2014} do not include the cases of the half-space \cite{Krylov1999-1} and of $C^1$ domains \cite{Kim2004}.

	\subsection{Main result and its applications to various domain conditions}\label{0003}

	In line with \cite{Kim2014}, we restrict our attention to the class of domains admitting the Hardy inequality.
	This choice is motivated by the observation that the Hardy inequality holds on various non-smooth domains (see \eqref{230212413}).
	A key distinguishing feature of the present paper, compared with earlier studies, is the utilization of superharmonic functions in conjunction with the Hardy inequality. 
	This combination enables us to accurately describe the boundary behavior of solutions.
	Our main result, Theorem \ref{22.02.18.6}, establishes the following estimate together with a related solvability result:
	\begin{itemize}
		\item[] Let $\Omega$ admit the Hardy inequality \eqref{hardy} and $\psi$ be a superharmonic Harnack function in $\Omega$.
		For any $1<p<\infty$ and $-\frac{1}{p}<\mu<1-\frac{1}{p}$, if $u\in C_c^\infty((0,\infty)\times \Omega)$ and $f_0,\,f_1,\,\ldots,\,f_d$ satisfy $\partial_t u=\Delta u+f_0+\sum_{i\geq 1}D_if_i$, then we have
		\begin{align}\label{2401301249}
			\begin{split}
				&\|\psi^{-\mu}\rho^{-2/p}u\|_p+\|\psi^{-\mu}\rho^{-2/p+1}D_xu\|_p\\
				\lesssim\,&\|\psi^{-\mu}\rho^{-2/p+2}f_0\|_p+\sum_{i\geq 1}\|\psi^{-\mu}\rho^{-2/p+1}f_i\|_p\,.
			\end{split}
		\end{align}
	\end{itemize}
	Here, $\rho(x):=d(x,\partial\Omega)$. 
	In \eqref{2401301249}, the superharmonic Harnack function $\psi$ represents the boundary behavior of solutions.
	Through the Sobolev-H\"older embedding theorem, we further obtain pointwise estimates for solutions (see \eqref{241031713} and Proposition \ref{2204160313}).
	
	Our main result does not specify the superharmonic Harnack function $\psi$.
	The flexibility in selecting $\psi$ is a crucial advantage of our theorem, which allows applications to a wide range of non-smooth domains $\Omega$.
	Throughout Section \ref{app.}, we investigate applications to various geometric domain conditions.
	Specifically, we apply superharmonic functions $\psi$ such that $\psi\simeq d(\cdot,\partial\Omega)^\lambda$ for some $\lambda\in\bR$, where the range of $\lambda$ is different for each domain condition.
	Additionally, in Lipschitz cones, we apply the weighted $L_p$-theory to obtain pointwise estimates for solutions.

	We next outline applications of our main result to various domain conditions.
	Consider a domain $\Omega\subset \bR^d$, $d\geq 2$, and denote $\rho(x):=d(x,\partial\Omega)$.
	For $p\in (1,\infty)$, $\theta\in\bR$, and $n\in \{0,1,2,\ldots\}$, we define
	\begin{alignat*}{2}
		&\|f\|_{\bW_{p,\theta}^n(\Omega,T)}&&:=\sum_{k=0}^n\|D_x^kf\|_{\bL_{p,\theta+kp}(\Omega,T)}:=\sum_{k=0}^n\bigg(\int_0^T\int_\Omega\big|D_x^kf(t,x)\big|^p\rho(x)^{\theta+kp}\dd x\dd t\bigg)^{1/p}\,,\\
		&\|f\|_{\bW_{p,\theta}^{-n}(\Omega,T)}&&:=\inf\bigg\{\sum_{|\beta|\leq n}\|f_\beta\|_{\bL_{p,\theta-|\beta|p}(\Omega,T)}\,:\,f=\sum_{|\beta|\leq n}D_x^{\beta}f_{\beta}\bigg\}\,.
	\end{alignat*}
	$\bW_{p,\theta}^{n}(\Omega,T)$ denotes the set of all $f$ such that $\|f\|_{\bW_{p,\theta}^{n}(\Omega,T)}<\infty$.
	
	\begin{remark}\label{230214208}
		For $n\in\bZ$, the spaces $\bW_{p,\theta}^{n}(\Omega,T)$ appear only in this subsection.
		However, these spaces coincide with $\bH_{p,\theta+d}^{n}(\Omega,T)$ (see \eqref{2409098051} and \eqref{241111154} with putting $\Psi\equiv 1$).
		Here, $\bH_{p,\theta+d}^{n}(\Omega,T)$ denotes the function space introduced in \eqref{221015645}.
	\end{remark}
	
	Consider the equation
	\begin{align}
		\partial_t^\alpha u=\cL u+f:=\sum_{i,j=1}^da^{ij}(t)D_{ij}u+f\quad\text{on }(0,T]\times \Omega\,,\label{2410161019}
	\end{align}
	where $0<\alpha\leq 1$ and $0<T<\infty$.
	Here, $\partial_t^1$ coincides with the classical derivative, and for $0<\alpha<1$, $\partial_t^\alpha$ is the Caputo derivative (see Definitions \ref{240907807} and \ref{240419512}).
	In addition, the coefficients $a^{ij}(\cdot)$ are measurable functions of $t$ and satisfy the uniform ellipticity condition: there exists $0<\nu\le1$ such that
	\begin{align}\label{2410161039}
		\nu |\xi|^2\leq \sum_{i,j=1}^da^{ij}(t)\xi_i\xi_j\leq \nu^{-1}|\xi|^2\quad\text{for all}\,\,\,\,0<t\leq T\,,\,\,\xi\in\bR^d\,.
	\end{align}
	
	We restrict our attention to both the zero initial data condition and the zero Dirichlet boundary condition,
	\begin{align}
		u(t,x)=0 \quad \text{if}\,\,\,\, t=0\,\,\,\,\text{or}\,\,\,\,x\in\partial\Omega\,.\label{2410161020}
	\end{align}
	The results in Sections \ref{mainresultsection} and \ref{app.} also cover the nonzero initial data problem, where the initial data lie in the space $B_{p,\theta}^\gamma(\Omega)$, as introduced in \eqref{2409098052} and \eqref{241111154} (with $\Psi\equiv1$).
	For the rigorous definition of equation \eqref{2410161019} with \eqref{2410161020} or nonzero initial data, see Definition \ref{240419512}.
	For simplicity of exposition, we omit the treatment of this case in the present subsection.
	
			\vspace{1mm}\noindent
	\textbf{1) Exterior cone condition.}
	For $\delta\in[0,\pi/2)$ and $R>0$, $\Omega$ is said to satisfy the exterior $(\delta,R)$-cone condition if for every $p\in\partial\Omega$, there exists a unit vector $e_p\in\bR^d$ such that
	$$
	\{x\in\bR^d\,:\,(x-p)\cdot e_p\geq |x-p|\cos\delta\,\,,\,\,|x-p|<R\}\subset \Omega^c\,.
	$$
	When $\delta=0$, this condition is often called the exterior $R$-line segment condition.
	An example of this condition is illustrated in Figure \ref{230212745}.
	Although our result is applicable to some unbounded domains, we restrict our attention here to bounded domains.

	For given $\delta\in(0,\frac{\pi}{2})$, let $\lambda_{\delta}$ be the constant defined in \eqref{241018612}.
	When $d=2$, we set $\lambda_{0}=1/2$.
	Note that $\lambda_\delta=\frac{\pi}{2(\pi-\delta)}$ if $d=2$; $\lambda_\delta=\frac{\delta}{\pi-\delta}$ if $d=4$; and $\lambda_\delta>0$ for all $d\ge2$ and $\delta>0$.

	\begin{thm}[see Corollary \ref{22.02.19.3} and Example \ref{240928314}.(3)]\label{2302141109}
		Let $\delta\in(0,\pi)$ if $d\geq 3$, and $\delta\in[0,\pi)$ if $d= 2$.
		Suppose that $\Omega\subset \bR^d$ is a bounded domain satisfying the $(\delta,R)$-exterior cone condition for some $0<R<\infty$.
		Then for any $p\in(1,\infty)$, $\theta\in\bR$, and $n\in\bZ$ satisfying
		\begin{align*}
			-2-(p-1)\lambda_\delta<\theta-d<-2+\lambda_\delta\,,
		\end{align*}
		if $f\in \bW_{p,\theta+2p}^n(\Omega,T)$, then equation \eqref{2410161019}--\eqref{2410161020} with $\cL:=\Delta$ has a unique solution $u$ in $\bW_{p,\theta}^{n+2}(\Omega,T)$.
		Moreover, we have
		\begin{align}\label{241018627}
			\|u\|_{\bW_{p,\theta}^{n+2}(\Omega,T)}\leq N\| f\|_{\bW_{p,\theta+2p}^{n}(\Omega,T)}\,,
		\end{align}
		where $N=N(d,p,\alpha,n,\theta,\lambda,M_{\lambda})$.
	\end{thm}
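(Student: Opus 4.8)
The plan is to derive Theorem \ref{2302141109} as a special case of the general weighted estimate \eqref{2401301249} (equivalently the solvability Theorem \ref{22.02.18.6}), by exhibiting a suitable superharmonic Harnack function on a domain with the exterior cone condition. First I would verify that a bounded domain $\Omega$ satisfying the $(\delta,R)$-exterior cone condition admits the Hardy inequality \eqref{hardy}: the exterior cone condition forces the complement $\Omega^c$ to occupy a fixed positive fraction of every small ball centered at a boundary point, which is precisely the volume density condition \eqref{230212413}, and by Proposition \ref{240928313} (together with \eqref{Acondition}) this implies \eqref{hardy}. Next, and this is the geometric heart of the argument, I would construct a superharmonic Harnack function $\psi$ with $\psi \simeq \rho^{\lambda_\delta}$, where $\lambda_\delta$ is the exponent defined in \eqref{241018612} coming from the first eigenvalue of the spherical Laplacian on the spherical cap of opening $\pi-\delta$. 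The idea is that on a cone of opening angle $\pi-\delta$ the function $r^{\lambda_\delta}\phi_1(\sigma)$ (with $\phi_1$ the first Dirichlet eigenfunction on the cap) is exactly harmonic and vanishes on the lateral boundary; for a general domain with the exterior cone condition, one patches together such model functions — or invokes the construction already carried out in Example \ref{240928314}.(3) — to produce a genuinely superharmonic function comparable to $\rho^{\lambda_\delta}$, which automatically satisfies the Harnack-type oscillation bound since $\rho$ does.

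Once $\psi \simeq \rho^{\lambda_\delta}$ is in hand, I would feed it into \eqref{2401301249}. Writing $\psi^{-\mu} \simeq \rho^{-\mu\lambda_\delta}$, the weights appearing there become powers of $\rho$ alone: the solution norm controls $\rho^{-\mu\lambda_\delta - 2/p}u$ and $\rho^{-\mu\lambda_\delta - 2/p + 1}Du$, while the right side involves $\rho^{-\mu\lambda_\delta - 2/p + 2}f_0$ and $\rho^{-\mu\lambda_\delta - 2/p + 1}f_i$. To match the target space $\bW_{p,\theta}^{n+2}(\Omega)$ of Theorem \ref{2302141109}, I would translate via Remark \ref{230214208} (so $\bW_{p,\theta}^n = \bH_{p,\theta+d}^n$ with trivial $\Psi$) and choose the free parameter $\mu$ so that the exponent of $\rho$ in the zeroth-order term matches $\theta$: solving $-\mu\lambda_\delta - 2/p$ against the required power coming from $\theta - d$ gives $\mu = -\frac{1}{\lambda_\delta}\big(\frac{\theta - d + 2}{p} + \text{const}\big)$ up to the bookkeeping constants in the definition of $\bH_{p,\theta}^\gamma$. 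The admissible range $-\frac1p < \mu < 1 - \frac1p$ from the main theorem then translates, after multiplying through by $\lambda_\delta > 0$, into exactly the stated two-sided bound $-2 - (p-1)\lambda_\delta < \theta - d < -2 + \lambda_\delta$.

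For the higher regularity index $n$ and for general $n\in\bZ$ (including negative $n$), I would appeal to the full statement of Theorem \ref{22.02.18.6} / Corollary \ref{22.02.19.3} rather than just \eqref{2401301249}, which is phrased only at the $n=0$ level for $C_c^\infty$ data; the excerpt explicitly advertises that the results in Section \ref{mainresultsection} handle the scale of spaces $\bW_{p,\theta}^{n+2}$ and the divergence-form right-hand side $f = f_0 + \sum D_i f_i$, so the passage from the basic estimate to arbitrary $n$ is part of the machinery I am allowed to cite. The time-fractional case $0 < \alpha < 1$ and the classical case $\alpha = 1$ are treated uniformly there, so no separate argument is needed; one only notes that here $\cL = \Delta$, i.e. $\nu = 1$ and $a^{ij} = \delta^{ij}$, which is a legitimate instance of \eqref{2410161039}.

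The main obstacle I anticipate is the construction of the superharmonic Harnack function comparable to $\rho^{\lambda_\delta}$ on a general (not conical) domain with the exterior cone condition, together with pinning down the precise constant $\lambda_\delta$ in the borderline cases — notably $d = 2$ with $\delta = 0$ (handled by the ad hoc value $\lambda_\delta = 1/2$) and the behavior as $\delta \to \pi$. Verifying that the patched-together barrier is globally superharmonic (not merely locally, and not merely a supersolution near the boundary) requires care: one typically takes an infimum or a carefully damped sum of translated model solutions and checks the distributional inequality $\Delta\psi \le 0$ survives the patching, using that the infimum of superharmonic functions is superharmonic. Since the excerpt states this is carried out in Example \ref{240928314}.(3), I would cite that computation; the remaining work — checking the Hardy inequality, matching weight exponents, and reading off the range of $\theta$ — is then routine bookkeeping with the constants appearing in the definitions of $\bW_{p,\theta}^n$ and $\bH_{p,\theta}^\gamma$.
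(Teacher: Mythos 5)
Your overall route is the paper's route: show the exterior cone condition gives the Hardy inequality and a superharmonic Harnack function comparable to a power of $\rho$, feed these into Theorem~\ref{22.02.18.6} (via Corollary~\ref{22.02.19.3}), and translate $\bH_{p,\theta}^\gamma$ back to $\bW_{p,\theta}^n$ via Remark~\ref{230214208}. However, your Hardy-inequality step has a gap in the corner case $d=2$, $\delta=0$, which the theorem explicitly allows: when $\delta=0$ the exterior ``cone'' degenerates to a line segment, which has Lebesgue measure zero, so the complement does \emph{not} occupy a positive fraction of small balls and the volume density condition \eqref{230212413}/\eqref{Acondition} fails. The paper instead obtains Hardy from the $\mathbf{LHMD}(\lambda_\delta)$ property (Example~\ref{240928314}.(3)), which gives the capacity density condition \eqref{22.02.22.1} via Proposition~\ref{240928313} and hence Hardy via Lemma~\ref{240928354}; a line segment has positive capacity in $\bR^2$, so this survives the degenerate case. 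Since you already invoke Example~\ref{240928314}.(3) for the superharmonic construction, the fix is simply to use it for the Hardy inequality as well rather than appeal to volume density.

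A second, smaller imprecision: Lemma~\ref{21.11.08.1} produces a superharmonic function comparable to $\rho^{\lambda'}$ only for $\lambda'$ \emph{strictly less than} $\lambda_\delta$, not for $\lambda_\delta$ itself as you state, and on a general (non-conical) domain satisfying the exterior cone condition one cannot expect a genuinely superharmonic $\psi\simeq\rho^{\lambda_\delta}$. This is harmless because the admissible range of $\theta$ is an open interval: fix $\lambda'<\lambda_\delta$ close enough that $-2-(p-1)\lambda'<\theta-d<-2+\lambda'$ still holds, exactly as in the paper's proof of Corollary~\ref{22.02.19.3}. Your matching of $\mu$ against $\theta$ is then correct: with $\Psi=\trho^{\,\lambda'}$, relation \eqref{220526558} gives $\Psi^\mu\bH_{p,d-2}^{\gamma+2}(\Omega,T)=\bH_{p,d-2-\mu\lambda'p}^{\gamma+2}(\Omega,T)$, so $\mu=-(\theta-d+2)/(\lambda'p)$, and the constraint $\mu\in(-1/p,1-1/p)$ unwinds to the stated two-sided bound on $\theta-d$.
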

	
	For a comparison between the function spaces in Theorem \ref{2302141109} and those in \cite{IW}, see \cite[Remark 1.4]{Seo202404}.
	
			\vspace{1mm}\noindent
	\textbf{2) Convex domains.} A convex domain $\Omega$ is an open set such that $(1-t)x+ty\in \Omega$ whenever $x,\,y\in\Omega$ and $t\in [0,1]$.
	
	\begin{thm}[see Corollary \ref{2208131026}]\label{221005646}
		Let $\Omega$ be a convex (possibly unbounded) domain.
		Then for any $p\in(1,\infty)$, $\theta\in\bR$, and $n\in\bZ$ satisfying
		\begin{align*}
			-p-1<\theta-d<-1\,,
		\end{align*}
		if $f\in \bW_{p,\theta+2p}^n(\Omega,T)$, then equation \eqref{2410161019}--\eqref{2410161020} with any ellipticity constant $0<\nu\leq1$ in \eqref{2410161039} has a unique solution $u$ in $\bW_{p,\theta}^{n+2}(\Omega,T)$.
		Moreover, we have
		\begin{align}\label{241018628}
			\|u\|_{\bW_{p,\theta}^{n+2}(\Omega,T)}\leq N\| f\|_{\bW_{p,\theta+2p}^{n}(\Omega,T)}\,,
		\end{align}
		where $N=N(d,p,\alpha,n,\theta)$. 
		In particular, $N$ is independent of $\Omega$.
	\end{thm}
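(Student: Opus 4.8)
The plan is to obtain Theorem \ref{221005646} as the instance $\psi:=\rho$ --- that is, $\lambda=1$ in the language of Subsection \ref{0003} --- of the weighted solvability statement packaged in Corollary \ref{2208131026} (which rests on Theorem \ref{22.02.18.6} and the estimate \eqref{2401301249}). Accordingly, it suffices to verify, for an \emph{arbitrary} convex $\Omega\subsetneq\bR^d$, the two structural hypotheses of that corollary: that $\Omega$ admits the Hardy inequality \eqref{hardy}, and that $\rho(\cdot)=d(\cdot,\partial\Omega)$ is a superharmonic Harnack function on $\Omega$ --- in fact a supersolution of every operator $\sum_{i,j}a^{ij}(t)D_{ij}$ obeying \eqref{2410161039}, which is precisely what lets the convex case carry the full family \eqref{2410161019} rather than only $\cL=\Delta$. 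Granting these two facts, the range $-\tfrac1p<\mu<1-\tfrac1p$ for the weight exponent in \eqref{2401301249} translates, via the identification $\bW_{p,\theta}^{n}(\Omega)=\bH_{p,\theta+d}^{n}(\Omega)$ of Remark \ref{230214208}, into exactly $-p-1<\theta-d<-1$; and since every constant below can be taken absolute, $N$ does not depend on $\Omega$. The index $n\in\bZ$, the passage from $C_c^\infty$-data to $f\in\bW_{p,\theta+2p}^{n}(\Omega)$, the fractional orders $0<\alpha<1$, and the time-measurable coefficients $a^{ij}(\cdot)$ are already incorporated into Theorem \ref{22.02.18.6}.

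\emph{The Hardy inequality.} For $x_0\in\partial\Omega$ choose a supporting hyperplane $\pi$ of $\Omega$ at $x_0$; then $\Omega$ lies in one of the closed half-spaces bounded by $\pi$, so the open complementary half-space lies in $\Omega^c$, and therefore $|\Omega^c\cap B_r(x_0)|\ge\tfrac12|B_r(x_0)|$ for every $r>0$. Thus $\Omega$ satisfies the volume density condition \eqref{230212413} with an absolute lower bound, and Proposition \ref{240928313} together with \eqref{Acondition} yields \eqref{hardy} with $\mathrm{C}_0(\Omega)$ controlled by a dimensional constant. (Alternatively one may quote the classical sharp Hardy inequality on convex domains, which holds with constant $4$ for every convex $\Omega$.)

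\emph{The weight $\psi=\rho$.} On a convex $\Omega$ the function $\rho$ is concave: for $x,y\in\Omega$ and $t\in[0,1]$ the open balls $B(x,\rho(x))$ and $B(y,\rho(y))$ lie in $\Omega$, hence by convexity so does their Minkowski combination $(1-t)B(x,\rho(x))+tB(y,\rho(y))=B\big((1-t)x+ty,\,(1-t)\rho(x)+t\rho(y)\big)$, whence $\rho((1-t)x+ty)\ge(1-t)\rho(x)+t\rho(y)$. Since $\rho$ is concave and (as a distance function) globally $1$-Lipschitz, it is locally integrable with negative semidefinite distributional Hessian, i.e. $\sum_{i,j}\xi_i\xi_jD_{ij}\rho\le0$ in $\cD'(\Omega)$ for every $\xi\in\bR^d$; diagonalizing the matrix $(a^{ij}(t))$ then gives $\sum_{i,j}a^{ij}(t)D_{ij}\rho\le0$ in $\cD'(\Omega)$ for every coefficient matrix satisfying \eqref{2410161039}, and in particular $\Delta\rho\le0$. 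The Harnack bound is immediate from $1$-Lipschitzness: for $y\in B(x,\rho(x)/2)$ one has $\tfrac12\rho(x)\le\rho(y)\le\tfrac32\rho(x)$, so $\sup_{B(x,\rho(x)/2)}\rho\le3\,\inf_{B(x,\rho(x)/2)}\rho$, with constant $3$ independent of $\Omega$. Hence $\psi:=\rho$ is an admissible superharmonic Harnack function, and Corollary \ref{2208131026} applies with $\lambda=1$, completing the deduction.

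\emph{Main obstacle.} The only genuinely delicate point is the rigorous upgrade, on an arbitrary --- possibly unbounded, merely convex --- domain, of the concavity of $\rho$ to the distributional inequalities $\Delta\rho\le0$ and $\sum_{i,j}a^{ij}(t)D_{ij}\rho\le0$, carried out with no recourse to regularity or boundedness of $\partial\Omega$, so that the Hardy constant, the Harnack constant, and hence $N$, are uniform over the entire class of convex domains; this uniformity is exactly what yields the $\Omega$-independence of $N$. Everything else is a direct appeal to Theorem \ref{22.02.18.6}/Corollary \ref{2208131026}.
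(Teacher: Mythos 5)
Your proposal is correct and reaches the same destination as the paper — a reduction of Theorem~\ref{221005646} to Corollary~\ref{2208131026}, which in turn invokes Theorem~\ref{22.02.18.6} with $\Psi=\trho$ and $\mu\in(-1/p,1-1/p)$ via Proposition~\ref{05.11.1}.(2) — but it takes a genuinely different route through the single substantive lemma of the convex case, namely the distributional supersolution property of $\rho$ (Theorem~\ref{22.02.19.5} in the paper). The paper represents $\rho$ as $\inf_{p\in\partial\Omega}W_p$, with $W_p(x)=(p-x)\cdot e_p$ a positive affine (hence harmonic) function on the half-space containing $\Omega$; for a fixed symmetric $\mathrm{A}\in\mathrm{M}(\nu^2,1)$ it then picks $\mathrm{B}$ with $\mathrm{B}^2=\mathrm{A}$, observes that each $W_p(\mathrm{B}\,\cdot\,)$ is harmonic on $\mathrm{B}^{-1}\Omega$, appeals to Lemma~\ref{21.05.18.1}.(2) (infimum of positive classical superharmonic functions is superharmonic), and concludes $\sum\alpha^{ij}D_{ij}\rho\le0$ by a change of variables. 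You instead prove $\rho$ concave by the Minkowski-sum argument on inscribed balls, invoke the standard fact that a (locally Lipschitz, hence locally integrable) concave function has a negative semidefinite distributional Hessian — i.e.\ $\sum\xi_i\xi_jD_{ij}\rho\le0$ for each fixed direction $\xi$ — and conclude $\sum a^{ij}(t)D_{ij}\rho\le0$ by diagonalizing the positive semidefinite coefficient matrix. Both are correct. Your route is a little more self-contained: it bypasses the representation $\rho=\inf W_p$, the need to verify the hypotheses of Lemma~\ref{21.05.18.1}.(2), and the $\mathrm{B}$-change-of-variables, at the modest cost of knowing that concavity survives mollification (used to make the ``negative distributional Hessian'' step rigorous). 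The paper's route has the minor advantage of working with \emph{harmonic} building blocks, which avoids discussing Hessians of nonsmooth functions at all. The rest of your argument — the Hardy inequality from the half-space volume density bound $|\Omega^c\cap B_r(p)|\ge\tfrac12|B_r(p)|$, the Harnack constant $3$ from $1$-Lipschitzness of $\rho$ (cf.\ Example~\ref{21.05.18.2}.(1)), the $\Omega$-uniformity of all constants, and the translation to the $\bW_{p,\theta}^n$ scale via Remark~\ref{230214208} — matches the paper's own line of reasoning. One small bookkeeping caveat: under the identification $\bW_{p,\theta}^n(\Omega)=\bH_{p,\theta+d}^n(\Omega)$ and the $\bH$-range $-p-1<\theta_H-d<-1$ of Corollary~\ref{2208131026}, the natural $\bW$-range is $-p-1<\theta<-1$ rather than $-p-1<\theta-d<-1$; you reproduce the condition as stated in Theorem~\ref{221005646}, but it is worth double-checking the $d$-shift when finalizing the reduction.
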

	
			\vspace{1mm}\noindent
	\textbf{3) Totally vanishing exterior Reifenberg condition.}
	The totally vanishing exterior Reifenberg condition (abbreviated as $\langle\mathrm{TVER}\rangle$) extends the concept of bounded vanishing Reifenberg domains introduced in \eqref{22.02.26.41} and the discussion below.
	
	To clarify the main concept of $\langle\mathrm{TVER}\rangle$ in Definition \ref{2209151117}.(3), we present a simplified version in Definition \ref{221013228}, denoted by $\langle\mathrm{TVER}\rangle^*$.
	Note that $\langle\mathrm{TVER}\rangle^*$ is a sufficient condition for $\la \mathrm{TVER}\ra$.
	In Figure \ref{230212856}, we illustrate the differences between the vanishing Reifenberg condition, $\langle\mathrm{TVER}\rangle^*$, and $\langle\mathrm{TVER}\rangle$.
	
	\begin{defn}\label{221013228}
		We say that an open set  $\Omega$ satisfies $\langle\mathrm{TVER}\rangle^*$ if for each $\delta\in(0,1)$, there exist $R_{0,\delta},\,R_{\infty,\delta}>0$ satisfying the following: For every $p\in\partial \Omega$ and $r>0$ with $r\leq R_{0,\delta}$ or $r\geq R_{\infty,\delta}$, there exists a unit vector $e_{p,r}\in\bR^d$ such that
		\begin{align*}
			\Omega\cap B_r(p)\subset \{x\in B_r(p)\,:\,(x-p)\cdot e_{p,r}<\delta r\}\,.
		\end{align*}
	\end{defn}
	
	As shown in Example \ref{220910305}, $\langle\mathrm{TVER}\rangle^*$ is satisfied by bounded domains of the following types: the vanishing Reifenberg domains, $C^1$-domains, domains satisfying the exterior ball condition, and finite intersections of these domains.
	Furthermore, several unbounded domains also satisfy $\langle\mathrm{TVER}\rangle^*$ (see \eqref{241025520}).
	
	\begin{thm}[see Corollary \ref{22.07.17.109}]\label{230214437}
		Suppose that $\Omega$ satisfies $\langle\mathrm{TVER}\rangle^*$.
		Then for any $p\in(1,\infty)$, $\theta\in\bR$, and $n\in\bZ$ satisfying
		\begin{align*}
			-p-1<\theta-d<-1\,,
		\end{align*}
		the same assertion as that in Theorem \ref{221005646} holds.
		Here, $N$ in \eqref{241018628} depends only on $d$, $p$, $n$, $\theta$, and  $\big\{R_{0,\delta}/R_{\infty,\delta}\big\}_{\delta\in(0,1]}$.
	\end{thm}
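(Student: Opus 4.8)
The plan is to reduce Theorem~\ref{230214437} to the general estimate \eqref{2401301249} (Theorem~\ref{22.02.18.6}) and the solvability scheme behind it by checking two facts for a domain $\Omega$ satisfying $\la\mathrm{TVER}\ra^*$: that $\Omega$ admits the Hardy inequality \eqref{hardy}, and that it carries a superharmonic Harnack function $\psi$ with $\psi\simeq\rho$ — with all constants controlled only by $d$, $p$, $n$, $\theta$, and the scale ratios $\{R_{0,\delta}/R_{\infty,\delta}\}_{\delta\in(0,1]}$. Since $\la\mathrm{TVER}\ra^*$ is a sufficient condition for $\la\mathrm{TVER}\ra$ of Definition~\ref{2209151117}.(3), this is exactly the input needed to invoke Corollary~\ref{22.07.17.109}. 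The exponent attached to the weight is $\lambda=1$ here (as in the convex case), so that, via $\psi\simeq\rho$, the weights $\psi^{-\mu}\rho^{-2/p+k}$ appearing in \eqref{2401301249} reduce to pure powers of $\rho$, and the admissible range $-\tfrac1p<\mu<1-\tfrac1p$ becomes, after the bookkeeping of Remark~\ref{230214208} (the identification $\bW_{p,\theta}^n(\Omega)=\bH_{p,\theta+d}^n(\Omega)$), exactly $-p-1<\theta-d<-1$.

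For the Hardy inequality I would argue as follows. Fix a small $\delta\in(0,1)$. When $p\in\partial\Omega$ and $r\le R_{0,\delta}$ or $r\ge R_{\infty,\delta}$, the flatness $\Omega\cap B_r(p)\subset\{(x-p)\cdot e_{p,r}<\delta r\}$ forces $\Omega^c\cap B_r(p)$ to contain the cap $\{x\in B_r(p):(x-p)\cdot e_{p,r}\ge\delta r\}$, whence $|\Omega^c\cap B_r(p)|\ge c(d)|B_r(p)|$ once $\delta$ is small. For the intermediate range $R_{0,\delta}<r<R_{\infty,\delta}$ I would use nesting: $|\Omega^c\cap B_r(p)|\ge|\Omega^c\cap B_{R_{0,\delta}}(p)|\ge c(d)(R_{0,\delta}/r)^d|B_r(p)|\ge c(d)(R_{0,\delta}/R_{\infty,\delta})^d|B_r(p)|$. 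Thus the volume density condition \eqref{230212413} holds with a constant depending only on $d$ and $R_{0,\delta}/R_{\infty,\delta}$, and Proposition~\ref{240928313} (together with \eqref{Acondition}) delivers \eqref{hardy} with $\mathrm{C}_0(\Omega)$ controlled by the same data.

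The core of the proof is the construction of a superharmonic $\psi$ with $\psi\simeq\rho$. The point is that $\la\mathrm{TVER}\ra^*$ says $\Omega$ locally sits inside a half-space at every admissible scale: on $\Omega\cap B_r(p)$ the affine function $h_{p,r}(x):=\delta r-(x-p)\cdot e_{p,r}$ is positive and harmonic, and $h_{p,r}\simeq\rho$ on $B_{r/4}(p)$. I would glue these half-space barriers over a Whitney-type covering of $\Omega$ — via a regularized minimum, or a Perron/balayage procedure — matching the small-scale family $\{h_{p,r}\}_{r\le R_{0,\delta}}$, the large-scale family $\{h_{p,r}\}_{r\ge R_{\infty,\delta}}$, and the region at distance $\gtrsim R_{0,\delta}$ from $\partial\Omega$, where superharmonicity is purely local and $\psi\simeq\rho$ costs nothing. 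This produces $\psi$ with $\Delta\psi\le0$ in $\cD'(\Omega)$ and $\psi\simeq\rho$; since $\rho$ has oscillation at most $\tfrac12\rho(x)$ on $B(x,\rho(x)/2)$, the comparability $\psi\simeq\rho$ automatically yields the Harnack bound $\sup_{B(x,\rho(x)/2)}\psi\lesssim\inf_{B(x,\rho(x)/2)}\psi$, again with constants depending only on $d$ and the scale ratios.

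With \eqref{hardy} and such a $\psi$ in hand the remainder is routine: \eqref{2401301249} gives the a priori bound for $u\in C_c^\infty((0,\infty)\times\Omega)$, and the continuity-method argument promotes it to unique solvability of \eqref{2410161019}–\eqref{2410161020} for all $0<\alpha\le1$, all $n\in\bZ$, and all time-measurable $a^{ij}$ with ellipticity $\nu$ (with $N$ independent of $\nu$ for this domain class, exactly as in Theorem~\ref{221005646}); replacing $\psi$ by $\rho$ in the weights and invoking Remark~\ref{230214208} yields \eqref{241018628} with $N=N(d,p,\alpha,n,\theta,\{R_{0,\delta}/R_{\infty,\delta}\})$. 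I expect the only real obstacle to be the gluing in the superharmonic construction: producing a single globally superharmonic $\psi$ that stays two-sidedly comparable to $\rho$ across the transition between the small- and large-scale regimes, while keeping every constant dependent only on the ratios $R_{0,\delta}/R_{\infty,\delta}$ and not on $R_{0,\delta}$, $R_{\infty,\delta}$ separately — in particular, checking that no upward jumps are introduced across the interfaces of the covering.
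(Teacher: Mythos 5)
Your structural reduction (Hardy via the volume density condition, then Theorem \ref{22.02.18.6} with a superharmonic Harnack function, then Remark \ref{230214208}) is the right skeleton, and the Hardy-inequality verification via volume density plus nesting is sound. But the central claim — that one can build a globally superharmonic $\psi$ with $\psi\simeq\rho$ by gluing the local half-space barriers $h_{p,r}(x)=\delta r-(x-p)\cdot e_{p,r}$ — fails for $\langle\mathrm{TVER}\rangle^*$ domains, and the obstacle you flag in your last paragraph is not a bookkeeping issue. For a general exterior Reifenberg domain the distance function $\rho$ is neither super- nor subharmonic, and the infimum of the shifted affine barriers has no uniform lower bound $\gtrsim\rho$: at a point $x$ with nearest boundary point $p$, a barrier anchored at a nearby $p'$ at scale $r\simeq\rho(x)$ can be as small as a vanishing fraction of $\rho(x)$, since nothing prevents $(x-p')\cdot e_{p',r}$ from sitting close to $\delta r$. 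This is exactly why convex domains are special (Theorem \ref{22.02.19.5}): there $\rho$ \emph{is} the infimum of affine barriers, with no $\delta r$ shift to contend with. Also, in the intermediate region $R_{0,\delta}\lesssim\rho(x)\lesssim R_{\infty,\delta}$, superharmonicity is not ``for free''; the paper has to insert an explicit radial barrier there.

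What the paper actually does (Theorem \ref{22.02.19.6}, built on Lemma \ref{21.08.24.1}) is to construct, for each $\epsilon\in(0,1)$, a function $\phi$ with $\phi\simeq\rho^{1-\epsilon}$ satisfying $\sum_{i,j}\alpha^{ij}D_{ij}\phi\le 0$ for every $(\alpha^{ij})\in\mathrm{M}(\nu^2,1)$. The $\epsilon$-loss is essential, not cosmetic: the local barriers are shifted upward additively, $\phi_{p,k}=\delta^{k(1-\epsilon)}\bigl((1-\eta)\,w_{p,\delta^k}+\eta\bigr)$, so the infimum over the nested scales $\delta^k$ stays bounded below by $\eta\,\delta^{k(1-\epsilon)}$, and the price for this is the exponent $k(1-\epsilon)$ rather than $k$. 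Across the intermediate scales $R_{0,\delta}\le r\le R_{\infty,\delta}$ an explicit radial barrier $\alpha_1-\beta_1|x-p|^{2-\nu^{-2}d}$ is spliced in via Lemma \ref{21.05.18.1}.(4); this is precisely where $R_{0,\delta}/R_{\infty,\delta}$ enters the constants. Finally, for a given $\theta$ with $-p-1<\theta-d<-1$ one fixes $\epsilon$ small enough that $-p-1+(p-1)\epsilon<\theta-d<-1-\epsilon$, picks $\delta=\delta(\epsilon,\nu)$ accordingly, invokes Proposition \ref{05.11.1}.(2) to get $\cI(\phi,p,\nu^2)=(-\tfrac1p,1-\tfrac1p)$, and applies Theorem \ref{22.02.18.6} with $\Psi=\trho^{\,1-\epsilon}$ and $\mu=-\frac{\theta-d+2}{p(1-\epsilon)}$. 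The open range of $\theta$ is thus recovered only in the limit $\epsilon\to0$ (i.e.\ $\delta\to0$), and no single superharmonic function comparable to $\rho$ is ever constructed — nor could it be.
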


	Parabolic equations in bounded vanishing Reifenberg domains have been investigated in the literature, such as Byun and Wang \cite{Rparabolic} and Choi and Kim \cite{Reifweight2}.
	These studies mainly focused on equations with variable coefficients, and also provided weighted $L_p$-estimates for Muckenhoupt $A_p$-weight functions.
	However, these studies mostly dealt with bounded vanishing Reifenberg domains.
	In contrast with these works, Theorem \ref{230214437} considers domains satisfying $\langle\mathrm{TVER}\rangle^\ast$, thereby including bounded vanishing Reifenberg domains.
	
			\vspace{1mm}\noindent
	\textbf{4) Lipschitz cones.} 
	Lipschitz cones are defined by $\Omega:=\{r\sigma\,:\,r>0\,\,,\,\,\,\,\sigma\in\cM\}$ with $\cM$ a Lipschitz subdomain of $\bS^{d-1}$.
	In Section \ref{0074}, we derive a pointwise estimate for homogeneous solutions of the equation
	\begin{align*}
		u_t=\Delta u\quad \text{in}\quad (0,1]\times \big(B_1\cap \Omega\big)\quad;\quad u=0\quad \text{on}\quad (0,1]\times \big(B_1\cap \partial\Omega\big)\,,
	\end{align*}
	where $B_1:=B_1(0)$ (see Theorem \ref{2208221223} and equation \eqref{2208221229}).
	
			\vspace{1mm}\noindent
	\textbf{5) Domains with the capacity density condition.}
	We consider domains $\Omega$ satisfying
	\begin{align}\label{2302101253}
		\inf_{\substack{p\in\partial\Omega\\r>0}}\frac{\mathrm{Cap}\left(\Omega^c\cap \overline{B}_r(p),B_{2r}(p)\right)}{\mathrm{Cap}\left(\overline{B}_r(p),B_{2r}(p)\right)}>0\,,
	\end{align}
	where $\mathrm{Cap}(K,U)$ denotes the $L_2$-capacity of $K$ relative to $U$ (for the definition, see \eqref{230324942}).
	Condition \eqref{2302101253} has been studied extensively in the literature, including \cite{aikawa2002, AA, KilKos1994, kinnunen2021, lewis}.
	In particular, the volume density condition \eqref{230212413} is a sufficient condition for \eqref{2302101253}.
	As an application, we consider the equation
	$$
	\partial_t^\alpha u=\Delta u+f_0+\sum_{i=1}^d D_if^i\quad\text{in}\quad (0,T]\times \Omega\quad ;\quad u(0,x)=0\quad \text{for}\quad x\in\Omega
	$$
	for a domain $\Omega$ satisfying \eqref{2302101253}.
	We establish an unweighted $L_p$ solvability result for $p$ near $2$ (see Theorem \ref{230210357}), and prove H\"older continuity of the solution under the assumption that
	$|f_0|\lesssim \rho^{-2+\epsilon}$ and $|f_1|+\cdots+|f_d|\lesssim\rho^{-1+\epsilon}$ for some $\varepsilon>0$ (see Theorem \ref{220602322}).
	
		\vspace{1mm}\noindent
	\textbf{6) Domains $\Omega\subset \bR^d$ with $\dim_{\cA}(\Omega^c)<d-2$.}
	Recall the definition of the Aikawa dimension, $\dim_{\cA}(E)$, given in \eqref{241018630}.
	
	\begin{thm}[see Corollary \ref{22.02.19.300}]
		Let $\Omega\subset \bR^d$, $d\geq 3$, satisfy $\dim_{\cA}(\Omega^c)=:\beta_0<d-2$.
		Then for any $p\in(1,\infty)$, $\theta\in\bR$, and $n\in\bZ$ satisfying 
		$$
		-d+\beta_0<\theta<(p-1)(d-\beta_0)-2p\,,
		$$
		the same assertion as that in Theorem \ref{2302141109} holds.
		Here, $N$ in \eqref{241018627} depends only on $d$, $p$, $n$, $\theta$, $\beta_0$, $\{A_{\beta}\}_{\beta>\beta_0}$ in \eqref{22.02.08.2}.
	\end{thm}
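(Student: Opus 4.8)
The plan is to deduce this result from the general machinery already developed in the paper, by exhibiting a suitable superharmonic Harnack function $\psi$ on $\Omega$ with $\psi\simeq\rho^{\lambda}$ and then invoking the main weighted $L_p$-solvability theorem (the one behind Theorem \ref{2302141109}, namely Corollary \ref{22.02.19.3}) together with the stated application to the Aikawa dimension condition (Corollary \ref{22.02.19.300}). Concretely, first I would recall that $\dim_{\cA}(\Omega^c)=\beta_0<d-2$ forces $\Omega^c$ to be rather thin; in particular, for every $\beta$ with $\beta_0<\beta<d-2$ there is a constant $A_\beta$ as in \eqref{22.02.08.2} controlling the $\beta$-dimensional Aikawa content of $\Omega^c$. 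The key is that such thinness of $\Omega^c$ guarantees two things simultaneously: (i) $\Omega$ admits the Hardy inequality \eqref{hardy}, and (ii) the distance function $\rho$ itself, raised to an appropriate power $\lambda$, is superharmonic on $\Omega$ and satisfies the Harnack-type oscillation bound $\sup_{B(x,\rho(x)/2)}\psi\lesssim\inf_{B(x,\rho(x)/2)}\psi$. Step (ii) is where the Aikawa bound $\dim_{\cA}(\Omega^c)<d-2$ is used in an essential way: it is exactly the threshold below which $\Delta(\rho^\lambda)\le 0$ can be arranged for a nontrivial range of $\lambda$, because a set of codimension exceeding $2$ is negligible for the Laplacian.

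Next I would make the range of admissible $\lambda$ precise. Writing $\psi=\rho^{\lambda}$, a formal computation gives $\Delta\rho^{\lambda}=\lambda\rho^{\lambda-1}\Delta\rho+\lambda(\lambda-1)\rho^{\lambda-2}|\nabla\rho|^2$, and since $|\nabla\rho|=1$ a.e.\ and $\Delta\rho$ is a (signed) measure whose positive part is controlled by the thinness of $\Omega^c$, one finds that $\rho^\lambda$ is superharmonic precisely for $\lambda$ in an interval of the form $(\,d-2-\beta_0)$-dependent endpoints; the bookkeeping is carried out in the cited Example/Corollary (Example \ref{240928314}-type computation feeding into Corollary \ref{22.02.19.300}). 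Then I would translate the abstract weight exponent $\mu$ with $-\tfrac1p<\mu<1-\tfrac1p$ from \eqref{2401301249}, combined with the power $\lambda$ of $\psi$, into the concrete exponent $\theta$ appearing in the spaces $\bW^{n}_{p,\theta}(\Omega)$: using $\psi^{-\mu}\rho^{-2/p}\simeq\rho^{-\mu\lambda-2/p}$ and matching with $\rho^{\theta/p}$ (via $\bW^n_{p,\theta}=\bH^n_{p,\theta+d}$ from Remark \ref{230214208}), the constraint on $\mu$ becomes exactly $-d+\beta_0<\theta<(p-1)(d-\beta_0)-2p$ after optimizing over the allowed $\lambda$. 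The non-zero-data and higher-regularity ($n\in\bZ$) statements, as well as the passage from the a priori estimate \eqref{2401301249} to genuine solvability, are already packaged in Sections \ref{mainresultsection}–\ref{app.}, so here one only needs to quote them.

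Finally I would assemble the pieces: fix $p$, $\theta$ in the stated open range; choose $\beta\in(\beta_0,d-2)$ and $\lambda$ in the corresponding superharmonicity interval so that the weight identification above holds with some $\mu\in(-\tfrac1p,1-\tfrac1p)$; verify \eqref{hardy} for $\Omega$ from $\dim_{\cA}(\Omega^c)<d-2$ (this is the content of Proposition \ref{240928313}/\eqref{Acondition} applied with the capacity or volume density consequences of small Aikawa dimension); then apply the main theorem to obtain existence, uniqueness, and the estimate \eqref{241018627}, tracking that the implicit constant depends only on $d,p,n,\theta,\beta_0$ and the family $\{A_\beta\}_{\beta>\beta_0}$ since these are the only quantities entering the Hardy constant and the Harnack constant for $\psi$. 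The main obstacle — and the step deserving the most care — is (ii), establishing that $\rho^{\lambda}$ is a genuine superharmonic Harnack function under the Aikawa-dimension hypothesis with the sharp $\lambda$-range; this requires controlling $\Delta\rho$ as a distribution near the (possibly wild) boundary and is precisely where the codimension-$>2$ smallness of $\Omega^c$ is exploited. Everything else is either a direct citation of the paper's general theorem or routine exponent arithmetic.
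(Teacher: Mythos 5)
Your high-level plan matches the paper's — Hardy inequality from $\dim_{\cA}(\Omega^c)<d-2$, plus a superharmonic Harnack function comparable to a negative power of $\rho$, plus Theorem \ref{22.02.18.6} — and your exponent bookkeeping translating $\mu$ and $\lambda=-(d-2-\beta)$ into the stated range for $\theta$ is correct (after the shift $\theta\mapsto\theta+d$ coming from Remark \ref{230214208}, the window $\beta<\theta+d<(d-2-\beta)p+\beta$ with $\beta\searrow\beta_0$ gives exactly $-d+\beta_0<\theta<(p-1)(d-\beta_0)-2p$).

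However, there is a genuine gap in the step you yourself flag as the delicate one. You propose to show that $\rho^{\lambda}$ \emph{itself} is superharmonic by computing $\Delta\rho^{\lambda}=\lambda\rho^{\lambda-1}\Delta\rho+\lambda(\lambda-1)\rho^{\lambda-2}|\nabla\rho|^2$ and controlling the positive part of $\Delta\rho$ via the thinness of $\Omega^c$. This is false in general, even under the Aikawa hypothesis. Already for $\Omega=\bR^3\setminus\{p_1,p_2\}$ (so $\beta_0=0<1=d-2$) and $\lambda<0$, one has $\rho^{\lambda}=\max\bigl(|x-p_1|^{\lambda},|x-p_2|^{\lambda}\bigr)$, which is a maximum of two smooth functions and therefore carries a \emph{positive} singular part of its distributional Laplacian along the bisector plane; so $\rho^{\lambda}$ is not superharmonic across the ridge set. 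The Aikawa dimension of $\Omega^c$ does not control $\Delta\rho$ on the medial axis, and in fact $\rho^{\lambda}$ simply is not the right object.

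What the paper actually does is invoke Theorem \ref{21.10.18.1} (from \cite{Seo202404}): under \eqref{22.02.08.2} with $\beta<d-2$, there exists a superharmonic function $\phi$ on $\Omega$ that is merely \emph{comparable} to $\rho^{-d+2+\beta}$, and this $\phi$ is a genuinely different function than $\rho^{-d+2+\beta}$ (in the two-point example above, think of $|x-p_1|^{\lambda}+|x-p_2|^{\lambda}$ rather than the max — each summand is superharmonic, and the sum is comparable to $\rho^{\lambda}$ precisely because $\lambda<0$). The Aikawa-dimension condition enters through the construction of $\phi$ and the two-sided comparison $\phi\simeq\rho^{-d+2+\beta}$, not through an estimate on $\Delta\rho$. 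Once Theorem \ref{21.10.18.1} is granted, the rest of your proposal (Lemma \ref{240928354} for the Hardy inequality, $\Psi:=\trho^{\,-d+2+\beta}$ as the regularization, Proposition \ref{05.11.1}.(1) for the admissible $\mu$, and Theorem \ref{22.02.18.6}) goes through exactly as in the paper's proof of Corollary \ref{22.02.19.300}. So the proposal is structurally on target but omits the one substantive ingredient, and replaces it by a computation that does not hold.
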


	\subsection{Notations.}
	\begin{itemize}
		
		\item We use all notations introduced in \cite[Subsection 1.3]{Seo202404}.

		\item  The letter $N$ denotes a finite positive constant which may have different values along the argument, whose dependence will be specified in context;  $N=N(a,b,\cdots)$ means that this $N$ depends only on the parameters inside the parentheses.
		
		\item  For a list of parameters $L$, $A\lesssim_{L} B$ means that $A\leq N(L)B$, and $A\simeq _{L}B$ means that $A\lesssim_{L} B$ and $B\lesssim_{L} A$.
		
		\item $a \vee b :=\max\{a,b\}$, $a \wedge b :=\min\{a,b\}$. 
		
		\item For a Lebesgue measurable set $E\subset \bR^d$, $|E|$ denotes the Lebesgue measure of $E$.
		
		
		\item 	For $x=(x_1,\ldots,x_d)$, $y=(y_1,\ldots,y_d)$ in $\bR^d$,  $x\cdot y:=(x,y)_{\mathbb{R}^d} :=\sum^d_{i=1}x_iy_i$ denotes the standard inner product.
		
		\item $B_r(p):=B(p,r):=\big\{x\in\bR^d\,:\,|x-p|<r\big\}$, and $B_r:=B_r(0)$.
		
		\item For a fixed open set $\cO\subset \bR^d$, we usually denote $\rho(x):=d(x,\partial\cO)$ when there is no confusion.
		
		\item For a set $E\subset \bR^d$, $1_E$ denotes the function defined by $1_E(x)=1$ for $x\in E$, and $1_E(x)=0$ for $x\notin E$.
		For a function $f$ defined in $E$, $f1_E$ denotes the function defined as $\big(f1_E\big)(x)=f(x)$ if $x\in E$, and $\big(f1_E\big)(x)=0$ if $x\notin E$.
		
		\item For an open set $\cO\subseteq \bR^d$, $\cD'(\cO)$ denotes the set of all distributions on $\cO$, which is the dual of $C_c^{\infty}(\cO)$.
		For $f\in \cD'(\cO)$ and $\varphi\in C^{\infty}_c(\domain)$, the expression $\la f,\varphi\ra $ denotes the evaluation of $f$ with the test function $\varphi$.

		\item  For  any multi-index $\alpha=(\alpha_1,\ldots,\alpha_d)$, $\alpha_i\in \{0\}\cup \bN$,   
		$$
		\partial_tf:=\frac{\mathrm{d}f}{\mathrm{d} t}\,\,,\,\,\,\,\partial_t^nf:=\frac{\mathrm{d}^n f}{\mathrm{d}t^n}\,\,,\,\,\,\, D_if:=\frac{\mathrm{d} f}{\mathrm{d} x_i}\,\,,\,\,\,\, D^{\alpha}_xf(x):=D^{\alpha_1}_1\cdots D^{\alpha_d}_df(x).
		$$
		We denote $|\alpha|:=\sum_{i=1}^d \alpha_i$. 
		For the second order derivatives we denote $D_iD_jf$ by $D_{ij}f$. We often abbreviate 
		$|gf_x|:=\sum_{i=1}^d|g D_if|$, $|gf_{xx}|:=\sum_{i,j=1}^d|gD_{ij}f|$, and  $\big|gD^m_x f\big|:=\sum_{|\alpha|=m}|gD^\alpha_x f|$.
		We extend these notations to a sublinear function $\|\cdot\|:\cD'(\Omega)\rightarrow [0,+\infty]$; for example, $\|gf_x\|:=\sum_{i=1}^d\|g D_if\|$.

		\item $\Delta f:=\sum_{i=1}^d D_{ii}f$ denotes the Laplacian of a function (or distribution) $f$ defined on an open subset of $\mathbb{R}^d$.

		\item  For a measure space $(A, \cA, \mu)$, a Banach space $(B,\|\cdot\|_B)$, and $p\in[1,\infty]$, we write $L_p(A,\cA, \mu;B)$ for the set of all $B$-valued $\overline{\cA}$-measurable functions $f$ satisfying
		\begin{alignat*}{2}
			&\|f\|^p_{L_p(A,\cA,\mu;B)}:=\int_{A} \lVert f\rVert^p_{B} \dd \mu<\infty&&\qquad\text{if}\quad p\in[1,\infty)\,;\\
			&\|f\|_{L_\infty(A,\cA,\mu;B)}:=\underset{x\in A}{\mathrm{ess\,sup}}\,\|f(x)\|_B<\infty&&\qquad\text{if}\quad  p=\infty\,.
		\end{alignat*}
		Here, $\overline{\cA}$ is the completion of $\cA$ with respect to $\mu$.  
		We will drop $\cA$ or $\mu$ or even $B$ in $L_p(A,\cA, \mu;B)$ when they are obvious in the context.

		

		
		
	\end{itemize}

	%


	\mysection{Preliminary}
	
	Throughout this section, $\Omega$ is an open set in $\bR^d$ (with $d\in\bN$), and $\rho(x)$ denotes the boundary distance function $d(x,\partial\Omega)$.
	
	\subsection{Fractional calculus}
	
	We recall the definitions of the Riemann–Liouville fractional integral and the Caputo fractional derivative.
	
	\begin{defn}\label{240907807}\,
		
		\begin{enumerate}
			\item Let $\alpha>0$.
			For a measurable function $f:[0,T]\rightarrow \bR$, $I_t^{\alpha}f$ denotes the Riemann–Liouville fractional integral defined by 
			$$
			I_t^{\alpha}f(t_0):=\int_0^{t_0}\frac{(t_0-s)^{-1+\alpha}}{\Gamma(\alpha)}f(s)\dd s
			$$
			provided that the right-hand side is well-defined, \textit{i.e.}, $(t_0-\,\cdot\,)^{-1+\alpha}f(\,\cdot\,)\in L_1\big((0,t_0]\big)$.	
			We also define $I_t^0 f(t)=f(t)$.
			
			\item Let $0<\alpha<1$.
			For $f\in C^1\big([0,T]\big)$, $\partial^{\alpha}_tf$ denotes the Caputo fractional derivative defined by $\partial^\alpha_tf:=\partial_tI_t^{1-\alpha}\big(f-f(0)\big)=I_t^{1-\alpha}\big(\partial_t f\big)$.
		\end{enumerate}
	\end{defn}
	
	Using the following relation between the beta function and the gamma function: \begin{align}\label{240420130}
		\int_s^t(t-r)^{-1+\beta_1}(r-s)^{-1+\beta_2}\dd r=\frac{\Gamma(\beta_1)\Gamma(\beta_2)}{\Gamma(\beta_1+\beta_2)}(t-s)^{-1+\beta_1+\beta_2}\,,
	\end{align}
	for all $\beta_1,\,\beta_2>0$, we obtain that for any $\beta_1,\,\beta_2\geq 0$, $I_t^{\beta_1}\big(I_t^{\beta_2} f\big)=I_t^{\beta_1+\beta_2}f$.

	\begin{lemma}\label{240426733}
		Let $F\in C_{\mathrm{loc}}^1\big([0,T)\big)$, $p\in(1,\infty)$, and $\alpha\in(0,1)$. Then
		\begin{align}\label{240911430}
			\partial_t^\alpha\big(|F|^p\big)(t)\leq p|F(t)|^{p-2}F(t)\partial_t^\alpha F(t)\,.
		\end{align}
		Moreover, if $F(0)=0$, then
		\begin{align}\label{241027730}
			0\leq I^{1-\alpha}_t\big(|F|^p\big)(t)\leq p\int_0^t|F(s)|^{p-2}F(s)\partial_t^{\alpha}F(s)\dd s\,.
		\end{align}
	\end{lemma}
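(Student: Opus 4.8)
The plan is to combine the pointwise convexity of $x\mapsto|x|^p$ with the nonlocal integral representation of the Caputo derivative. First I would establish, for any $g\in C^1_{\mathrm{loc}}\big([0,T)\big)$ and $0<t<T$, by integrating by parts in $\partial_t^\alpha g(t)=I_t^{1-\alpha}(\partial_t g)(t)=\frac{1}{\Gamma(1-\alpha)}\int_0^t(t-s)^{-\alpha}\partial_sg(s)\,\dd s$ (see Definition \ref{240907807}), the identity
\begin{equation*}
\partial_t^\alpha g(t)=\frac{1}{\Gamma(1-\alpha)}\left(\frac{g(t)-g(0)}{t^\alpha}+\alpha\int_0^t\frac{g(t)-g(s)}{(t-s)^{1+\alpha}}\,\dd s\right),
\end{equation*}
where the boundary contribution at $s=t$ vanishes because $|g(t)-g(s)|\lesssim|t-s|$ and $\alpha<1$, and the integral converges for the same reason.

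Next, set $\phi(x):=|x|^p$; for $p\in(1,\infty)$ this is a convex $C^1$ function with $\phi'(x)=p|x|^{p-2}x$, so $\phi\circ F\in C^1_{\mathrm{loc}}\big([0,T)\big)$ and the identity above applies to both $g=F$ and $g=\phi\circ F$. Multiplying the representation of $\partial_t^\alpha F(t)$ by the $s$-independent constant $\phi'(F(t))$, pulling it inside, and subtracting the representation of $\partial_t^\alpha(\phi\circ F)(t)$, the two ``$1/t^\alpha$'' terms combine and the integrands combine into
\begin{equation*}
p|F(t)|^{p-2}F(t)\,\partial_t^\alpha F(t)-\partial_t^\alpha\big(|F|^p\big)(t)=\frac{1}{\Gamma(1-\alpha)}\left(\frac{\Phi\big(F(t),F(0)\big)}{t^\alpha}+\alpha\int_0^t\frac{\Phi\big(F(t),F(s)\big)}{(t-s)^{1+\alpha}}\,\dd s\right),
\end{equation*}
with $\Phi(a,b):=\phi'(a)(a-b)-\big(\phi(a)-\phi(b)\big)$. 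The convexity of $\phi$ is precisely the supporting-line inequality $\phi(b)\ge\phi(a)+\phi'(a)(b-a)$, i.e.\ $\Phi(a,b)\ge0$ for all $a,b\in\bR$; since $t^{-\alpha}>0$, $(t-s)^{-1-\alpha}>0$, $\alpha>0$, and $\Gamma(1-\alpha)>0$, the right-hand side is nonnegative, which is \eqref{240911430}.

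For the ``in particular'' part, assume $F(0)=0$. Then $|F(0)|^p=0$, hence $\partial_t^\alpha(|F|^p)=\partial_tI_t^{1-\alpha}(|F|^p)$, and since $t\mapsto\partial_t^\alpha(|F|^p)(t)$ equals $I_t^{1-\alpha}\big(p|F|^{p-2}F\,\partial_tF\big)(t)$ it is continuous, so $t\mapsto I_t^{1-\alpha}(|F|^p)(t)$ is $C^1$ on $[0,T)$. Integrating \eqref{240911430} over $[0,t]$ and using $I_t^{1-\alpha}(|F|^p)(0)=0$ (because $\int_0^{t_0}(t_0-s)^{-\alpha}|F(s)|^p\,\dd s\lesssim t_0^{1-\alpha}\sup_{[0,t_0]}|F|^p\to0$) together with $I_t^{1-\alpha}(|F|^p)\ge0$ (positivity of the kernel) gives \eqref{241027730}.

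The only point needing care is the technical handling of the singular kernel in the representation formula: the vanishing of the boundary term and the convergence of $\int_0^t(t-s)^{-1-\alpha}\Phi\big(F(t),F(s)\big)\,\dd s$. Both follow from $|\Phi(F(t),F(s))|\lesssim|t-s|$ near $s=t$, which comes from $F\in C^1$ and the local Lipschitz continuity of $\phi$ and $\phi'$ on the (bounded) range of $F$ over $[0,t]$. This presents no real obstacle; the substance of the argument is the single observation $\Phi\ge0$.
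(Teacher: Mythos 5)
Your proof is correct and follows essentially the same route as the paper's. You derive the Marchaud-type representation of $\partial_t^\alpha g$ by integration by parts and apply it to $g=F$ and $g=|F|^p$ before combining; the paper instead integrates by parts directly in the expression $p\int_0^t\frac{(t-s)^{-\alpha}}{\Gamma(1-\alpha)}\big(|F(t)|^{p-2}F(t)-|F(s)|^{p-2}F(s)\big)F'(s)\,\dd s$, but the auxiliary function it introduces, $I(s)=|F(s)|^p-|F(t)|^p-p|F(t)|^{p-2}F(t)(F(s)-F(t))$, is exactly your $\Phi\big(F(t),F(s)\big)$, and the decisive observation $I\ge 0$ by convexity of $x\mapsto|x|^p$ is identical, as is the treatment of the ``in particular'' part via $I_t^1\partial_t^\alpha(|F|^p)=I_t^{1-\alpha}\big(|F|^p-|F(0)|^p\big)$.
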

	When $p=2$, this lemma is implicit in the proof of \cite[Proposition 4.1]{DONG2019289}.
	We extend the argument to general $1<p<\infty$.
	
	\begin{proof}[Proof of Lemma \ref{240426733}]
		By the definition of $\partial_t^\alpha:=I_t^{1-\alpha}\partial_t$, we obtain that 
		\begin{align*}
			J(t):=\,&p|F(t)|^{p-2}F(t)\partial_t^\alpha F(t)-\partial_t^\alpha \big(|F|^p\big)(t)\\
			=\,&p\int_0^t\frac{(t-s)^{-\alpha}}{\Gamma(1-\alpha)}\Big(|F(t)|^{p-2}F(t)F'(s)-|F(s)|^{p-2}F(s)F'(s)\Big)\dd s.
		\end{align*}
		For a fixed $t\in (0,T]$, put
		$$
		I(s):=|F(s)|^p-|F(t)|^p-p|F(t)|^{p-2}F(t)\big(F(s)-F(t)\big)\,.
		$$
		Observe that 
		\begin{align}\label{240825340}
			0\leq I(s)\leq (t-s)\cdot 2p\cdot\|F\|_\infty^{p-1}\|F'\|_\infty\qquad \text{and}\qquad \\
			I'(s)=p\Big(|F(s)|^{p-2}F(s)F'(s)-|F(t)|^{p-2}F(t)F'(s)\Big)\,,\nonumber
		\end{align}
		where the first inequality in \eqref{240825340} follows from the convexity of the map $t\mapsto |t|^p$.
		These imply that
		\begin{align*}
			J(t)=\,&-\int_0^t\frac{(t-s)^{-\alpha}}{\Gamma(1-\alpha)}I'(s)\dd s\\
			=\,&-\lim_{\epsilon\searrow 0}\bigg(\frac{\epsilon^{-\alpha}}{\Gamma(1-\alpha)}I(t-\epsilon)-\frac{t^{-\alpha}}{\Gamma(1-\alpha)}I(0)-\alpha\int_0^{t-\epsilon}\frac{(t-s)^{-1-\alpha}}{\Gamma(1-\alpha)}I(s)\dd s\bigg)\\
			=\,&\frac{t^{-\alpha}}{\Gamma(1-\alpha)}I(0)+\alpha\int_0^{t}\frac{(t-s)^{-1-\alpha}}{\Gamma(1-\alpha)}I(s)\dd s\,.
		\end{align*}
		Since $I(s)\geq 0$, we have $J(t)\geq 0$, and therefore \eqref{240911430} follows.
		
		Since 
		$$
		I_t^1\partial_t^\alpha \big(|F|^p\big)=I_t^{1-\alpha}I_t^1\partial_t \big(|F|^p\big)=I_t^{1-\alpha}\big(|F(\cdot)|^p-|F(0)|^p\big)\,,
		$$
		\eqref{241027730} follows from \eqref{240911430}. 
	\end{proof}

	\subsection{Superharmonic function}\label{241014932}
	
	The weight system in the main theorem (Theorem \ref{22.02.18.6}) consists of the boundary distance function and a superharmonic function.
	We recall the definitions of a superharmonic function and a classical superharmonic function.
	
	A locally integrable function $\phi:\Omega\rightarrow \bR$ is called a \textit{superharmonic} if $\Delta \phi\leq 0$ in the sense of distributions, \textit{i.e.}, for any nonnegative $\zeta\in C_c^{\infty}(\Omega)$, 
	$$
	\int_{\Omega}\phi\,\Delta \zeta\, \dd x\leq 0\,.
	$$
	A function $\phi:\Omega\rightarrow (-\infty,+\infty]$ is called a \textit{classical superharmonic function} if the following conditions are satisfied:
	\begin{enumerate}
		\item $\phi$ is lower semi-continuous in $\Omega$.
		\item For any $x\in \Omega$ and  any $r>0$ such that $\overline{B}_r(x)\subset \Omega$, 
		\begin{align*}
			\phi(x)\geq \frac{1}{\big|B_r(x)\big|}\int_{B_r(x)}\phi(y)\dd y\,.
		\end{align*}
		\item $\phi\not\equiv +\infty$ on each connected component of $\Omega$.
	\end{enumerate}
	As shown in \cite{AG}, $\phi$ is harmonic if and only if both $\phi$ and $-\phi$ are classical superharmonic functions.

	We collect several lemmas for superharmonic functions and classical superharmonic functions.
	The proofs of Lemmas \ref{240315329} - \ref{21.04.23.5} are given in \cite[Section 2]{Seo202404}.
	
	\begin{lemma}\label{240315329}\,
		A function $\phi:\Omega\rightarrow [-\infty,+\infty]$ is superharmonic if and only if there exists a classical superharmonic function $\phi_0$ in $\Omega$ such that $\phi=\phi_0$ almost everywhere in $\Omega$.
	\end{lemma}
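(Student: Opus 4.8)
The plan is to prove the two implications separately, using the mean-value characterization of classical superharmonic functions and a mollification argument for the distributional formulation.

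For the "if" direction, suppose $\phi_0$ is a classical superharmonic function with $\phi=\phi_0$ a.e. First I would note that $\phi_0$ is locally bounded above on $\Omega$ (being lower semi-continuous and satisfying a sub-mean-value inequality forces local integrability, hence $\phi_0\in L^1_{\mathrm{loc}}$; this is standard and presumably recorded in \cite{AG}). Then for any nonnegative $\zeta\in C_c^\infty(\Omega)$, I would verify $\int_\Omega \phi_0\,\Delta\zeta\,\mathrm{d}x\le 0$ by writing $\Delta\zeta$ as a suitable limit, or more cleanly by mollifying: let $\phi_0^\varepsilon:=\phi_0*\eta_\varepsilon$ on the subdomain where this is defined. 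The sub-mean-value property passes to averages, so $\phi_0^\varepsilon$ is smooth with $\Delta\phi_0^\varepsilon\le 0$ pointwise (this is the classical fact that mollifications of a classical superharmonic function are smooth superharmonic and decrease to $\phi_0$), whence $\int \phi_0^\varepsilon\Delta\zeta = \int (\Delta\phi_0^\varepsilon)\zeta\le 0$; letting $\varepsilon\to 0$ and using $\phi_0^\varepsilon\to\phi_0$ in $L^1_{\mathrm{loc}}$ gives $\int\phi_0\Delta\zeta\le 0$. Since $\phi=\phi_0$ a.e., the same inequality holds for $\phi$, so $\phi$ is superharmonic in the distributional sense.

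For the "only if" direction, suppose $\Delta\phi\le 0$ in $\mathcal D'(\Omega)$ with $\phi\in L^1_{\mathrm{loc}}$. The natural route is again mollification: $\phi^\varepsilon:=\phi*\eta_\varepsilon$ is smooth on $\Omega_\varepsilon:=\{x:d(x,\partial\Omega)>\varepsilon\}$ and satisfies $\Delta\phi^\varepsilon=(\Delta\phi)*\eta_\varepsilon\le 0$ there, so $\phi^\varepsilon$ is a smooth (hence classical) superharmonic function on $\Omega_\varepsilon$. A standard computation with radially symmetric mollifiers shows $\varepsilon\mapsto \phi^\varepsilon(x)$ is monotone nondecreasing as $\varepsilon\searrow 0$ (for each fixed $x$, on the range of $\varepsilon$ where it is defined), again because of the sub-mean-value inequality. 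Therefore $\phi_0(x):=\lim_{\varepsilon\searrow 0}\phi^\varepsilon(x)=\sup_{\varepsilon>0}\phi^\varepsilon(x)$ exists in $(-\infty,+\infty]$ pointwise on $\Omega$. One then checks: (i) $\phi_0$ is lower semi-continuous, as an increasing limit of continuous functions; (ii) $\phi_0$ inherits the super-mean-value inequality on balls $\overline B_r(x)\subset\Omega$ by passing to the limit in the corresponding inequality for $\phi^\varepsilon$ (using monotone convergence); (iii) $\phi_0\not\equiv+\infty$ on each component, since $\phi^\varepsilon\to\phi$ in $L^1_{\mathrm{loc}}$ forces $\phi_0=\phi$ a.e., so $\phi_0$ is finite a.e.; (iv) the same a.e.\ identity $\phi_0=\phi$ is exactly the conclusion. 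Hence $\phi_0$ is a classical superharmonic function equal to $\phi$ almost everywhere.

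The main technical point—rather than an obstacle—is establishing that the mollifications $\phi^\varepsilon$ are monotone in $\varepsilon$ and converge pointwise to a function that still satisfies the mean-value inequality; this is the heart of the equivalence and relies on choosing $\eta_\varepsilon$ radial and invoking the sub-mean-value property of $\phi^{\varepsilon'}$ for $\varepsilon'<\varepsilon$ together with Fubini. Since the statement explicitly says these proofs are in \cite[Section 2]{Seo202404} and the classical facts are in \cite{AG}, I expect the write-up to be short, essentially citing the classical theory (e.g.\ Armitage–Gardiner) for the regularization-and-monotone-limit lemma and then assembling the four bullet points above.
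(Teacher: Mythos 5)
Your plan follows the standard mollification argument, which is also the route taken in the cited reference (the surrounding machinery in the paper, in particular Lemma~\ref{21.04.23.3}.(4) and Lemma~\ref{21.04.23.5}, is exactly the $\phi^{(\epsilon)}$-regularization used there). The two implications are organized correctly and the four checks (i)--(iv) in the ``only if'' direction are the right ones.

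Two imprecisions are worth correcting. First, a classical superharmonic function need not be \emph{locally bounded above}; $-\log|x|$ on a punctured disk in $\bR^2$ is a counterexample. What is true, and what you actually need, is local integrability, which follows from lower semi-continuity plus the super-mean-value inequality (this is Lemma~\ref{21.04.23.3}.(2)); the phrase ``locally bounded above'' should be dropped. Second, the monotonicity $\phi^{\epsilon'}\ge \phi^{\epsilon}$ for $\epsilon'<\epsilon$ does not follow directly from ``the sub-mean-value property of $\phi^{\epsilon'}$ together with Fubini'' (and it is the \emph{super}-mean-value inequality, since superharmonic functions dominate their ball averages). The clean route is: (a) prove, for a \emph{smooth} superharmonic $u$, that $r\mapsto$ (spherical mean of $u$ over $\partial B_r(x)$) is nonincreasing, hence $\epsilon\mapsto u*\eta_\epsilon(x)$ is nondecreasing as $\epsilon\searrow 0$ when $\eta$ is radial; (b) apply this to $u=\phi^{\delta}$ to get $(\phi^\delta)^{\epsilon}(x)\le(\phi^\delta)^{\epsilon'}(x)$; (c) send $\delta\to 0$ using $\phi^\delta\to\phi$ in $L^1_{\mathrm{loc}}$, which gives $(\phi^\delta)^{\epsilon}(x)\to\phi^{\epsilon}(x)$ pointwise. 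With these two adjustments the sketch is sound and matches the expected proof.
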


	\begin{lemma}\label{21.04.23.3}
		Let $\phi$ be a classical superharmonic function in $\Omega$.
		
		\begin{enumerate}
			\item If $\phi$ is twice continuously differentiable, then $\Delta\phi\leq 0$.
			
			\item $\phi$ is locally integrable in $\Omega$.
			
			\item For any compact set $K\subset \Omega$, $\phi$ has the minimum value on $K$. 
			
			\item For $\epsilon>0$, put
			\begin{align}\label{21.04.23.1}
				\phi^{(\epsilon)}(x)=\int_{B_1(0)}\big(\phi 1_{\Omega}\big)(x-\epsilon y)\cdot N_0\,\ee^{-1/(1-|y|^2)}\dd y\,,
			\end{align}
			where $N_0:=\big(\int_{B_1}\ee^{-1/(1-|y|^2)}\dd y\big)^{-1}$.
			Then for any compact set $K\subset \Omega$ and $0<\epsilon<d(K,\Omega^c)$, the following statements hold:
			\begin{enumerate}
				\item $\phi^{(\epsilon)}$ is infinitely differentiable on $\bR^d$.
				\item $\phi^{(\epsilon)}$ is a classical superharmonic function in $K^{\circ}$.
				\item For any $x\in K$, $\phi^{(\epsilon)}(x)\nearrow \phi(x)$ as $\epsilon\searrow 0$.
			\end{enumerate}
		\end{enumerate}
	\end{lemma}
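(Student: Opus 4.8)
The plan is to handle the four parts essentially independently, but in the order (1), (3), (2), (4), since (3) supplies the lower bound on $\phi$ over compact sets that (2) uses, and (2) gives the local integrability that makes the mollification in (4) meaningful. For (1): if $\phi\in C^2(\Omega)$, expand $\phi$ in its second-order Taylor polynomial at $x$. For $\overline{B}_r(x)\subset\Omega$ the linear term averages to zero by antipodal symmetry, the off-diagonal quadratic terms vanish, and the diagonal terms contribute $\tfrac12\sum_{i}D_{ii}\phi(x)\cdot\frac{1}{|B_r(x)|}\int_{B_r(x)}(y_i-x_i)^2\dd y$, so that
\[
\frac{1}{|B_r(x)|}\int_{B_r(x)}\phi(y)\dd y-\phi(x)=\frac{r^2}{2(d+2)}\,\Delta\phi(x)+o(r^2)\qquad(r\to0^+).
\]
The sub-mean-value inequality in the definition of a classical superharmonic function forces the left side to be $\le 0$ for all small $r>0$; dividing by $r^2$ and letting $r\to0^+$ gives $\Delta\phi(x)\le0$. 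For (3): $\phi$ is lower semicontinuous and $>-\infty$ on $\Omega$, so on compact $K$ choose $x_n\in K$ with $\phi(x_n)\to\inf_K\phi$, pass to a convergent subsequence $x_n\to x_\ast\in K$, and use $\phi(x_\ast)\le\liminf_n\phi(x_n)=\inf_K\phi$; finiteness of $\phi(x_\ast)$ shows the infimum is finite and attained.

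For (2) I would run a clopen argument inside each connected component $\Omega_0$ of $\Omega$. Let $E$ be the set of $x\in\Omega$ possessing a neighborhood on which $\phi$ is integrable; $E$ is open by construction. It meets $\Omega_0$: since $\phi\not\equiv+\infty$ on $\Omega_0$, pick $x_0\in\Omega_0$ with $\phi(x_0)<+\infty$; for small $r$ the sub-mean-value inequality gives $\int_{B_r(x_0)}\phi\dd y\le|B_r(x_0)|\,\phi(x_0)<\infty$, while (3) bounds $\phi$ below on $\overline{B}_r(x_0)$, so $\phi\in L_1(B_r(x_0))$ and $x_0\in E$. And $E$ is relatively closed in $\Omega$: if $x_0\in\partial E\cap\Omega$, fix $r$ with $\overline{B}_{2r}(x_0)\subset\Omega$; then $B_r(x_0)$ meets $E$, so $\phi$ is finite almost everywhere near an interior point of $B_r(x_0)$, hence there is $z$ with $\phi(z)<\infty$ and $|z-x_0|$ arbitrarily small, and applying the sub-mean-value inequality at $z$ on a ball of radius slightly larger than $|z-x_0|$ shows $\phi\in L_1$ near $x_0$, i.e. $x_0\in E$. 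Connectedness gives $E\supset\Omega_0$, so $\phi\in L_{1,\mathrm{loc}}(\Omega)$.

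For (4): part (a) is the standard fact that the convolution of a locally integrable function with a fixed $C_c^{\infty}$ kernel is $C^\infty$, here applied to $\phi1_\Omega$, which by (2) is locally integrable near $K$; in particular $\phi^{(\epsilon)}$ is finite-valued, which supplies the nondegeneracy needed in (b). For (b), write $\eta(y):=N_0\,\ee^{-1/(1-|y|^2)}1_{B_1}(y)$, so $\phi^{(\epsilon)}(x)=\int\phi(x-\epsilon y)\eta(y)\dd y$; then for $\overline{B}_\rho(x_0)\subset K^{\circ}$, Fubini gives
\[
\frac{1}{|B_\rho(x_0)|}\int_{B_\rho(x_0)}\phi^{(\epsilon)}(x)\dd x=\int\eta(y)\Big(\frac{1}{|B_\rho(x_0-\epsilon y)|}\int_{B_\rho(x_0-\epsilon y)}\phi(x)\dd x\Big)\dd y\le\int\eta(y)\phi(x_0-\epsilon y)\dd y=\phi^{(\epsilon)}(x_0),
\]
the inequality being the sub-mean-value property of $\phi$ at each $x_0-\epsilon y$, legitimate because $\overline{B}_\rho(x_0-\epsilon y)\subset\Omega$ once $\epsilon<d(K,\Omega^c)$ (a short triangle-inequality check). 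Together with smoothness (hence lower semicontinuity) and finiteness, this makes $\phi^{(\epsilon)}$ a classical superharmonic function on $K^{\circ}$.

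Finally, for (4)(c) I would exploit the radiality of $\eta$: in polar coordinates $\phi^{(\epsilon)}(x)=\int_0^1 A_x(\epsilon s)\dd\mu(s)$, where $A_x(r)$ is the average of $\phi$ over $\partial B_r(x)$ and $\mu$ is a fixed probability measure on $(0,1)$ independent of $\epsilon$. Using that spherical averages of a superharmonic function are non-increasing in $r$, with $A_x(r)\le\phi(x)$ and $A_x(0^+)=\phi(x)$ (the limit identity following from lower semicontinuity, which gives $\liminf_{r\to0^+}A_x(r)\ge\phi(x)$), one gets $A_x(\epsilon s)\nearrow\phi(x)$ as $\epsilon\searrow0$ for each $s$, and monotone convergence yields $\phi^{(\epsilon)}(x)\nearrow\phi(x)$. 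The steps demanding genuine care are (2), where the clopen bookkeeping across components must be set up cleanly, and (4)(c), which rests on the monotonicity of spherical means (a standard consequence of the definition of superharmonicity) and on the limit $A_x(0^+)=\phi(x)$; these, together with the small geometric containments used in (4)(b), are where a complete proof actually spends its effort.
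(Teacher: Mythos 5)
The paper does not give a proof of this lemma; it defers entirely to \cite[Section 2]{Seo202404}, so there is no in-paper argument to compare against. Your proof is correct and follows the standard route of classical potential theory (the same one found in \cite{AG}). The ordering $(1),(3),(2),(4)$ is the right one, the clopen argument in $(2)$ is sound (the phrase ``$|z-x_0|$ arbitrarily small'' is unnecessary---any $z\in B_r(x_0)$ with $\phi(z)<\infty$ and $\rho\in(|z-x_0|,r)$ already works, since $\overline{B}_\rho(z)\subset B_{2r}(x_0)\subset\Omega$), and the Fubini computation in $(4)(b)$ together with the containment $\overline{B}_\rho(x_0-\epsilon y)\subset\Omega$ is exactly what is needed.

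Two points deserve more care in a polished write-up. First, in $(4)(c)$ the entire argument hinges on the monotonicity $r\mapsto A_x(r)$ being non-increasing and on $A_x(r)\le\phi(x)$; these do not fall out of the ball sub-mean-value inequality by inspection but require comparison with harmonic functions on an annulus (or an equivalent device) --- you flag this as ``a standard consequence,'' which is fair, but it should be either proved or explicitly cited, since it carries most of the weight of $(4)(c)$. Second, for $(4)(a)$ your argument establishes smoothness of $\phi^{(\epsilon)}$ wherever $\phi 1_{\Omega}$ is integrable on the $\epsilon$-convolution window; by $(2)$ this is automatic on a neighborhood of $K$ (where the window sits inside $\Omega$), but the stated conclusion ``$\phi^{(\epsilon)}$ is infinitely smooth on $\mathbb{R}^d$'' also requires $\phi 1_\Omega$ to be in $L^1_{\mathrm{loc}}(\mathbb{R}^d)$, which is not automatic if $\phi$ blows up near $\partial\Omega$ --- this is really a wrinkle in the lemma's phrasing, but a complete proof should at least note where $(2)$ guarantees local integrability of $\phi 1_\Omega$ and where it does not.
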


	\begin{lemma}\label{21.04.23.5}
		Let $\phi$ be a positive superharmonic function in $\Omega$, and denote by $\phi^{(\epsilon)}$ the function defined in \eqref{21.04.23.1}.
		\begin{enumerate}
			\item For any $c\leq 1$, $\phi^c$ is locally integrable in $\Omega$.
			
			\item If $f\in L_1(\Omega)$ and  $\mathrm{supp}(f)$ is a compact subset of $\Omega$, then for any $c\in\bR$,
			\begin{align*}
				\lim_{\epsilon\rightarrow 0}\int_{\Omega}|f|\big(\phi^{(\epsilon)}\big)^c\dd x= \int_{\Omega}|f|\phi^c\dd x\,.
			\end{align*}
			
			\item If $f\in L_\infty(\Omega)$ and $\mathrm{supp}(f)$ is a compact subset of $\Omega$, then for any $c\leq 1$,
			\begin{align*}
				\lim_{\epsilon\rightarrow 0}\int_{\Omega}f\big(\phi^{(\epsilon)}\big)^c\dd x= \int_{\Omega}f\phi^c\dd x\,.
			\end{align*}
		\end{enumerate}
	\end{lemma}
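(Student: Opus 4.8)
\emph{Proof proposal.} The plan is to reduce everything to the classical representative of $\phi$, prove (1) by elementary inequalities, obtain (2) from the approximation property in Lemma~\ref{21.04.23.3}(4) together with the monotone and dominated convergence theorems, and deduce (3) from (2) and (1).

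First, by Lemma~\ref{240315329} we may assume $\phi$ is a classical superharmonic function; the mollification $\phi^{(\epsilon)}$ in \eqref{21.04.23.1} depends on $\phi$ only up to a null set, so nothing is lost. Since $\phi$ is positive, Lemma~\ref{21.04.23.3}(3) gives $m_K:=\inf_K\phi>0$ for every compact $K\subset\Omega$. For (1): when $0\le c\le 1$, the bound $t^c\le 1+t$ for $t\ge 0$ yields $\phi^c\le 1+\phi$, which is locally integrable by Lemma~\ref{21.04.23.3}(2); when $c<0$, on any compact $K$ one has $\phi^c\le m_K^c<\infty$, so $\phi^c$ is locally bounded, hence locally integrable.

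For (2), put $K:=\mathrm{supp}(f)$ and consider only $\epsilon<d(K,\Omega^c)$, so that $\phi^{(\epsilon)}$ is smooth and strictly positive on a neighborhood of $K$. By Lemma~\ref{21.04.23.3}(4)(c), $\phi^{(\epsilon)}(x)\nearrow\phi(x)$ on $K$ as $\epsilon\searrow 0$, so in particular $\epsilon\mapsto\phi^{(\epsilon)}$ is monotone on $K$. If $c\ge 0$, then $(\phi^{(\epsilon)})^c\nearrow\phi^c$ pointwise on $K$, and the monotone convergence theorem gives $\int_\Omega|f|(\phi^{(\epsilon)})^c\dd x\to\int_\Omega|f|\phi^c\dd x$, both sides possibly $+\infty$. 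If $c<0$, then $(\phi^{(\epsilon)})^c\searrow\phi^c$ pointwise on $K$; fixing one admissible $\epsilon_0$, continuity and positivity give $\sup_K(\phi^{(\epsilon_0)})^c<\infty$, so $|f|(\phi^{(\epsilon)})^c\le\big(\sup_K(\phi^{(\epsilon_0)})^c\big)\,|f|\in L_1(\Omega)$ for all $\epsilon\le\epsilon_0$, and the dominated convergence theorem applies.

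For (3), split $f=f^+-f^-$; each of $f^\pm$ lies in $L_1(\Omega)$ with compact support in $\Omega$, so (2) gives $\int_\Omega f^\pm(\phi^{(\epsilon)})^c\dd x\to\int_\Omega f^\pm\phi^c\dd x$. The hypothesis $c\le 1$ enters precisely here: by (1), $\phi^c$ is locally integrable, so $\int_\Omega f^\pm\phi^c\dd x\le\|f\|_{L_\infty(\Omega)}\int_K\phi^c\dd x<\infty$, whence the two limits may legitimately be subtracted. I do not expect a genuine obstacle: the only points requiring care are the monotonicity of $\epsilon\mapsto\phi^{(\epsilon)}$ — which is what makes a single mollification serve as an integrable majorant in the $c<0$ case of (2), and which is exactly Lemma~\ref{21.04.23.3}(4)(c) — and the avoidance of an $\infty-\infty$ ambiguity in (3), which is why $c\le 1$ is imposed there and part (1) is invoked.
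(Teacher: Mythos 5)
Your proof is correct, and it follows the natural route: reduce to the classical representative via Lemma~\ref{240315329}, invoke the monotone convergence $\phi^{(\epsilon)}\nearrow\phi$ from Lemma~\ref{21.04.23.3}.(4)(c) (with a fixed $\phi^{(\epsilon_0)}$ as an integrable majorant in the $c<0$ case), and deduce (3) from (2) and (1) after splitting $f=f^+-f^-$. The paper itself does not include a proof of this lemma — it defers to \cite[Section 2]{Seo202404} — so a line-by-line comparison is not possible here, but every step you take is standard and valid; in particular the use of Lemma~\ref{21.04.23.3}.(3) to get $\inf_K\phi>0$ in part (1) and the observation that $c\le1$ is what makes $\int_K\phi^c\dd x<\infty$ (preventing the $\infty-\infty$ ambiguity in part (3)) are exactly the right points to stress. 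One alternative worth noting for part (1): for $c\in(0,1)$ one could instead appeal to Lemma~\ref{21.05.18.1}.(3) to see that $\phi^c$ is itself superharmonic and hence locally integrable by Lemma~\ref{21.04.23.3}.(2); your elementary bound $t^c\le 1+t$ accomplishes the same and is more self-contained.
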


	\subsection{Harnack function and regular Harnack function}\label{241014933}
	One of our primary methods is the localization argument used in \cite{Krylov1999-1}.
	We employ the notions of the Harnack function and the regular Harnack function to extend this argument to various weight systems.
	
	\begin{defn}\label{21.10.14.1}\,
		
		\begin{enumerate}
			\item A measurable function $\psi:\Omega\rightarrow \bR_+$ is called a \textit{Harnack function} if there exists a constant $C=:\mathrm{C}_1(\psi)>0$ such that
			\begin{align}\label{22.02.17.1}
				\underset{B(x,\rho(x)/2)}{\mathrm{ess\,sup}}\,\psi\leq C\underset{B(x,\rho(x)/2)}{\mathrm{ess\,inf}}\,\psi\quad\text{for all}\,\,x\in\Omega\,.
			\end{align}
			
			\item A function $\Psi\in C^{\infty}(\Omega)$ is called a \textit{regular Harnack function} if $\Psi>0$ and there exists a sequence of constants $\big(C^{(k)}\big)_{k\in\bN}=:\mathrm{C}_2(\Psi)$ such that for each $k\in\bN$,
			\begin{align*}
				|D_x^k\Psi|\leq C^{(k)}\,\rho^{-k}\Psi\quad\text{on}\quad\Omega\,.
			\end{align*}
			
			\item Let $\psi$ be a measurable function and $\Psi$ be a regular Harnack function in $\Omega$. 
			$\Psi$ is called a \textit{regularization} of $\psi$ if there exists a constant $C=:\mathrm{C}_3(\psi,\Psi)>0$ such that
			$$
			C^{-1}\Psi\leq\psi\leq C\,\Psi\quad\text{almost everywhere on}\,\,\Omega. 
			$$
		\end{enumerate}
	\end{defn}
	
	It is worth mentioning that, as shown in \cite[Lemma 3.4]{Seo202404}, \eqref{22.02.17.1} is equivalent to the existence of constants $r\in(0,1)$ and $N_r>0$ such that 
	\begin{align}\label{241012350}
		\underset{B(x,r\rho(x))}{\mathrm{ess\,sup}}\,\psi\leq N_{r}\underset{B(x,r\rho(x))}{\mathrm{ess\,inf}}\,\psi\quad\text{for all}\,\,x\in\Omega\,.
	\end{align}
	
	The following example and lemma are given in \cite[Subsection 3.1]{Seo202404}:
	
	\begin{example}\label{21.05.18.2}\,
		
		\begin{enumerate}
			\item For any $E\subset \Omega^c$, the function $x\mapsto d(x,E)$ is a Harnack function in $\Omega$.
			In addition, $\mathrm{C}_1\big(d(\,\cdot\,,E)\big)$ can be chosen as $3$.
			
			\item If $\Psi\in C^{\infty}(\Omega)$ satisfies $\Psi>0$ and $\Delta\Psi=-\Lambda\Psi$ for some constant $\Lambda\geq 0$, then $\Psi$ is a regular Harnack function in $\Omega$.
			In addition, $\mathrm{C}_2(\Psi)$ can be chosen to depend only on $d$.
			
			\item For any regular Harnack function $\Psi$ in $\Omega$, and $\sigma\in\bR$, $\Psi^\sigma$ is also a regular Harnack function in $\Omega$.
			In addition, $\mathrm{C}_2(\Psi^\sigma)$ can be chosen to depend only on $d$,  $\sigma$, $\mathrm{C}_2(\Psi)$.
			
			\item Let $\Psi$ and $\Phi$ be regularizations of $\psi$ and $\phi$, respectively.
			Then $\Psi\Phi$, $\Psi+\Phi$, and $\frac{\Phi\Psi}{\Phi+\Psi}$ are regularizations of $\psi\phi$, $\max (\psi,\phi)$, and $\min (\psi,\phi)$, respectively.
		\end{enumerate}
	\end{example}

	\begin{lemma}\label{21.05.27.3}
		\,\,
		
		\begin{enumerate}
			\item If $\psi$ is a Harnack function, $\psi$ admits a regularization.
			For this regularization of $\psi$ denoted by $\widetilde{\psi}$, $\mathrm{C}_2(\widetilde{\psi})$ and $\mathrm{C}_3(\psi,\widetilde{\psi})$ can be chosen to depend only on $d$ and $\mathrm{C}_1(\psi)$.
			
			\item If $\Psi$ is a regular Harnack function, then it is also a Harnack function and $\mathrm{C}_1(\Psi)$ can be chosen to depend only on $d$ and $\mathrm{C}_2(\Psi)$.
		\end{enumerate}
	\end{lemma}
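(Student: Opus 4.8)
I would prove \textbf{(2)} first, as it is short. Let $C^{(1)}$ be the first term of the sequence $\mathrm{C}_2(\Psi)$, so that $|D_x\Psi|\le C^{(1)}\rho^{-1}\Psi$ on $\Omega$. Fix $x\in\Omega$. Every $y\in B(x,\rho(x)/2)$ satisfies $\rho(y)\ge\rho(x)-|x-y|>\rho(x)/2$, hence $|\nabla\log\Psi|=|D_x\Psi|/\Psi\le 2C^{(1)}\rho(x)^{-1}$ throughout the convex ball $B(x,\rho(x)/2)$. Integrating $\nabla\log\Psi$ along the segment joining any two points of that ball shows that $\log\Psi$ has oscillation at most $2C^{(1)}$ on it, i.e.\ $\mathrm{ess\,sup}_{B(x,\rho(x)/2)}\Psi\le e^{2C^{(1)}}\,\mathrm{ess\,inf}_{B(x,\rho(x)/2)}\Psi$. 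This is \eqref{22.02.17.1} with $\mathrm{C}_1(\Psi)=e^{2C^{(1)}}$, which depends only on $d$ and $\mathrm{C}_2(\Psi)$.

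For \textbf{(1)}, the plan is to regularize $\psi$ by a mollification carried out at the variable length scale $\rho(x)$. First observe that \eqref{22.02.17.1}, together with $\psi>0$ and $\psi<\infty$ a.e., forces $0<\mathrm{ess\,inf}_{B}\psi\le\mathrm{ess\,sup}_{B}\psi<\infty$ on every ball $B=B(x,\rho(x)/2)$; in particular $\psi$ is locally integrable on $\Omega$. Next I would invoke the classical \emph{regularized distance}: a function $r\in C^\infty(\Omega)$ with $r\simeq_d\rho$ and $|D_x^kr|\le N(d,k)\,\rho^{1-k}$ for all $k\ge0$. Fix $\epsilon=\epsilon(d)\in(0,1)$ small enough that $B(x,\epsilon r(x))\subset B(x,\rho(x)/2)$ for every $x$, pick $\zeta\in C_c^\infty(B_{1/2})$ with $\zeta\ge0$ and $\int\zeta\dd y=1$, and set
\[
\widetilde{\psi}(x):=\int_{\bR^d}\psi\big(x-\epsilon r(x)y\big)\,\zeta(y)\dd y=\frac{1}{(\epsilon r(x))^{d}}\int_{\bR^d}\psi(z)\,\zeta\Big(\frac{x-z}{\epsilon r(x)}\Big)\dd z\,.
\]
Because $r$ is smooth and positive, the right-hand expression shows $\widetilde{\psi}\in C^\infty(\Omega)$: locally in $x$ the $z$-integration runs over a fixed compact subset of $\Omega$ on which $\psi$ is integrable and on which the $x$-derivatives of the kernel are bounded, so differentiation under the integral sign is legitimate.

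It remains to verify the two defining properties of a regularization. \emph{Comparability.} For each $x$, $\widetilde{\psi}(x)$ is an average of values of $\psi$ on $B(x,\epsilon r(x)/2)\subset B(x,\rho(x)/2)$, so it lies between $\mathrm{ess\,inf}$ and $\mathrm{ess\,sup}$ of $\psi$ over $B(x,\rho(x)/2)$; by the Lebesgue differentiation theorem $\psi(x)$ lies in the same interval for a.e.\ $x$, so \eqref{22.02.17.1} yields $\mathrm{C}_1(\psi)^{-1}\psi\le\widetilde{\psi}\le\mathrm{C}_1(\psi)\psi$ a.e., and $\mathrm{C}_3(\psi,\widetilde{\psi})$ may be taken to be $\mathrm{C}_1(\psi)$. \emph{Derivative bounds.} Writing $\phi(x,z):=(\epsilon r(x))^{-d}\zeta\big((x-z)/(\epsilon r(x))\big)$, the chain and product rules together with $|D_x^jr|\lesssim_{d,j}\rho^{1-j}$ (and $r\simeq_d\rho$), applied to the kernel $\phi(x,\cdot)$ which is supported in $B(x,\epsilon r(x)/2)$, give $|D_x^k\phi(x,z)|\le N(d,k)\,r(x)^{-d-k}\,1_{B(x,\epsilon r(x)/2)}(z)$; integrating against $\psi$ and using $\int_{B(x,\epsilon r(x)/2)}\psi\dd z\lesssim_d r(x)^{d}\,\mathrm{ess\,sup}_{B(x,\rho(x)/2)}\psi\lesssim r(x)^{d}\,\mathrm{C}_1(\psi)\,\widetilde{\psi}(x)$ gives $|D_x^k\widetilde{\psi}|\le N(d,k)\,\mathrm{C}_1(\psi)\,r(x)^{-k}\widetilde{\psi}\lesssim_{d,k}\mathrm{C}_1(\psi)\,\rho^{-k}\widetilde{\psi}$. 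Hence $\mathrm{C}_2(\widetilde{\psi})$ depends only on $d$ and $\mathrm{C}_1(\psi)$, completing \textbf{(1)}.

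The step I expect to be the main obstacle is this last derivative estimate: one is forced to mollify at a \emph{smooth} scale (whence the regularized distance, since $\rho$ is only Lipschitz), and must then check — via Fa\`a di Bruno's formula applied to the variable-scale kernel — that each of the $k$ differentiations costs exactly one factor of $\rho^{-1}$ and no worse, keeping the constants dependent only on $d$, $k$, and $\mathrm{C}_1(\psi)$. Everything else is routine once local integrability of $\psi$ is noted. A fully equivalent alternative avoiding mollification is to take a Whitney decomposition of $\Omega$ with a subordinate smooth partition of unity $\{\eta_j\}$ (so that $|D_x^k\eta_j|\lesssim_k\rho^{-k}$ on $\mathrm{supp}\,\eta_j$), set $\widetilde{\psi}:=\sum_j c_j\eta_j$ with $c_j$ the average of $\psi$ over the $j$-th Whitney cube, and use $\sum_j D_x^k\eta_j\equiv0$ for $k\ge1$ to rewrite $D_x^k\widetilde{\psi}$ near the $i$-th cube as $\sum_j(c_j-c_i)D_x^k\eta_j$, the differences $c_j-c_i$ being controlled by \eqref{22.02.17.1}; the bookkeeping is the same in spirit.
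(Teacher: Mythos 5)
The paper does not reproduce a proof of this lemma: it is stated with a pointer to \cite[Subsection 3.1]{Seo202404}, so there is nothing on the page to compare against. Judged on its own terms, your proof is correct. Part (2) is the clean argument (integrate $\nabla\log\Psi$ across the ball $B(x,\rho(x)/2)$, using that $\rho>\rho(x)/2$ there to control the gradient by $2C^{(1)}\rho(x)^{-1}$ uniformly, giving oscillation of $\log\Psi$ at most $2C^{(1)}$). Part (1) by mollification at the variable scale $\epsilon r(x)$ with a classical regularized distance $r$ is exactly the standard construction, and your bookkeeping is right at each step: local boundedness of $\psi$ from the Harnack condition together with $\psi>0$, $\psi<\infty$ a.e.; the inclusion $B(x,\epsilon r(x)/2)\subset B(x,\rho(x)/2)$ for $\epsilon$ small depending only on the comparison constant $r\simeq_d\rho$; the comparability $\mathrm{C}_1(\psi)^{-1}\psi\leq\widetilde{\psi}\leq\mathrm{C}_1(\psi)\psi$ a.e.\ by sandwiching the average between essential inf and sup and invoking Lebesgue differentiation; and the derivative bound $|D_x^k\widetilde{\psi}|\lesssim_{d,k}\mathrm{C}_1(\psi)\rho^{-k}\widetilde{\psi}$ by pushing all $x$-derivatives onto the kernel $\phi(x,z)=(\epsilon r(x))^{-d}\zeta((x-z)/(\epsilon r(x)))$ and observing that on its support each derivative of the scale factor and of the kernel argument costs one factor of $r^{-1}$. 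You correctly flag the Fa\`a di Bruno step as the only place where care is needed, and the Whitney-decomposition alternative you sketch is equivalent and would serve just as well. One small remark: it is worth being explicit that the regularized distance $r$ you invoke is Stein's classical construction and not the $\trho$ of the present paper, since $\trho$ is itself produced by applying this very lemma to $\rho$; your phrasing already suggests this, so there is no circularity, but a reader could trip on it.
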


	We conclude this subsection with a lemma describing the boundary behavior of regular Harnack functions in a John domain.
	Recall that a domain $\Omega$ is called a John domain it satisfies the following condition:
	\begin{itemize}
		\item[] There exist a point $x_0\in\Omega$ and constants $L_0,\,\epsilon_0>0$ such that for any $x\in \Omega$, there exists a rectifiable curve $\gamma:[0,L]\rightarrow \Omega$, parametrized by arclength, such that 
		$L\leq L_0$, $\gamma(0)=x$, $\gamma(L)=x_0$, and $	d(\gamma(t),\partial\Omega)\geq \epsilon_0 t/L$ for all $t\in[0,L]$.
	\end{itemize}

	\begin{lemma}\label{220819318}
		Let $\Omega$ be a John domain.
		For each fixed $x_0$, there exist constants $N,\,A>0$ such that for every regular Harnack function $\Psi$ in $\Omega$,
		$$
		N^{-1}\rho(x)^{A}\leq \frac{\Psi(x)}{\Psi(x_0)}\leq N\rho(x)^{-A}\quad \text{for every}\quad x\in\Omega\,.
		$$
		
	\end{lemma}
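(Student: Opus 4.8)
\emph{Plan.} The plan is to integrate the logarithmic gradient bound of $\Psi$ along a John curve. Since $\Psi$ is a regular Harnack function, the $k=1$ case of Definition \ref{21.10.14.1}.(2) gives $|\nabla\Psi|\le C^{(1)}\rho^{-1}\Psi$ on $\Omega$, where $C^{(1)}$ denotes the first term of the sequence $\mathrm{C}_2(\Psi)$; since $\Psi\in C^\infty(\Omega)$ is strictly positive this reads $|\nabla\log\Psi|\le C^{(1)}\rho^{-1}$ on $\Omega$. Recall also that every John domain is bounded: writing $z$ for the John centre and $L_0,\epsilon_0$ for the John constants, each $x\in\Omega$ admits a defining curve with $|x-z|\le L\le L_0$, so $\Omega\subset\overline{B}_{L_0}(z)$, whence $\Omega$ has diameter $\le 2L_0$ and therefore $\rho\le 2L_0=:R_0$ on $\Omega$; in particular $\log(R_0/\rho(x))\ge 0$ for all $x\in\Omega$. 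It is enough to prove the claimed inequality with the fixed point $x_0$ replaced by the John centre $z$: for a general $x_0\in\Omega$ one writes $\Psi(x)/\Psi(x_0)=\big(\Psi(x)/\Psi(z)\big)\cdot\big(\Psi(z)/\Psi(x_0)\big)$, and the constant factor $\Psi(z)/\Psi(x_0)$ is bounded above and below by positive constants obtained by applying the $z$-case at the point $x_0$.

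So fix $x\in\Omega$ and a John curve $\gamma:[0,L]\to\Omega$ with $\gamma(0)=x$ and $\gamma(L)=z$. Because $\log\Psi\in C^1(\Omega)$ and $\gamma$ is $1$-Lipschitz (arclength parametrization), $t\mapsto\log\Psi(\gamma(t))$ is absolutely continuous, so
\begin{align*}
\Big|\log\tfrac{\Psi(z)}{\Psi(x)}\Big|\le\int_0^L\big|\nabla\log\Psi(\gamma(t))\big|\,\dd t\le C^{(1)}\int_0^L\frac{\dd t}{\rho(\gamma(t))}\,.
\end{align*}
I would estimate the last integral by splitting at $t=\rho(x)/2$. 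On $[0,\rho(x)/2]$ the $1$-Lipschitz continuity of $\rho$ together with $|\gamma(t)-x|\le t$ gives $\rho(\gamma(t))\ge\rho(x)-t\ge\rho(x)/2$, so this part contributes at most $1$. On $[\rho(x)/2,L]$ (nonempty only if $\rho(x)/2<L$) the John inequality $\rho(\gamma(t))\ge\epsilon_0 t/L\ge\epsilon_0 t/L_0$ bounds the contribution by $\tfrac{L_0}{\epsilon_0}\int_{\rho(x)/2}^{L}\tfrac{\dd t}{t}=\tfrac{L_0}{\epsilon_0}\log\tfrac{2L}{\rho(x)}\le\tfrac{L_0}{\epsilon_0}\log\tfrac{R_0}{\rho(x)}$. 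Hence
\begin{align*}
\Big|\log\tfrac{\Psi(z)}{\Psi(x)}\Big|\le C^{(1)}\Big(1+\tfrac{L_0}{\epsilon_0}\log\tfrac{R_0}{\rho(x)}\Big)\,.
\end{align*}

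Exponentiating and putting $A:=C^{(1)}L_0/\epsilon_0$ yields $e^{-C^{(1)}}R_0^{-A}\,\rho(x)^{A}\le\Psi(x)/\Psi(z)\le e^{C^{(1)}}R_0^{A}\,\rho(x)^{-A}$, which is precisely the asserted two-sided estimate for $x_0=z$ with $N:=e^{C^{(1)}}R_0^{A}$; absorbing the constant factor $\Psi(z)/\Psi(x_0)$ only enlarges $N$. Thus $N$ and $A$ may be taken to depend only on $L_0$, $\epsilon_0$, the first term $C^{(1)}$ of $\mathrm{C}_2(\Psi)$ (and, for a general fixed point, on $\rho(x_0)$).

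\emph{Main obstacle.} The only delicate point is the integrability of $1/\rho(\gamma(\cdot))$ near $t=0$: the John condition alone gives only $\rho(\gamma(t))\ge\epsilon_0 t/L$, whose reciprocal fails to be integrable at the origin, so the term $\rho(\gamma(0))=\rho(x)$ cannot be discarded; the bound $\rho(\gamma(t))\ge\rho(x)-t$ on the initial segment is exactly what restores integrability there, and boundedness of $\Omega$ is what keeps $\log(R_0/\rho(x))$ nonnegative. Everything else is elementary. Alternatively one may argue via a Harnack chain: cover $\gamma$ by $O\big(1+\log(1/\rho(x))\big)$ balls $B(\gamma(t_i),\rho(\gamma(t_i))/2)$ with consecutive centres within $\rho(\gamma(t_i))/4$, apply the Harnack estimate \eqref{22.02.17.1} along the chain (valid since a regular Harnack function is a Harnack function by Lemma \ref{21.05.27.3}.(2)), and count the balls using $\rho(\gamma(t))\ge\epsilon_0 t/L$ and $L\le L_0$; this gives the same conclusion, with the chain length playing the role of the logarithmic integral above.
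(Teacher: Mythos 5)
Your argument is correct, and it is a genuinely more self-contained route than the one the paper takes. The paper combines two black-box citations: [VM, Corollary 3.4], which bounds $\Psi(x)/\Psi(x_0)$ for any continuous Harnack function in terms of the quasihyperbolic distance $k(x,x_0)$, and [GO, Theorem 3.11], which shows $k(x,x_0)\lesssim\log(1/\rho(x))+1$ on a John domain. You instead exploit the $C^\infty$ structure of a regular Harnack function directly: the $k=1$ case of Definition \ref{21.10.14.1}.(2) gives the logarithmic gradient bound $|\nabla\log\Psi|\le C^{(1)}\rho^{-1}$, and integrating this along the John curve (with the correct split at $t=\rho(x)/2$ to control the non-integrable behaviour of $1/t$ near the origin) reproduces both citations in one elementary computation. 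In effect you are computing an explicit upper bound for the quasihyperbolic length of the John curve and pairing it with a pointwise logarithmic-derivative estimate, rather than invoking the Harnack chain machinery underlying [VM]. The trade-off is that the paper's route applies to any Harnack function (no smoothness needed); yours uses the extra gradient information that comes for free here, and has the advantages of being self-contained and yielding explicit constants $A=C^{(1)}L_0/\epsilon_0$, $N=e^{C^{(1)}}(2L_0)^A$. Your observations about boundedness of John domains and the necessity of the initial-segment bound $\rho(\gamma(t))\ge\rho(x)-t$ are both correct and well placed.

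One small remark on the statement itself: as written, "there exist constants $N,A>0$ such that for any regular Harnack function $\Psi$" suggests uniformity over $\Psi$, but both your constants and the paper's cited constant $N_0$ from [VM] depend on the Harnack-type constants of $\Psi$ ($C^{(1)}$ in your case, $\mathrm{C}_1(\Psi)$ in theirs). This is clearly the intended reading, so your dependence is consistent with the paper's.
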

	\begin{proof}
		Let $\Psi$ be a regular Harnack function in $\Omega$.
		Then $\Psi$ is also a continuous Harnack function (see Lemma \ref{21.05.27.3}.(2)).
		It follows from \cite[Corollary 3.4]{VM} that for an arbitrary domain $\Omega$,
		\begin{align}\label{220818200}
			N_0^{-(k(x,x_0)+1)}\leq \frac{\Psi(x)}{\Psi(x_0)}\leq N_0^{k(x,x_0)+1}\qquad\text{for all}\quad x_0,\,x\in \Omega\,,
		\end{align}
		where $N_0\geq 1$ is a constant depending only on $\mathrm{C}_1(\Psi)$.
		Here, $k(x,x_0)\geq 0$ denotes the quasihyperbolic distance between $x$ and $x_0$ (see \cite[paragraph 2.5]{VM} for the definition). 
		In addition, Gehring and Martio \cite[Theorem 3.11]{GO} proved that if $\Omega$ is a John domain, then for every $x_0\in \Omega$ there exists $A,\,B>0$ depending only on $\Omega$ and $x_0$ such that
		\begin{align}\label{220818159}
			k(x,x_0)\leq A \log\frac{1}{\rho(x)}+B\qquad\text{whenever}\quad x\in \Omega\,.
		\end{align}
		Combining \eqref{220818200} and \eqref{220818159}, we complete the proof. 
	\end{proof}
	
	\subsection{Weighted function spaces}
	In this subsection, we introduce the Krylov-type weighted Sobolev space $H_{p,\theta}^\gamma(\Omega)$ and its generalization $\Psi H_{p,\theta}^\gamma(\Omega)$.
	
	The theory of $H_{p,\theta}^{\gamma}(\Omega)$ was initiated by Krylov \cite{Krylov1999-0,Krylov1999-1,Krylov2001} and further developed by Lototsky \cite{Lo0,Lo1}.
	These developments were motivated by stochastic parabolic equations and aimed at controlling the boundary behavior of derivatives of solutions, as discussed in detail in \cite{Krylov1994, Krylov1999-1}.
	In \cite{Seo202404}, the present author proposed a generalization, denoted by $\Psi H_{p,\theta}^\gamma(\Omega)$, by introducing Harnack functions as weights.
	
	We suppose that $d\in\bN$, $p\in(1,\infty)$, $\gamma,\,\theta\in\bR$, $\Omega$ is an open set in $\bR^d$, and $\Psi$ is a regular Harnack function in $\Omega$.
	For brevity, we write
	$$
	\mathbf{I}=\{d,\,p,\,\gamma,\,\theta,\,\mathrm{C}_2(\Psi)\}\,,
	$$
	where $\mathrm{C}_2(\Psi)$ denotes the constant appearing in Definition \ref{21.10.14.1}.
	
	We first recall the definition of Bessel potential spaces and Besov spaces on $\bR^d$.
	$H_p^{\gamma}(\bR^d)$ denotes the space of Bessel potentials with the norm
	\begin{align*}
		\|f\|_{H_p^{\gamma}(\bR^d)}:=\|(1-\Delta)^{\gamma/2}f\|_{L_p(\bR^d)}:=\big\|\cF^{-1}\big[(1+|\xi|^2)^{\gamma/2}\cF(f)(\xi)\big]\big\|_p\,,
	\end{align*}
	where $\cF$ and $\cF^{-1}$ denote the Fourier transform and its inverse, respectively.
	For $\gamma\in \bR$, by $B_p^{\gamma}(\bR^d)$ we denote the Besov space whose norm is given by
	\begin{align*}
		\|f\|_{B_p^{\gamma}(\bR^d)}:=\big\|(1-\Delta)^{(\gamma-1)/2}f\big\|_{B_p^1(\bR^d)}\,,
	\end{align*}
	where
	\begin{align*}
		\|f\|_{B_p^1(\bR^d)}^p:=\|f\|_p^p+\int_{\bR^d}\int_{\bR^d}\frac{|f(x+h)-2f(x)+f(x-h)|^p}{|h|^{d+p}}\dd h\,\dd x\,.
	\end{align*}
	It is known that $H_p^{\gamma}(\bR^d)$ coincides with the Triebel-Lizorkin space $F_{p,2}^\gamma (\bR^d)$, and $B_p^{\gamma}(\bR^d)$ coincides with $B_{p,p}^{\gamma}(\bR^d)=F_{p,p}^\gamma (\bR^d)$ (see \cite[Subsection 2.3.5, Theorem 2.3.8]{triebel}).
	For $n\in\bN_0$ and $s\in(0,1)$, the Bessel potential space $H_p^{n}(\bR^d)$ is equivalent to the Sobolev space
	\begin{align*}
		W_p^{n}(\bR^d):=\bigg\{f\in\cD'(\bR^d)\,:\,\sum_{k=0}^n\int_{\bR^d}|D_x^kf|^p\dd x<\infty\bigg\}\,,
	\end{align*}
	and $B_{p}^{n+s}(\bR^d)$ coincides with the Sobolev-Slobodeckij space,
	\begin{align*}
		W_p^{n+s}(\bR^d):=\,&\Big\{f\in W_p^{n}\,:\,\int_{\bR^d}\int_{\{y:|y-x|< 1\}}\frac{|D_x^nf(x)-D_x^nf(y)|^p}{|x-y|^{d+sp}}\dd y\,\dd x<\infty\Big\}\,.
	\end{align*}
	
	Throughout this subsection, $X=H$ or $B$, so that $\big(X_p^\gamma(\bR^d), X_{p,\theta}^{\gamma}(\Omega), \Psi X_{p,\theta}^\gamma(\Omega)\big)$ stands for either $$
	\big(H_p^\gamma(\bR^d), H_{p,\theta}^{\gamma}(\Omega), \Psi H_{p,\theta}^\gamma(\Omega)\big)\quad \text{or}\quad \big(B_p^\gamma(\bR^d), B_{p,\theta}^{\gamma}(\Omega), \Psi B_{p,\theta}^\gamma(\Omega)\big)\,.
	$$
	To define the spaces $X_{p,\theta}^\gamma(\Omega)$ and $\Psi X_{p,\theta}^{\gamma}(\Omega)$,
	take $\trho\in C^{\infty}(\Omega)$ such that, for each $k\in\bN_0$, there exists $N_k=N(d,k)>0$ such that
	\begin{align*}
		N_0^{-1}\rho(x)\leq \trho(x)\leq N_0\rho(x)\quad\text{and}\quad |D_x^k\trho(x)|\leq N_k\,\trho(x)^{-k+1}\quad \text{for all}\quad x\in\Omega\,.
	\end{align*}
	(see, \textit{e.g.},  Lemma \ref{21.05.27.3}.(1)).
	Take $\zeta_0\in C_c^{\infty}(\bR_+)$ such that 
	\begin{align*}
		\zeta_0\geq 0\,\,,\,\,\,\
		\text{supp}(\zeta_0)\subset [\ee^{-1},\ee]\quad\text{and}\quad \sum_{n\in\bZ}\zeta_0(\ee^n\,\cdot\,)\equiv 1\quad\text{on}\,\,\bR_+\,.
	\end{align*}
	For arbitrary $\xi\in C^{\infty}(\bR_+)$, we denote
	\begin{align*}
		\xi_{(n)}(x):=\xi(\ee^{-n}\trho(x))\,.
	\end{align*}
	Note that the following properties hold for $\zeta_{0,(n)}:=\zeta_0\big(\ee^{-n}\trho(\,\cdot\,)\big)$:
	\begin{align*}
		\begin{split}
			&\sum_{n\in\bZ}\zeta_{0,(n)}\equiv 1\quad\text{on}\,\,\Omega\,,\\
			&\text{supp}(\zeta_{0,(n)})\subset \{x\in\Omega\,:\,\ee^{n-1}\leq \trho(x)\leq \ee^{n+1}\}\,,\\
			&\zeta_{0,(n)}\in C^{\infty}(\bR^d)\quad\text{and}\quad |D_x^{\alpha}\zeta_{0,(n)}|\leq N(d,\alpha,\zeta)\,\ee^{-n|\alpha|}\,.
		\end{split}
	\end{align*}
	
	\begin{defn}\label{220610533}\,\,
		
		\begin{enumerate}
			\item By $X_{p,\theta}^{\gamma}(\Omega)$ we denote the class of all distributions $f\in\cD'(\Omega)$ such that
			\begin{align*}
				\|f\|^p_{X^{\gamma}_{p,\theta}(\Omega)}:=\sum_{n\in\bZ}\ee^{n\theta}\|\big(\zeta_{0,(n)}f\big)(\ee^n\cdot)\|_{X^{\gamma}_p(\bR^d)}^p<\infty\,.
			\end{align*}
			
			\item By $\Psi X_{p,\theta}^{\gamma}(\Omega)$ we denote the class of all distributions $f\in\cD'(\Omega)$ such that $f=\Psi g$ for some $g\in X_{p,\theta}^{\gamma}(\Omega)$. The norm in $\Psi X_{p,\theta}^{\gamma}(\Omega)$ is defined by
			$$
			\|f\|_{\Psi X_{p,\theta}^{\gamma}(\Omega)}:=\|\Psi^{-1}f\|_{X_{p,\theta}^{\gamma}(\Omega)}\,.
			$$
		\end{enumerate}
	\end{defn}
	We also denote $L_{p,\theta}(\Omega):=H_{p,\theta}^0(\Omega)$ and $\Psi L_{p,\theta}(\Omega):=\Psi H_{p,\theta}^0(\Omega)$.

	The following equivalent norm for $\Psi H_{p,\theta}^\gamma(\Omega)$ was established in \cite[Lemma 3.12]{Seo202404}.
	
	\begin{lemma}\label{240911814}
		For any $n\in\bN_0$, 
		\begin{align}\label{2409098051}
			\|f\|_{\Psi H_{p,\theta}^{n}(\Omega)}^p\simeq_{\mathbf{I}} \sum_{k=0}^n\int_{\Omega}|\rho^kD_x^kf|^p\Psi^{-p}\rho^{\theta-d}\dd x\,.
		\end{align}
	\end{lemma}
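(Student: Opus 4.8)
My plan is to reduce \eqref{2409098051} to the defining dyadic norm of $H^n_{p,\theta}(\Omega)$ from Definition \ref{220610533}.(1), in three stages: first strip off the regular Harnack factor $\Psi$, then analyze a single dyadic shell, then reassemble by summing over shells. Since $H^n_p(\bR^d)=W^n_p(\bR^d)$ with equivalent norms (recalled above), the $H^n_p(\bR^d)$-norm appearing in Definition \ref{220610533}.(1) is comparable to $\big(\sum_{k=0}^n\|D^k_x(\,\cdot\,)\|^p_{L_p(\bR^d)}\big)^{1/p}$ throughout.

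\emph{Stage 1 (removing $\Psi$).} By Definition \ref{220610533}.(2) one has $\|f\|_{\Psi H^n_{p,\theta}(\Omega)}=\|\Psi^{-1}f\|_{H^n_{p,\theta}(\Omega)}$, and since $\Psi^{-1}$ is again a regular Harnack function by Example \ref{21.05.18.2}.(3) (so $|D^j_x\Psi^{-1}|\lesssim_{\mathbf{I}}\rho^{-j}\Psi^{-1}$), the Leibniz rule gives the pointwise bound $\rho^k|D^k_x(\Psi^{-1}f)|\lesssim_{\mathbf{I}}\Psi^{-1}\sum_{m\le k}\rho^m|D^m_xf|$, and, writing $f=\Psi\cdot(\Psi^{-1}f)$ and using that $\Psi$ itself is a regular Harnack function, the reverse bound $\rho^k|D^k_xf|\lesssim_{\mathbf{I}}\Psi\sum_{m\le k}\rho^m|D^m_x(\Psi^{-1}f)|$. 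Raising to the $p$-th power, weighting by $\rho^{\theta-d}$, and integrating shows that the right-hand side of \eqref{2409098051} is comparable, with a constant depending only on $\mathbf{I}$, to $\sum_{k=0}^n\int_\Omega|\rho^kD^k_xg|^p\rho^{\theta-d}\dd x$ with $g:=\Psi^{-1}f$. So it suffices to prove the $\Psi\equiv 1$ case.

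\emph{Stage 2 (one shell).} Fix $n\in\bZ$. Using the equivalence above, the scaling identity $D^k_x\big[h(\ee^n\,\cdot\,)\big]=\ee^{nk}(D^k_xh)(\ee^n\,\cdot\,)$, and the change of variables $y=\ee^nx$ (which contributes a factor $\ee^{-nd}$), I would obtain
\[
\big\|(\zeta_{0,(n)}g)(\ee^n\,\cdot\,)\big\|^p_{H^n_p(\bR^d)}\simeq_{\mathbf{I}}\sum_{k=0}^n\ee^{n(kp-d)}\big\|D^k_x(\zeta_{0,(n)}g)\big\|^p_{L_p(\bR^d)}\,.
\]
Since $\rho\simeq\ee^n$ on $\mathrm{supp}\,\zeta_{0,(n)}$, multiplying by $\ee^{n\theta}$ and replacing $\ee^{n(\theta+kp-d)}$ by the comparable weight $\rho^{\theta+kp-d}$ on that support yields
\[
\ee^{n\theta}\big\|(\zeta_{0,(n)}g)(\ee^n\,\cdot\,)\big\|^p_{H^n_p(\bR^d)}\simeq_{\mathbf{I}}\sum_{k=0}^n\int_\Omega\big|\rho^kD^k_x(\zeta_{0,(n)}g)\big|^p\rho^{\theta-d}\dd x\,.
\]

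\emph{Stage 3 (summing shells), and the main difficulty.} For the bound ``$\lesssim$'', the Leibniz rule together with $|D^j_x\zeta_{0,(n)}|\lesssim\ee^{-nj}\simeq\rho^{-j}$ on $\mathrm{supp}\,\zeta_{0,(n)}$ bounds $\rho^k|D^k_x(\zeta_{0,(n)}g)|$ by $\big(\sum_{m\le k}\rho^m|D^m_xg|\big)1_{\mathrm{supp}\,\zeta_{0,(n)}}$, and since the sets $\mathrm{supp}\,\zeta_{0,(n)}$ overlap with multiplicity bounded by a constant depending only on $d$, summing $p$-th powers over $n$ and integrating is controlled by $\sum_{k=0}^n\int_\Omega|\rho^kD^k_xg|^p\rho^{\theta-d}\dd x$. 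For ``$\gtrsim$'', from $g=\sum_n\zeta_{0,(n)}g$ and the same bounded overlap one gets $|\rho^kD^k_xg|^p\lesssim_{\mathbf{I}}\sum_n|\rho^kD^k_x(\zeta_{0,(n)}g)|^p$ pointwise. Combining Stages 2 and 3 with Definition \ref{220610533}.(1), and then Stage 1 to restore $\Psi$, gives \eqref{2409098051}. The ideas are routine; the only place that really needs care is the bookkeeping in Stage 3 — keeping the Leibniz cross-terms and the overlap multiplicity of $\{\zeta_{0,(n)}\}$ uniform in $n$ so that every constant depends only on $\mathbf{I}$ (the dependence on $\mathrm{C}_2(\Psi)$ entering only through the derivative bounds for $\Psi^{\pm1}$ used in Stage 1). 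This is exactly the computation of \cite[Lemma 3.12]{Seo202404}.
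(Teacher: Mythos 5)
Your proposal is correct, and the three-stage argument (peel off the regular-Harnack weight via Leibniz and $|D^j_x\Psi^{\pm1}|\lesssim\rho^{-j}\Psi^{\pm1}$; rescale one dyadic shell using $H^n_p=W^n_p$ and the change of variables $y=\ee^m x$; reassemble using the Leibniz rule for $\zeta_{0,(m)}$ and the bounded overlap of the shells) is exactly the computation that underlies the cited \cite[Lemma 3.12]{Seo202404}, to which the present paper simply defers rather than giving a proof. One cosmetic remark: in Stage 2 you reuse the letter $n$ both for the fixed Sobolev order and for the dyadic index — harmless here, but worth renaming the shell index to avoid confusion.
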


	Proposition \ref{220418435} shows that, for $n\in\bN_0$ and $s\in(0,1)$, 
	$$
	\|f\|_{\Psi B_{p,\theta}^{n+s}(\Omega)}\simeq \|f\|_{\Psi H_{p,\theta}^{n}(\Omega)}+[D_x^nf]_{\Psi B_{p,\theta+np}^{s}(\Omega)}\,,
	$$ 
	where
	\begin{alignat}{2}\label{2409098052}
		[g]_{\Psi B_{p,\widetilde{\theta}}^{s}(\Omega)}^p:=\int_{\Omega}\bigg(\int_{h:|h|<\frac{\rho(x)}{2}}\frac{|g(x+h)-g(x)|^p}{|h|^{d+s p}}\dd h\bigg)\Psi(x)^p\rho(x)^{sp +\widetilde{\theta}-d}\dd x\,.
	\end{alignat}

	We now summarize the properties of $\Psi X_{p,\theta}^{\gamma}(\Omega)$. 
	Since $\zeta_0$ is fixed and the spaces $X_{p,\theta}^{\gamma}(\Omega)$ are independent of the choice of $\zeta_0$ (see \eqref{220526530}), we omit the dependence on $\zeta_0$.
	
	Most properties of $\Psi H_{p,\theta}^\gamma(\Omega)$ in Lemmas \ref{220527502111}–\ref{22.04.11.3} are proved in \cite{Seo202404}, while the corresponding results for $\Psi B_{p,\theta}^\gamma(\Omega)$ can be derived in a similar manner.
	However, Lemmas \ref{220527502111}.(4)-(5) and \ref{220527502}.(1)-(3), (9) are not covered in \cite{Seo202404}, and we focus on these cases below.
	It is worth noting that the complete proofs of Lemmas \ref{220527502111}-\ref{22.04.11.3} are provided also in \cite[Subsection 3.2, Appendix A]{Seo202304}.
	
	\begin{lemma}\label{220527502111}
		\,\,
		
		\begin{enumerate}
			\item $\Psi X_{p,\theta}^{\gamma}(\Omega)$ is a Banach space.
			
			\item $C_c^{\infty}(\Omega)$ is dense in $\Psi X_{p,\theta}^{\gamma}(\Omega)$.
			
			\item $\Psi X_{p,\theta}^{\gamma}(\Omega)$ is a reflexive Banach space with the dual $\Psi X_{p',\theta'}^{-\gamma}(\Omega)$, where 
			\begin{align}\label{241002631}
				\frac{1}{p}+\frac{1}{p'}=1\quad\text{and}\quad \frac{\theta}{p}+\frac{\theta'}{p'}=d\,.
			\end{align}
			Moreover, we have
			\begin{align}\label{220526214}
				\sup_{g\in C_c^{\infty}(\Omega),\,g\neq 0}\frac{\big|\la f,g \ra
					\big|}{\| g\|_{\Psi X_{p',\theta'}^{-\gamma}(\Omega)}}\simeq_{\mathbf{I}}\|f\|_{\Psi X_{p,\theta}^{\gamma}(\Omega)}\,.
			\end{align}
			
			\item Let $p_i\in(1,\infty)$ and $\gamma_i$, $\theta_i\in\bR$ for $i=0,\,1$.
			For each $t\in (0,1)$,
			\begin{align}\label{241031140}
				&\big[ \Psi X^{\gamma_0}_{p_0,\theta_0}(\Omega), \Psi X^{\gamma_1}_{p_1,\theta_1}(\Omega)\big]_t\simeq_N \Psi X^{\gamma_t}_{p_t,\theta_t}(\Omega)\,,
			\end{align}
			where $N=N(d,p_i,\theta_i,\gamma_i,t;i=1,2)$.
			Here, $[Y_0,Y_1]_t$ is the complex interpolation space of $Y_0$ and $Y_1$ (see \cite[Section 1.9]{triebel} for the definition and properties of the complex interpolation spaces), and the parameters $p_t\in(1,\infty)$ and $\gamma_t,\,\theta_t\in\bR$ satisfy
			\begin{align}\label{220526223}
				\frac{1}{p_t}=\frac{1-t}{p_0}+\frac{t}{p_1}\,\,,\,\,\,\,\gamma_t=(1-t)\gamma_0+t\gamma_1\,\,\,,\,\,\,\,\frac{\theta_t}{p_t}=(1-t)\frac{\theta_0}{p_0}+t\frac{\theta_1}{p_1}\,.
			\end{align}
			
			\item Let $p_i\in(1,\infty)$ and $\gamma_i$, $\theta_i\in\bR$ for $i=0,\,1$, with $\gamma_0\neq \gamma_1$.
			For each $t\in (0,1)$,
			\begin{align}\label{241108522}
				\big(\Psi X^{\gamma_0}_{p_0,\theta_0}(\Omega),\Psi X^{\gamma_1}_{p_1,\theta_1}(\Omega)\big)_{t,p_t}\simeq_N \Psi  B^{\gamma_t}_{p_t,\theta_t}(\Omega)
			\end{align}
			where $N=N(d,p_i,\theta_i,\gamma_i,t;i=1,2)$.
			Here, $(Y_0,Y_1)_{t,p_t}$ is the real interpolation space of $Y_0$ and $Y_1$ (see \cite[Section 1.3]{triebel} for the definition and properties of the real interpolation spaces), and the parameters $p_t\in(1,\infty)$ and $\gamma_t,\,\theta_t\in\bR$ satisfy \eqref{220526223}.
		\end{enumerate}
	\end{lemma}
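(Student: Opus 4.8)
The plan is to deduce everything from the corresponding facts on $\bR^{d}$ by a retraction/coretraction argument, after first splitting off the weight $\Psi$. Parts (1)--(3) for $X=H$ are \cite[Subsection 3.2]{Seo202404} (with complete proofs also in \cite[Appendix A]{Seo202304}); the arguments there use only that $X^{\gamma}_{p}(\bR^{d})$ is a reflexive Banach space in which $C_{c}^{\infty}(\bR^{d})$ is dense, that it is stable with uniform norm control under dilations $u\mapsto u(\lambda\,\cdot\,)$ for $\lambda$ in a compact subset of $(0,\infty)$ and under pointwise multiplication by functions bounded in a fixed $C^{N}(\bR^{d})$, and that its dual is $X^{-\gamma}_{p'}(\bR^{d})$ under the $L_{2}$-pairing --- all of which hold for Besov spaces by \cite[Sections 2.3.5, 2.11.2]{triebel} --- so the case $X=B$ follows verbatim. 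Hence only \eqref{241031140} and \eqref{241108522} need new work, and I would first reduce them to $\Psi\equiv1$: multiplication $M_{\Psi}\colon g\mapsto\Psi g$ is a topological automorphism of $\cD'(\Omega)$ with inverse $M_{\Psi^{-1}}$ (recall $\Psi^{-1}$ is again a regular Harnack function by Example \ref{21.05.18.2}.(3)) which, by Definition \ref{220610533}.(2), restricts to an isometric isomorphism $X^{\gamma}_{p,\theta}(\Omega)\to\Psi X^{\gamma}_{p,\theta}(\Omega)$ for every admissible $(p,\gamma,\theta)$; since interpolation functors commute with isomorphisms acting on a common ambient space, $\big[\Psi X^{\gamma_{0}}_{p_{0},\theta_{0}},\Psi X^{\gamma_{1}}_{p_{1},\theta_{1}}\big]_{t}=\Psi\big[X^{\gamma_{0}}_{p_{0},\theta_{0}},X^{\gamma_{1}}_{p_{1},\theta_{1}}\big]_{t}$ and likewise for the real method, so it suffices to treat $\Psi\equiv1$.

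Next I would realize $X^{\gamma}_{p,\theta}(\Omega)$ as a retract of a weighted $\ell_{p}$-sum of $\bR^{d}$-spaces, uniformly in the parameters. For a Banach space $Y$ of distributions on $\bR^{d}$ let $\ell_{p,\theta}(Y)$ denote the sequences $(u_{n})_{n\in\bZ}\subset Y$ with $\sum_{n}\ee^{n\theta}\|u_{n}\|_{Y}^{p}<\infty$. Keeping the cutoff $\zeta_{0}$ from Definition \ref{220610533} and fixing an auxiliary $\eta_{0}\in C_{c}^{\infty}(\bR_{+})$ with $\eta_{0}\equiv1$ on $\mathrm{supp}\,\zeta_{0}$ and $\mathrm{supp}\,\eta_{0}$ compactly contained in $(0,\infty)$, set $\iota f:=\big((\zeta_{0,(n)}f)(\ee^{n}\,\cdot\,)\big)_{n}$ and $\pi\,(u_{n})_{n}:=\sum_{n}\eta_{0,(n)}\,u_{n}(\ee^{-n}\,\cdot\,)$. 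By Definition \ref{220610533}.(1), $\iota$ is an isometry of $X^{\gamma}_{p,\theta}(\Omega)$ into $\ell_{p,\theta}\big(X^{\gamma}_{p}(\bR^{d})\big)$, and $\pi\circ\iota=\mathrm{Id}$ because $\sum_{n}\zeta_{0,(n)}\equiv1$ and $\eta_{0,(n)}\zeta_{0,(n)}=\zeta_{0,(n)}$. The one computation to carry out is the boundedness of $\pi$: for fixed $m$ one has $\zeta_{0,(m)}\,\pi(u_{n})_{n}=\sum_{|n-m|\le c}\zeta_{0,(m)}\eta_{0,(n)}\,u_{n}(\ee^{-n}\,\cdot\,)$ with $c=c(\zeta_{0},\eta_{0})$, and after rescaling by $\ee^{m}$, bounding $(\zeta_{0,(m)}\eta_{0,(n)})(\ee^{m}\,\cdot\,)$ in $C^{N}(\bR^{d})$ uniformly in $m,n$ (this is where the bounds on $D^{\alpha}_{x}\trho$ enter), treating it as a pointwise multiplier on $X^{\gamma}_{p}(\bR^{d})$, and using uniform boundedness of the dilations $u_{n}\mapsto u_{n}(\ee^{m-n}\,\cdot\,)$ for $|m-n|\le c$, one gets $\|(\zeta_{0,(m)}\pi(u_{n})_{n})(\ee^{m}\,\cdot\,)\|_{X^{\gamma}_{p}}\lesssim_{\mathbf{I}}\sum_{|n-m|\le c}\|u_{n}\|_{X^{\gamma}_{p}}$; raising to the $p$-th power, multiplying by $\ee^{m\theta}$, summing in $m$, and using $\ee^{m\theta}\simeq\ee^{n\theta}$ for $|m-n|\le c$ gives $\|\pi(u_{n})_{n}\|_{X^{\gamma}_{p,\theta}(\Omega)}\lesssim_{\mathbf{I}}\|(u_{n})_{n}\|_{\ell_{p,\theta}(X^{\gamma}_{p})}$. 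The same $\iota,\pi$ serve for every admissible $(p,\gamma,\theta)$ and for both $X=H$ and $X=B$.

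Then I would interpolate the sequence space and transfer back. The couple $\big(\ell_{p_{0},\theta_{0}}(X^{\gamma_{0}}_{p_{0}}(\bR^{d})),\ell_{p_{1},\theta_{1}}(X^{\gamma_{1}}_{p_{1}}(\bR^{d}))\big)$ is a weighted $\ell_{p}$-sum of $\big(X^{\gamma_{0}}_{p_{0}}(\bR^{d}),X^{\gamma_{1}}_{p_{1}}(\bR^{d})\big)$, so combining the interpolation of weighted vector-valued $\ell_{p}$-spaces (\cite[Section 1.18]{triebel}) with the $\bR^{d}$-identities $[X^{\gamma_{0}}_{p_{0}},X^{\gamma_{1}}_{p_{1}}]_{t}=X^{\gamma_{t}}_{p_{t}}$ and, for $\gamma_{0}\ne\gamma_{1}$, $(X^{\gamma_{0}}_{p_{0}},X^{\gamma_{1}}_{p_{1}})_{t,p_{t}}=B^{\gamma_{t}}_{p_{t}}$ (\cite[Sections 2.3.5, 2.4.2]{triebel}), with parameters as in \eqref{220526223}, yields $[\ell_{p_{0},\theta_{0}}(X^{\gamma_{0}}_{p_{0}}),\ell_{p_{1},\theta_{1}}(X^{\gamma_{1}}_{p_{1}})]_{t}=\ell_{p_{t},\theta_{t}}(X^{\gamma_{t}}_{p_{t}})$ and $(\ell_{p_{0},\theta_{0}}(X^{\gamma_{0}}_{p_{0}}),\ell_{p_{1},\theta_{1}}(X^{\gamma_{1}}_{p_{1}}))_{t,p_{t}}=\ell_{p_{t},\theta_{t}}(B^{\gamma_{t}}_{p_{t}})$. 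Applying the interpolation functor to the bounded maps $\iota,\pi$ of the previous step, together with the isometric coretraction of $X^{\gamma_{t}}_{p_{t},\theta_{t}}(\Omega)$ (resp.\ $B^{\gamma_{t}}_{p_{t},\theta_{t}}(\Omega)$) into the corresponding sequence space and the identity $\pi\circ\iota=\mathrm{Id}$, I get continuous inclusions in both directions between $[X^{\gamma_{0}}_{p_{0},\theta_{0}}(\Omega),X^{\gamma_{1}}_{p_{1},\theta_{1}}(\Omega)]_{t}$ and $X^{\gamma_{t}}_{p_{t},\theta_{t}}(\Omega)$ (giving \eqref{241031140} for $\Psi\equiv1$) and between $(X^{\gamma_{0}}_{p_{0},\theta_{0}}(\Omega),X^{\gamma_{1}}_{p_{1},\theta_{1}}(\Omega))_{t,p_{t}}$ and $B^{\gamma_{t}}_{p_{t},\theta_{t}}(\Omega)$ (giving \eqref{241108522} for $\Psi\equiv1$); undoing Step 1 restores $\Psi$, and tracking constants shows the equivalence constants depend only on $d$, the $p_{i}$, the $\theta_{i}$, the $\gamma_{i}$, and $t$.

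I expect the main obstacle to be the boundedness of $\pi$ in the second step --- making the localization estimate quantitative with constants that stay under control, which hinges on uniform $C^{N}$-multiplier bounds and uniform bounds for the dilation family on $X^{\gamma}_{p}(\bR^{d})$ for both the Bessel potential and Besov scales --- together with pinning down the correct $\bR^{d}$-level real-interpolation identity in the presence of a change of integrability $p_{0}\ne p_{1}$ in the third step; once these ingredients are in hand, the transfer is a formal application of abstract interpolation.
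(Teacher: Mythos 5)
Your proposal takes essentially the same route as the paper: parts (1)--(3) are cited from \cite{Seo202404}/\cite{Seo202304} and transfer to $X=B$ by the same arguments, while (4) and (5) are proved by a retraction/coretraction argument into a weighted $\ell_{p,\theta}$-valued sequence space followed by abstract interpolation (the paper invokes \cite[Theorem 1.2.4]{triebel} with the maps $S^{\Psi}, R^{\Psi}$, whose retraction uses $\zeta_1 := \zeta_0(\ee^{-1}\,\cdot\,)+\zeta_0+\zeta_0(\ee\,\cdot\,)$, which equals $1$ on $\mathrm{supp}\,\zeta_{0,(n)}$, so it plays exactly the role of your $\eta_0$). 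The only cosmetic difference is that you first peel off $\Psi$ via the isometric isomorphism $M_\Psi$ and then retract the unweighted space, whereas the paper folds $\Psi^{\mp1}$ directly into $S^{\Psi}$ and $R^{\Psi}$; your added detail on boundedness of $\pi$ reproduces what the paper defers to \cite[Proposition A.2]{Seo202304}.
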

	\begin{proof}
		(4) and (5) follow from the interpolation properties of $X_p^\gamma(\mathbb{R}^d)$ (see \cite[Section 2.4]{triebel2}) and the result in \cite[Theorem 1.2.4]{triebel}, where the latter is applied to the maps
		$$
		S^\Psi:\cD'(\Omega)\rightarrow \cD'(\bR^d)^{\bZ}:=\Big\{(f_n)_{n\in\bZ}\,:\,f_n\in\cD'(\bR^d)\Big\}\,\,,\,\, S^\Psi f:=\Big(\big(\Psi^{-1}f\zeta_{0,(n)}\big)(\ee^n\,\cdot\,)\Big)_{n\in\bZ}\,;
		$$
		$$
		R^\Psi:\cD'(\bR^d)^{\bZ}\rightarrow \cD'(\Omega)\quad,\quad R^\Psi\{f_n\}:=\Psi\sum_{n\in\bZ}\zeta_{1,(n)}(\,\cdot\,)f_n(\ee^{-n}\,\cdot\,)\,,
		$$
		where $\zeta_1(\,\cdot\,)=\zeta_0(\ee^{-1}\,\cdot\,)+\zeta_0(\,\cdot\,)+\zeta_0(\ee \,\cdot\,)$.
		Note that for any $p\in(1,\infty)$ and $\theta,\,\gamma\in\bR$, $S^\Psi$ is bounded from $\Psi H_{p,\theta}^\gamma(\Omega)$ to $\ell^p_\theta\big(H_p^\gamma(\bR^d)\big)$, while $R^\Psi$ is bounded from $\ell^p_\theta\big(H_p^\gamma(\bR^d)\big)$ to $\Psi H_{p,\theta}^\gamma(\Omega)$, where 
		$$
		\ell^p_\theta\big(H_p^\gamma(\bR^d)\big):=\Big\{(f_n)_{n\in\bZ}\subset \cD'(\bR^d)^\bZ\,:\,\big\|(f_n)_{n\in\bZ}\big\|_{\ell^p_\theta(\bZ;H_p^\gamma(\bR^d))}^p:=\sum_{n\in\bZ} \ee^{n\theta} \|f_n\|_{H_p^\gamma(\bR^d)}^p\Big\}\,.
		$$
		Moreover, $R^\Psi\circ S^\Psi$ is the identity map on $\Psi H_{p,\theta}^\gamma(\Omega)$.
		For more detailed proof, see \cite[Proposition A.2]{Seo202304}.
	\end{proof}
	
	For $k\in\bN_0$, $0<\alpha\leq 1$, and $\delta\in\bR$, we define
	\begin{align*}
		|f|_k^{(\delta)}:=\sum_{i=0}^k\big\|\rho^{\delta+i}D_x^if\big\|_{C(\Omega)}\quad,\quad |f|_{k,\alpha}^{(\delta)}\,\,\,\,&:=|f|_k^{(\delta)}+[\,\trho^{\delta+k+\alpha} D_x^kf\,]_{C^{0,\alpha}(\Omega)}\,.
	\end{align*}
	It is worth noting that for any regular Harnack function $\Psi$,
	\begin{alignat}{2}
		&|\Psi^{-1} f|_k^{(\delta)}&&\simeq_{N_1} \sum_{i=0}^k\sup_{\Omega}\left|\Psi^{-1}\rho^{\delta+i}D_x^if\right|\,,\label{240912237}\\
		&|\Psi^{-1}f|_{k,\alpha}^{(\delta)}&&\simeq_{N_2}|\Psi^{-1}f|_k^{(\delta)}+\sup_{x\in\Omega}\bigg(\Psi^{-1}(x)\rho^{\delta+k+\alpha}(x)\sup_{y:|y-x|<\frac{\rho(x)}{2}}\frac{\left|D_x^kf(x)-D_x^kf(y)\right|}{|x-y|^{\alpha}}\bigg)\,,\label{240912238}
	\end{alignat}
	where $N_1=N\big(d,k,\delta,\mathrm{C}_2(\Psi)\big)$ and $N_2=N\big(d,k,\alpha,\delta,\mathrm{C}_2(\Psi)\big)$.
	\eqref{240912237} follows by direct calculation, and \eqref{240912238} is proved in \cite[Proposition 3.13]{Seo202404}.
	
	\begin{lemma}\label{220527502}
		\,\,
		
		\begin{enumerate}
			\item If $1<p\leq 2$ (resp. $p\geq 2$), then
			\begin{align}\label{2411011106}
				\|f\|_{\Psi H_{p,\theta}^{\gamma}(\Omega)}\lesssim_{\mathbf{I}}\,(\text{resp.} \gtrsim_\mathbf{I})\,\|f\|_{\Psi B_{p,\theta}^{\gamma}(\Omega)}\,.
			\end{align}

			\item For any $s<\gamma$,
			\begin{align}\label{2411011107}
				\|f\|_{\Psi H_{p,\theta}^{s}(\Omega)}+\|f\|_{\Psi B_{p,\theta}^{s}(\Omega)}\lesssim_{\mathbf{I},s}\|f\|_{\Psi X_{p,\theta}^{\gamma}(\Omega)}\,.
			\end{align}
			
			\item Let $p_i\in(1,\infty)$ and $\gamma_i,\,\theta_i\in\bR$ for $i=0,\,1$, such that
			$$
			\gamma_0>\gamma_1\,\,,\quad \gamma_0-\frac{d}{p_0}=\gamma_1-\frac{d}{p_1}\,\,,\quad \frac{\theta_0}{p_0}=\frac{\theta_1}{p_1}\,.
			$$
			Then
			\begin{align}\label{2411011108}
				\|f\|_{\Psi X_{p_1,\theta_1}^{\gamma_1}(\Omega)}+\|f\|_{\Psi B_{p_1,\theta_1}^{\gamma_1}(\Omega)}\leq N\|f\|_{\Psi X_{p_0,\theta_0}^{\gamma_0}(\Omega)}
			\end{align}
			where $N=N(d,p_i,\gamma_i,\theta_i;i=0,1)$.

			\item For $k\in\bN$ such that $k>|\gamma|$, if $a\in C^{k,(0)}(\Omega)$, then
			\begin{align*}
				\|af\|_{\Psi X^{\gamma}_{p,\theta}(\Omega)}\lesssim_{\mathbf{I}}\|a\|_{C^{k,(0)}(\Omega)}\|f\|_{\Psi X^{\gamma}_{p,\theta}(\Omega)}\,.
			\end{align*}
			
			\item For any $\eta\in C_c^{\infty}(\bR_+)$,
			\begin{align*}
				\sum_{n\in\bZ}\ee^{n\theta}\big\|\big(\eta_{(n)}f\big)(\ee^n\cdot)\big\|^p_{\Psi X^{\gamma}_{p}}\lesssim_{\mathbf{I},\eta}\|f\|_{\Psi X^{\gamma}_{p,\theta}(\Omega)}^p\,.
			\end{align*}
			If $\eta$ additionally satisfies
			\begin{align*}
				\inf_{t\in\bR_+}\Big[\sum_{n\in\bZ}\eta(\ee^nt)\Big]>0\,,
			\end{align*}
			then
			\begin{align}\label{220526530}
				\|f\|_{\Psi X^{\gamma}_{p,\theta}(\Omega)}^p\simeq_{\mathbf{I},\eta} \sum_{n\in\bZ}\ee^{n\theta}\big\|\big(\eta_{(n)}f\big)(\ee^n\cdot)\big\|^p_{\Psi X^{\gamma}_{p}(\bR^d)}\,.
			\end{align}
			
			\item For any $s\in\bR$,
			\begin{align}\label{220526558}
				\|\trho^{\,s} f\|_{\Psi X^{\gamma}_{p,\theta}(\Omega)}\simeq_{\mathbf{I},s}\|f\|_{\Psi X^{\gamma}_{p,\theta+sp}(\Omega)}\,.
			\end{align}

			\item For any $k\in\bN$,
			\begin{align}\label{240911813}
				\|f\|_{\Psi X_{p,\theta}^{\gamma}(\Omega)}\simeq_{\mathbf{I},k}\sum_{i=0}^k\|D_x^i f\|_{\Psi X_{p,\theta+ip}^{\gamma-k}(\Omega)}\,.
			\end{align}

			\item Let $k\in\bN_0$ and $0<\alpha<1$ satisfy $\gamma-d/p>k+\alpha$.
			Then for any $f\in \Psi H^{\gamma}_{p,\theta}(\Omega)$,
			\begin{align}\label{241031713}
				\left|\Psi^{-1} f\right|^{(\theta/p)}_{k,\alpha}\leq_{\mathbf{I},k,\alpha} \|f\|_{\Psi X^{\gamma}_{p,\theta}(\Omega)}\,.
			\end{align}

			\item Let $t\in(0,1)$, and let $p_i\in(1,\infty),\,\theta_i,\,\gamma_i\in\bR$ ($i=0,\,1,\,,\,t$) be constants satisfying \eqref{220526223}.
			Then
			\begin{align}\label{241031137}
				\|f\|_{\Psi X_{p_t,\theta_t}^{\gamma_t}(\Omega)}\lesssim_N \|f\|_{\Psi X_{p_0,\theta_0}^{\gamma_0}(\Omega)}^{1-t}\|f\|_{\Psi X_{p_1,\theta_1}^{\gamma_1}(\Omega)}^{t}\,,
			\end{align}
			where $N=N(\mathrm{C}_2(\Psi),\gamma_i,p_i,\theta_i,t;i=1,2)$.

			\item For any $k,\,l\in\bN_0$, and a regular Harnack function $\Phi$ in $\Omega$,
			\begin{align}\label{2411011115}
				\big\|\big(D_x^k\Phi\big) D_x^lf\big\|_{\Psi X_{p,\theta}^\gamma(\Omega)}\lesssim \|\Phi f\|_{\Psi X_{p,\theta-(k+l)p}^{\gamma+l}(\Omega)}\,,
			\end{align}
			where $N=N(\mathbf{I},\mathrm{C}_2(\Phi),k,l)$.
			
		\end{enumerate}
	\end{lemma}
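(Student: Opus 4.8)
The plan is as follows. Parts (4)--(8) and (10) are precisely the $B$-space analogues of estimates already proved for $\Psi H_{p,\theta}^\gamma(\Omega)$ in \cite{Seo202404} (with complete proofs also in \cite[Subsection 3.2, Appendix A]{Seo202304}); each is obtained by rerunning those arguments with $H_p^\gamma(\bR^d)=F_{p,2}^\gamma(\bR^d)$ replaced by $B_p^\gamma(\bR^d)=B_{p,p}^\gamma(\bR^d)=F_{p,p}^\gamma(\bR^d)$, since the localization, the pointwise-multiplier estimates, and the dilation/summation bookkeeping involved there are insensitive to the particular fine index of the underlying Euclidean scale. So I would only write out (1), (2), (3), and (9). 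For (1)--(3) I would first reduce to the case $\Psi\equiv1$: by the definition $\|g\|_{\Psi X_{p,\theta}^\gamma(\Omega)}=\|\Psi^{-1}g\|_{X_{p,\theta}^\gamma(\Omega)}$, applying the $\Psi$-free statement to $\Psi^{-1}f$ yields the claim, and --- crucially --- the constant produced this way never sees $\Psi$, which is exactly why $N$ in (3) is stated without dependence on $\mathrm{C}_2(\Psi)$. After this reduction everything concerns the scale $X_{p,\theta}^\gamma(\Omega)$, whose norm is by definition the $\ell^p$-sum $\sum_{n\in\bZ}\ee^{n\theta}\|g_n\|_{X_p^\gamma(\bR^d)}^p$ with $g_n:=(\zeta_{0,(n)}f)(\ee^n\cdot)$.

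For (1) and (2) I would apply a fixed Euclidean embedding to each $g_n$, take $p$-th powers, multiply by $\ee^{n\theta}$, and sum over $n$. Statement (1) is the summed version of the elementary chain $B_{p,\min(p,2)}^\gamma(\bR^d)\hookrightarrow F_{p,2}^\gamma(\bR^d)\hookrightarrow B_{p,\max(p,2)}^\gamma(\bR^d)$ (so $B_p^\gamma\hookrightarrow H_p^\gamma$ when $p\le2$ and $H_p^\gamma\hookrightarrow B_p^\gamma$ when $p\ge2$), and (2) is the summed version of the strictly-lower-smoothness embeddings $X_p^\gamma(\bR^d)\hookrightarrow H_p^s(\bR^d)$ and $X_p^\gamma(\bR^d)\hookrightarrow B_p^s(\bR^d)$ for $s<\gamma$; all of these hold with constants depending only on $d,p,\gamma$ (and $s$), see \cite{triebel}. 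Since the Euclidean embedding constant does not depend on $g_n$, the summation is immediate and gives (1), (2) for $X_{p,\theta}^\gamma(\Omega)$, hence for $\Psi X_{p,\theta}^\gamma(\Omega)$.

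Part (3) needs one extra ingredient. The hypotheses $\gamma_0>\gamma_1$ and $\gamma_0-d/p_0=\gamma_1-d/p_1$ force $p_0<p_1$, so the Euclidean Sobolev embedding gives, termwise, $\|g_n\|_{H_{p_1}^{\gamma_1}(\bR^d)}+\|g_n\|_{B_{p_1}^{\gamma_1}(\bR^d)}\le N\|g_n\|_{X_{p_0}^{\gamma_0}(\bR^d)}$ with $N=N(d,p_i,\gamma_i)$, where $g_n=(\zeta_{0,(n)}\Psi^{-1}f)(\ee^n\cdot)$; here one should invoke the form of the $F$/$B$ Sobolev embedding valid for all fine indices under the strict balance $\gamma_0-d/p_0=\gamma_1-d/p_1$. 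Writing $a_n:=\|g_n\|_{X_{p_0}^{\gamma_0}(\bR^d)}$ and $b_n:=\ee^{n\theta_0/p_0}a_n$, the hypothesis $\theta_0/p_0=\theta_1/p_1$ makes $\ee^{n\theta_1}a_n^{p_1}=b_n^{p_1}$, so the inclusion $\ell^{p_0}(\bZ)\hookrightarrow\ell^{p_1}(\bZ)$ (valid because $p_0\le p_1$) yields $\sum_n\ee^{n\theta_1}a_n^{p_1}=\|(b_n)\|_{\ell^{p_1}}^{p_1}\le\|(b_n)\|_{\ell^{p_0}}^{p_1}=\big(\sum_n\ee^{n\theta_0}a_n^{p_0}\big)^{p_1/p_0}=\|f\|_{\Psi X_{p_0,\theta_0}^{\gamma_0}(\Omega)}^{p_1}$, and combining with the termwise bound gives (3). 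I expect this sequence-space passage --- together with citing the correct Euclidean embedding statement --- to be the only genuinely delicate point of the lemma; everything else is bookkeeping over $n$.

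Finally, (9) follows immediately from the complex interpolation identity $\big[\Psi X_{p_0,\theta_0}^{\gamma_0}(\Omega),\Psi X_{p_1,\theta_1}^{\gamma_1}(\Omega)\big]_t\simeq\Psi X_{p_t,\theta_t}^{\gamma_t}(\Omega)$ of Lemma \ref{220527502111}.(4) together with the general interpolation inequality $\|f\|_{[A_0,A_1]_t}\le\|f\|_{A_0}^{1-t}\|f\|_{A_1}^t$ valid for $f\in A_0\cap A_1$ (if the right-hand side of (9) is finite, then $f$ lies in the intersection, so this applies).
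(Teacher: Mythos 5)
Your proposal matches the paper's proof essentially line for line: the paper also treats only parts (1)--(3) and (9), invoking for (1) the fine-index embedding $B_{p,\min(p,2)}^\gamma\hookrightarrow F_{p,2}^\gamma\hookrightarrow B_{p,\max(p,2)}^\gamma$ (Triebel, Proposition 2.3.2/2.(iii)), for (2) the strict-smoothness embedding $X_p^\gamma\hookrightarrow H_p^s\cap B_p^s$, for (3) the Euclidean Sobolev/Jawerth--Franke embedding $X_{p_0}^{\gamma_0}(\bR^d)\hookrightarrow X_{p_1}^{\gamma_1}(\bR^d)\cap B_{p_1}^{\gamma_1}(\bR^d)$ followed by the $\ell^{p_0}\hookrightarrow\ell^{p_1}$ step exactly as you describe, and for (9) the complex interpolation identity of Lemma \ref{220527502111}.(4) plus Triebel's interpolation inequality; and the paper likewise defers (4)--(8) and (10) to the arguments in \cite{Seo202404} and \cite{Seo202304} since the underlying Euclidean manipulations are insensitive to whether the fine index is $2$ or $p$. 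Your preliminary reduction to $\Psi\equiv1$ and the remark about why $N$ in (3) is $\Psi$-independent are the only points you spell out that the paper leaves implicit, and they are correct.
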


	\begin{proof}
		(1) This follows from the fact that $B_p^{\gamma}(\bR^d)\subset H_p^{\gamma}(\bR^d)$ if $1<p\leq 2$ and $H_p^{\gamma}(\bR^d)\subset B_p^{\gamma}(\bR^d)$ if $p\geq 2$ (see \cite[Proposition 2.3.2/2.(iii)]{triebel2}).
		
		(2) This follows from the fact that $X_p^{\gamma}(\bR^d)\subset H_p^{s}(\bR^d)\cap B_p^s(\bR^d)$ if $\gamma>s$ (see \cite[Proposition 2.3.2/2.(ii)]{triebel2}).

		(3) Note that $p_0<p_1$.
		Since $X_{p_0}^{\gamma_0}(\bR^d)\subset X_{p_1}^{\gamma_1}(\bR^d)\cap B_{p_1}^{\gamma_1}(\bR^d)$ (see \cite[Theorem 2.7.1]{triebel2}), we have
		\begin{align*}
			&\Big(\sum_{n\in\bZ}\ee
			^{n\theta_1}\|\big(f\zeta_{0,(n)}\big)(\ee^n\cdot)\|_{X_{p_1}^{\gamma_1}(\bR^d)}^{p_1}\Big)^{1/p_1}+\Big(\sum_{n\in\bZ}\ee^{n\theta_1}\|\big(f\zeta_{0,(n)}\big)(\ee^n\cdot)\|_{B_{p_1}^{\gamma_1}(\bR^d)}^{p_1}\Big)^{1/p_1}\\
			\leq \,&N\Big(\sum_{n\in\bZ}\ee^{n\theta_0}\|\big(f\zeta_{0,(n)}\big)(\ee^n\cdot)\|_{X_{p_0}^{\gamma_0}(\bR^d)}^{p_1}\Big)^{1/p_1}\leq N\Big(\sum_{n\in\bZ}\ee^{n\theta_0}\|\big(f\zeta_{0,(n)}\big)(\ee^n\cdot)\|_{X_{p_0}^{\gamma_0}(\bR^d)}^{p_0}\Big)^{1/p_0}\,,
		\end{align*}
		where $N=N(d,p_i,\gamma_i;i=0,1)$.
		
		(9) This follows from \eqref{241031140} in combination with \cite[Theorem 1.9.3.(f)]{triebel}.
	\end{proof}
	
	\begin{lemma}\label{22.02.16.1}
		There exist bounded linear maps 
		$$
		\Lambda_0\,:\,\Psi H_{p,\theta}^{\gamma}\rightarrow \Psi H_{p,\theta}^{\gamma+1}(\Omega)\quad\text{and}\quad \,\Lambda_1,\,\ldots,\,\Lambda_d:\Psi H_{p,\theta}^{\gamma}\rightarrow \Psi H_{p,\theta-p}^{\gamma+1}(\Omega)
		$$
		such that, for any $f\in \Psi H_{p,\theta}^{\gamma}(\Omega)$, we have 
		$$
		f=\Lambda_0 f+\sum_{i=1}^dD_i(\Lambda_if)
		$$
		and 
		\begin{align*}
			\begin{gathered}
				\|\Lambda_0 f\|_{\Psi H_{p,\theta}^{\gamma+1}(\Omega)}+\sum_{i=1}^d\|\Lambda_i f\|_{\Psi H_{p,\theta-p}^{\gamma+1}(\Omega)}\leq N\|f\|_{\Psi H_{p,\theta}^{\gamma}(\Omega)}\,,
			\end{gathered}
		\end{align*}	
		where $N=N(d,p,\gamma,\theta,\mathrm{C}_2(\Psi))$.
	\end{lemma}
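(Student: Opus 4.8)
The plan is to reduce first to the trivial regular Harnack function $\Psi\equiv 1$, and then to run a localization argument along the partition $\{\zeta_{0,(n)}\}$ used to define the spaces, applying on each rescaled piece the $\bR^d$-identity $1=(1-\Delta)^{-1}-\sum_{i=1}^dD_i\big(D_i(1-\Delta)^{-1}\big)$ and gluing the pieces back. For the reduction, assume the statement known for $\Psi\equiv1$. Given $f\in\Psi H^\gamma_{p,\theta}(\Omega)$, set $g:=\Psi^{-1}f$, which lies in $H^\gamma_{p,\theta}(\Omega)$ with $\|g\|_{H^\gamma_{p,\theta}(\Omega)}=\|f\|_{\Psi H^\gamma_{p,\theta}(\Omega)}$, and write $g=g_0+\sum_{i=1}^dD_ig_i$ with $g_0\in H^{\gamma+1}_{p,\theta}(\Omega)$, $g_i\in H^{\gamma+1}_{p,\theta-p}(\Omega)$ and the corresponding bound, the maps $g\mapsto g_0$, $g\mapsto g_i$ being linear. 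Using the Leibniz rule $\Psi D_ig_i=D_i(\Psi g_i)-(D_i\Psi)g_i$, I would put $\Lambda_0f:=\Psi g_0-\sum_{i=1}^d(D_i\Psi)g_i$ and $\Lambda_if:=\Psi g_i$, so that $f=\Lambda_0f+\sum_iD_i(\Lambda_if)$. Then $\|\Lambda_if\|_{\Psi H^{\gamma+1}_{p,\theta-p}(\Omega)}=\|g_i\|_{H^{\gamma+1}_{p,\theta-p}(\Omega)}$ and $\|\Psi g_0\|_{\Psi H^{\gamma+1}_{p,\theta}(\Omega)}=\|g_0\|_{H^{\gamma+1}_{p,\theta}(\Omega)}$, while Lemma~\ref{220527502}.(10) with $\Phi=\Psi$, $k=1$, $l=0$ and smoothness index $\gamma+1$ gives $\|(D_i\Psi)g_i\|_{\Psi H^{\gamma+1}_{p,\theta}(\Omega)}\lesssim\|\Psi g_i\|_{\Psi H^{\gamma+1}_{p,\theta-p}(\Omega)}=\|g_i\|_{H^{\gamma+1}_{p,\theta-p}(\Omega)}$, where the implied constant depends only on $d,p,\gamma,\theta,\mathrm{C}_2(\Psi)$. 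Since $f\mapsto\Psi^{-1}f$ is an isometric isomorphism of $\Psi H^\gamma_{p,\theta}(\Omega)$ onto $H^\gamma_{p,\theta}(\Omega)$, this disposes of the general $\Psi$.

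\emph{Construction for $\Psi\equiv1$.} I would recycle the bounded maps $S$ and $R$ from the proof of Lemma~\ref{220527502111} with $\Psi\equiv1$: for every $\sigma,\vartheta\in\bR$, $Sf=\big((f\zeta_{0,(n)})(\ee^n\cdot)\big)_n$ is bounded from $H^\sigma_{p,\vartheta}(\Omega)$ to $\ell^p_\vartheta\big(\bZ;H^\sigma_p(\bR^d)\big)$, $R\{F_n\}=\sum_n\zeta_{1,(n)}(\cdot)F_n(\ee^{-n}\cdot)$ is bounded from $\ell^p_\vartheta\big(\bZ;H^\sigma_p(\bR^d)\big)$ to $H^\sigma_{p,\vartheta}(\Omega)$, and $R\circ S=\mathrm{id}$ (recall $\zeta_{1,(n)}\equiv1$ on $\mathrm{supp}\,\zeta_{0,(n)}$). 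Put $f_n:=(Sf)_n$ and $h_n:=(1-\Delta)^{-1}f_n$. Since $(1-\Delta)^{-1}$ and $D_i(1-\Delta)^{-1}$ map $H^\gamma_p(\bR^d)$ boundedly into $H^{\gamma+2}_p(\bR^d)$ and $H^{\gamma+1}_p(\bR^d)$, one gets $\sum_n\ee^{n\theta}\big(\|h_n\|^p_{H^{\gamma+2}_p}+\sum_i\|D_ih_n\|^p_{H^{\gamma+1}_p}\big)\lesssim\|f\|^p_{H^\gamma_{p,\theta}(\Omega)}$, and $(1-\Delta)h_n=f_n$ reads $f_n=h_n-\sum_iD_i(D_ih_n)$. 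Setting $g_0:=R\{h_n\}$ and $g_i:=R\{-\ee^nD_ih_n\}$, the boundedness of $R$ together with $\ee^{n(\theta-p)}\|\ee^nD_ih_n\|^p_{H^{\gamma+1}_p}=\ee^{n\theta}\|D_ih_n\|^p_{H^{\gamma+1}_p}$ yields $\|g_0\|_{H^{\gamma+1}_{p,\theta}(\Omega)}+\sum_i\|g_i\|_{H^{\gamma+1}_{p,\theta-p}(\Omega)}\lesssim\|f\|_{H^\gamma_{p,\theta}(\Omega)}$.

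\emph{The gluing error, which I expect to be the main point.} The subtlety is that $g_0+\sum_iD_ig_i$ is not quite $f$, because $D_i$ can fall on the cutoff inside $R$. A direct computation using $\Delta h_n=h_n-f_n$, $R\circ S=\mathrm{id}$ and $\zeta_{1,(n)}\zeta_{0,(n)}=\zeta_{0,(n)}$ gives
\[
 g_0+\sum_{i=1}^dD_ig_i=f-E,\qquad E:=\sum_{i=1}^d\sum_{n\in\bZ}\big(\ee^nD_i\zeta_{1,(n)}\big)(D_ih_n)(\ee^{-n}\cdot).
\]
The amplitude $\ee^nD_i\zeta_{1,(n)}$ has exactly the scale-invariant structure of $\zeta_{1,(n)}$ itself: it is smooth, uniformly bounded in $n$, satisfies $|D^k_x(\ee^nD_i\zeta_{1,(n)})|\lesssim\ee^{-nk}$, and is supported in an annular region $\{\ee^{n-c}\le\trho\le\ee^{n+c}\}$ with bounded overlap in $n$. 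Hence the map $\{q_n\}\mapsto\sum_n(\ee^nD_i\zeta_{1,(n)})q_n(\ee^{-n}\cdot)$ is bounded from $\ell^p_\theta\big(\bZ;H^{\gamma+1}_p(\bR^d)\big)$ to $H^{\gamma+1}_{p,\theta}(\Omega)$ by the very argument proving the boundedness of $R$ (this verification, and keeping track of the constants, is where I expect the real work to be). Taking $q_n=D_ih_n$ then gives $\|E\|_{H^{\gamma+1}_{p,\theta}(\Omega)}\lesssim\|f\|_{H^\gamma_{p,\theta}(\Omega)}$. Since $E\in H^{\gamma+1}_{p,\theta}(\Omega)$, I would simply absorb it into the zeroth term: $\Lambda_0f:=g_0+E$ and $\Lambda_if:=g_i$. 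Then $\Lambda_0f+\sum_iD_i(\Lambda_if)=(g_0+E)+(f-E-g_0)=f$, all membership and norm requirements hold with a constant depending only on $d,p,\gamma,\theta$, and $f\mapsto\Lambda_0f$, $f\mapsto\Lambda_if$ are linear by construction.
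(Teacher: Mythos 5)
Your proof is correct and follows the standard Krylov-style argument that the paper inherits from \cite[Lemma 3.12]{Seo202404}: reduce to $\Psi\equiv1$ via the isometry $f\mapsto\Psi^{-1}f$ (the Leibniz correction being absorbed by \eqref{2411011115}), localize with the $S,R$ machinery from the proof of Lemma~\ref{220527502111}, apply $(1-\Delta)^{-1}$ and $D_i(1-\Delta)^{-1}$ on each rescaled piece, and absorb the commutator term $E$ (which gains one derivative and carries the same scale-invariant amplitude bounds as $R$ itself) into $\Lambda_0$. All the scaling exponents (the compensating $\ee^n$ in $g_i$, the shift $\theta\mapsto\theta-p$) check out, so no gaps.
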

	
	The following corollary follows directly from \eqref{220526214} and Lemma \ref{22.02.16.1} (see also the proof of \cite[Lemma 3.12(2)]{Seo202404}).
	
	\begin{corollary}\label{240909836}
		For any $k\in\bN$, there exist bounded linear maps $\Lambda_\alpha:\Psi X_{p,\theta}^\gamma(\Omega)\rightarrow \Psi X_{p,\theta-|\alpha|p}^{\gamma+k}(\Omega)$, for all multi-indices $\alpha$ with $|\alpha|\leq k$, such that 
		$$
		f=\sum_{|\alpha|\leq k}D^\alpha_x\Lambda_\alpha f\,.
		$$
		Moreover,
		\begin{align}\label{241111154}
			\|f\|_{\Psi X_{p,\theta}^{\gamma}(\Omega)}\simeq_{\mathbf{I},k} \inf\bigg\{\sum_{|\alpha|\leq k}\|f_{\alpha}\|_{\Psi X_{p,\theta-|\alpha|p}^{\gamma+k}(\Omega)}:\,f=\sum_{|\alpha|\leq k}D_x^{\alpha}f_{\alpha}\bigg\}\,.
		\end{align}
	\end{corollary}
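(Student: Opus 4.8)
The plan is to iterate Lemma~\ref{22.02.16.1}. A single application of that lemma writes $f=\Lambda_0f+\sum_{i=1}^dD_i(\Lambda_if)$ with $\Lambda_0f\in\Psi H_{p,\theta}^{\gamma+1}(\Omega)$ and $\Lambda_if\in\Psi H_{p,\theta-p}^{\gamma+1}(\Omega)$, \textit{i.e.}, it gains one unit of smoothness at the cost of a shift $\theta\mapsto\theta-p$ in every differentiated summand. Applying the lemma again to each of the $d+1$ summands with the corresponding parameters, and repeating $k$ times, one obtains, after commuting derivatives ($D_iD_j=D_jD_i$) and collecting the terms that share a multi-index, linear maps $\Lambda_\alpha$, $|\alpha|\le k$ — each a finite sum of $k$-fold compositions of the basic maps of Lemma~\ref{22.02.16.1} — with $f=\sum_{|\alpha|\le k}D_x^\alpha\Lambda_\alpha f$, $\Lambda_\alpha f\in\Psi H_{p,\theta-|\alpha|p}^{\gamma+k}(\Omega)$, and $\sum_{|\alpha|\le k}\|\Lambda_\alpha f\|_{\Psi H_{p,\theta-|\alpha|p}^{\gamma+k}(\Omega)}\le N\|f\|_{\Psi H_{p,\theta}^{\gamma}(\Omega)}$ with $N=N(\mathbf I,k)$. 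This yields the existence of the $\Lambda_\alpha$, the identity $f=\sum_{|\alpha|\le k}D_x^\alpha\Lambda_\alpha f$, and the bound $\inf\{\cdots\}\lesssim_{\mathbf I,k}\|f\|_{\Psi H^\gamma_{p,\theta}(\Omega)}$ in \eqref{241111154} for $X=H$.

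For the reverse inequality in \eqref{241111154} I would use the duality \eqref{220526214}. Given any decomposition $f=\sum_{|\alpha|\le k}D_x^\alpha f_\alpha$ with $f_\alpha\in\Psi H_{p,\theta-|\alpha|p}^{\gamma+k}(\Omega)$, pair with $g\in C_c^\infty(\Omega)$ to get $(f,g)=\sum_{|\alpha|\le k}(-1)^{|\alpha|}(f_\alpha,D_x^\alpha g)$. The duality pairing of Lemma~\ref{220527502111}.(3) (with $1/p+1/p'=1$, $\theta/p+\theta'/p'=d$) bounds each term by $\|f_\alpha\|_{\Psi H_{p,\theta-|\alpha|p}^{\gamma+k}(\Omega)}\,\|D_x^\alpha g\|_{\Psi H_{p',\theta'+|\alpha|p'}^{-\gamma-k}(\Omega)}$, and $\|D_x^\alpha g\|_{\Psi H_{p',\theta'+|\alpha|p'}^{-\gamma-k}(\Omega)}\lesssim_{\mathbf I}\|g\|_{\Psi H_{p',\theta'}^{-\gamma}(\Omega)}$ by \eqref{240911813} together with the embedding Lemma~\ref{220527502}.(2). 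Summing, taking the supremum over $g$, and applying \eqref{220526214} gives $\|f\|_{\Psi H_{p,\theta}^{\gamma}(\Omega)}\lesssim_{\mathbf I,k}\sum_{|\alpha|\le k}\|f_\alpha\|_{\Psi H_{p,\theta-|\alpha|p}^{\gamma+k}(\Omega)}$, which combined with the first paragraph proves \eqref{241111154} for $X=H$. (Alternatively one argues term by term, bounding $\|D_x^\alpha f_\alpha\|_{\Psi H_{p,\theta}^{\gamma}(\Omega)}\lesssim\|f_\alpha\|_{\Psi H_{p,\theta-|\alpha|p}^{\gamma+k}(\Omega)}$ directly from \eqref{240911813} and Lemma~\ref{220527502}.(2), and sums.)

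The case $X=B$ is handled identically: the basic maps of Lemma~\ref{22.02.16.1} are constructed without reference to the smoothness index, so the evident $\Psi B$-analogue of that lemma holds with the same maps — equivalently, those maps are bounded $\Psi B_{p,\theta}^{\gamma}(\Omega)\to\Psi B_{p,\theta-|\alpha|p}^{\gamma+k}(\Omega)$ by real interpolation (Lemma~\ref{220527502111}.(5)) between two $\Psi H$-levels $\gamma_0<\gamma<\gamma_1$, on which boundedness is the first paragraph — and the reverse inequality is the second paragraph verbatim with $H$ replaced by $B$. I expect no genuine obstacle here; the only point requiring care is the bookkeeping in the $k$-fold iteration of the first paragraph, namely checking that the recursion terminates on exactly the multi-indices $|\alpha|\le k$, that the weight shift attached to $D_x^\alpha\Lambda_\alpha f$ is precisely $-|\alpha|p$, and that the constant accumulated over the (at most $(d+1)^k$) compositions depends only on $\mathbf I$ and $k$.
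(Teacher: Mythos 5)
Your proposal is correct and follows essentially the same route as the paper: the paper states that the corollary "directly follows from \eqref{220526214} and Lemma \ref{22.02.16.1}," which is precisely the combination you use (iterating Lemma \ref{22.02.16.1} for the decomposition, duality \eqref{220526214} together with \eqref{240911813} for the reverse inequality, and an interpolation step for $X=B$). Your bookkeeping of the weight shift $\theta\mapsto\theta-|\alpha|p$ and the smoothness gain $\gamma\mapsto\gamma+k$ is accurate, and for the reverse bound \eqref{240911813} with parameters $(p',\theta',-\gamma,k)$ already gives $\|D_x^\alpha g\|_{\Psi X_{p',\theta'+|\alpha|p'}^{-\gamma-k}(\Omega)}\lesssim\|g\|_{\Psi X_{p',\theta'}^{-\gamma}(\Omega)}$ directly, so the appeal to Lemma~\ref{220527502}.(2) there is superfluous but harmless.
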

	
	\begin{lemma}\label{22.04.11.3}
		Let $\eta\in C_c^{\infty}(\bR^d)$ satisfy $\eta=1$ on $B_1(0)$ and $\mathrm{supp}(\eta)\subset B_2(0)$.
		For each $i\in\bN$, let $N(i)\in\bN$ be a constant satisfying
		$$
		\mathrm{supp}\Big(\sum_{|n|\leq i}\zeta_{0,(n)}\Big)\subset \big\{x\in\Omega\,:\,\big(N(i)/2\big)^{-1}\leq \rho(x)\leq N(i)/2\big\}\,.
		$$
		Let $\Lambda_i$, $\Lambda_{i,j}$, $\Lambda_{i,j,k}$ be linear functionals on $\cD'(\Omega)$ defined as
		\begin{align*}
			\Lambda_{i}f:=\Big(\sum_{|n|\leq i}\zeta_{0,(n)}\Big)f\,\,\,,\,\,\,\,\Lambda_{i,j}f=\eta(j^{-1}\cdot\,)\Lambda_if\,\,\,,\,\,\,\,\Lambda_{i,j,k}f=\big(\Lambda_{i,j}f\big)^{(N(i)^{-1}k^{-1})}\,,
		\end{align*}
		where $\big(\Lambda_{i,j}f\big)^{(\epsilon)}$ is defined in the same way as in \eqref{21.04.23.1}.
		Then for any $p\in(1,\infty)$, $\gamma$,\,$\theta\in\bR$, and regular Harnack function $\Psi$, the following hold:
		\begin{enumerate}
			\item For any $f\in \cD'(\Omega)$, $\Lambda_{i,j,k}f\in C_c^{\infty}(\Omega)$\,.
			
			\item  For any $f\in \Psi H_{p,\theta}^{\gamma}(\Omega)$,
			\begin{align}
				&\sup_i\|\Lambda_if\|_{\Psi H_{p,\theta}^{\gamma}(\Omega)}\leq N_1\|f\|_{\Psi	H_{p,\theta}^{\gamma}(\Omega)}\nonumber\\
				&\,\sup_j\|\Lambda_{i,j}f\|_{\Psi H_{p,\theta}^{\gamma}(\Omega)}\leq N_2\|f\|_{\Psi H_{p,\theta}^{\gamma}(\Omega)}\label{2401021134}\\
				&\,\sup_k\|\Lambda_{i,j,k}f\|_{\Psi H_{p,\theta}^{\gamma}(\Omega)}\leq N_3\|f\|_{\Psi H_{p,\theta}^{\gamma}(\Omega)}\,,\nonumber
			\end{align}
			where $N_1$, $N_2$, $N_3$ are constants independent of $f$.
			
			\item For any $f\in \Psi H_{p,\theta}^{\gamma}(\Omega)$,
			\begin{align}\label{2401021135}
				\lim_{k\rightarrow \infty}\Lambda_{i,j,k}f=\Lambda_{i,j}f\,\,\,,\,\,\,\lim_{j\rightarrow \infty}\Lambda_{i,j}f=\Lambda_{i}f\,\,\,,\,\,\,\lim_{i\rightarrow \infty}\Lambda_i f=f\quad\text{in}\quad \Psi H_{p,\theta}^{\gamma}(\Omega)\,.
			\end{align}
		\end{enumerate}
	\end{lemma}
	
	\mysection{Key estimates for the heat equation}\label{0030}
	
	For $0<\nu_1\leq \nu_2<\infty$ and $T\in (0,\infty]$, we denote
	\begin{itemize}
		\item $\mathrm{M}(\nu_1,\nu_2)$: the set of all real symmetric $d\times d$ matrices $(\alpha^{ij})_{d\times d}$ satisfying
		$$
		\nu_1|\xi|^2\leq \sum_{i,j=1}^d\alpha^{ij}\xi_i\xi_j\leq \nu_2|\xi|^2\qquad\forall\,\, \xi\in\bR^d\,.
		$$
		
		\item $\cM_T(\nu_1,\nu_2)$: the set of all $\cL:=\sum_{i,j=1}^da^{ij}(\cdot)D_{ij}$, where $\{a^{ij}(\cdot)\}_{i,j=1,\ldots,d}$ is a collection of time-measurable functions in $(0,T)$ such that $\big(a^{ij}(t)\big)_{d\times d}\in \mathrm{M}(\nu_1,\nu_2)$ for all $t\in(0,T)$.
		
	\end{itemize}
	Throughout this section we assume that $\Omega$ is an open set in $\bR^d$,
	\begin{align}\label{241028314}
		T\in(0,\infty)\,\,,\,\,\,\,0<\nu_1\leq \nu_2<\infty\,\,,\,\,\text{and}\,\,\,\,\cL\in\cM_T(\nu_1,\nu_2)\,.
	\end{align}
	We deal with the classical ($\alpha=1$) and time-fractional ($0<\alpha<1$) parabolic equations,
	\begin{align}\label{2206201120}
		\partial_t^\alpha u=\cL u+f:=\sum_{i,j=1}^da^{ij}(t)D_{ij} u + f\,\,,\quad t\in(0,T]\quad;\quad u(0,\cdot)=u_0\,.
	\end{align}
	Here, $\partial_t^1$ denotes the classical time derivative $\frac{\mathrm{d}}{\mathrm{d}t}$, while for $0<\alpha<1$, $\partial_t^\alpha$ denotes the Caputo fractional derivative introduced in Definition \ref{240907807}.
	
	In this paper, our main focus is on the classical and time-fractional heat equations, \eqref{para} and \eqref{parafrac}.
	For these equations, Lemma 2.6 of \cite{Seo202404} can be applied directly.
	However, we also consider the operator $\cL$ in \eqref{2206201120}, which involves variable coefficients.
	To treat the operator $\cL$, we introduce the following definition:

	\begin{defn}\label{21.11.10.1}
		Let $\phi$ be a positive superharmonic function in $\Omega$.
		For $\delta\in (0,1]$ and $p\in(1,\infty)$, we denote by $\cI(\phi,p,\delta)$ the set of all constants $\mu\in (-\frac{1}{p},1-\frac{1}{p})$ such that there exists a constant $\mathrm{C}_4>0$ satisfying the inequality 
		\begin{align}\label{21.07.12.1}
			\int_{\Omega\cap\{u\neq 0\}}|u|^{p-2}|\nabla u|^2\phi^{-\mu p} \dd x\leq \mathrm{C}_4\int_{\Omega}\Big(-\sum_{i,j=1}^d\alpha^{ij}D_{ij}u\Big)\cdot u|u|^{p-2}\phi^{-\mu p} \dd x
		\end{align}
		for all $u\in C_c^{\infty}(\Omega)$ and $(\alpha^{ij})_{d\times d}\in \mathrm{M}(\delta,1)$.
	\end{defn}
	
	We use the set $\cI(\phi,p,\delta)$ to formulate the main theorems, specifically Theorems \ref{05.11.2} and \ref{22.02.18.6}.
	According to Lemma \ref{03.30}, if $\delta=1$, corresponding to the case where only the Laplacian $\Delta$ is considered, then $\cI(\phi,p,1)$ is the same as $(-\frac{1}{p},1-\frac{1}{p})$.
	Notably, the following proposition guarantees the existence of a non-empty interval contained in $\cI(\phi,p,\delta)$, even when $\delta\in(0,1)$, without imposing additional assumptions on $\Omega$, $\phi$, or $p$:
	
	\begin{prop}\label{05.11.1}
		Let $p\in(1,\infty)$ and $\delta\in(0,1]$.
		\begin{enumerate}
			\item If
			\begin{align}\label{22.04.12.1029}
				\mu\in \Big(-\frac{(p-1)/p}{p(\delta^{-1/2}+1)/2-1},\frac{(p-1)/p}{p(\delta^{-1/2}-1)/2+1}\Big)\,,
			\end{align}
			then for any positive superharmonic function $\phi$ in $\Omega$, it holds that $\mu\in \cI(\phi,p,\delta)$ and the constant $\mathrm{C}_4$ in \eqref{21.07.12.1} can be chosen to depend only on $\delta$, $p$, and $\mu$.
			In particular, 
			$$
			\cI(\phi,p,1)=\left(-\frac{1}{p},1-\frac{1}{p}\right)\,.
			$$
			
			\item Suppose $\phi>0$ satisfies that for any $(\alpha^{ij})_{d\times d}\in\mathrm{M}(\delta,1)$, $\sum_{i,j=1}^d\alpha^{ij}D_{ij}\phi\leq 0$ in the sense of distributions.
			Then
			$$
			\cI(\phi,p,\delta)=\left(-\frac{1}{p},1-\frac{1}{p}\right)\,.
			$$
			Moreover, for any $\mu\in (-\frac{1}{p},1-\frac{1}{p})$, the constant $\mathrm{C}_4$ in \eqref{21.07.12.1} can be chosen to depend only on $\delta$, $p$, and $\mu$.
		\end{enumerate}
	\end{prop}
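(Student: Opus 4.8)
\emph{Setup and reduction.} A general positive superharmonic $\phi$ is only locally integrable, so the plan is to run the whole argument with the mollifications $\phi^{(\epsilon)}$ of \eqref{21.04.23.1} — which are smooth, strictly positive, and superharmonic (resp.\ $\alpha$-superharmonic for every $(\alpha^{ij})\in\mathrm M(\delta,1)$, in case (2)) on a neighbourhood of $\mathrm{supp}\,u$ — and to pass to the limit at the very end. For fixed $u\in C_c^\infty(\Omega)$ the function $|u|^{p-2}|\nabla u|^2$ on $\{u\neq0\}$ is an $L_1$-function with compact support, and $(-\alpha^{ij}D_{ij}u)u|u|^{p-2}$ is continuous with compact support, while $-\mu p\le 1$ since $\mu>-\tfrac1p$; hence Lemma~\ref{21.04.23.5}.(2)--(3) delivers the convergence of both sides of \eqref{21.07.12.1} as $\epsilon\searrow0$. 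So fix a smooth positive $\phi$ of the above type, set $w:=\phi^{-\mu p}$, $\psi:=\log\phi$, and recall $\alpha^{ij}D_{ij}\psi=\phi^{-1}\alpha^{ij}D_{ij}\phi-\alpha^{ij}D_i\psi\,D_j\psi$ and $-\Delta\psi=\phi^{-1}(-\Delta\phi)+|\nabla\psi|^2$.

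\emph{The substitution (the crux).} The key is to substitute $u=\phi^{a}v$ with $v\in C_c^\infty(\Omega)$ and a carefully chosen exponent $a$. Expanding $\alpha^{ij}D_{ij}(\phi^a v)$ and integrating by parts repeatedly (all steps legitimate since $v$ and the weights are smooth with $v$ compactly supported), one finds that the terms carrying the \emph{unsigned} factor $\phi^{-1}\alpha^{ij}D_{ij}\phi$ add up with total coefficient $\mu-\tfrac{2a(p-1)}{p}$. Choosing $a:=\dfrac{\mu p}{2(p-1)}$ annihilates this coefficient, and the remaining $\alpha^{ij}D_i\psi\,D_j\psi$-terms collapse into a single one; writing $W:=\phi^{p(a-\mu)}$ this yields the identity
\begin{align*}
\int_{\Omega}\Big(-\alpha^{ij}D_{ij}u\Big)u|u|^{p-2}w\,\dd x=(p-1)\int_{\Omega}W|v|^{p-2}\alpha^{ij}D_iv\,D_jv\,\dd x-\frac{\mu^2p^2}{4(p-1)}\int_{\Omega}W|v|^{p}\,\alpha^{ij}D_i\psi\,D_j\psi\,\dd x\,,
\end{align*}
together with $\int_\Omega|u|^{p-2}|\nabla u|^2w\,\dd x=\int_\Omega W|v|^{p-2}\big|\nabla v+a\,v\nabla\psi\big|^2\,\dd x$. (For $p=2$ this is the classical ground-state substitution $u=\phi^\mu v$; I expect the main difficulty to be pinning down that the correct exponent for general $p$ is $\tfrac{\mu p}{2(p-1)}$ rather than $\mu$ — once the identity is in hand, only Cauchy--Schwarz is left.)

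\emph{Hardy input and case (1).} Put $\Gamma:=W|v|^p\ge0$ and $\sigma:=p(a-\mu)=-\tfrac{\mu p(p-2)}{2(p-1)}$. Integrating by parts and using $-\Delta\psi\ge|\nabla\psi|^2$ gives $\int\Gamma|\nabla\psi|^2\le\int\nabla\Gamma\cdot\nabla\psi=\sigma\int\Gamma|\nabla\psi|^2+p\int W|v|^{p-2}v(\nabla v\cdot\nabla\psi)$; the factor $1-\sigma=\tfrac{2(p-1)+\mu p(p-2)}{2(p-1)}$ is linear in $\mu$ and positive at $\mu=-\tfrac1p$ and $\mu=1-\tfrac1p$, hence positive on the whole range, so Cauchy--Schwarz produces the superharmonic Hardy inequality $\int W|v|^p|\nabla\psi|^2\le\tfrac{p^2}{(1-\sigma)^2}\int W|v|^{p-2}|\nabla v|^2=\tfrac{4p^2(p-1)^2}{(2(p-1)+\mu p(p-2))^2}\int W|v|^{p-2}|\nabla v|^2$. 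In case (1), combining this with $\delta|\nabla v|^2\le\alpha^{ij}D_iv\,D_jv$ and $\alpha^{ij}D_i\psi\,D_j\psi\le|\nabla\psi|^2$ bounds the right-hand side of the identity from below by $(p-1)\big[1-\tfrac{\mu^2p^4}{\delta(2(p-1)+\mu p(p-2))^2}\big]\int W|v|^{p-2}\alpha^{ij}D_iv\,D_jv$; the bracket is strictly positive exactly when $p^2|\mu|<\sqrt\delta\,(2(p-1)+p(p-2)\mu)$, which is precisely condition \eqref{22.04.12.1029} (and for $\delta=1$ it reads $\mu\in(-\tfrac1p,1-\tfrac1p)$). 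Finally $\int W|v|^{p-2}|\nabla v+av\nabla\psi|^2\le2\int W|v|^{p-2}|\nabla v|^2+2a^2\int W|v|^p|\nabla\psi|^2\lesssim_{\delta,p,\mu}\int W|v|^{p-2}\alpha^{ij}D_iv\,D_jv$, so the identity gives \eqref{21.07.12.1} with $\mathrm C_4=\mathrm C_4(\delta,p,\mu)$; the equality $\cI(\phi,p,1)=(-\tfrac1p,1-\tfrac1p)$ then follows, the reverse inclusion being part of the definition of $\cI$.

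\emph{Case (2).} Here the only change is that the hypothesis $\alpha^{ij}D_{ij}\phi\le0$ supplies the \emph{anisotropic} Hardy inequality: $\alpha^{ij}D_i\psi\,D_j\psi\le-\alpha^{ij}D_{ij}\psi$, and the same integration by parts combined with Cauchy--Schwarz in the $\alpha$-inner product gives $\int W|v|^p\alpha^{ij}D_i\psi\,D_j\psi\le\tfrac{p^2}{(1-\sigma)^2}\int W|v|^{p-2}\alpha^{ij}D_iv\,D_jv$ directly, with no loss of a factor $\delta^{-1}$. Consequently the lower bound becomes $(p-1)\big[1-\tfrac{\mu^2p^4}{(2(p-1)+p(p-2)\mu)^2}\big]\int W|v|^{p-2}\alpha^{ij}D_iv\,D_jv$, and this bracket is positive for \emph{every} $\mu\in(-\tfrac1p,1-\tfrac1p)$; arguing as before yields $\cI(\phi,\delta,p)=(-\tfrac1p,1-\tfrac1p)$ with $\mathrm C_4=\mathrm C_4(\delta,p,\mu)$, as claimed.
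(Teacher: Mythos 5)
Your proof is correct and rests on the same key move as the paper's: the substitution $u=\phi^{a}v$ with $a=\mu p/(2(p-1))$ (the paper writes $v=u\phi^{c/(2p-2)}$, $c=-\mu p$, which is the same), and your displayed identity is exactly the paper's middle equality with $w=\phi^{c}$, $W=\phi^{c'}$, $\alpha^{ij}D_i\psi D_j\psi=\phi^{-2}\alpha^{ij}\phi_i\phi_j$. The two proofs diverge only in the closing steps. For part (1), after the identity the paper establishes $\int|u|^{p-2}|\nabla u|^2\phi^{c}\le N\int|v|^{p-2}|\nabla v|^2\phi^{c'}$ by a two-case argument — $c\in[0,1)$ via superharmonicity of $\phi^c$, and $c\in(-p+1,0)$ via Cauchy--Schwarz plus Lemma \ref{03.30}.(1) — which yields the sharp factor $\tfrac{(p-1)(1-c)}{p-1+c}$; you replace this with the crude bound $|\nabla v+av\nabla\psi|^2\le2|\nabla v|^2+2a^2v^2|\nabla\psi|^2$ together with the superharmonic Hardy inequality, avoiding the case split at the cost of a worse (but still admissible) constant. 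For part (2), the paper reduces to the isotropic case by the affine change of variables $x\mapsto \mathrm{B}x$ with $\mathrm{B}^2=(\alpha^{ij})$ and then invokes Lemma \ref{03.30}.(2); you instead rerun the Hardy argument directly in the $\alpha$-inner product, using $\alpha^{ij}D_i\psi D_j\psi\le-\alpha^{ij}D_{ij}\psi$ (implied by $\alpha^{ij}D_{ij}\phi\le0$) and the anisotropic Cauchy--Schwarz $|\alpha^{ij}a_ib_j|\le(\alpha^{ij}a_ia_j)^{1/2}(\alpha^{ij}b_ib_j)^{1/2}$, which is a self-contained variant that avoids the affine transformation and is arguably cleaner. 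One small bookkeeping remark: your narrative about cancelling the coefficient of $\phi^{-1}\alpha^{ij}D_{ij}\phi$ describes an intermediate route the paper never takes — the paper's single integration by parts differentiates only $|u|^{p-2}u\phi^c$ and so never produces $D_{ij}\phi$ — but since your final identity matches the paper's exactly, this is immaterial; your verification that positivity of the bracket $(p-1)\big[1-\tfrac{\mu^2p^4}{\delta(2(p-1)+\mu p(p-2))^2}\big]$ is precisely condition \eqref{22.04.12.1029} also checks out.
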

	
	Proposition \ref{05.11.1}.(2) is applied to results concerning convex domains and the totally vanishing exterior Reifenberg condition; see Subsections \ref{convex} and \ref{ERD}, respectively.

	The proof of Proposition \ref{05.11.1} relies on the following two lemmas:
	
	\begin{lemma}[Lemma 2.6 in \cite{Seo202404}]\label{03.30}
		Let $p\in(1,\infty)$ and $c<1$, and suppose that $u\in C(\Omega)$ satisfies
		\begin{equation}\label{22.01.25.2}
			\begin{gathered}
				\mathrm{supp}(u)\,\,\text{is a compact subset of}\,\,\,\Omega\,,\\
				u\in C_{\mathrm{loc}}^2\big(\{x\in\Omega\,:\,u(x)\neq 0\}\big)\,\,\,,\,\,\,\text{and}\quad \int_{\{u\neq 0\}}|u|^{p-1}|D_x^2u|\dd x<\infty\,,\
			\end{gathered}
		\end{equation}
		and $\phi$ is a positive superharmonic function in a neighborhood of $\mathrm{supp}(u)$.
		\begin{enumerate}
			\item If $\phi$ is twice continuously differentiable, then 
			\begin{align}\label{230121211}
				\int_{\Omega}|u|^p\phi^{c-2}|\nabla \phi|^2 \dd x \leq \Big(\frac{p}{1-c}\Big)^2\int_{\Omega\cap\{u\neq 0\}}|u|^{p-2}|\nabla u|^2\phi^c \dd x\,.
			\end{align}
			
			\item If we additionally assume that $c\in (-p+1,1)$ and $(\Delta u)1_{\{u\neq 0\}}$ is bounded, then
			\begin{align*}
				\int_{\Omega\cap\{u\neq 0\}}|u|^{p-2}|\nabla u|^2\phi^c \dd x\leq N\int_{\Omega\cap\{u\neq 0\}}(-\Delta u)\cdot u|u|^{p-2}\phi^c \dd x\,,
			\end{align*}
			where $N=N(p,c)>0$.
			
			\item If the Hardy inequality \eqref{hardy} holds for $\Omega$, then
			\begin{align}\label{22062422512}
				\int_{\Omega}|u|^p\phi^c\rho^{-2}\dd x\leq N\int_{\Omega\cap\{u\neq 0\}}|u|^{p-2}|\nabla u|^2 \phi^c \dd x\,,
			\end{align}
			where $N=N(p,c,\mathrm{C}_0(\Omega))>0$.
		\end{enumerate}
	\end{lemma}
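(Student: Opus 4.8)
As indicated in its statement, this is \cite[Lemma~2.6]{Seo202404}, and the plan is to reproduce its proof. I would establish (1) first, since it powers (2) and (3). For (1), the plan is to start from the pointwise identity $\phi^{c-2}|\nabla\phi|^{2}=\tfrac{1}{c-1}\nabla\phi\cdot\nabla(\phi^{c-1})$, multiply by $|u|^{p}$, and integrate by parts; there are no boundary terms since $|u|^{p}\nabla\phi$ is compactly supported in $\Omega$, and all integrals converge because on $\mathrm{supp}(u)$ the $C^{2}$ function $\phi$ is bounded between two positive constants. This yields
\begin{align*}
\int_{\Omega}|u|^{p}\phi^{c-2}|\nabla\phi|^{2}\dd x
&=\frac{1}{1-c}\int_{\Omega}\phi^{c-1}\,\nabla(|u|^{p})\cdot\nabla\phi\dd x\\
&\quad+\frac{1}{1-c}\int_{\Omega}\phi^{c-1}|u|^{p}\,\Delta\phi\dd x\,,
\end{align*}
and since $c<1$, $\phi>0$ and $\Delta\phi\le 0$ the last term is $\le 0$ and gets dropped. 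Writing $\nabla(|u|^{p})=p|u|^{p-2}u\,\nabla u$ and applying Cauchy--Schwarz with the splitting $\phi^{c-1}|u|^{p-1}=\bigl(\phi^{(c-2)/2}|u|^{p/2}\bigr)\bigl(\phi^{c/2}|u|^{(p-2)/2}\bigr)$ then bounds the remaining term by $\tfrac{p}{1-c}\bigl(\int_{\Omega}|u|^{p}\phi^{c-2}|\nabla\phi|^{2}\dd x\bigr)^{1/2}\bigl(\int_{\Omega\cap\{u\neq0\}}|u|^{p-2}|\nabla u|^{2}\phi^{c}\dd x\bigr)^{1/2}$, and dividing gives \eqref{230121211}.

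For (2) I would first take $\phi\in C^{2}$, integrate $\int_{\Omega}(-\Delta u)\,u|u|^{p-2}\phi^{c}\dd x$ by parts, and use $\nabla(u|u|^{p-2})=(p-1)|u|^{p-2}\nabla u$ to reach $(p-1)\int_{\Omega\cap\{u\neq0\}}|u|^{p-2}|\nabla u|^{2}\phi^{c}\dd x+\tfrac{c}{p}\int_{\Omega}\phi^{c-1}\nabla(|u|^{p})\cdot\nabla\phi\dd x$. When $c\ge 0$, the identity from (1) shows the cross term equals $\tfrac{c}{p}\bigl((1-c)\int|u|^{p}\phi^{c-2}|\nabla\phi|^{2}-\int\phi^{c-1}|u|^{p}\Delta\phi\bigr)\ge 0$, so the bound holds with $N=(p-1)^{-1}$. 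When $-p+1<c<0$, I would estimate the cross term in absolute value by Cauchy--Schwarz together with \eqref{230121211}, obtaining $\le\tfrac{|c|p}{1-c}\int_{\Omega\cap\{u\neq0\}}|u|^{p-2}|\nabla u|^{2}\phi^{c}\dd x$, so that the right-hand side dominates $\bigl(p-1-\tfrac{|c|p}{1-c}\bigr)\int|u|^{p-2}|\nabla u|^{2}\phi^{c}$, and $p-1-\tfrac{|c|p}{1-c}=\tfrac{p-1-|c|}{1-c}>0$ precisely because $|c|<p-1$. For a general positive superharmonic $\phi$, I would run this argument with the mollifications $\phi^{(\epsilon)}$ from \eqref{21.04.23.1} (smooth and superharmonic near $\mathrm{supp}(u)$ by Lemma \ref{21.04.23.3}) and let $\epsilon\to 0$, invoking Lemma \ref{21.04.23.5} for the weighted integrals. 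Part (3) is then quick: apply the Hardy inequality \eqref{hardy} to $v:=|u|^{p/2}\phi^{c/2}$ (after a routine approximation by $C_{c}^{\infty}(\Omega)$ functions and the same mollification of $\phi$), note $|\nabla v|^{2}\le\tfrac{p^{2}}{2}|u|^{p-2}|\nabla u|^{2}\phi^{c}+\tfrac{c^{2}}{2}|u|^{p}\phi^{c-2}|\nabla\phi|^{2}$, and absorb the last resulting integral using \eqref{230121211} to obtain \eqref{22062422512}.

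The hard part is not any individual inequality but the regularization bookkeeping. Since $u$ is merely continuous across its zero set, each integration by parts must be carried out with $u$ replaced by a smooth truncation vanishing near $\{u=0\}$ and then passed to the limit; the hypothesis $\int_{\{u\neq0\}}|u|^{p-1}|D_{x}^{2}u|\dd x<\infty$ in \eqref{22.01.25.2} is what legitimizes these limits (it also forces $|u|^{p-2}|\nabla u|^{2}1_{\{u\neq0\}}\in L_{1}$). In (2) and (3) the weight $\phi$ is only superharmonic, so the reduction to smooth $\phi$ via $\phi^{(\epsilon)}$ must be justified: one checks that the relevant densities---$|u|^{p-2}|\nabla u|^{2}1_{\{u\neq0\}}$ and $(\Delta u)u|u|^{p-2}1_{\{u\neq0\}}$---are integrable with compact support, which follows from \eqref{22.01.25.2} and, for the latter, the assumed boundedness of $(\Delta u)1_{\{u\neq0\}}$, and then one passes to the limit with Lemma \ref{21.04.23.5}. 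With these in hand, everything else is Cauchy--Schwarz, Young's inequality, and the sign of $\Delta\phi$.
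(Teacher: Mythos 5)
Your proposal is correct. The paper itself does not reprove this lemma — it simply cites \cite[Lemma 2.6]{Seo202404} — but your reconstruction follows exactly the intended route: the integration-by-parts identity plus Cauchy--Schwarz and the sign of $\Delta\phi$ for \eqref{230121211}, the split into $c\ge 0$ (nonnegativity of the cross term via $\Delta(\phi^c)\le 0$) versus $-p+1<c<0$ (absorbing the cross term using \eqref{230121211}, with $p-1+c>0$) for part (2), and Hardy applied to $|u|^{p/2}\phi^{c/2}$ for part (3); these are the same computations the paper runs explicitly in the proof of Proposition \ref{05.11.1}. Your handling of the two regularizations — truncating $u$ away from $\{u=0\}$ using \eqref{22.01.25.2} and Lemma \ref{21.04.23.4}, and mollifying $\phi$ via $\phi^{(\epsilon)}$ with the limit passage of Lemma \ref{21.04.23.5} — is also the right bookkeeping, and your argument correctly shows that only part (2) needs $c>-p+1$, consistent with the remark following the lemma.
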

		
	\begin{lemma}[Lemma A.1 in \cite{Seo202404}]\label{21.04.23.4}
		Let $p\in(1,\infty)$ and $u\in C(\bR^d)$ satisfy \eqref{22.01.25.2}.
		
		\begin{enumerate}
			\item $|u|^{p/2-1}u\in W_2^1(\bR^d)$ and $D_i(|u|^{p/2-1}u)=\frac{p}{2}|u|^{p/2-1}(D_iu)1_{\{u\neq 0\}}$.
			
			\item  $|u|^p\in W_1^2(\bR^d)$ and 
			\begin{align*}
				\begin{split}
					D_i\big(|u|^p\big)\,&=p|u|^{p-2}uD_iu 1_{\{u\neq 0\}}\,\,;\\
					D_{ij}\big(|u|^p\big)\,&=\big(p|u|^{p-2}uD_{ij}u+p(p-1)|u|^{p-2}D_iuD_ju\big)\,1_{\{u\neq 0\}}\,.
				\end{split}	
			\end{align*}
		\end{enumerate}
	\end{lemma}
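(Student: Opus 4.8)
\emph{Overall strategy.} The whole lemma rests on the single a priori estimate
\[
\int_{\{u\neq 0\}}|u|^{p-2}|\nabla u|^2\,\dd x\ \le\ \frac{1}{p-1}\int_{\{u\neq 0\}}|u|^{p-1}\,|D_x^2u|\,\dd x\,<\,\infty\,,
\]
the finiteness coming from \eqref{22.01.25.2}. Here $G:=\{u\neq 0\}$ is open and $u\in C^2_{\mathrm{loc}}(G)$. The elementary point is that for every $\delta>0$ the set $\{|u|\ge\delta\}$ is a \emph{compact} subset of $G$ (it is closed, contained in $\overline{\mathrm{supp}\,u}$, and disjoint from $\{u=0\}$). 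Pick a smooth $\chi_\delta\colon\bR\to[0,1]$ with $\chi_\delta\equiv 0$ on $[-\delta,\delta]$, $\chi_\delta\equiv 1$ off $[-2\delta,2\delta]$, $t\,\chi_\delta'(t)\ge 0$, $|\chi_\delta'|\le C\delta^{-1}$, and set $\Theta_\delta(t):=|t|^{p-2}t\,\chi_\delta(t)$; then $\Theta_\delta$ is smooth on $\bR$ (smooth away from $0$ and vanishing on $[-\delta,\delta]$), so $\Theta_\delta(u)\in C^2(\bR^d)$ with support inside the compact set $\{|u|\ge\delta\}\subset G$. Since $u\in C^2$ there, integration by parts gives
\[
\int_G\Theta_\delta'(u)\,|\nabla u|^2\,\dd x=\int_G\nabla\!\big(\Theta_\delta(u)\big)\cdot\nabla u\,\dd x=-\int_G\Theta_\delta(u)\,\Delta u\,\dd x\ \le\ \int_G|u|^{p-1}\,|D_x^2u|\,\dd x\,,
\]
using $|\Theta_\delta(t)|\le|t|^{p-1}$ and $|\Delta u|\le|D_x^2u|$. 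As $\Theta_\delta'\ge 0$ and $\Theta_\delta'(t)=(p-1)|t|^{p-2}$ for $|t|\ge 2\delta$, the left side is $\ge(p-1)\int_{\{|u|\ge 2\delta\}}|u|^{p-2}|\nabla u|^2\,\dd x$; letting $\delta\downarrow 0$ and applying monotone convergence proves the estimate. (By Cauchy--Schwarz and the compactness of $\mathrm{supp}\,u$ it also follows that $|u|^{p/2-1}|\nabla u|,\ |u|^{p-1}|\nabla u|\in L_1(\bR^d)$.) This integrability estimate is the only genuine obstacle; everything afterwards is routine convergence bookkeeping with this estimate supplying the majorants.

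\emph{Part (1).} Write $v:=|u|^{p/2-1}u=\mathrm{sgn}(u)|u|^{p/2}$ and $V_i:=\tfrac{p}{2}|u|^{p/2-1}(D_iu)\,1_{\{u\neq 0\}}$. The estimate above gives $|V|^2=\tfrac{p^2}{4}|u|^{p-2}|\nabla u|^2\,1_{\{u\neq0\}}\in L_1$, so $v,\,V\in L_2(\bR^d)$ ($v$ is bounded with compact support). To see $D_iv=V_i$ weakly, use the soft truncations $v_\delta:=\mathrm{sgn}(v)\big(|v|-\delta\big)_+=\mathrm{sgn}(u)\big(|u|^{p/2}-\delta\big)_+$: on the open set $\{|u|>\delta^{2/p}\}$ this equals the $C^2$ function $\mathrm{sgn}(u)(|u|^{p/2}-\delta)$, it vanishes on the open set $\{|u|<\delta^{2/p}\}$, and it is locally Lipschitz near $\{|u|=\delta^{2/p}\}$, so $v_\delta\in W^{1,\infty}(\bR^d)$ with compact support and $\nabla v_\delta=V\,1_{\{|u|>\delta^{2/p}\}}$ a.e. For $\varphi\in C_c^\infty(\bR^d)$ we then have $\int v_\delta\,D_i\varphi\,\dd x=-\int V_i\,1_{\{|u|>\delta^{2/p}\}}\,\varphi\,\dd x$; since $|v_\delta-v|\le\delta$ uniformly and $|V_i\varphi|\in L_1$, letting $\delta\downarrow 0$ (dominated convergence on the right) gives $D_iv=V_i$, whence $v\in W_2^1(\bR^d)$.

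\emph{Part (2).} As $|u|^p=v^2$ with $v\in W_2^1\cap L_\infty$, the chain rule for bounded Sobolev functions yields $|u|^p\in W_1^1$ with $D_i(|u|^p)=2v\,V_i=p|u|^{p-2}u\,D_iu\,1_{\{u\neq 0\}}=:Z_i$, which lies in $L_1$ by the remark in the first paragraph. On $G$ the function $Z_i$ is $C^1$ with classical derivative $Y_{ij}:=p(p-1)|u|^{p-2}D_iu\,D_ju+p|u|^{p-2}u\,D_{ij}u$, and $Y_{ij}\,1_{\{u\neq0\}}\in L_1$ by \eqref{22.01.25.2} and the integrability estimate. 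To obtain $D_jZ_i=Y_{ij}\,1_{\{u\neq0\}}$, multiply by a cutoff $\eta_\delta(u)$ ($\eta_\delta$ smooth, $\equiv 0$ on $[-\delta,\delta]$, $\equiv 1$ off $[-2\delta,2\delta]$, $|\eta_\delta'|\le C\delta^{-1}$): then $Z_i\,\eta_\delta(u)\in C_c^1(\bR^d)$ and, for $\varphi\in C_c^\infty$,
\[
\int Z_i\,\eta_\delta(u)\,D_j\varphi\,\dd x=-\int\Big(\eta_\delta(u)\,Y_{ij}+Z_i\,\eta_\delta'(u)\,D_ju\Big)1_{\{u\neq0\}}\,\varphi\,\dd x\,.
\]
Sending $\delta\downarrow 0$: the left side $\to\int Z_i\,D_j\varphi$ and the $\eta_\delta(u)Y_{ij}$ term $\to\int Y_{ij}1_{\{u\neq0\}}\varphi$ by dominated convergence; for the remaining term, $|Z_i\,\eta_\delta'(u)\,D_ju|\le 2pC\,|u|^{p-2}|\nabla u|^2\,1_{\{\delta\le|u|\le2\delta\}}$ because $|t\,\eta_\delta'(t)|\le 2C$, and this $\to 0$ in $L_1$ by dominated convergence, the integrability estimate again providing the majorant. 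Hence $D_{ij}(|u|^p)=D_jZ_i=Y_{ij}\,1_{\{u\neq0\}}$, and $|u|^p\in W_1^2(\bR^d)$.

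\emph{Where the difficulty sits.} The one nontrivial step is the passage from $\int_{\{u\neq0\}}|u|^{p-1}|D_x^2u|<\infty$ to $\int_{\{u\neq0\}}|u|^{p-2}|\nabla u|^2<\infty$ via an integration by parts that leaves no boundary term. This is carried out \emph{without} any assumption on the regularity of $\partial\{u\neq 0\}$ or of the level sets of $u$ (in particular, no Sard-type argument), precisely because $\{|u|\ge\delta\}$ is automatically compactly contained in $\{u\neq 0\}$, so the truncated power $\Theta_\delta(u)$ is a bona fide compactly supported $C^2$ test function there. Once that estimate is available, all limits in Parts (1) and (2) are standard dominated/monotone convergence arguments in which the integrability estimate itself serves as the dominating function.
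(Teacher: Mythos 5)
Your proof is correct, and it follows what is essentially the standard (and, as far as the cited reference \cite{Seo202404} goes, the intended) route: the whole lemma reduces to the integrability of $|u|^{p-2}|\nabla u|^2\,1_{\{u\neq 0\}}$, which you obtain by integrating by parts against truncated powers supported in the automatically compact sets $\{|u|\ge\delta\}\subset\{u\neq 0\}$, after which the weak-derivative identities follow by dominated convergence with that estimate as majorant. Note that the paper itself does not reprove this lemma but defers to Lemma A.1 of \cite{Seo202404}; your argument is a correct self-contained substitute.
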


	\begin{proof}[Proof of Proposition \ref{05.11.1}]
		
		(1) Let $\mu$ satisfy \eqref{22.04.12.1029}.
		Our goal is to prove \eqref{21.07.12.1} for each $u\in C_c^\infty(\Omega)$ with a  constant $\mathrm{C}_4$ depending only on $\mu$, $p$, and $\delta$.
		Let us fix $u\in C_c^\infty(\Omega)$.
		By Lemma \ref{21.04.23.5}, by considering $\phi^{(\epsilon)}$ in \eqref{21.04.23.1} instead of $\phi$, it suffices to prove \eqref{21.07.12.1} only for a positive smooth superharmonic function $\phi$ in a neighborhood of $\text{supp}(u)$.
		
		Put $c:=-\mu p\in(-p+1,1)$ and $v=u\phi^{c/(2p-2)}$. 
		Using Lemmas \ref{21.04.23.4} and \ref{03.30}.(1), and the fact that $(\alpha^{ij})\in\mathrm{M}(\delta,1)$, we have
		\begin{align*}
			&-\sum_{i,j}\int_{\Omega}\alpha^{ij}u_{ij}|u|^{p-2}u\phi^c\dd x\\
			=\,&(p-1)\sum_{i,j}\int_{\Omega}\alpha^{ij}u_iu_j|u|^{p-2}\phi^c\dd x+c\sum_{i,j}\int_{\Omega}\alpha^{ij}|u|^{p-2}uu_i\phi_j\phi^{c-1}\dd x\\
			=\,&(p-1)\int_{\Omega}\Big(\sum_{i,j}\alpha^{ij}v_iv_j\Big)|v|^{p-2}\phi^{c'}\dd x-\frac{c^2}{4(p-1)}\int_{\Omega}|v|^p\Big(\sum_{i,j}\alpha^{ij}\phi_i\phi_j\Big)\phi^{c'-2}\dd x\\
			\geq \,&(p-1)\delta \int_{\Omega}|\nabla v|^2|v|^{p-2}\phi^{c'}\dd x-\frac{c^2}{4(p-1)}\int_{\Omega}|v|^p|\nabla \phi|^2\phi^{c'-2}\dd x\\
			\geq\,&\kappa'\int_{\Omega}|\nabla v|^2|v|^{p-2}\phi^{c'}\dd x\,,
		\end{align*}
		where $c':=\frac{(p-2)c}{2p-2}<1$ and 
		$
		\kappa'=\delta(p-1)-\frac{1}{4(p-1)}\big(\frac{pc}{1-c'}\big)^2\,.
		$
		Observe that $\kappa'>0$ if and only if $\mu:=-\frac{c}{p}$ satisfies \eqref{22.04.12.1029}. 
		Therefore it suffices to show that
		\begin{align}\label{220606927}
			\int_{\Omega}|u|^{p-2}|\nabla u|^2\phi^{c} \dd x\leq N(p,\mu)\int_{\Omega}|v|^{p-2}|\nabla v|^2\phi^{c'} \dd x\,.
		\end{align}
		Note that
		\begin{align*}
			&\int_{\Omega\cap\{u\neq 0\}}|v|^{p-2}|\nabla v|^2\phi^{c'} \dd x\\
			\geq\,&\int_{\Omega\cap\{u\neq 0\}}|u|^{p-2}|\nabla u|^2\phi^{c} \dd x+\frac{c}{p-1}\int_{\Omega\cap\{u\neq 0\}}|u|^{p-2}u(\nabla u \cdot \nabla \phi)\phi^{c-1} \dd x\,.
		\end{align*}
		
		If $c\in[0,1)$, then because $\Delta \phi\leq 0$ on $\mathrm{supp}(u)$, we obtain that 
		$$
		\Delta(\phi^c)=c(\Delta \phi)\phi^{c-1}-c(1-c)|\nabla \phi|^2\phi^{c-1}\leq 0
		$$ 
		on $\text{supp}(u)$. 
		This implies that
		$$
		\frac{c}{p-1}\int_{\Omega\cap\{u\neq 0\}}|u|^{p-2}u(\nabla u \cdot \nabla \phi)\phi^{c-1} \dd x=-\frac{1}{p(p-1)}\int_{\Omega}|u|^{p}\Delta(\phi^c) \dd x\geq 0\,.
		$$
		Therefore \eqref{220606927} holds.
		
		If $c\in(-p+1,0)$, then Lemma \ref{03.30}.(1) implies
		\begin{align*}
			&\int_{\Omega\cap\{u\neq 0\}}|u|^{p-2}|\nabla u|^2\phi^{c} \dd x+\frac{c}{p-1}\int_{\Omega\cap\{u\neq 0\}}|u|^{p-2}u(\nabla u \cdot \nabla \phi)\phi^{c-1} \dd x\\
			\geq\,& \int_{\Omega\cap\{u\neq 0\}}|u|^{p-2}|\nabla u|^2\phi^{c} \dd x\\
			&+\frac{c}{p-1}\Big(\int_{\Omega\cap\{u\neq 0\}}|u|^{p-2}|\nabla u|^2\phi^{c} \dd x\Big)^{1/2}\Big(\int_{\Omega}|u|^{p}\phi^{c-2}|\nabla \phi|^2 \dd x\Big)^{1/2}\\
			\geq\,&\frac{p-1+c}{(p-1)(1-c)}\int_{\Omega\cap\{u\neq 0\}}|u|^{p-2}|\nabla u|^2\phi^{c} \dd x\,.
		\end{align*}
		Since $p-1+c>0$, the proof is complete.
		
		(2) For a fixed $\mathrm{A}=(\alpha^{ij})_{d\times d}\in \mathrm{M}(\delta,1)$, take $\mathrm{B}\in\mathrm{M}\big(\sqrt{\delta},1\big)$ such that $\mathrm{B}^2=\mathrm{A}$.
		We denote $u_{\mathrm{B}}(y):=u(\mathrm{B}y)$ and $\phi_{\mathrm{B}}(y):=\phi(\mathrm{B}y)$.
		Since 
		$$
		\Delta \phi_{\mathrm{B}}=\sum_{i,j=1}^d\alpha^{ij}\big(D_{ij}\phi\big)(\mathrm{B}\,\cdot\,)\leq 0
		$$
		in the sense of distributions on $\mathrm{B}^{-1}\Omega:=\{\mathrm{B}^{-1}x\,:\,x\in\Omega\}$,
		Lemma \ref{03.30}.(2) implies that for any $u\in C_c^{\infty}(\Omega)$,
		\begin{alignat*}{2}
			\int_{\Omega\cap\{u\neq 0\}}|u|^{p-2}|\nabla u|^2\phi^{-\mu p}\dd x
			&\leq &&\delta^{-1} \mathrm{det(B)}\int_{\mathrm{B}^{-1}\Omega \cap \{u_\mathrm{B}\neq 0\}}|u_\mathrm{B}|^{p-2}|\nabla u_\mathrm{B}|^2 \phi_{\mathrm{B}}^{-\mu p}\dd y\\
			&\lesssim_{p,\mu}\,&&\delta^{-1} \mathrm{det(B)}\int_{\mathrm{B}^{-1}\Omega}(-\Delta u_\mathrm{B})\cdot u_\mathrm{B}|u_\mathrm{B}|^{p-2}\phi_{\mathrm{B}}^{-\mu p}\dd y\\
			&= &&\delta^{-1}\int_{\Omega}\big(-\alpha^{ij}D_{ij}u\big)\cdot u|u|^{p-2}\phi^{-\mu p} \dd x\,.
		\end{alignat*}
		Therefore the proof is complete.
	\end{proof}

	The following theorem is the main result of this section:
	
	\begin{thm}\label{05.11.2}
		Let $\Omega$ be an open set admitting the Hardy inequality \eqref{hardy} and $\phi$ be a positive superharmonic function in $\Omega$, and let $0<\alpha\leq 1$ and $0<T<\infty$. 
		For any $p\in (1,\infty)$ and  $\mu\in \cI(\phi,p,\nu_1/\nu_2)$,the following estimate holds for every $u\in C_c^{\infty}\big((0,T]\times \Omega\big)$ with $f:=\partial_t^{\alpha}u-\cL u$:	
		\begin{align}\label{230328758}
			\begin{split}
				\int_0^T\int_{\Omega}|u|^{p}\phi^{-\mu p}\rho^{-2}\dd x\dd t
				\leq N \int_0^T\int_{\Omega}|f|^p\phi^{-\mu p}\rho^{2p-2}\dd x\dd t\,,
			\end{split}
		\end{align}
		where $N=N(p,\mu,\mathrm{C}_0(\Omega),\mathrm{C}_4)$.
	\end{thm}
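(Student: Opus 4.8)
The plan is to run a weighted energy estimate: for each fixed $t$ I would test the identity $\partial_t^{\alpha}u-\cL u=f$ against the function $u|u|^{p-2}\phi^{-\mu p}$ and integrate in $x$, and then integrate the outcome over $t\in(0,T]$. Writing $c:=-\mu p$ (so that $c\in(-p+1,1)$ and $\phi^{c}$ is locally integrable by Lemma \ref{21.04.23.5}.(1)), this multiplication and integration gives, for every $t$,
\begin{align*}
\int_{\Omega}\big(-\cL u\big)u|u|^{p-2}\phi^{c}\dd x=\int_{\Omega}f\,u|u|^{p-2}\phi^{c}\dd x-\int_{\Omega}\big(\partial_t^{\alpha}u\big)u|u|^{p-2}\phi^{c}\dd x\,,
\end{align*}
all integrals being finite since $u(t,\cdot)\in C_c^{\infty}(\Omega)$ and, for $0<\alpha<1$, $\partial_t^\alpha u(t,\cdot)$ is bounded with support in $\operatorname{supp}_x u$.

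First I would bound the left-hand side from below by a gradient energy. Since $\big(a^{ij}(t)/\nu_2\big)_{d\times d}\in\mathrm{M}(\nu_1/\nu_2,1)$ and $\mu\in\cI(\phi,p,\nu_1/\nu_2)$, inequality \eqref{21.07.12.1} applied to $u(t,\cdot)$ yields
\begin{align*}
\int_{\Omega\cap\{u(t,\cdot)\neq 0\}}|u|^{p-2}|\nabla u|^2\phi^{c}\dd x\leq\frac{\mathrm{C}_4}{\nu_2}\int_{\Omega}\big(-\cL u\big)u|u|^{p-2}\phi^{c}\dd x\,,
\end{align*}
and then the Hardy-type inequality of Lemma \ref{03.30}.(3) converts the left side into $\int_{\Omega}|u|^{p}\phi^{c}\rho^{-2}\dd x$ up to a constant depending only on $p,c,\mathrm{C}_0(\Omega)$. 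Chaining these with the identity above, I arrive at the pointwise-in-$t$ bound
\begin{align*}
\int_{\Omega}|u(t,\cdot)|^{p}\phi^{c}\rho^{-2}\dd x\leq N_0\Big(\int_{\Omega}f\,u|u|^{p-2}\phi^{c}\dd x-\int_{\Omega}\big(\partial_t^{\alpha}u\big)u|u|^{p-2}\phi^{c}\dd x\Big)\,,
\end{align*}
where $N_0$ depends only on $p,\mu,\mathrm{C}_0(\Omega),\mathrm{C}_4$ (and, through the normalization of the coefficients, on $\nu_1,\nu_2$).

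Next I would dispose of the $f$-term by Hölder and Young. Splitting $\phi^{c}=\phi^{c/p}\phi^{c/p'}$ and inserting $1=\rho^{(2p-2)/p}\rho^{-(2p-2)/p}$, Hölder's inequality with exponents $p$ and $p'=p/(p-1)$, together with the arithmetic identities $(p-1)p'=p$ and $\tfrac{2p-2}{p}\,p'=2$, gives
\begin{align*}
\int_{\Omega}f\,u|u|^{p-2}\phi^{c}\dd x\leq\Big(\int_{\Omega}|f|^{p}\rho^{2p-2}\phi^{c}\dd x\Big)^{1/p}\Big(\int_{\Omega}|u|^{p}\rho^{-2}\phi^{c}\dd x\Big)^{1/p'}\,,
\end{align*}
so by Young's inequality this is at most $\varepsilon\int_{\Omega}|u|^{p}\rho^{-2}\phi^{c}\dd x+N_{\varepsilon}\int_{\Omega}|f|^{p}\rho^{2p-2}\phi^{c}\dd x$; choosing $\varepsilon$ with $N_0\varepsilon\leq\tfrac12$ absorbs the first term into the left-hand side of the previous display, leaving only the $f$-integral and the fractional time term.

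Finally I would integrate over $t\in(0,T]$. Because $u\in C_c^{\infty}((0,T]\times\Omega)$ forces $u(0,x)=0$, Lemma \ref{240426733} (via \eqref{241027730} when $0<\alpha<1$, and via the ordinary chain rule $\partial_t(|u|^p)=p\,u|u|^{p-2}\partial_t u$ when $\alpha=1$) gives $\int_0^{T}u|u|^{p-2}\partial_t^{\alpha}u\,\dd t\geq0$ for every $x\in\Omega$; hence, by Fubini and $\phi^{c}\geq0$, $\int_0^{T}\!\int_{\Omega}(\partial_t^{\alpha}u)u|u|^{p-2}\phi^{c}\dd x\dd t\geq0$, so this term is simply dropped after integration in $t$. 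Integrating the absorbed inequality in $t$ then yields $\int_0^{T}\!\int_{\Omega}|u|^{p}\phi^{c}\rho^{-2}\dd x\dd t\lesssim\int_0^{T}\!\int_{\Omega}|f|^{p}\phi^{c}\rho^{2p-2}\dd x\dd t$, which is exactly \eqref{230328758}. The step I expect to be the main obstacle is precisely the treatment of the fractional time term: unlike the case $\alpha=1$, the quantity $\int_{\Omega}(\partial_t^{\alpha}u)u|u|^{p-2}\phi^{c}\dd x$ need not be nonnegative for a fixed $t$, and its good sign is recovered only after integrating over the entire interval $(0,T]$ and invoking the fractional Leibniz-type inequality of Lemma \ref{240426733}; everything else (the weight bookkeeping in the Hölder/Young step and the absorption) is routine once the exponents are checked.
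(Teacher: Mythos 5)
Your proof is correct and follows essentially the same route as the paper: both proofs test the equation against $u|u|^{p-2}\phi^{-\mu p}$, use the definition of $\cI(\phi,p,\nu_1/\nu_2)$ together with Lemma \ref{03.30}.(3) to bound $\int|u|^p\phi^{-\mu p}\rho^{-2}$ by the $-\cL u$ energy, handle the $f$-term by Young's inequality, and use \eqref{240426821} (Lemma \ref{240426733} for $0<\alpha<1$, the chain rule for $\alpha=1$) after integrating in $t$ to discard the time-fractional term; the only presentational difference is that you organize the estimates pointwise in $t$ before integrating whereas the paper integrates from the outset. One small point worth flagging: you correctly observe that applying \eqref{21.07.12.1} requires normalizing to $(a^{ij}(t)/\nu_2)\in\mathrm{M}(\nu_1/\nu_2,1)$, which introduces a factor $1/\nu_2$ and hence a dependence of the final constant on $\nu_2$; the paper's display \eqref{230328743} writes the constant as $\mathrm{C}_4$ without this normalization, and the theorem's stated dependence $N=N(p,\mu,\mathrm{C}_0(\Omega),\mathrm{C}_4)$ suppresses the $\nu_2$-dependence, so your accounting is actually slightly more careful than the paper's.
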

	
	To prove Theorem \ref{05.11.2}, we use the following elementary fact: for $F\in C^1\big([0,T]\big)$ with $F(0)=0$, 
	\begin{align}\label{240426821}
		\int_0^T|F(t)|^{p-2}F(t)\partial_t^\alpha F(t)\dd t\geq 0\,.
	\end{align}
	If $\alpha=1$, then it follows from a direct calculation, while for $0<\alpha<1$, it follows from Lemma \ref{240426733}.

	\begin{proof}[Proof of Theorem \ref{05.11.2}]
		By Lemma \ref{03.30}.(3) and the fact that $\mu\in \cI(\phi,p,\nu_1/\nu_2)$, we have
		\begin{align}\label{230328743}
			\begin{aligned}
				\int_0^{T}\int_{\Omega}|u|^{p}\phi^{-\mu p}\rho^{-2}\dd x\dd t\,&\lesssim_{p,\mu,\mathrm{C}_1(\Omega)} \int_0^{T}\int_{\Omega}|\nabla u|^2|u|^{p-2}\phi^{-\mu p}\dd x\dd t\\
				&\leq \mathrm{C}_4 \int_0^{T}\int_{\Omega}\Big(-\sum_{i,j}a^{ij}(t)D_{ij} u\Big) u|u|^{p-2}\phi^{-\mu p}\dd x\dd t\,.
			\end{aligned}
		\end{align}
		
		Integrating
		$$
		p\,\big(\partial_t^\alpha u\big) u|u|^{p-2}\,\phi^{-\mu p}-p\,\sum_{i,j=1}^da^{ij}(t)D_{ij} u\cdot u|u|^{p-2}\phi^{-\mu p}=pf\cdot u|u|^{p-2}\phi^{-\mu p}
		$$
		over $(0,T]\times \Omega$ and applying Young's inequality yield, for any $\epsilon>0$, 
		\begin{alignat}{2}
			&p\int_0^{T}\int_{\Omega}\Big(-\sum_{i,j}a^{ij}(t)D_{ij} u\Big) u|u|^{p-2}\phi^{-\mu p}\dd x\dd t \nonumber\\
			\leq\,&-p\int_0^T\int_\Omega\big(\partial_t^\alpha u\big)u|u|^{p-2}\phi^{-\mu p}\dd x\dd t\label{230328742}\\
			&+ \epsilon^{-p+1}\int_0^{T}\int_{\Omega}|f|^p\phi^{-\mu p}\rho^{2p-2}\dd x\dd t+(p-1)\epsilon\int_0^{T}\int_\Omega|u|^{p}\phi^{-\mu p} \rho^{-2}\dd x\dd t\nonumber\\
			\leq \,&\epsilon^{-p+1}\int_0^{T}\int_{\Omega}|f|^p\phi^{-\mu p}\rho^{2p-2}\dd x\dd t+(p-1)\epsilon\int_0^{T}\int_\Omega|u|^{p}\phi^{-\mu p} \rho^{-2}\dd x\dd t\,,\nonumber
		\end{alignat}
		where the last inequality follows from \eqref{240426821}.
		Combining \eqref{230328743} and \eqref{230328742} and choosing $\epsilon>0$ sufficiently small, we obtain \eqref{230328758}; note that because $\phi^{-\mu p}$ is locally integrable (see Lemmas \ref{21.04.23.3}.(2) and (3)), the first term in \eqref{230328743} is finite.
	\end{proof}
	
	Recall that $\langle F,\zeta\rangle$ denotes the action of $F\in\cD'(\Omega)$ on $\zeta\in C_c^\infty(\Omega)$.
	\begin{lemma}[Existence of a very weak solution]\label{21.05.25.300}
		Let $\Omega$ be an open set admitting the Hardy inequality \eqref{hardy}.
		For any $f\in C_c^{\infty}\big((0,T]\times\Omega\big)$, there exists $u\in L_{\infty}\big([0,T]\times\Omega\big)$ satisfying the following properties:
		\begin{enumerate}
			\item For any $\zeta\in C_c^{\infty}(\Omega)$ and $\eta\in C_c^{\infty}\big([0,T)\big)$,
			\begin{align}\label{2206301251}
				\int_0^T \Big(I_t^{1-\alpha}\la u(t,\cdot),\zeta\ra \Big)\eta'(t)\dd t=-\int_0^T\la \Delta u(t,\cdot)+f(t,\cdot),\zeta\ra \eta(t)\dd t\,.
			\end{align}
			
			\item For any $p\in(1,\infty)$, $\mu\in\big(-\frac{1}{p},1-\frac{1}{p}\big)$, and any positive superharmonic function $\phi$ in $\Omega$,
			\begin{align}\label{2206131031}
				\int_0^T\int_{\Omega}|u|^p\phi^{-\mu p}\rho^{-2}\dd x\dd t\leq N\int_0^{T}\int_{\Omega}|f|^p\phi^{-\mu p}\rho^{2p-2}\dd x\dd s\,,
			\end{align}
			where $N=N(p,c,\mathrm{C}_0(\Omega))$.
		\end{enumerate}
	\end{lemma}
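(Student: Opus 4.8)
\emph{Strategy and construction.} The plan is to build $u$ by exhausting $\Omega$ with smooth bounded subdomains on which the equation is classically solvable, to read off the $L_\infty$-bound from the maximum principle uniformly along the exhaustion, to pass to the limit for the weak formulation \eqref{2206301251}, and to obtain \eqref{2206131031} by running the energy argument behind Theorem \ref{05.11.2} on the subdomains while invoking the Hardy step on $\Omega$ itself. Concretely, set $\epsilon_0:=\min\{t:(t,x)\in\mathrm{supp}(f)\}>0$ and let $K\Subset\Omega$ be the projection of $\mathrm{supp}(f)$ onto $x$; since $f\in C_c^\infty$, all $t$-derivatives of $f$ vanish for $t\le\epsilon_0$. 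Fix smooth bounded domains $\Omega_1\Subset\Omega_2\Subset\cdots$ with $K\Subset\Omega_1$ and $\bigcup_k\Omega_k=\Omega$. On $\Omega_k$, with $\{e_{k,j}\}$ an $L_2$-orthonormal basis of Dirichlet eigenfunctions ($-\Delta e_{k,j}=\lambda_{k,j}e_{k,j}$), solve $\partial_t^\alpha\widehat u_{k,j}=-\lambda_{k,j}\widehat u_{k,j}+\widehat f_{k,j}$, $\widehat u_{k,j}(0)=0$; as $\widehat f_{k,j}$ vanishes to infinite order at $\epsilon_0$, each $\widehat u_{k,j}\in C^\infty([0,T])$ vanishes on $[0,\epsilon_0]$ and satisfies $|\widehat u_{k,j}(t)|\lesssim\sup_s|\widehat f_{k,j}(s)|$, so (using $f(t,\cdot)\in C_c^\infty(\Omega)\subset\bigcap_m\mathrm{Dom}(\Delta_{\Omega_k}^m)$) the series $u_k:=\sum_j\widehat u_{k,j}e_{k,j}$ converges in $C^\infty\big([0,T]\times\overline{\Omega_k}\big)$, solves $\partial_t^\alpha u_k=\Delta u_k+f$ on $(0,T]\times\Omega_k$ with $u_k|_{\partial\Omega_k}=0$, and $u_k\equiv0$ for $t\le\epsilon_0$. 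Let $\widetilde u_k$ be the zero-extension of $u_k$ to $\Omega$; then $\widetilde u_k(t,\cdot)\in H^1_0(\Omega)$ has compact support $\overline{\Omega_k}$ in $\Omega$, so $\widetilde u_k(t,\cdot)$ satisfies \eqref{22.01.25.2} for each $t$.

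\emph{Bound, limit, weak formulation.} As $\tfrac{t^\alpha}{\Gamma(1+\alpha)}\|f\|_{L_\infty}$ super-solves the problem on $\Omega_k$, the maximum principle gives $\|u_k\|_{L_\infty([0,T]\times\Omega_k)}\le\tfrac{T^\alpha}{\Gamma(1+\alpha)}\|f\|_{L_\infty}$ for all $k$. Since $\Omega_k\uparrow\Omega$, the Dirichlet heat semigroups $e^{\tau\Delta_{\Omega_k}}$ converge strongly to $e^{\tau\Delta_\Omega}$ (standard stability of the Dirichlet Laplacian under monotone exhaustion), hence so do the Mittag--Leffler operators producing the $u_k$; therefore $\widetilde u_k\to u$ in $C\big([0,T];L_2(\Omega)\big)$ for some $u\in L_\infty([0,T]\times\Omega)$ with the same bound, and $\widetilde u_k\to u$ a.e.\ along a subsequence. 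Any $\zeta\in C_c^\infty(\Omega)$ has $\mathrm{supp}(\zeta)\Subset\Omega_k$ for all large $k$, and \eqref{2206301251} holds for each such $u_k$ on $\Omega_k$ (with $\eta\in C_c^\infty([0,T))$), by the integration-by-parts identity $\int_0^T(I_t^{1-\alpha}\la u_k,\zeta\ra)\eta'\,\dd t=-\int_0^T(\partial_t^\alpha\la u_k,\zeta\ra)\eta\,\dd t$ applied to the classical solution; letting $k\to\infty$ yields \eqref{2206301251} for $u$, proving (1).

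\emph{A priori estimate.} Fix $p\in(1,\infty)$, $\mu\in(-\tfrac1p,1-\tfrac1p)$, a positive superharmonic $\phi$, and set $c:=-\mu p\in(1-p,1)$. For $\cL=\Delta$ we have $\nu_1=\nu_2$, so Proposition \ref{05.11.1}.(1) gives $\mu\in\cI(\phi,p,1)$ with $\mathrm{C}_4=\mathrm{C}_4(p,\mu)$. Fix $k$. Because $u_k\in C^\infty(\overline{\Omega_k})$ vanishes on the smooth boundary $\partial\Omega_k$, and $\phi$ is superharmonic on the neighbourhood $\Omega$ of $\overline{\Omega_k}$ with $\phi^c\in L^1(\overline{\Omega_k})$ (Lemma \ref{21.04.23.5}.(1)), the integrations by parts in Lemmas \ref{03.30} and \ref{21.04.23.4} and in the proof of Proposition \ref{05.11.1}.(1) carry over on $\Omega_k$ without boundary contributions. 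Then \eqref{2206131031} for $\widetilde u_k$ is obtained by chaining: (i) Lemma \ref{03.30}.(3) applied \emph{on $\Omega$} to $\widetilde u_k$, bounding $\int_0^T\int_\Omega|\widetilde u_k|^p\phi^c\rho^{-2}$ by a multiple (depending on $\mathrm{C}_0(\Omega)$) of $\int_0^T\int_{\Omega_k\cap\{u_k\ne0\}}|u_k|^{p-2}|\nabla u_k|^2\phi^c$; (ii) $\mu\in\cI(\phi,p,1)$, i.e.\ inequality \eqref{21.07.12.1} on $\Omega_k$, bounding this by $\mathrm{C}_4\int_0^T\int_{\Omega_k}(-\Delta u_k)u_k|u_k|^{p-2}\phi^c$; (iii) the equation $-\Delta u_k=f-\partial_t^\alpha u_k$ together with \eqref{240426821} (valid since $t\mapsto u_k(t,x)$ is $C^1$ with $u_k(0,x)=0$), which replaces the last quantity by at most $\int_0^T\int_{\Omega_k}|f|\,|u_k|^{p-1}\phi^c$; (iv) Young's inequality $p|f|\,|u_k|^{p-1}\phi^c\le\varepsilon^{1-p}|f|^p\phi^c\rho^{2p-2}+(p-1)\varepsilon|u_k|^p\phi^c\rho^{-2}$; (v) absorbing the last term back into the left side, legitimate because $\rho\ge d(\overline{\Omega_k},\partial\Omega)>0$ on $\overline{\Omega_k}$ makes $\int_0^T\int_\Omega|\widetilde u_k|^p\phi^c\rho^{-2}$ finite. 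This gives
\[
\int_0^T\!\!\int_{\Omega}|\widetilde u_k|^{p}\phi^{c}\rho^{-2}\,\dd x\,\dd t\ \le\ N\big(p,\mu,\mathrm{C}_0(\Omega)\big)\int_0^T\!\!\int_{\Omega}|f|^{p}\phi^{c}\rho^{2p-2}\,\dd x\,\dd t ,
\]
and letting $k\to\infty$ with Fatou's lemma on the left yields \eqref{2206131031}.

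\emph{Main obstacle.} The substantive point is the a priori estimate: one must verify that the energy computation behind Theorem \ref{05.11.2} genuinely goes through on the smooth bounded domain $\Omega_k$ for the classical-but-not-compactly-supported solution $u_k$ — i.e.\ that Lemmas \ref{03.30} and \ref{21.04.23.4} and the manipulation in Proposition \ref{05.11.1}.(1) persist for $v\in C^\infty(\overline{\Omega_k})$ with $v|_{\partial\Omega_k}=0$, the boundary integrals vanishing because $v=0$ there (using the smoothness of $\partial\Omega_k$ and $\phi^c\in L^1_{\mathrm{loc}}$) — and, decisively, that the Hardy step is invoked for $\Omega$ (applied to the zero-extension $\widetilde u_k$, which does have compact support in $\Omega$) rather than for the $\Omega_k$, whose Hardy constants need not stay bounded; this is exactly the point at which hypothesis \eqref{hardy} enters. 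The remaining ingredients — classical solvability and up-to-the-boundary smoothness on bounded smooth domains, the maximum principle, and strong resolvent convergence under exhaustion — are standard.
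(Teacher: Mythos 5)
Your proof follows the same high-level strategy as the paper: exhaust $\Omega$ by smooth bounded subdomains $\Omega_k$, solve the equation classically on $\Omega_k$, obtain a uniform $L_\infty$-bound from the maximum principle, pass to the limit for the weak formulation by dominated convergence and for the a priori estimate by Fatou's lemma, and — decisively — take the Hardy step on $\Omega$ itself applied to the zero-extension $\widetilde u_k$, which is compactly supported in $\Omega$. That last observation is exactly the point of the hypothesis and you identify it correctly. The tactical differences are worth noting. (a) You construct the approximate solutions explicitly via Dirichlet eigenfunction expansion and Mittag--Leffler solution of the resulting scalar fractional ODEs; the paper instead isolates this existence into Lemma \ref{240426835} (quoting \cite{HKP2021}). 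Both are fine, though your eigenfunction route carries the extra burden of verifying convergence of the series in $C^\infty([0,T]\times\overline{\Omega_k})$. (b) For the passage to the limit you invoke strong stability of the Dirichlet semigroups under monotone domain exhaustion together with subordination; the paper avoids operator-convergence machinery entirely by splitting $f=f^1-f^2$ with $f^1,f^2\geq 0$ and using the comparison principle to obtain a \emph{monotone} pointwise limit of $R_n(f^i 1_{\Omega_n})$, which is more elementary and self-contained. (c) In your ``Main obstacle'' you propose re-running the integrations by parts on $\Omega_k$ for $u_k\in C^\infty(\overline{\Omega_k})$ with $u_k|_{\partial\Omega_k}=0$, checking that boundary terms vanish; this detour is not needed. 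The zero-extension $\widetilde u_k$ already belongs to the class \eqref{22.01.25.2} on $\Omega$ (compact support, smooth on $\{\widetilde u_k\neq 0\}\subset\Omega_k$, $(\Delta\widetilde u_k)1_{\{\widetilde u_k\neq 0\}}$ bounded), so Lemma \ref{03.30}.(2) and (3) apply to it directly on $\Omega$ without any adjustment — this is precisely what the paper means by ``repeat the proof of Theorem \ref{05.11.2} for $(R_nH)1_{\Omega_n}$ in place of $u$, with using Lemma \ref{03.30}.(2) instead of \eqref{21.07.12.1}.''
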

	Note that \eqref{2206301251} means that $\partial_t^\alpha u=\Delta u+f$ with the initial condition $u(0,\cdot)\equiv0$ (see Remark \ref{2404201227}.(2)).
	
	\begin{proof}[Proof of Lemma \ref{21.05.25.300}]
		Take a sequence of infinitely smooth bounded domains $\big(\Omega_n\big)_{n\in\bN}$ such that
		$$
		\text{supp}(f)\subset [0,T]\times\Omega_1\,\,,\quad \overline{\Omega_n}\subset \Omega_{n+1}\,\,,\quad \bigcup_{n}\Omega_n=\Omega
		$$
		(see, \textit{e.g.}, \cite[Proposition 8.2.1]{DD_2008}).
		For any $H\in C_c^{\infty}((0,T]\times \Omega_1)$, there exists $U\in C^{\infty}\big([0,T]\times \Omega_n\big)\cap C\big([0,T]\times \overline{\Omega_n}\big)$ satisfying
		\begin{align}\label{240909536}
			\partial_t^\alpha U=\Delta U+H1_{\Omega_1}\quad \text{on}\,\,\,\,(0,T]\times\Omega_n\quad; \quad U(t,x)=0\quad\text{if}\,\,\,\, t=0\,\,\,\,\text{or}\,\,\,\,x\in\partial\Omega_n\,,
		\end{align}
		and
		\begin{align}\label{241002610}
			\int_0^T\int_{\Omega_n}|U|^{p-1}|D_x^2 U|\dd x\dd t<\infty\,.
		\end{align}
		It is a classical result if $\alpha=1$. 
		For $0<\alpha<1$, the existence of such $U$ is established in Lemma \ref{240426835}.
		Denote this solution $U$ by $R_nH$, and note the following properties of $R_nH$:
		\begin{itemize}
			\item From \eqref{240909536}, we deduce that for any $\zeta\in C_c^{\infty}(\Omega_n)$ and $\eta\in C_c^{\infty}\big([0,T)\big)$,
			\begin{align}\label{240909550}
				\int_0^T \Big(I_t^{1-\alpha}\la R_nH(t,\cdot),\zeta\ra \Big)\eta'(t)\dd t=-\int_0^T\la \Delta \big(R_nH\big)(t,\cdot)+H(t,\cdot)1_{\Omega_1},\zeta\ra \eta(t)\dd t\,.
			\end{align}

			\item The maximum principle (for when $0<\alpha<1$, see \cite[Theorem 2]{LUCHKO2009218}) yields 
			\begin{align}\label{240909624}
				\big|R_nH(t,x)\big|\leq \frac{\|H\|_\infty}{\Gamma(1+\alpha)}t^\alpha\,.
			\end{align}

			\item $\overline{\Omega}_n$ is a compact subset of $\Omega$, $R_{n}H\in C([0,T]\times \overline{\Omega}_n)\cap C^{\infty}([0,T]\times \Omega_n)$, and $R_{n}H\equiv 0$ on $\overline{\big((0,T]\times \Omega_n\big)}\setminus \big((0,T]\times \Omega_n\big)$.
			These and \eqref{241002610} imply that $R_{n}H(t,\,\cdot\,)1_{\Omega_n}$ satisfies condition \eqref{22.01.25.2}.
			Repeating the proof of Theorem \ref{05.11.2} for $(R_nH)1_{\Omega_n}$ in place of $u$ and using Lemma \ref{03.30}.(2) instead of \eqref{21.07.12.1}, we obtain that for any $p\in(1,\infty)$, $\mu\in\big(-\frac{1}{p},1-\frac{1}{p}\big)$, and any positive superharmonic function $\phi$,
			\begin{align}\label{2206131250}
				\int_0^T\int_{\Omega}|R_{n}H|^{p}\rho^{-2}\phi^{-\mu p}\dd x\dd s
				\leq N(p,c,\mathrm{C}_0(\Omega))\int_0^T\int_{\Omega}|H|^p\phi^{-\mu p}\rho^{2p-2}\dd x\dd s\,.
			\end{align}
		\end{itemize}

		Take $F\in C_c^{\infty}\big([0,T]\times \Omega_1\big)$ such that $|f|\leq F$, and put $f_1:=F+f$ and $f_2:=F$ so that $f_1,\,f_2\geq 0$ and $f=f^1-f^2$.
		
		For $v_n:=R_n\big(f^11_{\Omega_n}\big)$, the maximum principle implies that $0\leq v_n\leq v_{n+1}$.
		We denote the pointwise limit of $v_n$ by $v$.
		In the same manner, we define $w:=\lim_{n\rightarrow\infty}R_n\big(f^2 1_{\Omega_n}\big)$.
		By \eqref{240909624}, $v$ and $w$ are bounded.
		Put
		$$
		u:=v-w=\lim_{n\rightarrow\infty}R_{n}f1_{\Omega_n}\,.
		$$
		it follows from \eqref{240909624} that $|u(t,x)|\lesssim t^\alpha$, which implies that $I^{1-\alpha}\big(|\la u(t,\cdot),\zeta\ra|\big)\lesssim t$.
		Applying Lebesgue's dominated convergence theorem to \eqref{240909550} and Fatou's lemma to \eqref{2206131250} yields \eqref{2206301251} and \eqref{2206131031},, respectively.
	\end{proof}

	\mysection{Solvability of the heat equation}\label{mainresultsection}
	
	Throughout this section, we assume \eqref{241028314} and that $\Omega$ be an open set in $\bR^d$, $d\in\bN$.

	\subsection{Solution spaces}\label{0052}
	This subsection introduces the solution spaces $\Psi\mathring{\cH}_{p,\theta}^{\alpha,\gamma}(\Omega,T)$ in the case where the initial datum $u_0$ in \eqref{para} and \eqref{parafrac} is zero, and $\Psi\cH_{p,\theta}^{\alpha,\gamma}(\Omega,T)$ for general $u_0$.
	
	Throughout this subsection, we assume that $\Psi$ is a regular Harnack function in $\Omega$, that $0<\alpha\le 1$, $1<p<\infty$, and $\gamma,\theta\in\bR$; we then denote
	\begin{equation}\label{221015645}
		\begin{alignedat}{2}
			\bH_{p}^{\gamma}(\bR^d,T)&:=L_p\big((0,T);H^{\gamma}_{p}(\bR^d)\big)\,\,,&\quad \bL_{p}(\bR^d,T)&:=L_p\big((0,T);L_p(\bR^d)\big)\,\,,\\
			\bH^{\gamma}_{p,\theta}(\Omega,T)&:=L_p\big((0,T); H^{\gamma}_{p,\theta}(\Omega)\big)\,\,,&\quad \bL_{p,\theta}(\Omega,T)&:=L_p\big((0,T);L_{p,\theta}(\bR^d)\big)\,\,,\\
			\Psi\bH^{\gamma}_{p,\theta}(\Omega,T)&:=L_p\big((0,T);\Psi H^{\gamma}_{p,\theta}(\Omega)\big)\,\,,&\quad \Psi\bL_{p,\theta}(\Omega,T)&:=L_p\big((0,T);\Psi L_{p,\theta}(\bR^d)\big)\,.
		\end{alignedat}
	\end{equation}

	Recall the definition of the Riemann-Liouville fractional integral, $I_t^\beta$, in Definition \ref{240907807}.
	
	\begin{defn}\label{240419512}
		\,
		
		\begin{enumerate}
			\item $\Psi\mathring{\cH}_{p,\theta}^{\alpha,\gamma+2}(\Omega,T)$ denotes the set of all $u\in \Psi\bH_{p,\theta}^{\gamma+2}(\Omega,T)$ such that there exists $f\in \Psi\bH_{p,\theta+2p}^{\gamma}(\Omega,T)$ satisfying the following: for any $\phi\in C_c^{\infty}(\Omega)$, and $\eta\in C_c^{\infty}\big([0,T)\big)$,
			\begin{align}\label{240419720}
				\int_0^T \Big(I_t^{1-\alpha}\la u(t),\phi\ra \Big)\eta'(t)\dd t=-\int_0^T\la f(t),\phi\ra \eta(t)\dd t\,,
			\end{align}
			where 
			$$
			I_t^{1-\alpha}\la u(t),\phi\ra:=\int_0^t\frac{(t-s)^{-\alpha}}{\Gamma(\alpha)}\la u(s),\phi\ra\dd s\,.
			$$
			For such $u\in \Psi\mathring{\cH}_{p,\theta}^{\alpha,\gamma+2}(\Omega,T)$, we denote $\partial_t^\alpha u:=f$ and define
			$$
			\|u\|_{\Psi\mathring{\cH}_{p,\theta}^{\alpha,\gamma+2}(\Omega,T)}:=\|u\|_{\Psi \bH_{p,\theta}^{\gamma+2}(\Omega,T)}+\|\partial_t^\alpha u\|_{\Psi \bH_{p,\theta+2p}^{\gamma}(\Omega,T)}\,.
			$$
			
			\item For $0<\frac{1}{p}<\alpha\leq 1$, $\Psi\cH_{p,\theta}^{\alpha,\gamma+2}(\Omega,T)$ denotes the set of all $u:(0,T]\rightarrow \cD'(\Omega)$ such that $u\in\Psi\bH_{p,\theta}^{\gamma+2}(\Omega,T)$, and there exist $u_0\in \Psi B_{p,\theta+2/\alpha}^{\gamma+2-2/(p\alpha)}(\Omega)$ and $f\in\Psi\bH_{p,\theta+2p}^{\gamma}(\Omega,T)$ satisy the following: for any $\phi\in C_c^{\infty}(\Omega)$,
			\begin{align}\label{240419514}
				\la u(t),\phi\ra=\la u_0,\phi\ra+I_t^{\alpha}\la f(t),\phi\ra\quad \text{for all}\,\,\,\,t\in(0,T]\,.
			\end{align}
			For such $u\in \Psi\cH_{p,\theta}^{\alpha,\gamma+2}(\Omega,T)$, we set $u(0):=u_0$, $\partial_t^\alpha u:=f$ and define
			$$
			\|u\|_{\Psi \cH_{p,\theta}^{\alpha,\gamma+2}(\Omega,T)}:=\|u\|_{\Psi \bH_{p,\theta}^{\gamma+2}(\Omega,T)}+\|u(0)\|_{\Psi B_{p,\theta+2/\alpha}^{\gamma+2-2/(\alpha p)}(\Omega)}+\|\partial_t^\alpha u\|_{\Psi \bH_{p,\theta+2p}^{\gamma}(\Omega,T)}\,.
			$$
		\end{enumerate}
	\end{defn}
	
	We similarly define $\mathring{\cH}_{p}^{\alpha,\gamma}(\bR^d,T)$ and $\cH_{p}^{\alpha,\gamma}(\bR^d,T)$ in the same manner as $\Psi \mathring{\cH}_{p,\theta}^{\alpha,\gamma}(\Omega,T)$ and $\Psi \cH_{p,\theta}^{\alpha,\gamma}(\Omega,T)$, respectively, by replacing the domain $\Omega$ with $\bR^d$ and the function spaces $\Psi \bH_{p,\theta}^{\gamma}(\Omega,T)$ and $\Psi B_{p,\theta}^\gamma(\Omega)$ with $\bH_{p}^{\gamma}(\bR^d,T)$ and $B_{p}^\gamma(\bR^d)$, respectively.
	
	\begin{remark}
		The space $\Psi \mathring{\cH}_{p,\theta}^{\alpha,\gamma+2}(\Omega,T)$ is designed for the problem with zero initial data in \eqref{2206201120}.
		The motivation for this definition comes from the identity that, for $F\in C^{1}\big([0,T]\big)$ with $F(0)=0$, we have
		$$
		\partial_t^{\alpha}F(t):=I_t^{1-\alpha}\partial_t F(t)=\partial_t I_{t}^{1-\alpha}F(t)\,.
		$$ 
		Consequently, for any $\eta\in C_c^{\infty}\big([0,T)\big)$,
		$$
		\int_0^T \Big(I_t^{1-\alpha}F(t) \Big)\eta'(t)\dd t=-\int_0^T\big(\partial_t^{\alpha}F(t)\big) \eta(t)\dd t\,.
		$$
		Notably, if $\alpha<\frac{1}{p}$, then the problem with nonzero initial data can be reduced to the zero initial data case.
		More precisely, let $u\in \Psi\bH_{p,\theta}^{\gamma+2}(\Omega,T)$, $u_0\in \Psi H_{p,\theta+2p}^{\gamma}(\Omega)$, and $f\in \Psi \bH_{p,\theta+2p}^{\gamma}(\Omega)$, and suppose that they satisfy \eqref{240419514}.
		Define
		$$
		\widetilde{f}(t,\cdot)=\frac{t^{-\alpha}}{\Gamma(1-\alpha)}u_0(\cdot)+f(t,\cdot)\,,
		$$ 
		Then $\widetilde{f}\in \Psi\bH_{p,\theta+2p}^{\gamma}(\Omega,T)$, and $\la u(t),\phi\ra=I_t^{\alpha}\la \widetilde{f}(t),\phi\ra$ for all $t\in(0,T]$.
		Consequently, $u\in \Psi \mathring{\cH}_{p,\theta}^{\gamma+2}(\Omega,T)$ (see Lemma \ref{240426524}.(1)).
		
		The space $\Psi \cH_{p,\theta}^{\alpha,\gamma+2}(\Omega,T)$ is intended for the problem with nonzero initial data in \eqref{2206201120} when $\alpha>\frac{1}{p}$.
		For $\alpha=\frac{1}{p}$, \cite{DONG2021107494} provides a result for $u_0$ in $\bR^d$, under additional assumptions on the regularity of $u_0$.
		We refer the reader to \cite{KimWoo2023} for a detailed discussion of $ \cH_{p,\theta}^{\alpha,\gamma+2}(\bR^d,T)$ for general $\alpha\in(0,1)$.
	\end{remark}
	
	\begin{remark}\label{2404201227}
		\,
		
		\begin{enumerate}
			\item 
			All quantities in \eqref{240419720} are well-defined. 
			More precisely, $I_t^{1-\alpha}|\la u(t),\phi\ra|$ and $\la f(t),\phi\ra$ lie in $L_{p}([0,T])$.
			Indeed,
			\begin{align}
				\begin{aligned}\label{240426508}
					\bigg(\int_0^{T} \Big(I_t^{1-\alpha}|\la u(t),\phi\ra| \Big)^p\dd t\bigg)^{1/p}
					&\leq \int_0^{T} \frac{s^{-\alpha}}{\Gamma(1-\alpha)}\bigg(\int_s^{T}|\la u(t-s),\phi\ra|^p \dd t\bigg)^{1/p}\dd s\\
					&\lesssim  \frac{T^{1-\alpha}}{\Gamma(2-\alpha)}\|u\|_{\Psi\bH_{p,\theta}^{\gamma+2}(\Omega,T)}\|\phi\|_{\Psi^{-1}H_{p',\theta'}^{-\gamma-2}(\Omega)}\,,\\
					\Big(\int_0^{T}|\la f(t),\phi\ra|^p\dd t\Big)^{1/p}\,&\lesssim \|f\|_{\Psi \bH_{p,\theta+2p}^{\gamma}(\Omega,T)}\|\phi\|_{\Psi^{-1}H_{p',\theta'-2p'}^{-\gamma}(\Omega)}\,,
				\end{aligned}
			\end{align}
			where $p'$ and $\theta'$ are the constants in \eqref{241002631} (the first inequality follows from Minkowski’s inequality, whereas the latter two follow from \eqref{220526214}).
			
			Since $C_c^{\infty}(\Omega)$ is a separable topological vector space, identity \eqref{240419720} implies the uniqueness of $\partial_t^\alpha u:=f\in\Psi \bH_{p,\theta+2p}^\gamma(\Omega,T)$ for $u\in \Psi\mathring{\cH}_{p,\theta}^{\alpha,\gamma+2}(\Omega,T)$.

			\item The term $I_t^\alpha \la f(t),\phi\ra$ in \eqref{240419514} is well-defined for all $t\in(0,T]$, as can be seen from the following estimate:
			\begin{align}\label{240428116}
				\begin{split}
					I_t^{\alpha}\big|\la f(t),\phi\ra \big|\,&\lesssim_\alpha  \Big(\int_0^t(t-s)^{-(1-\alpha)p'}\dd s\Big)^{1/p'} \Big(\int_0^t\big|\la f(s),\phi\ra \big|^p\dd s\Big)^{1/p}\\
					&\lesssim t^{\alpha-1/p}\|f\|_{\Psi \bH_{p,\theta+2p}^{\gamma}(\Omega,T)}\|\phi\|_{\Psi^{-1}H_{p',\theta'-2p'}^{-\gamma}(\Omega)}
				\end{split}
			\end{align}
			where $p'=\frac{p}{p-1}$ (the second inequality follows from \eqref{220526214} and that $(1-\alpha)p'<1$ because $\alpha>\frac{1}{p}$).
			
			\eqref{240428116} and \eqref{220526214} also imply that
			\begin{align}\label{240911422}
				\sup_{0<t\leq T}\frac{|\la u(t)-u_0,\phi\ra|}{t^{\alpha-1/p}}\lesssim \sup_{0<t\leq T}\frac{I_t^{\alpha}|\la f(t),\phi\ra|}{t^{\alpha-1/p}}\lesssim \|f\|_{\bH_p^\gamma(\Omega,T)}\|\phi\|_{H_{p',\theta'}^{-\gamma}(\Omega)}\,.
			\end{align}
			This shows that $u(0):=u_0$ satisfying \eqref{240419514} is uniquely determined as $\lim_{t\searrow 0}u(t)$.
			
			For the uniqueness of $f$ in \eqref{240419514}, observe that if $I_t^\alpha\la f(t),\phi\ra=0$ for all $t\in(0,T]$, then $I_t^1\la f(t),\phi\ra=0$ for all $t\in(0,T]$.
			Hence $\la f(t),\phi\ra=0$ for almost every $t\in(0,T]$, and therefore $f=0$ in $\Psi \bH_{p,\theta+2p}^\gamma(\Omega,T)$.
		\end{enumerate}
	\end{remark}

	The following lemma shows a relation between \eqref{240419720} and \eqref{240419514}.
	In particular, if $\alpha>\frac{1}{p}$, then
	$$
	\Psi \mathring{\cH}_{p,\theta}^{\alpha,\gamma+2}(\Omega,T)=\Big\{u\in\Psi \cH_{p,\theta}^{\alpha,\gamma+2}(\Omega,T)\,:\,u(0)=0\Big\}\,.
	$$
	
	\begin{lemma}\label{240426524}\,
		
		\begin{enumerate}
			\item Let $\phi\in C_c^{\infty}(\Omega)$, and assume that $u(t),u_0\in\cD(\Omega)$ for $t\in(0,T]$, and that $\langle f(\cdot),\phi\rangle\in L_1((0,T))$.
			If equation \eqref{240419514} holds for almost every $t\in(0,T]$, then for any $\eta\in C_c^{\infty}([0,T))$,
			\begin{align}\label{240420138}
				\int_0^T \Big(I_t^{1-\alpha}\langle u(t)-u_0,\phi\rangle \Big)\eta'(t)\dd t
				= -\int_0^T\langle f(t),\phi\rangle \eta(t)\dd t\,.
			\end{align}
			
			\item Let  $0<\frac{1}{p}<\alpha\leq 1$. 
			Suppose that $u\in\Psi\bH_{p,\theta}^{\alpha,\gamma+2}(\Omega)$, $u_0\in \Psi B_{p,\theta+2}^{\gamma+2-2/(\alpha p)}(\Omega)$, and $f\in \Psi \bH_{p,\theta+2p}^\gamma(\Omega,T)$ satisfy \eqref{240420138} for all $\phi\in C_c^{\infty}(\Omega)$ and $\eta\in C_c^{\infty}\big([0,T)\big)$.
			Then there exists a unique $\overline{u}\in \Psi \cH_{p,\theta}^{\alpha,\gamma+2}(\Omega,T)$ such that $\overline{u}=u$ almost everywhere on $(0,T]$, $\overline{u}(0)=u_0$, and $\partial_t^\alpha \overline{u}=f$.
		\end{enumerate}
	\end{lemma}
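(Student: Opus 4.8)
The plan is to treat the two parts as the two directions of one equivalence: part~(1) passes from the ``mild'' (Volterra) identity \eqref{240419514} to the ``weak'' identity \eqref{240420138}, and part~(2) goes back, the only genuinely new work being to manufacture a representative $\overline u$ of $u$ that realizes \eqref{240419514} for \emph{every} $t\in(0,T]$. Both directions use only two elementary facts already on hand: the semigroup law $I_t^{\beta_1}I_t^{\beta_2}=I_t^{\beta_1+\beta_2}$ for $\beta_1,\beta_2\ge0$ (recorded just after \eqref{240420130}), and the classical fact that a locally integrable function on $(0,T)$ whose distributional derivative equals $w\in L_1((0,T))$, tested against all $\eta\in C_c^\infty([0,T))$ (so that the endpoint $t=0$ is felt), must coincide a.e.\ with $t\mapsto\int_0^t w(s)\dd s$.

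For part~(1) I would fix $\phi\in C_c^\infty(\Omega)$ and set $h(t):=\la u(t)-u_0,\phi\ra$, which by hypothesis equals $I_t^\alpha\la f(t),\phi\ra$ for a.e.\ $t$. Since $\la f(\cdot),\phi\ra\in L_1((0,T))$, so are $I_t^\alpha\la f(t),\phi\ra$ and $h$; hence $I_t^{1-\alpha}h$ is defined a.e., and the semigroup law gives $I_t^{1-\alpha}h(t)=I_t^1\la f(t),\phi\ra=\int_0^t\la f(s),\phi\ra\dd s$ for a.e.\ $t$. The right-hand side is absolutely continuous, vanishes at $t=0$, and has a.e.\ derivative $\la f(t),\phi\ra$, so integrating by parts against $\eta\in C_c^\infty([0,T))$ and using $\eta(T)=0$ yields \eqref{240420138} at once; this argument needs nothing beyond $0<\alpha\le1$.

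For part~(2) I would again fix $\phi\in C_c^\infty(\Omega)$, put $h(t):=\la u(t)-u_0,\phi\ra\in L_p((0,T))$ (using $f\in\Psi\bH^\gamma_{p,\theta+2p}(\Omega,T)$ and the bounds in \eqref{240426508}). Hypothesis \eqref{240420138} says precisely that the distributional derivative of $I_t^{1-\alpha}h$ equals $\la f(\cdot),\phi\ra$ together with vanishing of its boundary value at $0$, so by the fact quoted above $I_t^{1-\alpha}h(t)=\int_0^t\la f(s),\phi\ra\dd s=I_t^1\la f(t),\phi\ra$ for a.e.\ $t$. Applying $I_t^\alpha$ and the semigroup law turns this into $I_t^1 h=I_t^{1+\alpha}\la f(t),\phi\ra$; both sides are $I_t^1$ of $L_1((0,T))$-functions (namely $h$ and $I_t^\alpha\la f(t),\phi\ra$), hence absolutely continuous, so differentiating gives $h(t)=I_t^\alpha\la f(t),\phi\ra=\la I_t^\alpha f(t),\phi\ra$ for a.e.\ $t$, where $I_t^\alpha f(t):=\int_0^t\frac{(t-s)^{\alpha-1}}{\Gamma(\alpha)}f(s)\dd s$ converges as a Bochner integral in $\Psi H^\gamma_{p,\theta+2p}(\Omega)$ for \emph{every} $t\in(0,T]$ precisely because $\alpha>1/p$ (this is the estimate \eqref{240428116}), the pairing with $\phi$ passing under the integral.

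This dictates the definition $\overline u(t):=u_0+I_t^\alpha f(t)\in\cD'(\Omega)$ for $t\in(0,T]$. By the previous step, for each fixed $\phi$ one has $\la\overline u(t),\phi\ra=\la u(t),\phi\ra$ for a.e.\ $t$; passing to one null set for a countable subset of $C_c^\infty(\Omega)$ separating $\cD'(\Omega)$ (the separability invoked in Remark~\ref{2404201227}) gives $\overline u=u$ a.e.\ as $\cD'(\Omega)$-valued maps, whence in particular $\overline u\in\Psi\bH^{\gamma+2}_{p,\theta}(\Omega,T)$. By construction $\overline u$ satisfies \eqref{240419514} with the data $u_0$ and $f$, so $\partial_t^\alpha\overline u=f$, and $\|I_t^\alpha f(t)\|_{\Psi H^\gamma_{p,\theta+2p}(\Omega)}\lesssim t^{\alpha-1/p}\to0$ (again by $\alpha>1/p$, cf.\ \eqref{240911422}) gives $\overline u(0)=\lim_{t\to0^+}\overline u(t)=u_0$; thus $(\overline u,u_0,f)$ meets Definition~\ref{240419512}.(2). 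Uniqueness is then immediate: any $\overline u'$ with the three stated properties obeys \eqref{240419514} with the same $u_0,f$, hence $\la\overline u'(t),\phi\ra=\la\overline u(t),\phi\ra$ for \emph{every} $t\in(0,T]$ and every $\phi\in C_c^\infty(\Omega)$. I expect the only delicate point to be this passage from the a.e.-in-$t$, $\phi$-dependent identity to a single representative valid for all $t$ (and the accompanying limit $\overline u(0)=u_0$), which is exactly where the hypothesis $\alpha>1/p$ enters.
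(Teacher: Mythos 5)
Your proof is correct, and the overall plan matches the paper's. Part~(1) is exactly the paper's ``direct calculation with \eqref{240419514} and \eqref{240420130}'', written out. In part~(2), your route from the weak identity \eqref{240420138} back to the mild identity is slightly different in form from the paper's: the paper tests \eqref{240420138} against the fractionally-smoothed test function $\widetilde\eta(t):=\int_t^T\frac{(r-t)^{\alpha-1}}{\Gamma(\alpha)}\eta(r)\,\mathrm{d}r$ and compares the resulting moments of $u$ and $\overline u$ against all $\eta$, whereas you apply $I_t^\alpha$ directly to the resulting absolutely continuous function $I_t^{1-\alpha}h$, use the semigroup law to reduce to $I_t^1h=I_t^{1+\alpha}\la f,\phi\ra$, and differentiate. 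These are adjoint formulations of the same inversion step (testing against $\widetilde\eta$ is the dual of applying $I_t^\alpha$); yours is arguably a bit more economical because it avoids introducing and checking the properties of $\widetilde\eta$, at the cost of having to check that $I_t^{1-\alpha}h\in L_1$ so the fundamental lemma applies (which is fine since $h\in L_p$). Everything afterward --- defining $\overline u(t)=u_0+I_t^\alpha f(t)$, the separability argument to upgrade from a.e.-in-$t$-for-fixed-$\phi$ to a.e.-in-$t$ in $\cD'(\Omega)$, the identification $\overline u(0)=u_0$ via $\alpha>1/p$, and the uniqueness via \eqref{240419514} --- matches the paper.
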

	
	Note that in Lemma \ref{240426524}.(1), $\langle f(\cdot),\phi\rangle\in L_1((0,T))$ implies that 
	$$
	I_t^{1-\alpha} I_t^\alpha|\langle f(t),\phi\rangle| = I_t^1 |\langle f(t),\phi\rangle|<\infty\,.
	$$
	Hence $I_t^\alpha\langle f(t),\phi\rangle$ is well-defined for almost every $t$.
	
	\begin{proof}[Proof of Lemma \ref{240426524}]
		(1) Equation \eqref{240420138} follows directly from \eqref{240419514} and \eqref{240420130} by straightforward calculation.
		
		(2) Define $\overline{u}(t):=u_0+I_t^{\alpha}f(t)$ in the sense of distributions, \textit{i.e.},
		\begin{align}\label{240419724}
			\la \overline{u}(t),\phi\ra :=\la u_0,\phi\ra+\int_0^t\frac{(t-s)^{-1+\alpha}}{\Gamma(\alpha)} \la f(s),\phi\ra\dd s
		\end{align}
		By Remark \ref{2404201227}.(2), $\overline{u}(t)$ is well-defined in $\cD'(\Omega)$ for any $t\in(0,T]$.
		
		We first claim that $\la u(t),\phi\ra=\la \overline{u}(t),\phi\ra$ for almost every $t\in(0,T]$.
		For arbitrary fixed $\eta\in C_c^{\infty}\big([0,T)\big)$, put
		\begin{align*}
			\widetilde{\eta}(t):=\int_t^T\frac{(r-t)^{-1+\alpha}}{\Gamma(\alpha)}\eta(r)\dd r\,.
		\end{align*}
		Then $\widetilde{\eta}\in C_c^{\infty}([0,T))$, and
		$\partial_t^k\widetilde{\eta}(t)=\int_t^T\frac{(r-t)^{-1+\alpha}}{\Gamma(\alpha)}(\partial_t^k\eta)(r)\dd r$.
		It follows from \eqref{240420130} that
		\begin{align*}
			\int_0^T \Big(I_t^{1-\alpha}\la u(t)-u_0,\phi\ra \Big)\widetilde{\eta}\,'(t)\dd t=-\int_0^T\la u(r)-u_0,\phi\ra \eta(r)\dd r\,.
		\end{align*}
		On the other hand, \eqref{240420138} and \eqref{240419724} imply that
		\begin{align*}
			\int_0^T \Big(I_t^{1-\alpha}\la u(t)-u_0,\phi\ra \Big)\widetilde{\eta}\,'(t)\dd t\,&=-\int_0^t\la f(t),\phi\ra \widetilde{\eta}(t)\dd t\\
			&=-\int_0^T\la \overline{u}(t)-u_0,\phi\ra \eta(t)\dd t\,.
		\end{align*}
		Therefore we have
		\begin{align*}
		\int_0^T\la u(t)-u_0,\phi\ra \eta(t)\dd t\,&=-\int_0^T \Big(I_t^{1-\alpha}\la u(t)-u_0,\phi\ra \Big)\widetilde{\eta}\,'(t)\dd t\\
		&=\int_0^T\la \overline{u}(t)-u_0,\phi\ra \eta(t)\dd t\,.
		\end{align*}
		Since this equality holds for all $\eta\in C_c^{\infty}\big([0,T)\big)$, we conclude that $\la u(t),\phi\ra=\la \overline{u}(t),\phi\ra$ for almost every $t\in(0,T]$.
		
		It follows from the separability of $C_c^{\infty}(\Omega)$ that $u(t)=\overline{u}(t)$ in $\cD'(\Omega)$ for almost every $t\in(0,T]$, which implies that $\overline{u}\in\Psi\bH_{p,\theta}^{\gamma+2}(\Omega)$.
		By \eqref{240419724}, $\overline{u}\in \Psi\cH_{p,\theta}^{\alpha,\gamma+2}(\Omega,T)$ with $\overline{u}(0)=u_0$ and $\partial_t^\alpha\overline{u}=f$.
		
		To prove the uniqueness of $\overline{u}$, consider the case that $\overline{u}\in \Psi\cH_{p,\theta}^{\alpha,\gamma+2}(\Omega,T)$ with $\overline{u}(0)= 0$ and $\partial_t^\alpha \overline{u}=0$.
		Then, by \eqref{240911422}, $\overline{u}(t)=0$ for all $t\in (0,T]$.
		Therefore the proof is complete.
	\end{proof}
	
	\begin{lemma}\label{240911115} Let $0<\alpha\leq 1$, $1<p<\infty$, and $0<T<\infty$.
		\begin{enumerate}
			\item $\Psi\mathring{\cH}_{p,\theta}^{\alpha,\gamma}(\Omega,T)$ is a Banach space, and $C_c^{\infty}\big((0,\infty)\times \Omega\big)$ is dense in $\Psi \mathring{\cH}_{p,\theta}^{\alpha,\gamma+2}(\Omega,T)$.
			
			\item Suppose that $\alpha>\frac{1}{p}$. Then 
			$\Psi\cH_{p,\theta}^{\alpha,\gamma}(\Omega,T)$ is a Banach space, and $C_c^{\infty}\big([0,\infty)\times \Omega\big)$ is dense in $\Psi \cH_{p,\theta}^{\alpha,\gamma+2}(\Omega,T)$.
		\end{enumerate}			
	\end{lemma}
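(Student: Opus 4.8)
The plan is: completeness is obtained by passing to the limit in the distributional identities \eqref{240419720} and \eqref{240419514} that define the two spaces, using the pairing bounds already recorded in Remark \ref{2404201227}; density is proved in two stages, a spatial cut‑off and mollification via Lemma \ref{22.04.11.3}, then a mollification and cut‑off in $t$ arranged so that the approximants keep the initial trace unchanged. For completeness, since each $\Psi X_{p,\theta}^{\gamma}(\Omega)$ is a Banach space (Lemma \ref{220527502111}.(1)), so are $\Psi\bH_{p,\theta}^{\gamma}(\Omega,T)=L_p\big((0,T);\Psi H_{p,\theta}^{\gamma}(\Omega)\big)$ and the relevant $\Psi B$ space. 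Given a Cauchy sequence $(u_n)$ in $\Psi\mathring\cH_{p,\theta}^{\alpha,\gamma+2}(\Omega,T)$, let $u$ and $f$ be the limits of $(u_n)$ in $\Psi\bH_{p,\theta}^{\gamma+2}(\Omega,T)$ and of $(\partial_t^\alpha u_n)$ in $\Psi\bH_{p,\theta+2p}^{\gamma}(\Omega,T)$; for fixed $\phi\in C_c^\infty(\Omega)$ and $\eta\in C_c^\infty\big([0,T)\big)$ the bounds in \eqref{240426508} show both sides of \eqref{240419720} written for $(u_n,\partial_t^\alpha u_n)$ converge to the corresponding expressions for $(u,f)$, so $u\in\Psi\mathring\cH_{p,\theta}^{\alpha,\gamma+2}(\Omega,T)$ with $\partial_t^\alpha u=f$ and $u_n\to u$ in norm. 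For $\Psi\cH_{p,\theta}^{\alpha,\gamma+2}(\Omega,T)$ with $\alpha>1/p$, let also $u_0$ be the limit of $(u_n(0))$ and put $\overline u(t):=u_0+I_t^\alpha f(t)$ (well defined for $t\in(0,T]$ by Remark \ref{2404201227}.(2)); passing to the limit in \eqref{240419514} tested against $\phi\in C_c^\infty(\Omega)$ — using \eqref{240428116} for the $I_t^\alpha$ term and continuity of the pairing for the $u_0$ term — gives $\la u_n(t),\phi\ra\to\la\overline u(t),\phi\ra$ for every $t$, while $u_n\to u$ in $\Psi\bH_{p,\theta}^{\gamma+2}(\Omega,T)$ forces $u_{n_k}(t)\to u(t)$ in $\cD'(\Omega)$ for a.e.\ $t$ along a subsequence; by separability of $C_c^\infty(\Omega)$, $\overline u=u$ a.e., so $\overline u$ is the desired limit in $\Psi\cH_{p,\theta}^{\alpha,\gamma+2}(\Omega,T)$.

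For density, fix $u$ in the space, with $f:=\partial_t^\alpha u$ (and $u_0:=u(0)$ in case (2)), and apply the operators $\Lambda_{i,j,k}$ of Lemma \ref{22.04.11.3} to $u(t,\cdot)$ for each $t$. Since each $\Lambda_{i,j,k}$ acts only in $x$ — a multiplication by a function in $C_c^\infty(\Omega)$ followed by a spatial mollification — a direct check with \eqref{240419720} and \eqref{240419514} shows it commutes with $I_t^\alpha$ and $\partial_t^\alpha$; hence $\Lambda_{i,j,k}u$ again lies in $\Psi\mathring\cH_{p,\theta}^{\alpha,\gamma+2}(\Omega,T)$ (resp.\ $\Psi\cH_{p,\theta}^{\alpha,\gamma+2}(\Omega,T)$) with $\partial_t^\alpha(\Lambda_{i,j,k}u)=\Lambda_{i,j,k}f$ (and $(\Lambda_{i,j,k}u)(0)=\Lambda_{i,j,k}u_0\in C_c^\infty(\Omega)$). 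By Lemma \ref{22.04.11.3}.(2)--(3) (and its analogue for $\Psi B$ spaces) together with dominated convergence in $t$, one can choose a diagonal sequence $(i_m,j_m,k_m)$ with $\Lambda_{i_m,j_m,k_m}u\to u$ in $\Psi\bH_{p,\theta}^{\gamma+2}(\Omega,T)$, $\Lambda_{i_m,j_m,k_m}f\to f$ in $\Psi\bH_{p,\theta+2p}^{\gamma}(\Omega,T)$, and (case (2)) $\Lambda_{i_m,j_m,k_m}u_0\to u_0$ in $\Psi B_{p,\theta+2/\alpha}^{\gamma+2-2/(p\alpha)}(\Omega)$. Thus it suffices to approximate a fixed $v:=\Lambda_{i,j,k}u$: it is spatially supported in a fixed compact $K\Subset\Omega$, and since a spatial mollification maps $\Psi H_{p,\sigma}^{s}(\Omega)$ into every $H_p^{m}(\bR^d)$ and $H_p^{m}\hookrightarrow C^{\ell}$, the maps $t\mapsto v(t,\cdot)$ and $t\mapsto(\partial_t^\alpha v)(t,\cdot)$ lie in $L_p\big((0,T);C^{\ell}(K)\big)$ for every $\ell$; on functions supported in $K$ all the weighted norms are comparable to the unweighted norms on $\bR^d$.

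For the temporal step, write $v(t)=v(0)+I_t^\alpha g(t)$ with $g:=\partial_t^\alpha v$ (valid a.e.\ on $(0,T)$ for $v\in\Psi\mathring\cH$ by \eqref{240419720}, and for every $t$ for $v\in\Psi\cH$ by \eqref{240419514}; $v(0)=0$ in case (1)), extend $g$ by $0$ outside $(0,T)$, and set $v_\delta(t):=v(0)+I_t^\alpha\big(g\ast_t\varphi_\delta\big)(t)$ where $\varphi_\delta\in C_c^\infty\big((0,\delta)\big)$ in case (2) and $\varphi_\delta\in C_c^\infty\big((\delta,2\delta)\big)$ in case (1). Because $I_t^\alpha$ is a convolution in $t$, $v_\delta=v\ast_t\varphi_\delta$; $g\ast_t\varphi_\delta$ is $C^\infty$ and vanishes to infinite order at $t=0$, so $I_t^\alpha\big(g\ast_t\varphi_\delta\big)$ is $C^\infty$ via $\partial_t^k I_t^\alpha h=I_t^\alpha h^{(k)}$ for $h$ flat at $0$; hence $v_\delta$ is jointly smooth, spatially supported in $K$, has trace $v(0)$ at $t=0$ and $\partial_t^\alpha v_\delta=g\ast_t\varphi_\delta$, and in case (1) vanishes on $[0,\delta)$. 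Multiplying by a cut‑off $\chi\in C^\infty\big([0,\infty)\big)$ with $\chi\equiv 1$ on $[0,T]$ and $\mathrm{supp}\,\chi\subset[0,2T]$ yields a function in $C_c^\infty\big((0,\infty)\times\Omega\big)$ (case (1)) or $C_c^\infty\big([0,\infty)\times\Omega\big)$ (case (2)) whose restriction to $(0,T]\times\Omega$ has the same initial trace $v(0)$ and the same Caputo derivative $g\ast_t\varphi_\delta$ as $v_\delta$. As $\delta\to 0$, the boundedness of $I_t^\alpha$ on $L_p\big((0,T);\Psi H_{p,\theta}^{\gamma+2}(\Omega)\big)$ together with $g\ast_t\varphi_\delta\to g$ in $L_p\big((0,T);\Psi H_{p,\theta+2p}^{\gamma}(\Omega)\big)$ gives $\chi v_\delta\to v$ in $\Psi\mathring\cH_{p,\theta}^{\alpha,\gamma+2}(\Omega,T)$ (resp.\ $\Psi\cH_{p,\theta}^{\alpha,\gamma+2}(\Omega,T)$), and combined with the spatial step this proves density.

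The subtle part is the temporal step: because $\partial_t^\alpha$ is nonlocal, one must verify that mollifying and truncating in time stays inside the solution class with the prescribed $\partial_t^\alpha$ and — decisively in the $\Psi\cH$ case — with \emph{exactly} the prescribed initial trace, since $\|u(0)\|_{\Psi B}$ enters the norm and no a priori solvability estimate is yet available; this is precisely why the construction must be designed never to perturb $v(0)$ (and why the kernel $\varphi_\delta$ is placed on $(\delta,2\delta)$ in the zero‑initial‑data case, to push the support off $t=0$). Establishing joint $C^\infty$‑regularity of the time‑mollified functions through the identity $\partial_t^k I_t^\alpha h=I_t^\alpha h^{(k)}$ for $h$ flat at $0$ is the other point requiring care.
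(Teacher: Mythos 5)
Your proof is correct and follows essentially the same route as the paper's: approximate first in $x$ via the cut‑off/mollification operators $\Lambda_{i,j,k}$ of Lemma \ref{22.04.11.3}, then in $t$ by convolution against a one‑sided $C_c^\infty$ kernel, arranged so that the zero‑initial‑data approximants vanish near $t=0$ (case 1) and the trace $v(0)$ is left untouched (case 2). The only cosmetic difference is that the paper mollifies $v$ directly after extending it to $t<0$ by $0$ (resp.\ by $v(0)$), whereas you mollify $g=\partial_t^\alpha v$ and reconstruct $v_\delta=v(0)+I_t^\alpha(g*_t\varphi_\delta)$, which — as you observe via $v_\delta=v*_t\varphi_\delta$ — is the same object; do use that identity (rather than the $L_p$-boundedness of $I_t^\alpha$ alone) to conclude $v_\delta\to v$ in $\Psi\bH_{p,\theta}^{\gamma+2}(\Omega,T)$, as the boundedness of $I_t^\alpha$ applied to $g*_t\varphi_\delta-g$ only controls a lower‑regularity norm unless one first invokes the smoothness and compact support of $g=\Lambda_{i,j,k}(\partial_t^\alpha u)$.
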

	
	\begin{proof}
		It follows from \eqref{240426508}–\eqref{240911422} that both $\Psi\mathring{\cH}_{p,\theta}^{\gamma}(\Omega,T)$ and $\Psi\cH_{p,\theta}^{\gamma}(\Omega,T)$ are Banach spaces.
		Therefore, it remains to prove that $C_c^{\infty}\big((0,\infty)\times\Omega\big)$ and $C_c^{\infty}\big([0,\infty)\times\Omega\big)$ are dense in $\Psi\mathring{\cH}_{p,\theta}^{\alpha,\gamma+2}(\Omega,T)$ and $\Psi\cH_{p,\theta}^{\alpha,\gamma+2}(\Omega,T)$, respectively.
		
		(1) Let $\delta>0$ be given.
		By Lemma \ref{22.04.11.3}, there exists $i_0,j_0,k_0\in\bN$ such that
		\begin{align}\label{2404273131}
			\|u-\Lambda_{i_0,j_0,k_0}u\|_{\Psi \mathring{\cH}_{p,\theta}^{\alpha,\gamma+2}(\Omega,T)}<\delta/2\,,
		\end{align}
		where $\Lambda_{i_0,j_0,k_0}$ is the operator defined in Lemma \ref{22.04.11.3}.
		Set $v:=\Lambda_{i_0,j_0,k_0}u$ and $g:=\partial_t^\alpha v=\Lambda_{i_0,j_0,k_0}(\partial_t^\alpha u)$.
		Observe that $v\in \Psi \mathring{\cH}_{p,\theta}^{\alpha,\gamma+2}(\Omega,T)$.
		For convenience, for $t<0$, we put $v(t)=0$.
		Take $\eta_0\in C_c^{\infty}(\bR)$ such that $\mathrm{supp}(\eta_0)\subset [1/2,1)$ and $\int_\bR \eta_0(t)\dd t=1$.
		For $\epsilon\in(0,1)$, we define
		$$
		v^{(\epsilon)}(t):=\int_0^1v(t-s)\eta_0(\epsilon s)\dd s\quad \text{and}\quad g^{(\epsilon)}(t):=\int_0^1g(t-s)\eta_0(\epsilon s)\dd s\,.
		$$
		Then $v^{(\epsilon)}(t)=0$ for all $t\in (0,\epsilon/2)$. 
		It is easy to see that  $v^{(\epsilon)}\in\mathring{\cH}_{p,\theta}^{\alpha,\gamma+2}(\Omega,T)$ with $\partial_t^\alpha\big(v^{(\epsilon)}\big)=g^{(\epsilon)}$.
		Since $v\in\bH_{p,\theta}^{\gamma+2}(\Omega,T)$, the definition of $v^{(\epsilon)}$ implies that $v^{(\epsilon)}\rightarrow v$ as $\epsilon\rightarrow 0$ in $\bH_{p,\theta}^{\gamma+2}(\Omega,T)$.
		In the same manner, $g^{(\epsilon)}\rightarrow g$ in $\bH_{p,\theta+2p}^{\gamma}(\Omega)$.
		Therefore, we conclude that there exists $\epsilon>0$ such that $\|v-v^{(\epsilon)}\|_{\mathring{\cH}_{p,\theta}^{\alpha,\gamma+2}(\Omega,T)}<\delta/2$.
		By combining this with \eqref{2404273131}, we have $\|u-v^{(\epsilon)}\|_{\mathring{\cH}_{p,\theta}^{\alpha,\gamma+2}(\Omega,T)}<\delta$.
		It follows from the definition of $v^{(\epsilon)}$ that $v^{(\epsilon)}$ lie in $C_c^{\infty}\big((0,T]\big)$.
		Therefore the proof is complete.
		
		(2) The proof of this assertion is almost the same with the proof of (1) of this lemma (for $t<0$, put $v(t)= v(0)$ instead of $v(t)\equiv 0$).
		We leave the details to the reader.
	\end{proof}

	\begin{lemma}\label{2205241011}
		Let $\Psi'$ be a regular Harnack function, $p'\in(1,\infty)$, and $\gamma',\,\theta'\in\bR$.
		If $f\in \Psi \bH_{p,\theta}^{\gamma}(\Omega,T)\cap \Psi'\bH_{p',\theta'}^{\gamma'}(\Omega,T)$, then for any $\epsilon>0$, there exist $g\in C_c^{\infty}((0,T)\times\Omega)$ such that
		$$
		\|f-g\|_{\Psi \bH_{p,\theta}^{\gamma}(\Omega,T)}+\|f-g\|_{\Psi'\bH_{p',\theta'}^{\gamma'}(\Omega,T)}<\epsilon\,.
		$$
	\end{lemma}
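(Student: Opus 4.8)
The plan is to follow the density scheme used in the proof of Lemma \ref{240911115}.(1). The decisive observation is that every operator appearing there --- the spatial cut-off--mollification operators $\Lambda_{i,j,k}$ of Lemma \ref{22.04.11.3} and the subsequent mollification in $t$ --- is independent of the weight and of $(p,\gamma,\theta)$, so one and the same sequence of approximants converges to $f$ simultaneously in $\Psi\bH_{p,\theta}^{\gamma}(\Omega,T)$ and in $\Psi'\bH_{p',\theta'}^{\gamma'}(\Omega,T)$.

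First I would carry out the spatial reduction. Since $f$ lies in the intersection, $f(t,\cdot)\in \Psi H_{p,\theta}^{\gamma}(\Omega)\cap \Psi'H_{p',\theta'}^{\gamma'}(\Omega)$ for a.e.\ $t\in(0,T)$. Applying $\Lambda_{i,j,k}$ pointwise in $t$ and invoking Lemma \ref{22.04.11.3}.(2)--(3) once with $(\Psi,p,\gamma,\theta)$ and once with $(\Psi',p',\gamma',\theta')$, together with the dominated convergence theorem in the $t$-integral, gives $\Lambda_{i,j,k}f\to f$ as $i,j,k\to\infty$ (in that order) in both $\bH$-spaces; since $\Lambda_{i,j,k}$ is the \emph{same} operator in either case, one can fix $i_0,j_0,k_0$ with $\|f-v\|_{\Psi\bH_{p,\theta}^{\gamma}(\Omega,T)}+\|f-v\|_{\Psi'\bH_{p',\theta'}^{\gamma'}(\Omega,T)}<\epsilon/2$ for $v:=\Lambda_{i_0,j_0,k_0}f$. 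By \eqref{241002704}, $v(t,\cdot)$ is supported, for a.e.\ $t$, in a fixed compact $K\subset\Omega$; and because $\Lambda_{i_0,j_0,k_0}$ multiplies $f(t,\cdot)$ by a fixed $C_c^{\infty}(\Omega)$ function and then convolves with a fixed mollifier, there are constants $N_m$ such that $\|v(t)\|_{C^m(\overline{\Omega})}\leq N_m\|f(t)\|_{\Psi H_{p,\theta}^{\gamma}(\Omega)}$ for every $m\in\bN_0$; hence $v\in L_p\big((0,T);C^m(\overline{\Omega})\big)$ for all $m$.

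Next I would truncate and mollify in $t$. Choosing $\chi_\delta\in C_c^{\infty}\big((0,T)\big)$ with $0\le\chi_\delta\le 1$ and $\chi_\delta\uparrow 1$ pointwise on $(0,T)$, dominated convergence yields $\chi_\delta v\to v$ in both $\bH$-spaces, so one fixes $\delta>0$ so that $w:=\chi_\delta v$, supported in $[\delta,T-\delta]\times K$, is within $\epsilon/4$ of $v$ in the sum of the two norms. For $0<\kappa<\delta$ let $\omega_\kappa$ be a standard mollifier on $\bR$ with $\mathrm{supp}(\omega_\kappa)\subset(-\kappa,\kappa)$ and set $g(t,x):=\int_{\bR}\omega_\kappa(t-r)\,w(r,x)\dd r$. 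Using the $C^m_x$-bound above, $\partial_t^aD_x^{\beta}g(t,x)=\int_{\bR}\omega_\kappa^{(a)}(t-r)\,D_x^{\beta}w(r,x)\dd r$ is finite and jointly continuous in $(t,x)$, so $g\in C^{\infty}\big((0,T)\times\Omega\big)$ with $\mathrm{supp}(g)\subset[\delta-\kappa,T-\delta+\kappa]\times K$ a compact subset of $(0,T)\times\Omega$, i.e.\ $g\in C_c^{\infty}\big((0,T)\times\Omega\big)$. By the standard mollification theorem in Bochner spaces, applied with $X=\Psi H_{p,\theta}^{\gamma}(\Omega)$ and with $X=\Psi'H_{p',\theta'}^{\gamma'}(\Omega)$ (note $w\in L_p((0,T);X)$, resp.\ $L_{p'}((0,T);X)$), $g\to w$ in both spaces as $\kappa\searrow 0$; fixing $\kappa$ small and combining the three estimates by the triangle inequality then gives $\|f-g\|_{\Psi\bH_{p,\theta}^{\gamma}(\Omega,T)}+\|f-g\|_{\Psi'\bH_{p',\theta'}^{\gamma'}(\Omega,T)}<\epsilon$.

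The main obstacle I anticipate is the verification in the last step that $g\in C_c^{\infty}\big((0,T)\times\Omega\big)$: the function $v$ is merely $L_p$ in $t$ while smooth in $x$, and it is precisely the locally integrable control in $t$ of every spatial derivative of $v$ --- inherited from the explicit mollifier structure of $\Lambda_{i,j,k}$ --- that forces the $t$-mollification $g$ to be jointly smooth and compactly supported. Everything else reduces to the dominated convergence theorem and the triangle inequality.
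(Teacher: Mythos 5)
Your proof is correct, and it rests on the same guiding observation as the paper's: the approximation operators involved do not depend on the weight $\Psi$ or on the indices $(p,\gamma,\theta)$, so convergence in both Bochner spaces occurs simultaneously. The two arguments simply carry out the two reductions in opposite order. The paper first mollifies in $t$, reducing to the case where $f$ is an $X$-valued continuous function on $[0,T]$ with $X := \Psi H_{p,\theta}^{\gamma}(\Omega)\cap \Psi' H_{p',\theta'}^{\gamma'}(\Omega)$; it then approximates such an $f$ by a finite combination $\widetilde{f}(t,\cdot)=\sum_{k} \eta_k(t) f(kT/N,\cdot)$ with $\eta_k\in C_c^\infty\big((0,T)\big)$, and finally replaces each coefficient $f(kT/N,\cdot)$ by an element of $C_c^\infty(\Omega)$ using the density statement from Lemma \ref{22.04.11.3}. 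You instead perform the spatial reduction first, applying the operators $\Lambda_{i,j,k}$ of Lemma \ref{22.04.11.3} pointwise in $t$ together with dominated convergence, and only afterwards cut off and mollify in $t$. Both routes are valid. The paper's order is a little leaner because the final step lands directly in $C_c^\infty(\Omega)$, so there is no need for the uniform $C^m_x$-bounds on $v(t)$ that you derive to verify joint smoothness of $g$; your order has the merit of making the compact support and joint smoothness of the eventual approximant completely explicit and of avoiding the intermediate reduction to continuous $X$-valued functions.
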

	
	\begin{proof}
		Since $\Psi H_{p,\theta}^{\gamma}(\Omega)$ and $\Psi'H_{p',\theta'}^{\gamma'}(\Omega)$ are separable Banach spaces, by a standard mollification argument, it suffices to consider the case where $f$ is a $\Psi H_{p,\theta}^{\gamma}(\Omega)\cap \Psi' H_{p',\theta'}^{\gamma'}(\Omega)$-valued continuous function in $[0,T]$. 
		In this case, for any $\epsilon>0$, there exists a sufficiently large $N\in\bN$ and functions $\eta_1,\,\ldots,\,\eta_N\in C_c^{\infty}\big((0,T)\big)$ such that
		$$
		\|\,f-\widetilde{f}\,\|_{\Psi \bH_{p,\theta}^\gamma(\Omega,T)}+\|\,f-\widetilde{f}\,\|_{\Psi' \bH_{p',\theta'}^{\gamma'}(\Omega,T)}<\epsilon\,,
		$$
		where 
		$$
		\widetilde{f}(t,\cdot)=\sum_{k=1}^N\eta_k(t)f\big(kT/N,\,\cdot\,\big)\,.
		$$
		Since $C_c^{\infty}((0,T)\times \Omega)$ is dense in $\Psi \bH_{p,\theta}^{\gamma}(\Omega,T)\cap \Psi' \bH_{p',\theta'}^{\gamma'}(\Omega,T)$ (see Lemma \ref{22.04.11.3}), this completes the proof.
	\end{proof}
	
	We conclude this subsection with the following parabolic embedding theorem for $\Psi\cH_{p,\theta}^{\gamma+2}(\Omega)$.
	
	\begin{prop}\label{2204160313}
		Let $\alpha>\frac{1}{p}$ and $\frac{1}{p}\leq \beta< \alpha$.
		Then, for any $u\in\Psi\cH^{\alpha,\gamma+2}_{p,\theta}(\Omega,T)$ and $0\leq s< t\leq T$,
		\begin{align}\label{24082783611}
			\|u(t)-u(s)\|_{\Psi B_{p,\theta+2p\beta/\alpha}^{\gamma+2-2\beta/\alpha}(\Omega)}\leq  N |t-s|^{\beta-1/p}\|u\|_{\Psi \cH_{p,\theta}^{\alpha,\gamma+2}(\Omega,T)}\,,
		\end{align}
		where $N=N(d,p,\gamma,\theta,\alpha,\beta)$.
		In addition,
		\begin{align}\label{240827836111111}
			\|u(t)-u(s)\|_{\Psi H_{p,\theta+2p}^{\gamma}(\Omega)}\leq N |t-s|^{\alpha-1/p}\|u\|_{\Psi \cH_{p,\theta}^{\alpha,\gamma+2}(\Omega,T)}\,,
		\end{align}
		where $N=N(d,p,\gamma,\theta,\alpha)$.
	\end{prop}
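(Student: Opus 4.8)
The plan is to prove the two displayed estimates separately: \eqref{240827836111111} follows directly from the Riemann--Liouville representation of $u$, and \eqref{24082783611} is then obtained from the endpoint case $\beta=\frac1p$ (a trace/continuity statement) together with \eqref{240827836111111} by interpolation in $\beta$. For \eqref{240827836111111}: since $\alpha>\frac1p$ and $u\in\Psi\cH^{\alpha,\gamma+2}_{p,\theta}(\Omega,T)$, identity \eqref{240419514} gives $u(t)-u(s)=I_t^{\alpha}f(t)-I_s^{\alpha}f(s)$ with $f:=\partial_t^{\alpha}u\in\Psi\bH^{\gamma}_{p,\theta+2p}(\Omega,T)=L_p\big((0,T);B\big)$, $B:=\Psi H^{\gamma}_{p,\theta+2p}(\Omega)$; writing out the Riemann--Liouville integrals and splitting the one over $(0,t)$ at $r=s$,
\begin{align*}
\Gamma(\alpha)\,\big(u(t)-u(s)\big)=\int_s^t(t-r)^{\alpha-1}f(r)\dd r+\int_0^s\big[(s-r)^{\alpha-1}-(t-r)^{\alpha-1}\big]f(r)\dd r\,.
\end{align*}
Taking $\|\cdot\|_B$, using Minkowski's integral inequality and H\"older's inequality in $r$, and using $\int_s^t(t-r)^{(\alpha-1)p'}\dd r\lesssim(t-s)^{(\alpha-1/p)p'}$ together with the substitution $r=s-(t-s)\tau$, which yields
$$
\int_0^s\big[(s-r)^{\alpha-1}-(t-r)^{\alpha-1}\big]^{p'}\dd r\le(t-s)^{(\alpha-1/p)p'}\int_0^\infty\big[\tau^{\alpha-1}-(1+\tau)^{\alpha-1}\big]^{p'}\dd\tau
$$
(the last integral being finite exactly because $\alpha>\frac1p$: its integrand is $\sim\tau^{(\alpha-1)p'}$ near $0$ and $\sim\tau^{(\alpha-2)p'}$ near $\infty$), one obtains $\|u(t)-u(s)\|_{B}\lesssim(t-s)^{\alpha-1/p}\|f\|_{L_p((0,T);B)}\le(t-s)^{\alpha-1/p}\|u\|_{\Psi\cH^{\alpha,\gamma+2}_{p,\theta}(\Omega,T)}$, which is \eqref{240827836111111}. (The same computation works with $B$ replaced by any Banach space containing the values $f(r)$.)

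For \eqref{24082783611} with $\frac1p\le\beta<\alpha$ the smoothness index $\gamma+2-2\beta/\alpha$ lies strictly between $\gamma$ and $\gamma+2$, so one cannot argue from $f$ alone and must exploit that $u\in L_p\big((0,T);\Psi H^{\gamma+2}_{p,\theta}(\Omega)\big)$ as well. The key input is the endpoint $\beta=\frac1p$: the map $t\mapsto u(t)$ is continuous into $\Psi B^{\gamma+2-2/(p\alpha)}_{p,\theta+2/\alpha}(\Omega)$ --- the space of the initial value $u(0)$ in Definition \ref{240419512}.(2) --- with $\sup_{0\le t\le T}\|u(t)\|_{\Psi B^{\gamma+2-2/(p\alpha)}_{p,\theta+2/\alpha}(\Omega)}\lesssim\|u\|_{\Psi\cH^{\alpha,\gamma+2}_{p,\theta}(\Omega,T)}$. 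I would obtain this by the standard localization of Krylov's weighted theory: multiply $\Psi^{-1}u$ by the $t$-independent cutoffs $\zeta_{0,(n)}$ from Definition \ref{220610533} (which therefore commute with $\partial_t^{\alpha}$), rescale space-time to unit scale by $x\mapsto\ee^n x$, $t\mapsto\ee^{2n/\alpha}t$ (under which $\partial_t^{\alpha}-\Delta$ is homogeneous), so that each rescaled piece lies in $\cH^{\alpha,\gamma+2}_p(\bR^d,T)$ with $\ell^p$-summable norms --- this uses the multiplier and rescaling estimates of Lemma \ref{220527502}.(4)--(6) and the regular Harnack bounds on $\Psi$ and $\trho$ --- then apply the whole-space trace theorem for $\cH^{\alpha,\gamma+2}_p(\bR^d,T)$ (classical for $\alpha=1$; for $0<\alpha<1$ the time-fractional version from the references in the Introduction) scale by scale, and re-sum: by \eqref{220526530} the left side reconstructs the $\Psi B^{\gamma+2-2/(p\alpha)}_{p,\theta+2/\alpha}(\Omega)$-norm, and the right side is controlled by $\|u\|_{\Psi\cH^{\alpha,\gamma+2}_{p,\theta}(\Omega,T)}$ up to the rescaling bookkeeping noted below. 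Granting this endpoint and \eqref{240827836111111}, the intermediate exponents $\beta\in(\frac1p,\alpha)$ follow by interpolating the linear map $u\mapsto u(t)-u(s)$ between its bound into $\Psi B^{\gamma+2-2/(p\alpha)}_{p,\theta+2/\alpha}(\Omega)$ (operator norm $\lesssim1$) and its bound into $\Psi H^{\gamma}_{p,\theta+2p}(\Omega)$ (operator norm $\lesssim|t-s|^{\alpha-1/p}$): by Lemma \ref{220527502111}.(5) together with reiteration, the interpolated target space is $\big(\Psi H^{\gamma+2}_{p,\theta}(\Omega),\Psi H^{\gamma}_{p,\theta+2p}(\Omega)\big)_{\beta/\alpha,\,p}=\Psi B^{\gamma+2-2\beta/\alpha}_{p,\theta+2p\beta/\alpha}(\Omega)$, and with interpolation parameter $\eta=\frac{p\beta-1}{p\alpha-1}\in(0,1)$ the interpolated operator norm is $\lesssim\big(|t-s|^{\alpha-1/p}\big)^{\eta}=|t-s|^{\beta-1/p}$, which is \eqref{24082783611}.

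The main obstacle is in \eqref{24082783611}. The substantive analytic input is the whole-space trace theorem for $\cH^{\alpha,\gamma+2}_p(\bR^d,T)$, which encodes precisely the mutual compatibility of the three spaces $\Psi\bH^{\gamma+2}_{p,\theta}$, $\Psi\bH^{\gamma}_{p,\theta+2p}$ and $\Psi B^{\gamma+2-2/(p\alpha)}_{p,\theta+2/\alpha}$ built into Definition \ref{240419512}. The technical burden is the bookkeeping in the localization step, where the spatial dilation factor $\ee^{n}$ and the temporal factor $\ee^{2n/\alpha}$ (and the factor produced by the scaling of the Caputo derivative) must recombine with $\ee^{n\theta}$ into the single weight exponent of the target Besov space; here it is essential that the $\zeta_{0,(n)}$ are $t$-independent, so localization commutes with $\partial_t^{\alpha}$, and that $\Psi$ is a regular Harnack function, so that multiplication by $\Psi^{\pm1}$ is bounded on all the spaces involved.
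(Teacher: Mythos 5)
Your proof is correct, and while it proves the same statement as the paper it reorganizes the argument in a way worth noting. The paper reduces to $\Psi\equiv 1$, localizes with the time-independent cutoffs $\zeta_{0,(n)}$, and then applies a whole-space estimate (Proposition \ref{240827811}) that is already interpolated in $\beta$ and carries an explicit dilation parameter $A$; putting $A=\ee^{2n/\alpha}$ scale by scale and resumming yields \eqref{24082783611} for all $\beta\in[1/p,\alpha)$ at once, and \eqref{240827836111111} is obtained the same way as the $\beta=\alpha$ endpoint of that lemma. You, by contrast, (i) prove \eqref{240827836111111} directly at the level of $B$-valued functions with $B=\Psi H_{p,\theta+2p}^{\gamma}(\Omega)$ via the Riemann--Liouville representation, Minkowski and H\"older, which is genuinely cleaner because the computation does not see the weight at all and no localization is needed (the paper runs the identical computation but only at the $\bR^d$ level, forcing a localization pass afterward); and (ii) localize only to obtain the single endpoint $\beta=1/p$ (uniform-in-time trace into $\Psi B^{\gamma+2-2/(p\alpha)}_{p,\theta+2/\alpha}(\Omega)$), after which you interpolate the target spaces of the linear map $u\mapsto u(t)-u(s)$ using reiteration together with \eqref{241108522} to land in $\Psi B^{\gamma+2-2\beta/\alpha}_{p,\theta+2p\beta/\alpha}(\Omega)$ with the correct norm gain $|t-s|^{\beta-1/p}$. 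The bookkeeping in your interpolation step (identifying $\Psi B^{\gamma+2-2/(p\alpha)}_{p,\theta+2/\alpha}$ as $(\Psi H_{p,\theta}^{\gamma+2},\Psi H_{p,\theta+2p}^{\gamma})_{1/(\alpha p),p}$, using that the right endpoint $\Psi H_{p,\theta+2p}^{\gamma}$ is of class $J(1)\cap K(1)$ since $C_c^{\infty}(\Omega)$ is dense in both endpoints, and computing $\eta=\frac{p\beta-1}{p\alpha-1}$) checks out. The trade-off is that you only need the $A=1$ version of the whole-space trace theorem but must invoke reiteration at the weighted level, whereas the paper avoids reiteration by parametrizing the whole-space lemma by $A$; the analytic inputs (time-independence of $\zeta_{0,(n)}$, the $\bR^d$ trace theorem for $\cH_p^{\alpha,\gamma+2}$, the scaling factors $\ee^{n}$ and $\ee^{2n/\alpha}$ recombining with $\ee^{n\theta}$) are the same.
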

	
	\begin{proof}
		For any $\gamma',\theta'\in\bR$, the map $f\mapsto \Psi^{-1}f$ is an isometric isomorphism from $\Psi H_{p,\theta'}^{\gamma'}(\Omega)$ (resp. $\Psi B_{p,\theta'}^{\gamma'}(\Omega)$, $\Psi \cH_{p,\theta'}^{\gamma'}(\Omega,T)$) to $H_{p,\theta'}^{\gamma'}(\Omega)$ (resp. $B_{p,\theta'}^{\gamma'}(\Omega)$, $\cH_{p,\theta'}^{\gamma'}(\Omega,T)$).
		Therefore, it suffices to prove the proposition for the case $\Psi\equiv1$.
		The proof of this case is almost the same as that of \cite[Theorem 7.1]{Krylov2001}.
		Since $u\in\cH_{p,\theta}^{\gamma+2}(\Omega,T)$, the function
		$$
		u_n(t,x):=u(t,\ee^nx)\zeta_{0,(n)}(\ee^nx)\in \cH_{p}^{\gamma+2}(\bR^d,T)
		$$
		satisfies 
		$$
		\big(\partial_t^\alpha u_n\big)(s,\cdot)=\partial_t^\alpha u(s,\ee^n\,\cdot\,)\zeta_{0,(n)}(\ee^n\,\cdot)
		$$
		in the sense of distributions on $\bR^d$.
		Since $u_n\in \cH_p^{\gamma+2}(\bR^d,T)$, by \eqref{2408261012} with $A:=\ee^{2n/\alpha}$, we obtain, for $\beta\in[1/p,1)$,
		\begin{align*}
			\begin{split}
				&\ee^{2np\beta/\alpha}\|u_n(t)-u_n(s)\|_{B_p^{\gamma+2-2\beta/\alpha}(\bR^d)}^p\\
				\leq N &|t-s|^{p \beta -1}\big(\|u_n\|_{\bH_p^{\gamma+2}(\bR^d,T)}^p+\ee^{2np}\|\partial_t^\alpha u\|_{\bH_p^\gamma(\bR^d,T)}^p+\ee^{2/\alpha}\|u_0\|_{B_p^{\gamma+2-2/(p\alpha)}(\bR^d)}^p\big)\,,
			\end{split}
		\end{align*}
		where $N=N(d,p,\gamma,\beta)$.
		This yields that 
		\begin{alignat*}{3}
			&&&\|u(t)-u(s)\|_{B_{p,\theta+2p\beta/\alpha}^{\gamma+2-2\beta/\alpha}(\Omega)}^p\\
			&=\,&&\sum_{n\in\bZ}\ee^{n(\theta+2p\beta)}\|u_n(t)-u_n(s)\|_{B_{p}^{\gamma+2-2\beta/\alpha}(\bR^d)}^p\\
			&\lesssim&&|t-s|^{\beta p-1}\sum_{n\in\bZ}\ee^{n\theta}\Big(\|u_n\|_{\bH_p^{\gamma+2}(\bR^d,T)}^p+\ee^{2np}\|\partial_t^\alpha u\|_{\bH_p^\gamma(\bR^d,T)}^p+\ee^{2n/\alpha}\|u_0\|_{B_p^{\gamma+2-2/(p\alpha)}(\bR^d)}^p\Big)\\
			&=&&|t-s|^{\beta p-1}\Big(\|u\|_{\bH_{p,\theta}^{\gamma+2}(\Omega,T)}^p+\|\partial_t^\alpha u\|_{\bH_{p,\theta+2p}^\gamma(\Omega,T)}^p+\|u_0\|_{B_{p,\theta+2/\alpha}^{\gamma+2-2/(p\alpha)}(\Omega)}^p\Big)\,,
		\end{alignat*}
		which means \eqref{24082783611}.
		Estimate \eqref{240827836111111} follows in the same way, using \eqref{2408261013} in place of \eqref{2408261012}.
	\end{proof}

	\subsection{Solvability of parabolic equations}\label{0053}
	We now state the main result of this paper.
	Let $0<\alpha\le1$ and $0<T<\infty$, and consider the equation
	\begin{align}\label{220616124}
		\partial_t^\alpha u=\cL u + f \quad\text{in}\,\,\,(0,T],
	\end{align}
	where $\cL\in\cM_T(\nu_1,\nu_2)$, $0<\nu_1\leq \nu_2<\infty$ (see Section \ref{0030} for the definition of $\cM_T(\nu_1,\nu_2)$).
	
	\begin{thm}\label{22.02.18.6}
		Let $\Omega$ be an open set admitting the Hardy inequality \eqref{hardy} and let $\psi$ be a superharmonic Harnack function in $\Omega$ with its regularization $\Psi$.
		Then for any $p\in(1,\infty)$, $\mu\in \cI(\psi,p,\nu_1/\nu_2)$, and $\gamma\in\bR$, the following statements hold:
		
		\begin{enumerate}
			\item 
			For any $f\in \Psi^{\mu} \bH^{\gamma}_{p,d+2p-2}(\Omega,T)$, equation \eqref{220616124}
			has a unique solution $u$ in $\Psi^{\mu}\mathring{\cH}^{\alpha,\gamma+2}_{p,d-2}(\Omega,T)$.
			Moreover, we have
			\begin{align}\label{240824444}
				\|u\|_{\Psi^{\mu}\mathring{\cH}^{\alpha,\gamma+2}_{p,d-2}(\Omega,T)}\leq N\| f\|_{\Psi^{\mu}\bH^{\gamma}_{p,d+2p-2}(\Omega,T)}\,,
			\end{align}
			where $N=N(d,\alpha,p,\gamma,\mu,\mathrm{C}_0(\Omega),\mathrm{C}_2(\Psi),\mathrm{C}_3(\psi,\Psi),\mathrm{C}_4)$.
			
			\item If $\frac{1}{p}<\alpha$, then for any $u_0\in \Psi^{\mu} B^{\gamma+2-2/(p\alpha)}_{p,d+2/\alpha-2}(\Omega)$ and $f\in \Psi^{\mu} \bH^{\gamma}_{p,d+2p-2}(\Omega,T)$, equation \eqref{220616124} with $u(0,\cdot)=u_0$
			has a unique solution $u$ in $\Psi^{\mu}\cH^{\alpha,\gamma+2}_{p,d-2}(\Omega,T)$.
			Moreover, we have
			\begin{align}\label{240824443}
				\|u\|_{\Psi^{\mu}\cH^{\alpha,\gamma+2}_{p,d-2}(\Omega,T)}\leq N\left(\| u_0\|_{\Psi^{\mu} B^{\gamma+2-2/(p\alpha)}_{p,d+2/\alpha-2}(\Omega,T)}+\| f\|_{\Psi^{\mu}\bH^{\gamma}_{p,d+2p-2}(\Omega,T)}\right)\,,
			\end{align}
			where $N=N(d,\alpha,p,\gamma,\mu,\mathrm{C}_0(\Omega),\mathrm{C}_2(\Psi),\mathrm{C}_3(\psi,\Psi), \mathrm{C}_4)$.
			
		\end{enumerate}

	\end{thm}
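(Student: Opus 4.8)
The plan is to run Krylov's localization argument of \cite{Krylov1999-1, Kim2004} inside the weighted spaces $\Psi^{\mu}X_{p,\theta}^{\gamma}(\Omega)$, the one genuinely new feature being that the lower‑order error term produced by the localization is removed by the Hardy‑type $L_p$ estimate of Theorem~\ref{05.11.2} rather than by restricting the weight exponent as in \cite{Kim2014}. The only external input is the solvability of \eqref{2206201120} on the whole space: for $\cL\in\cM_T(\nu_1,\nu_2)$ with merely time‑measurable coefficients, $\partial_t^{\alpha}-\cL$ is an isomorphism from $\mathring{\cH}_p^{\alpha,\gamma+2}(\bR^d,T)$ onto $\bH_p^{\gamma}(\bR^d,T)$, and, when $\alpha>\tfrac1p$, from $\cH_p^{\alpha,\gamma+2}(\bR^d,T)$ onto $\bH_p^{\gamma}(\bR^d,T)\times B_p^{\gamma+2-2/(p\alpha)}(\bR^d)$, with constants independent of $T$; for $\alpha=1$ this is classical \cite{Krylov1999-1}, and for $0<\alpha<1$ it is contained in \cite{DONG2019289, DongKim2021}. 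Since $\Psi^{\mu}$ is again a regular Harnack function (Example~\ref{21.05.18.2}.(3)) and $\Psi\simeq\psi$, one may pass freely between $\psi$ and $\Psi$ in all weighted norms.

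First I would establish the a priori estimate \eqref{240824444} for $u\in C_c^{\infty}\big((0,T]\times\Omega\big)$ in the range $\gamma\ge 0$, with $f:=\partial_t^{\alpha}u-\cL u$. Decompose $u=\sum_{n\in\bZ}\zeta_{0,(n)}u$; because $\mathrm{supp}\,\zeta_{0,(n)}$ sits at distance $\simeq\ee^{n}$ from $\partial\Omega$, the rescaled piece $(\zeta_{0,(n)}u)(\ee^{2n/\alpha}\,\cdot\,,\ee^{n}\,\cdot\,)$ is compactly supported in $\bR^d$ and solves a dilated copy of \eqref{2206201120} on all of $\bR^d$, with right‑hand side coming from $\zeta_{0,(n)}f$ and the commutator $[\cL,\zeta_{0,(n)}]u=(\cL\zeta_{0,(n)})u+2a^{ij}(t)(D_i\zeta_{0,(n)})(D_ju)$ — no coefficient freezing is needed since $\cL$ is spatially constant, and $\partial_t^{\alpha}$ commutes with the spatial cut‑off. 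Applying the whole‑space estimate on each patch, using $|D_x^{k}\zeta_{0,(n)}|\lesssim\ee^{-nk}$, and re‑summing with the help of Lemma~\ref{220527502}.(5) and the norm equivalences \eqref{2409098051}, \eqref{240911813}, gives $\|u\|_{\Psi^{\mu}\mathring{\cH}_{p,d-2}^{\alpha,\gamma+2}(\Omega,T)}\le N\big(\|f\|_{\Psi^{\mu}\bH_{p,d+2p-2}^{\gamma}(\Omega,T)}+\|u\|_{\Psi^{\mu}\bL_{p,d-2}(\Omega,T)}\big)$, after absorbing the intermediate terms into the left side via interpolation (Lemma~\ref{220527502}.(2),(9)) and Young's inequality. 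By Theorem~\ref{05.11.2} the last term is $\le N\|f\|_{\Psi^{\mu}\bL_{p,d+2p-2}(\Omega,T)}\le N\|f\|_{\Psi^{\mu}\bH_{p,d+2p-2}^{\gamma}(\Omega,T)}$, the final step using $H_p^{\gamma}\hookrightarrow L_p$ for $\gamma\ge0$. This proves \eqref{240824444} on $C_c^{\infty}$, hence on all of $\Psi^{\mu}\mathring{\cH}_{p,d-2}^{\alpha,\gamma+2}(\Omega,T)$ by density (Lemma~\ref{240911115}.(1)); in particular $\partial_t^{\alpha}-\cL$ is injective with closed range.

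For existence when $\gamma\ge0$, I would use the method of continuity along $\cL_{\lambda}:=(1-\lambda)\Delta+\lambda\cL$, whose ellipticity ratios all dominate $\nu_1/\nu_2$, so that $\mu$ remains in the relevant interval $\cI(\psi,p,\cdot)$ and the estimate of the previous step holds uniformly in $\lambda$; it then suffices to solve the case $\cL=\Delta$. For $f\in C_c^{\infty}\big((0,T)\times\Omega\big)$, Lemma~\ref{21.05.25.300} produces a bounded weak solution $u_*$ of $\partial_t^{\alpha}u_*=\Delta u_*+f$ with zero initial value and with the zeroth‑order bound \eqref{2206131031}; interior parabolic regularity makes $u_*$ smooth on $(0,T]\times\Omega$, so the localization scheme applies directly to $u_*$ (truncating to $|n|\le N$ and letting $N\to\infty$ by Fatou), giving $u_*\in\Psi^{\mu}\mathring{\cH}_{p,d-2}^{\alpha,\gamma+2}(\Omega,T)$ with the lower‑order term now controlled by \eqref{2206131031}. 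Density of such $f$ in the data space (Lemma~\ref{2205241011}) together with the a priori estimate yields part (1) for $\cL=\Delta$, and the method of continuity finishes it for all $\cL$ when $\gamma\ge0$. The passage to arbitrary $\gamma\in\bR$ is the standard negative‑smoothness reduction: for $\gamma<0$ write $f=\Lambda_0f+\sum_iD_i(\Lambda_if)$ via Lemma~\ref{22.02.16.1}, solve the sub‑problems (of smoothness index $\gamma+1$) by the already‑treated case, and recombine using $\cL D_i=D_i\cL$; finitely many steps cover every $\gamma$, and uniqueness comes from the a priori estimate obtained by the same reduction. Finally, for part (2) with $\alpha>\tfrac1p$, transplanting the whole‑space pure‑initial‑value solution operator by the same partition‑of‑unity‑and‑rescaling construction gives a bounded lift $\ell:\Psi^{\mu}B_{p,d+2/\alpha-2}^{\gamma+2-2/(p\alpha)}(\Omega)\to\Psi^{\mu}\cH_{p,d-2}^{\alpha,\gamma+2}(\Omega,T)$ with $(\ell u_0)(0)=u_0$; then $u:=\ell u_0+v$, with $v$ solving the zero‑initial‑value problem of part (1) for the right‑hand side $f-(\partial_t^{\alpha}-\cL)\ell u_0$, is the required solution, Lemma~\ref{240426524} reduces uniqueness of (2) to that of (1), and Proposition~\ref{2204160313} together with Lemma~\ref{240426524} identifies $u(0)$ and gives \eqref{240824443}.

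The hard part will be the localization step: arranging the space–time dilation (with the $2/\alpha$ time‑scaling in the fractional case) so that each localized piece is genuinely a solution of a whole‑space copy of \eqref{2206201120}, and bookkeeping the commutator contributions in the weighted norms so that the unavoidable error collapses to exactly $\|u\|_{\Psi^{\mu}\bL_{p,d-2}(\Omega,T)}$ — precisely the quantity Theorem~\ref{05.11.2} controls. It is this matching that forces the weight to be built from the superharmonic function $\psi$ and makes the Hardy inequality \eqref{hardy} the natural hypothesis; the remainder is a routine, if lengthy, adaptation of the smooth‑domain theory.
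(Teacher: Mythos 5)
Your proposal follows essentially the same route as the paper: you localize via $\zeta_{0,(n)}$, rescale in space by $\ee^{n}$ and in time by $\ee^{2n/\alpha}$, apply the whole--space estimate for $\partial_t^{\alpha}-\cL$ on each piece, re--sum, and then absorb the resulting zeroth--order error $\|u\|_{\Psi^{\mu}\bL_{p,d-2}(\Omega,T)}$ by the Hardy--type estimate of Theorem~\ref{05.11.2}; existence then comes from Lemma~\ref{21.05.25.300}, density, and the method of continuity, with the reduction to arbitrary $\gamma$ via Lemma~\ref{22.02.16.1} exactly as in Lemma~\ref{22.04.15.0430}. The only differences are cosmetic: for part (2) you build the lift by transplanting whole--space initial--value solution operators through the partition of unity, whereas the paper invokes an abstract real--interpolation trace theorem (Lemma~\ref{240824401}); and your homotopy should read $\cL_{\lambda}=(1-\lambda)\nu_1\Delta+\lambda\cL$ rather than $(1-\lambda)\Delta+\lambda\cL$ so that $\cL_{\lambda}\in\cM_T(\nu_1,\nu_2)$ without normalizing the ellipticity constants.
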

	Recall that $\mathrm{C}_0(\Omega)$ denotes the constant in \eqref{hardy}, $\mathrm{C}_2(\Psi)$ and $\mathrm{C}_3(\psi,\Psi)$ denote the constants in Definition \ref{21.10.14.1}, and $\mathrm{C}_4$ denotes the constant in Definition \ref{21.11.10.1}.
	
	\begin{remark}\label{230212657}
		Theorem \ref{22.02.18.6} can be reformulated without explicitly including $\Psi$, by setting $\Psi=\widetilde{\psi}$, where $\widetilde{\psi}$ is the function given in Lemma \ref{21.05.27.3}.(1).
		In addition, when $\gamma\in\bN_0$, the equivalent norms of $\Psi \bH_{p,\theta}^{\gamma}(\Omega,T)$ and $\Psi B_{p,\theta}^{\gamma+s}(\Omega)$, $s\in(0,1)$, are given in \eqref{2409098051} and Proposition \ref{220418435}, respectively.
		For equivalent norms in the case where $-\gamma\in \bN$, use \eqref{241111154}.
		For $p=2$ and $\alpha=1$, one may also use the identity $\Psi^\mu B_{2,d}^{\gamma+1}(\Omega)=\Psi^\mu H_{2,d}^{\gamma+1}(\Omega)$ (see \eqref{2411011106}).
	\end{remark}
	
	To prove Theorem \ref{22.02.18.6}, we need the help of the following three lemmas:
	\begin{lemma}\label{22.04.14.1}
		Suppose that $u\in \mathring{\cH}^{\alpha,\gamma+1}_p(\bR^d,T)$ satisfies  $f:=\partial_t^\alpha u-\cL u \in\bH^{\gamma}_p(\bR^d,T)$.
		Then $u\in\mathring{\cH}^{\alpha,\gamma+2}_p(\bR^d,T)$, and
		\begin{align}\label{241013401}
			\|u\|_{\mathring{\cH}^{\alpha,\gamma+2}_{p}(\bR^d,T)}\leq N\left(\|u\|_{\bH^{\gamma+1}_{p}(\bR^d,T)}+\| f\|_{\bH^{\gamma}_{p}(\bR^d,T)}\right)
		\end{align}
		where $N=N(d,p,\alpha,\nu_1,\nu_2)$.
		In particular, $N$ is independent of $T$.
	\end{lemma}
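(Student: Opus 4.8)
The plan is to reduce the statement to the constant-in-$x$ model equation on the whole space, where $L_p$-maximal regularity with a constant \emph{independent of $T$} is available, and then to identify $u$ with the model solution by a mollification-plus-uniqueness argument; the extra term $\|u\|_{\bH^{\gamma+1}_p(\bR^d,T)}$ on the right-hand side is slack that we keep only because it is what later localization arguments can control.

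First, since on $\bR^d$ the operators $\partial_t^\alpha$ and $\cL=\sum_{i,j}a^{ij}(t)D_{ij}$ commute with the Bessel potential $(1-\Delta)^{s/2}$, and $(1-\Delta)^{\gamma/2}$ is an isometric isomorphism $\mathring{\cH}^{\alpha,\gamma+2}_p(\bR^d,T)\to\mathring{\cH}^{\alpha,2}_p(\bR^d,T)$ and $\bH^{\gamma}_p(\bR^d,T)\to\bL_p(\bR^d,T)$, it suffices to treat $\gamma=0$: if $u\in\mathring{\cH}^{\alpha,1}_p(\bR^d,T)$ and $f:=\partial_t^\alpha u-\cL u\in\bL_p(\bR^d,T)$, then $u\in\mathring{\cH}^{\alpha,2}_p(\bR^d,T)$ with the corresponding bound. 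For this I would invoke the known full-space solvability for $\partial_t^\alpha v=\cL v+f$ with $v(0,\cdot)=0$ and $\cL\in\cM_T(\nu_1,\nu_2)$: for $\alpha=1$ this is Krylov's classical $W^{1,2}_p$-theory for parabolic equations with coefficients measurable in time, and for $0<\alpha<1$ it is the kernel-free $L_p$-theory of Dong and Kim \cite{DONG2019289} (see also \cite{DongKim2021}). Both produce $\bar u\in\mathring{\cH}^{\alpha,2}_p(\bR^d,T)$ with
\begin{align*}
\|\bar u\|_{\mathring{\cH}^{\alpha,2}_p(\bR^d,T)}\leq N\|f\|_{\bL_p(\bR^d,T)},
\end{align*}
where $N=N(d,p,\alpha,\nu_1,\nu_2)$ is independent of $T$; as in those references the $T$-independence follows from the parabolic dilation $w(t,x)\mapsto w(c^{2/\alpha}t,cx)$, which is precisely where zero initial data is used.

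It remains to show $u=\bar u$. Put $w:=u-\bar u$; since $\mathring{\cH}^{\alpha,2}_p(\bR^d,T)\subset\mathring{\cH}^{\alpha,1}_p(\bR^d,T)$ we have $w\in\mathring{\cH}^{\alpha,1}_p(\bR^d,T)$, and by linearity $w$ solves $\partial_t^\alpha w=\cL w$ in the sense of Definition \ref{240419512} with zero right-hand side and zero initial data. Because $a^{ij}$ do not depend on $x$, both $\partial_t^\alpha$ and $\cL$ commute with convolution in $x$, so for a standard mollifier $\zeta_\epsilon$ the function $w^{(\epsilon)}(t,\cdot):=w(t,\cdot)*\zeta_\epsilon$ lies in $\mathring{\cH}^{\alpha,\sigma}_p(\bR^d,T)$ for every $\sigma\in\bR$, again solves $\partial_t^\alpha w^{(\epsilon)}=\cL w^{(\epsilon)}$ with zero initial data, and in particular is in $\mathring{\cH}^{\alpha,2}_p(\bR^d,T)$; the uniqueness part of the model solvability forces $w^{(\epsilon)}\equiv 0$ (alternatively, take the Fourier transform in $x$ and use uniqueness for the scalar fractional ODE $\partial_t^\alpha\hat w^{(\epsilon)}(t,\xi)=-\big(\textstyle\sum_{i,j}a^{ij}(t)\xi_i\xi_j\big)\hat w^{(\epsilon)}(t,\xi)$ with $\hat w^{(\epsilon)}(0,\xi)=0$). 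Letting $\epsilon\to0$ gives $w=0$, hence $u=\bar u\in\mathring{\cH}^{\alpha,\gamma+2}_p(\bR^d,T)$ and
\begin{align*}
\|u\|_{\mathring{\cH}^{\alpha,\gamma+2}_p(\bR^d,T)}\leq N\|f\|_{\bH^{\gamma}_p(\bR^d,T)}\leq N\big(\|u\|_{\bH^{\gamma+1}_p(\bR^d,T)}+\|f\|_{\bH^{\gamma}_p(\bR^d,T)}\big).
\end{align*}

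The only real obstacle lies in the second paragraph: one must ensure the whole-space maximal regularity estimate for $\cL\in\cM_T(\nu_1,\nu_2)$ is genuinely available with a constant independent of $T$, and — in the fractional case — that the cited estimates are indeed formulated on $(0,T)\times\bR^d$ with zero initial data so that the dilation producing $T$-independence applies; once this is granted, the reduction to $\gamma=0$, the commutation with mollification, and the uniqueness step are all soft.
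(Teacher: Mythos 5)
Your reduction to $\gamma=0$ via $(1-\Delta)^{\gamma/2}$ and the mollification-plus-uniqueness step are fine, but the main estimate you invoke is not available as stated, and the extra term $\|u\|_{\bH^{\gamma+1}_p}$ that you dismiss as ``slack'' is in fact what makes the lemma true with a $T$-independent constant. Your claimed bound
$\|\bar u\|_{\mathring{\cH}^{\alpha,2}_p(\bR^d,T)}\leq N\|f\|_{\bL_p(\bR^d,T)}$
with $N$ independent of $T$ is false: the norm $\|\bar u\|_{\mathring{\cH}^{\alpha,2}_p}$ contains $\|\bar u\|_{\bL_p(\bR^d,T)}$ and $\|D\bar u\|_{\bL_p(\bR^d,T)}$, and these cannot be controlled by $\|f\|_{\bL_p}$ uniformly in $T$. (Apply the parabolic rescaling $u\mapsto u(c^{2/\alpha}t,cx)$, or note that for $\alpha=1$ and $\cL=\Delta$ the solution $u(t)=\int_0^t e^{(t-s)\Delta}f(s)\,ds$ gives $\|u\|_{L_p((0,T)\times\bR^d)}\lesssim T\|f\|_{L_p}$, not $\lesssim\|f\|_{L_p}$.) What the whole-space maximal regularity theorems of Krylov and Dong--Kim provide $T$-uniformly is control of the \emph{top-order} quantities $\|\partial_t^\alpha u\|_{\bL_p}+\|D^2u\|_{\bL_p}$, not of $\|u\|_{\bL_p}$.

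The paper gets around this by a small but essential rewriting: it considers the equation
$\partial_t^\alpha u=(\cL-1)u+(u+f)$
and applies the cited full-space regularity results to the operator $\cL-1$. The zeroth-order damping term makes the resulting maximal regularity estimate coercive at the $L_p$ level, so it does control $\|u\|_{\bL_p}$ uniformly in $T$; the price is that the new right-hand side is $u+f$, whose $\bL_p$ (resp.\ $\bH^\gamma_p$) norm is bounded by $\|u\|_{\bH^1_p}+\|f\|_{\bL_p}$ (resp.\ $\|u\|_{\bH^{\gamma+1}_p}+\|f\|_{\bH^\gamma_p}$). That is exactly why the stated conclusion has $\|u\|_{\bH^{\gamma+1}_p}$ on the right; it is not slack. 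Keeping your reduction to $\gamma=0$ and your uniqueness argument, you should replace the model estimate in your second paragraph by the one for $\partial_t^\alpha v=(\cL-1)v+g$ and take $g=u+f$.
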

	\begin{proof}
		By considering $v:=(1-\Delta)^{\gamma/2}u$ and $g:=(1-\Delta)^{\gamma/2}f$, it suffices to prove the estimate for $\gamma=0$.
		Let $\gamma=0$.
		Considering the equation $\partial_t^\alpha u=\cL u-u +(u+f)$,
		\eqref{241013401} follows from \cite[Theorem 1.2]{Krylov2001-1} for $\alpha=1$, and from \cite[Theorem 2.1]{DONG2019289} for $0<\alpha<1$.
	\end{proof}
	
	\begin{lemma}[Higher order estimates]\label{21.05.13.9}\,
		Let $u\in\Psi \bH^{s+2}_{p,\theta}(\Omega,T)$ for some $s\in\bR$, and let $f\in \Psi\bH^{\gamma}_{p,\theta+2p}(\Omega,T)$.
		Suppose that \eqref{240419720} holds, where $f$ in \eqref{240419720} is replaced by $\cL u+f$.
		Then $u$ belongs to $\Psi\mathring{\cH}^{\alpha,\gamma}_{p,\theta}(\Omega,T)$ with $\partial_t^\alpha u=\cL u+f$. 
		Moreover, we have
		\begin{align}\label{2409111129}
			\begin{split}
				\|u\|_{\Psi \mathring{\cH}^{\alpha, \gamma+2}_{p,\theta}(\Omega,T)}\leq N\left(\|u\|_{\Psi \bH^{s}_{p,\theta}(\Omega,T)}+\| f\|_{\Psi \bH^{\gamma}_{p,\theta+2p}(\Omega,T)}\right)\,,
			\end{split}
		\end{align}
		where $N=N(d,p,\gamma,\theta,\nu_1,\nu_2,\mathrm{C}_2(\Psi),s)$.
	\end{lemma}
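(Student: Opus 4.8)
The plan is to run Krylov's localization argument: cut $u$ by the dyadic partition of unity $\{\zeta_{0,(n)}\}_{n\in\bZ}$, rescale each piece to $\bR^d$, invoke the whole‑space estimate of Lemma~\ref{22.04.14.1}, and sum back. First I would dispose of $\Psi$ by the substitution $u=\Psi w$: since $\Psi$ is a regular Harnack function (Definition~\ref{21.10.14.1}), the identity
\begin{align*}
\cL(\Psi w)=\Psi\,\cL w+2\sum_{i,j=1}^d a^{ij}(t)(D_i\Psi)(D_j w)+(\cL\Psi)\,w
\end{align*}
together with $|D_x^k\Psi|\le\mathrm{C}_2^{(k)}\rho^{-k}\Psi$ shows that $w$ satisfies $\partial_t^\alpha w=\cL w+\widetilde f$ in the sense of \eqref{240419720}, where $\widetilde f=\Psi^{-1}f$ plus terms of the form $\rho^{-1}\cdot(\text{first order in }w)$ and $\rho^{-2}\cdot w$; by \eqref{220526558} and \eqref{240911813} these extra terms lie in $\bH^\gamma_{p,\theta+2p}(\Omega,T)$ and are of lower order, so I will carry them along as part of the source (they are reabsorbed by the bootstrap below). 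Thus it suffices to treat the case $\Psi\equiv 1$.

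If $s\ge\gamma$ there is nothing to do: $u\in\bH^{s+2}_{p,\theta}(\Omega,T)\subset\bH^{\gamma+2}_{p,\theta}(\Omega,T)$, $\cL u+f\in\bH^\gamma_{p,\theta+2p}(\Omega,T)$ by \eqref{240911813}, and \eqref{240419720} exhibits this as $\partial_t^\alpha u$, so $u\in\mathring\cH^{\alpha,\gamma+2}_{p,\theta}(\Omega,T)$ directly from Definition~\ref{240419512}. So assume $s<\gamma$; the same reasoning gives $u\in\mathring\cH^{\alpha,s+2}_{p,\theta}(\Omega,T)$, which is the base of the induction and already provides the time regularity needed below. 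Localizing, $v_n:=\zeta_{0,(n)}u$ satisfies $\partial_t^\alpha v_n=\cL v_n+g_n$ weakly, where $g_n$ equals $\zeta_{0,(n)}f$ plus commutator terms $\big[\cL,\zeta_{0,(n)}\big]u$ carrying $|D\zeta_{0,(n)}|\lesssim e^{-n}$ and $|D^2\zeta_{0,(n)}|\lesssim e^{-2n}$ on the shell $\{e^{n-1}\le\trho\le e^{n+1}\}$. The parabolic rescaling $\check v_n(\tau,y):=v_n(e^{2n/\alpha}\tau,e^n y)$ turns this into $\partial_\tau^\alpha\check v_n=\cL\check v_n+\check g_n$ on $(0,e^{-2n/\alpha}T)\times\bR^d$ with the \emph{same} operator $\cL$ and with $\check v_n$ supported in a fixed annulus; since the constant in Lemma~\ref{22.04.14.1} is independent of $T$, it may be applied on each of these (shrinking) intervals with one constant $N=N(d,p,\alpha,\nu_1,\nu_2)$.

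Now the bootstrap. Lemma~\ref{22.04.14.1} gains only one derivative, while the commutator part of $\check g_n$ has the regularity of $\check v_n$ minus one; so starting from $\check v_n\in\mathring\cH^{\alpha,s+2}_p$ I iterate: at step $k$ the source lies in $\bH^{\sigma_k}_p$ with $\sigma_k:=\gamma\wedge(\sigma_{k-1}+1)$, $\sigma_0:=s$, and since $\check v_n\in\mathring\cH^{\alpha,\sigma_{k-1}+2}_p\subset\mathring\cH^{\alpha,\sigma_k+1}_p$ the lemma (with its ``$\gamma$'' taken to be $\sigma_k$) upgrades $\check v_n$ to $\mathring\cH^{\alpha,\sigma_k+2}_p$ with
\begin{align*}
\|\check v_n\|_{\mathring\cH^{\alpha,\sigma_k+2}_p(\bR^d,e^{-2n/\alpha}T)}\le N\Big(\|\check v_n\|_{\bH^{\sigma_k+1}_p(\bR^d,e^{-2n/\alpha}T)}+\|\check g_n\|_{\bH^{\sigma_k}_p(\bR^d,e^{-2n/\alpha}T)}\Big).
\end{align*}
Because $\sigma_k$ increases strictly until it meets $\gamma$, at most $\lceil\gamma-s\rceil+1$ steps reach $\sigma_k=\gamma$. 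Undoing the rescaling, multiplying by $e^{n\theta}$ and summing over $n$ — using the finite overlap of the supports of $\zeta_{0,(n)}$ and the identities \eqref{2409098051}, \eqref{220526530}, \eqref{220526558}, \eqref{240911813} to convert the annulus norms into the weighted norms on $\Omega$, and noting that the $e^{-n},e^{-2n}$ in the commutator exactly absorb the weight shift so that those terms are bounded by $\|u\|_{\bH^{\sigma_{k-1}+2}_{p,\theta}(\Omega,T)}$, which by the inductive hypothesis is already controlled by $\|u\|_{\bH^{s+2}_{p,\theta}(\Omega,T)}+\|f\|_{\bH^\gamma_{p,\theta+2p}(\Omega,T)}$ — the estimates chain into $\|u\|_{\mathring\cH^{\alpha,\gamma+2}_{p,\theta}(\Omega,T)}\lesssim\|u\|_{\bH^{s+2}_{p,\theta}(\Omega,T)}+\|f\|_{\bH^\gamma_{p,\theta+2p}(\Omega,T)}$. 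Reversing the substitution $u=\Psi w$ produces \eqref{2409111129}, and $\partial_t^\alpha u=\cL u+f$ in the $\mathring\cH$‑sense follows from \eqref{240419720} and the uniqueness of the abstract $\alpha$‑derivative (Remark~\ref{2404201227}.(1)).

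The main obstacle I anticipate is organizing this bootstrap cleanly: Lemma~\ref{22.04.14.1} requires an a priori $\mathring\cH$‑bound one order below the target (not merely an $\bH$‑bound), and the commutator source loses precisely one derivative, so one must track the increasing indices $\sigma_k$, verify that every hypothesis of Lemma~\ref{22.04.14.1} is met along the chain, and keep all constants independent of $n$ — which is exactly why the $T$‑independence in Lemma~\ref{22.04.14.1} and the finite overlap of the $\zeta_{0,(n)}$ are indispensable. The weight and Jacobian bookkeeping under $\tau=e^{-2n/\alpha}t$, $y=e^{-n}x$ is routine and parallels the corresponding computations in \cite{Seo202404,Seo202304}.
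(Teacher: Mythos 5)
Your high-level plan — dyadic localization, parabolic rescaling, the whole-space estimate Lemma~\ref{22.04.14.1}, and a bootstrap on the smoothness index — is the paper's own strategy, and your remarks on the $T$-independence of Lemma~\ref{22.04.14.1}, the finite overlap of the $\zeta_{0,(n)}$, and the weight/Jacobian bookkeeping are all correct. The substitution $u=\Psi w$ is a legitimate variant of the paper's device of putting $\Phi=\Psi^{-1}$ directly into the localized pieces $v_n$, though the latter avoids having to ``reabsorb'' lower-order source terms by regularity you have not yet established.

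There is, however, a genuine gap: your argument does not produce the claimed estimate \eqref{2409111129}, only a strictly weaker one. The estimate asserts $\|u\|_{\Psi\mathring\cH^{\alpha,\gamma+2}_{p,\theta}}\lesssim\|u\|_{\Psi\bH^{s}_{p,\theta}}+\|f\|_{\Psi\bH^{\gamma}_{p,\theta+2p}}$ — the $\mathring\cH^{\gamma+2}$ norm controlled by the $\bH^{s}$ norm, a gain of $\gamma+2-s$ spatial derivatives plus the fractional time derivative. Your ``if $s\geq\gamma$ there is nothing to do'' is incorrect: what follows trivially is only the membership and the bound $\|u\|_{\Psi\mathring\cH^{\alpha,\gamma+2}_{p,\theta}}\lesssim\|u\|_{\Psi\bH^{s+2}_{p,\theta}}+\|f\|_{\Psi\bH^{\gamma}_{p,\theta+2p}}$, which is no gain at all. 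For $s=\gamma=0$ the lemma claims $\|u\|_{\mathring\cH^{2}_{p,\theta}}\lesssim\|u\|_{\bL_{p,\theta}}+\|f\|_{\bL_{p,\theta+2p}}$, which is exactly the elliptic-type bound that drives the \emph{a priori} estimate in Theorem~\ref{22.02.18.6}; it certainly does not fall out of the definition. The same defect runs through your bootstrap for $s<\gamma$: starting at $\sigma_0=s$ with $\check v_n\in\mathring\cH^{\alpha,s+2}_p$, the chain bottoms out at $\|\check v_n\|_{\bH^{\sigma_1+1}_p}$ with $\sigma_1+1=\min(\gamma,s+1)+1$, i.e.\ after summation at $\|u\|_{\Psi\bH^{s+2}_{p,\theta}}$ — and indeed this is precisely what you write in your last displayed inequality, which is two derivatives weaker than \eqref{2409111129}.

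The repair is to start the chain \emph{below} $s$, not at $s$. The paper chooses $k\in\bN$ with $\gamma+1-k\leq s<\gamma+2-k$, applies the one-step estimate first with parameter $\gamma-k$, so that the right-hand side is $\|u\|_{\Psi\bH^{(\gamma-k)+1}_{p,\theta}}+\|f\|_{\Psi\bH^{\gamma-k}_{p,\theta+2p}}$ and $(\gamma-k)+1\leq s$ already forces $\|u\|_{\Psi\bH^{(\gamma-k)+1}_{p,\theta}}\lesssim\|u\|_{\Psi\bH^{s}_{p,\theta}}$; then it iterates upward through $\gamma-k,\gamma-k+1,\dots,\gamma$. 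The one-step estimate in Step~1 of the paper, despite its phrasing, only needs $u\in\Psi\bH^{\gamma'+1}_{p,\theta}$ and $f\in\Psi\bH^{\gamma'}_{p,\theta+2p}$ for a given level $\gamma'$, and that flexibility is what makes the downward initialization legal. In your notation, the fix is to choose $\sigma_0$ with $\sigma_0+1\leq s$ (and run the extra $\lceil s-\sigma_0\rceil$ steps) so that the base of the chain already sits at or below $\bH^{s}$.
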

	
	\begin{proof}
		\textbf{Step 1.} First, we consider the case $s\geq \gamma+1$.
		Since 
		\begin{align*}
			\|u\|_{\Psi\bH_{p,\theta}^{\gamma+1}(\Omega,T)}\lesssim_{d,p,s,\gamma} \| u\|_{\Psi\bH_{p,\theta}^{s}(\Omega,T)}
		\end{align*}
		(see \eqref{2411011107}), it suffices to prove the case $s=\gamma+1$.
		Recall the relation between $\zeta_0$ and the space $\Psi H_{p,\theta}^\gamma$ given in Definition \ref{220610533}. 
		Put
		\begin{align*}
			v_n(t,x)=\zeta_0\big(\ee^{-n}\trho(\ee^nx)\big)\Phi(\ee^nx) u(\ee^{2n/\alpha}t,\ee^nx)\,,
		\end{align*}
		where $\Phi=\Psi^{-1}$, so that $\mathrm{C}_2(\Phi)$ depends only on $d$ and $\mathrm{C}_2(\Psi)$.
		Note that
		\begin{align}\label{240419259}
			&\sum_{n\in\bZ}\ee^{n(\theta+2/\alpha)}\left\|v_n\right\|_{\bH_p^{\gamma+1}(\bR^d,\ee^{-2n/\alpha}T)}^p\nonumber\\
			=\,&\sum_{n\in\bZ}\ee^{n\theta}\int_0^{T}\left\|\zeta_0\big(\ee^{-n}\trho(\ee^n\cdot)\big)\Phi(\ee^n\cdot) u(t,\ee^n\cdot)\right\|_{H_p^{\gamma+1}(\bR^d)}^p\dd t\\
			=\,&\|\Phi u\|_{\bH_{p,\theta}^{\gamma+1}(\Omega,T)}^p=\| u\|_{\Psi\bH_{p,\theta}^{\gamma+1}(\Omega,T)}^p<\infty\,.\nonumber
		\end{align}
		This implies that $v_n\in \bH_p^{\gamma+1}(\bR^d,\ee^{-2n/\alpha}T)$. 
		In the same manner as in \eqref{240419259}, $\partial_t^\alpha v_n$ is well-defined in the sense of \eqref{240419720} and belongs to $\bH_p^{\gamma-1}(\bR^d,\ee^{-2n/\alpha}T)$.
		Therefore we have $v_n\in \mathring{\cH}_p^{\alpha,\gamma+1}(\bR^d,\ee^{-2n/\alpha}T)$.
		Moreover,
		\begin{align*}
			\begin{split}
				\partial_t^\alpha v_n=\cL_n v_n+\widetilde{f}_n\,,
			\end{split}
		\end{align*}
		where $\cL_nv(t,\cdot):=\sum_{i,j=1}^da^{ij}(\ee^{2n/\alpha}t)D_{ij}v(t,\cdot)$, and
		\begin{align*}
			\widetilde{f}_n(t,x):=&\,\big(\zeta_{0,(n)}\Phi\, f\big)(\ee^{2n/\alpha}t,\ee^nx)+\ee^{2n}\Big[\big(\zeta_{0,(n)}\Phi\, \cL u\big)-\cL\big(\zeta_{0,(n)}\Phi u\big)\Big](\ee^{2n/\alpha}t,\ee^nx)\\
			=&\,\big(\zeta_{0,(n)}\Phi\, f\big)(\ee^{2n/\alpha}t,\ee^nx)-\sum_{i,j=1}^da^{ij}(\ee^{2n/\alpha}t)F_{n,ij}(\ee^{2n/\alpha}t,\ee^nx)\,;\\
			F_{n,ij}:=&\,\ee^{2n}\zeta_{0,(n)}\Big(2D_j\Phi\cdot D_iu+D_{ij}\Phi\cdot u\Big)\\
			&\quad +\ee^n\big(\zeta_0'\big)_{(n)}\Big(2D_i\trho\cdot D_j(\Phi u)+D_{ij}\trho\cdot \Phi u\Big)	+\big(\zeta_0''\big)_{(n)}D_i\trho\cdot D_j\trho\cdot \Phi u\,.
		\end{align*}
		Using \eqref{2411011115} and \eqref{220526558}, we obtain 
		\begin{align}\label{22.04.15.1}
			&\sum_{n\in\bZ}\ee^{n(\theta+2/\alpha)}\big\|\widetilde{f}_n\big\|_{\bH^{\gamma}_p(\bR^d,\ee^{-2n/\alpha}T)}^p\nonumber\\
			\lesssim_N &\left\|\Phi f\right\|^p_{\bH_{p,\theta+2p}^{\gamma}(\Omega,T)}+\left\|\Phi_x u_x\right\|^p_{\bH_{p,\theta+2p}^{\gamma}(\Omega,T)}+\left\|\Phi_{xx} u\right\|^p_{\bH_{p,\theta+2p}^{\gamma}(\Omega,T)}
			\\
			&+\left\|\trho_x(\Phi u)_x\right\|^p_{\bH_{p,\theta+p}^{\gamma}(\Omega,T)}+\left\|\trho_{xx}\Phi u\right\|^p_{\bH_{p,\theta+p}^{\gamma}(\Omega,T)}+\left\|\trho_x\trho_x\Phi u\right\|^p_{\bH_{p,\theta}^{\gamma}(\Omega,T)}\nonumber
			\\
			\lesssim_N & \|\Phi f\|^p_{\bH_{p,\theta+2p}^{\gamma}(\Omega)}
			+\|\Phi u\|^p_{\bH^{\gamma+1}_{p,\theta}(\Omega)}=\|f\|^p_{\Psi\bH_{p,\theta+2p}^{\gamma}(\Omega)}
			+\|u\|^p_{\Psi\bH^{\gamma+1}_{p,\theta}(\Omega)}<\infty\,,\nonumber
		\end{align}
		where $N=N(d,p,\gamma,\theta,\mathrm{C}_2(\Psi))$.
		Hence $\tilde{f}_n\in\bH^{\gamma}_p(\bR^d,\ee^{-2n/\alpha}T)$ for all $n\in\bZ$.
		
		By Lemma \ref{22.04.14.1}, we have
		\begin{alignat}{2}\label{220506140}
			\|v_n\|_{\bH^{\gamma+2}_{p}(\bR^d,\ee^{-2n/\alpha}T)}\lesssim_N\|v_n\|_{\bH^{\gamma+1}_{p}(\bR^d,\ee^{-2n/\alpha}T)}+\| \widetilde{f}_n\|_{\bH^{\gamma}_{p}(\bR^d,\ee^{-2n/\alpha}T)}\,,
		\end{alignat}
		where $N=N(d,p,\gamma,\theta,\mathrm{C}_2(\Psi),\nu_1,\nu_2)$.
		Combining \eqref{240419259}, \eqref{220506140},  and \eqref{22.04.15.1}, we obtain
		\begin{alignat}{2}\label{241106159}
			\|u\|_{\Phi\bH^{\gamma+2}_{p,\theta}(\Omega,T)}^p&=&& \sum_{n\in\bZ}\ee^{n(\theta+2)}\|v_n\|_{\bH^{\gamma+2}_p(\bR^d,\ee^{-2n/\alpha}T)}^p\nonumber\\
			&\lesssim&& \sum_{n\in\bZ}\ee^{n(\theta+2)}\left(\|v_n\|_{\bH^{\gamma+1}_{p}(\bR^d,\ee^{-2n/\alpha}T)}^p+\| \widetilde{f}_n\|_{\bH^{\gamma}_{p}(\bR^d,\ee^{-2n/\alpha}T)}^p\right)\\
			&\lesssim&&\| u\|_{\Psi\bH^{\gamma+1}_{p,\theta}(\Omega,T)}^p+\| f\|^p_{\Psi\bH_{p,\theta+2p}^{\gamma}(\Omega,T)}\,.\nonumber
		\end{alignat}
		Noting that \eqref{240419720} holds for $\partial_t^\alpha u:=\cL u+f$, and
		\begin{align}\label{241106200}
			\begin{split}
				\| \partial_t^\alpha u\|_{\Psi\bH_{p,\theta+2p}^{\gamma}(\Omega,T)}\leq\,& \| \cL u\|_{\Psi\bH_{p,\theta+2p}^{\gamma}(\Omega,T)}+\| f\|_{\Psi\bH_{p,\theta+2p}^{\gamma}(\Omega,T)}\\
				\lesssim\,& \| u\|_{\Psi\bH_{p,\theta}^{\gamma+2}(\Omega,T)}+\|f\|_{\Psi\bH_{p,\theta+2p}^{\gamma}(\Omega,T)}
			\end{split}
		\end{align}
		(see \eqref{2411011115}).
		These yield $u\in\Psi \mathring{\cH}_{p,\theta}^{\alpha,\gamma+2}(\Omega,T)$. 
		In addition, \eqref{241106159} and \eqref{241106200} imply \eqref{2409111129} for $s=\gamma+1$.
		Therefore the proof for $s=\gamma+1$ is complete.

		\textbf{Step 2.} Recall that the case $s\geq \gamma+1$ was proved in Step 1.
		For $s<\gamma+1$, take $k\in\bN$ such that
		$$
		\gamma+1-k\leq s<\gamma+2-k\,,
		$$
		and repeatedly apply the result in Step 1 with $(s,\gamma)$ replaced by $(s,\gamma-k)$, $(\gamma-k,\gamma+1-k)$, ..., $(\gamma-1,\gamma)$.
		Then we obtain $u\in \mathring{\cH}_{p,\theta}^{\alpha,\gamma+2}$ and
		\begin{align*}
			\|\Phi u\|_{\mathring{\cH}^{\alpha,\gamma+2}_{p,\theta}(\Omega,T)}\,&\lesssim \|\Phi u\|_{\bH^{\gamma+1}_{p,\theta}(\Omega,T)}+\|\Phi f\|_{\bH_{p,\theta+2p}^{\gamma}(\Omega,T)}\\
			&\lesssim \,\,\,\,\cdots\,\,\,\,\\
			&\lesssim \|\Phi u\|_{\bH^{\gamma-k}_{p,\theta}(\Omega,T)}+\|\Phi f\|_{\bH_{p,\theta+2p}^{\gamma}(\Omega,T)}\\
			&\lesssim \|\Phi u\|_{\bH^{s}_{p,\theta}(\Omega,T)}+\|\Phi f\|_{\bH_{p,\theta+2p}^{\gamma}(\Omega,T)}\,.
		\end{align*}
		Therefore the proof is complete.
	\end{proof}

	\begin{lemma}\label{22.04.15.0430}
		Suppose the following holds:
		\begin{itemize}
			\item[]For any $f\in \Psi \bH_{p,\theta+2p}^{\gamma}(\Omega)$, equation \eqref{220616124} has a unique solution $u$ in $\Psi \mathring{\cH}_{p,\theta}^{\alpha,\gamma+2}(\Omega,T)$.
			Moreover, we have
			\begin{align}\label{2409111136}
				\|u\|_{\Psi \mathring{\cH}_{p,\theta}^{\alpha,\gamma+2}(\Omega,T)}\leq  N_{\gamma}\|f\|_{\Psi \bH_{p,\theta+2p}^{\gamma}(\Omega,T)}\,,
			\end{align}
			where $N_{\gamma}$ is a constant independent of $u_0$, $f$, and $u$.
		\end{itemize}
		Then for all $s\in\bR$, the following holds:
		\begin{itemize}
			\item[]
			For any $f\in \Psi\bH_{p,\theta+2p}^{s}(\Omega,T)$, equation \eqref{220616124} has a unique solution $u$ in $\Psi  \mathring{\cH}_{p,\theta}^{\alpha,s+2}(\Omega,T)$.
			Moreover, we have
			\begin{align}\label{240419317}
				\|u\|_{\Psi \mathring{\cH}_{p,\theta}^{\alpha,s+2}(\Omega,T)}\leq N_{s}\|f\|_{\Psi \bH_{p,\theta+2p}^{s}(\Omega,T)}\,,
			\end{align}
			where $N_s$ is a constant depending only on $d,\,p,\,\gamma,\,\theta,\,\nu_1,\,\nu_2,\,\mathrm{C}_2(\Psi),\,N_{\gamma},\,s$.
		\end{itemize}
	\end{lemma}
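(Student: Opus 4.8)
The plan is to reduce the statement to the a priori estimate
$$\|u\|_{\Psi\mathring{\cH}^{\alpha,s+2}_{p,\theta}(\Omega,T)}\le N_s\,\|\partial_t^\alpha u-\cL u\|_{\Psi\bH^{s}_{p,\theta+2p}(\Omega,T)},\qquad u\in C_c^\infty\big((0,T)\times\Omega\big),$$
for every $s\in\bR$, and then recover the full statement by soft arguments: by \eqref{2411011115} the map $u\mapsto\partial_t^\alpha u-\cL u$ is bounded $\Psi\mathring{\cH}^{\alpha,s+2}_{p,\theta}(\Omega,T)\to\Psi\bH^{s}_{p,\theta+2p}(\Omega,T)$, and the a priori estimate makes it bounded below on the dense subspace $C_c^\infty$ (Lemma \ref{240911115}), hence bounded below everywhere; this gives uniqueness and closed range, while the range contains every $f\in C_c^\infty$ (such $f$ lies in $\Psi\bH^\gamma_{p,\theta+2p}$, hence is solvable by hypothesis, and the solution lies in $\Psi\mathring{\cH}^{\alpha,s+2}_{p,\theta}$ by inclusion when $s\le\gamma$ and by Lemma \ref{21.05.13.9} when $s\ge\gamma$), so the range is all of $\Psi\bH^{s}_{p,\theta+2p}(\Omega,T)$, giving existence with $N_s$ the a priori constant. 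It remains to prove the a priori estimate, in the two regimes $s\ge\gamma$ and $s<\gamma$.

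\emph{Case $s\ge\gamma$.} Fix $u\in C_c^\infty$, put $f:=\partial_t^\alpha u-\cL u\in C_c^\infty\subset\Psi\bH^{\gamma}_{p,\theta+2p}(\Omega,T)$. By uniqueness in the hypothesis, $u$ is the $\Psi\mathring{\cH}^{\alpha,\gamma+2}_{p,\theta}$-solution of \eqref{220616124} with datum $f$, so $\|u\|_{\Psi\mathring{\cH}^{\alpha,\gamma+2}_{p,\theta}(\Omega,T)}\le N_\gamma\|f\|_{\Psi\bH^{\gamma}_{p,\theta+2p}(\Omega,T)}$. Applying Lemma \ref{21.05.13.9} to $u$, with the role of $s$ there played by $\gamma$ and the role of $\gamma$ there played by our $s$, gives $u\in\Psi\mathring{\cH}^{\alpha,s+2}_{p,\theta}(\Omega,T)$ with
$$\|u\|_{\Psi\mathring{\cH}^{\alpha,s+2}_{p,\theta}(\Omega,T)}\lesssim\|u\|_{\Psi\bH^{\gamma}_{p,\theta}(\Omega,T)}+\|f\|_{\Psi\bH^{s}_{p,\theta+2p}(\Omega,T)}\lesssim N_\gamma\|f\|_{\Psi\bH^{\gamma}_{p,\theta+2p}(\Omega,T)}+\|f\|_{\Psi\bH^{s}_{p,\theta+2p}(\Omega,T)},$$
and since $\gamma\le s$ forces $\|f\|_{\Psi\bH^{\gamma}_{p,\theta+2p}}\lesssim\|f\|_{\Psi\bH^{s}_{p,\theta+2p}}$, this is the desired estimate for all $s\ge\gamma$.

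\emph{Case $s<\gamma$.} Here I would argue by duality. By \eqref{2411011115} it suffices to bound $\|u\|_{\Psi\bH^{s+2}_{p,\theta}(\Omega,T)}$ by $\|f\|_{\Psi\bH^{s}_{p,\theta+2p}(\Omega,T)}$, and by \eqref{220526214} (together with density of $C_c^\infty$ in the dual space) this norm is comparable to the supremum of $\big|\int_0^T(u(t),\phi(t))\dd t\big|$ over $\phi\in C_c^\infty((0,T)\times\Omega)$ with $\|\phi\|_{\Psi^{-1}\bH^{-s-2}_{p',\theta'}(\Omega,T)}\le1$, where $p',\theta'$ are the conjugate parameters of \eqref{241002631} and $\Psi^{-1}$ is again a regular Harnack function (Example \ref{21.05.18.2}). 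Fixing such a $\phi$, let $v$ solve the backward adjoint problem for $\partial_t^\alpha-\cL$ with datum $\phi$ and vanishing terminal trace; since the $a^{ij}$ are $x$-independent and symmetric, $\cL$ is formally self-adjoint in $x$, so after the substitution $t\mapsto T-t$ this is a forward problem with operator in $\cM_T(\nu_1,\nu_2)$, and integrating by parts in $t$ (legitimate because $u(0)=0$ and $v$ has vanishing terminal trace) gives $\int_0^T(u,\phi)\dd t=\int_0^T(v,f)\dd t$. Hence $\big|\int_0^T(u,\phi)\dd t\big|\le\|v\|_{\Psi^{-1}\bH^{-s}_{p',\theta'-2p'}(\Omega,T)}\|f\|_{\Psi\bH^{s}_{p,\theta+2p}(\Omega,T)}$, so it remains to establish the a priori estimate $\|v\|_{\Psi^{-1}\bH^{-s}_{p',\theta'-2p'}}\lesssim\|\phi\|_{\Psi^{-1}\bH^{-s-2}_{p',\theta'}}$ for the adjoint equation at order $-s-2$. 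Dualizing the order-$\gamma$ hypothesis in the same way produces this adjoint a priori estimate at order $-\gamma-2$; and since $-s-2\ge-\gamma-2$, Lemma \ref{21.05.13.9} (applied to the time-reversed operator, which again lies in $\cM_T(\nu_1,\nu_2)$, and to the regular Harnack function $\Psi^{-1}$) promotes it to order $-s-2$, completing the argument.

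The main obstacle is the $s<\gamma$ step. One must make the integration-by-parts identity for the Caputo derivative rigorous (the formal $t$-adjoint of $\partial_t^\alpha$ on functions with vanishing initial trace is a right-sided Riemann--Liouville/Caputo operator, so one either invokes identities of the type in Lemma \ref{240426733} or reduces to the classical case $\alpha=1$ by approximation), verify that the time-reversed adjoint problem genuinely possesses a weak solution $v$ with enough integrability for the pairing $\int_0^T(u,\phi)\dd t=\int_0^T(v,f)\dd t$ to be valid, and carry the conjugate bookkeeping of the weights $\theta'$ and $\theta'-2p'$ and of the orders $-s-2$ and $-s$ consistently through both uses of duality; the $s\ge\gamma$ case, by contrast, is a routine consequence of Lemma \ref{21.05.13.9}.
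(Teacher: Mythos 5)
Your treatment of the case $s\ge\gamma$ is correct and is essentially the paper's Step~1: use the embedding $\Psi\bH^{s}_{p,\theta+2p}\subset\Psi\bH^{\gamma}_{p,\theta+2p}$ to obtain a solution at order $\gamma+2$, then invoke Lemma~\ref{21.05.13.9} to lift the regularity; the surrounding functional-analytic reduction (bounded below on a dense subspace plus density of the solvable data) is standard and correct.

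For $s<\gamma$, however, you take a genuinely different route from the paper, and it is here that the gaps you yourself flag are not routine. The paper performs a downward induction on the integer gap $\gamma - s$: by Lemma~\ref{22.02.16.1} one writes $f = f^0 + \sum_{i=1}^d D_i f^i$ with $f^0, \trho^{-1}f^i$ of Bessel order $s_0+1$, solves for $v^0,\ldots,v^d$ at that one-higher order using the inductive hypothesis, forms $v = v^0 + \sum_i D_i(\trho v^i)$, and then corrects by a further solution $w$ absorbing the commutator $\sum_i D_i(\cL(\trho v^i) - \trho\cL v^i)$, which by \eqref{2411011115} and \eqref{220526558} again lives at order $s_0+1$. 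This is entirely algebraic and never leaves the forward problem or the scale $\{\Psi\bH^{r}_{p,\cdot}\}$. Your duality argument, by contrast, must (a)~identify the dual of the graph-norm space $\Psi\mathring{\cH}^{\alpha,\gamma+2}_{p,\theta}(\Omega,T)$ and show that the abstract Banach-space adjoint of $u\mapsto\partial_t^\alpha u-\cL u$ is realized concretely as a time-reversed Caputo operator plus $\cL^*$; (b)~verify that the very-weak adjoint solution $v$ obtained from abstract duality satisfies the time-reversed equation in the precise distributional sense of \eqref{240419720}, which is a prerequisite for applying Lemma~\ref{21.05.13.9} to upgrade $v$ from order $-\gamma$ to order $-s$; and (c)~rigorously establish the fractional integration-by-parts $\int_0^T\langle\partial_t^\alpha u,v\rangle\dd t = \int_0^T\langle\widetilde u,\partial_t^\alpha\widetilde v\rangle\dd t$ in the weighted spaces with the vanishing initial/terminal traces encoded in Definition~\ref{240419512}. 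None of these is a mere bookkeeping detail — (a) and (b) in particular are where the method could silently break, since the dual of a graph-norm space is a quotient of a product and is not simply $\Psi^{-1}\bH^{-\gamma-2}_{p',\theta'}(\Omega,T)$. What the duality route buys, if carried through, is conceptual symmetry and the avoidance of the commutator bookkeeping; what the paper's constructive decomposition buys is that it never leaves the forward problem, requires only Lemmas~\ref{22.02.16.1} and~\ref{21.05.13.9}, and makes the one-step induction on $s$ completely transparent.
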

	\begin{proof}
		To prove uniqueness, suppose that $\overline{u}\in \Psi \mathring{\cH}_{p,\theta}^{\alpha,s+2}(\Omega,T)$ satisfies $\partial_t^\alpha\overline{u}=\cL \overline{u}$ and $\overline{u}(0,\cdot)\equiv 0$.
		By Lemma \ref{21.05.13.9}, $\overline{u}$ belongs to $\Psi \mathring{\cH}_{p,\theta}^{\alpha,\gamma+2}(\Omega,T)$.
		By the uniqueness of solutions in the assumption of this lemma, $\overline{u}(t)\equiv 0$ for almost every $t\in(0,T]$. 
		Therefore, uniqueness of solutions follows.
		Thus, it remains to show the existence of solutions and estimate \eqref{240419317}.
		
		\textbf{Step 1.} We first consider the case $s> \gamma$.
		Let $f\in\Psi \bH^{s}_{p,\theta+2p}(\Omega,T)$.
		Since $\Psi \bH^s_{p,\theta+2p}(\Omega,T)\subset \Psi \bH^{\gamma}_{p,\theta+2p}(\Omega,T)$, it follows that $f$ belongs to $\Psi \bH^{\gamma}_{p,\theta+2p}(\Omega,T)$, and hence there exists a solution $u\in \Psi \mathring{\cH}_{p,\theta}^{\alpha,\gamma+2}(\Omega,T)$ of equation \eqref{220616124}.
		It follows from Lemma \ref{21.05.13.9}, \eqref{2409111136}, and \eqref{2411011107} that
		\begin{alignat*}{2}
			\left\|u\right\|_{\Psi \mathring{\cH}^{\alpha,s+2}_{p,\theta}(\Omega,T)}\,&\lesssim_N\left\|u\right\|_{\Psi \bH^{\gamma+2}_{p,\theta}(\Omega,T)}+\left\|f\right\|_{\Psi \bH^{s}_{p,\theta+2p}(\Omega,T)}\\
			\, &\leq N_{\gamma}\|f\|_{\Psi \bH_{p,\theta+2p}^{\gamma}(\Omega,T)}+\|f\|_{\Psi \bH_{p,\theta+2p}^s(\Omega,T)}\\
			\,&\lesssim_N (N_{\gamma}+1)\left\|f\right\|_{\Psi \bH^{s}_{p,\theta+2p}(\Omega)}\,,
		\end{alignat*}
		where $N=N(d,p,\theta,\gamma,\mathrm{C}_2(\Psi),s)$.
		Therefore $u$ belongs to $\Psi \mathring{\cH}_{p,\theta}^{\alpha,s+2}(\Omega,T)$, and the proof is complete.
		
		\textbf{Step 2.} Consider the case $s<\gamma$. By mathematical induction, it suffices to show that if this lemma holds for $s=s_0+1$, then it also holds for $s=s_0$. 
		(Recall that the case $s\geq\gamma$ was proved in Step 1.)
		
		Let us assume that this lemma holds for $s=s_0+1$. 
		For $f\in \Psi \bH_{p,\theta+2p}^{s_0}(\Omega,T)$, by Lemma \ref{22.02.16.1}, there exist functions $f^0\in \Psi \bH_{p,\theta+2p}^{s_0+1}(\Omega,T)$ and $f^1,\,\ldots,\,f^d\in \Psi \bH_{p,\theta+p}^{s_0+1}(\Omega,T)$	such that $f=f^0+\sum_{i=1}^dD_if^i$ and
		\begin{align*}
			\left\|f^0\right\|_{\Psi \bH_{p,\theta+2p}^{s_0+1}(\Omega,T)}+\sum_{i=1}^d\left\|\trho^{\,-1}f^i\right\|_{\Psi \bH_{p,\theta+2p}^{s_0+1}(\Omega,T)}
			\lesssim_N\|f\|_{\Psi \bH_{p,\theta+2p}^{s_0}(\Omega,T)}\,,
		\end{align*}
		where $N=N(d,p,\theta,s_0,\mathrm{C}_2(\Psi))$.
		By the induction hypothesis, there exist $v^0,\,\cdots,\,v^d\in \Psi \mathring{\cH}_{p,\theta}^{\alpha,s_0+3}(\Omega,T)$ such that
		\begin{align*}
			\partial_t^\alpha v^0=\cL v^0 + f^0\quad\text{and}\quad \partial_t^\alpha v^i=\cL v^i + \trho^{\,-1}f^i\quad\text{for }i=1,\,\ldots,\,d\,,
		\end{align*}
		and
		\begin{equation}\label{21.09.30.100}
			\begin{alignedat}{2}
				\sum_{i=0}^d\left\|v^i\right\|_{\Psi \mathring{\cH}^{\alpha,s_0+3}_{p,\theta}(\Omega,T)}&\leq&&N_{s_0+1}\bigg(\left\|f^0\right\|_{\Psi \bH^{s_0+1}_{p,\theta+2p}(\Omega)}+\sum_{i=1}^d\left\|\trho^{\,-1} f^i\right\|_{\Psi \bH^{s_0+1}_{p,\theta+2p}(\Omega)}\bigg)\\
				&\lesssim_N\,&&N_{s_0+1}\|f\|_{\Psi \bH_{p,\theta+2p}^{s_0}(\Omega)}\,,
			\end{alignedat}
		\end{equation}
		where $N=N(d,p,\theta,s_0,\mathrm{C}_2(\Psi))$.
		Put $v=v^0+\sum_{i=1}^dD_i\big(\trho v^i\big)$, and observe that
		$$
		\partial_t^\alpha u=\cL v+f-\sum_{i=1}^dD_i\big(\cL(\trho v^i)-\trho\cL v^i\big)\,.
		$$
		Using \eqref{2411011115} and \eqref{220526558}, we obtain
		\begin{alignat*}{2}
			\left\|D_i\left(\cL(\trho v^i)-\trho\cL v^i\right)\right\|_{\Psi \bH^{s_0+1}_{p,\theta+2p}(\Omega)}\,&\lesssim_N \|\cL(\trho v^i)-\trho\cL v^i\|_{\Psi \bH^{s_0+2}_{p,\theta+p}(\Omega)}\\
			&\lesssim_N  \big\|\trho_{xx} v^i\big\|_{\Psi \bH^{s_0+2}_{p,\theta+p}(\Omega)}+\big\|\trho_x v^i_x\big\|_{\Psi \bH^{s_0+2}_{p,\theta+p}(\Omega)}\\
			&\lesssim_N \|v^i\|_{\Psi \bH^{s_0+3}_{p,\theta}(\Omega)}<\infty\,,
		\end{alignat*}
		where $N=N(d,p,\theta,s_0,\mathrm{C}_2(\Psi),\nu_1,\nu_2)$.
		By the induction hypothesis, there exists $w\in \Psi \mathring{\cH}^{\alpha,s_0+3}_{p,\theta}(\Omega,T)$ such that
		$$
		\partial_t^\alpha w=\cL w+\sum_{i=1}^dD_i\big(\cL(\trho v^i)-\trho\cL v^i\big)\quad(=\cL v-f)\,.
		$$
		Such $w$ satisfies
		\begin{equation}\label{21.09.30.200}
			\begin{alignedat}{2}
				\|w\|_{\Psi \mathring{\cH}^{\alpha,s_0+3}_{p,\theta}(\Omega,T)}&\leq&& N_{s_0+1}\sum_{i=1}^d\left\|D_i\left(\cL(\trho v^i)-\trho\cL v^i\right)\right\|_{\Psi \bH^{s_0+1}_{p,\theta+2p}(\Omega)}\\
				&\lesssim_N &&N_{s_0+1}\sum_{i=1}^d\|v^i\|_{\Psi \bH^{s_0+3}_{p,\theta}(\Omega)}\,,
			\end{alignedat}
		\end{equation}
		where $N=N(d,p,\theta,s_0,\mathrm{C}_2(\Psi),\nu_1,\nu_2)$.
		Put $u=v+w=v^0+\sum_{i=1}^d D_i(\trho v^i)+w$.
		Then $u$ satisfies $\partial_t^\alpha u=\cL u+ f$.
		Moreover, by \eqref{21.09.30.100} and \eqref{21.09.30.200}, we obtain \eqref{240419317} for $s=s_0$.
	\end{proof}

	Now, we provide the proof of Theorem \ref{22.02.18.6}.
	
	\begin{proof}[Proof of Theorem \ref{22.02.18.6}]
		
		(1) By Lemma \ref{22.04.15.0430}, it suffices to prove the case $\gamma=0$.
		
		\textbf{\textit{A priori} estimates.} Make use of Theorem \ref{05.11.2} and \eqref{2409098051} to obtain that for any $u\in C_c^{\infty}\big((0,T]\times\Omega\big)$, 
		\begin{align*}
			\begin{split}
				\|u\|_{\Psi^{\mu}\bL_{p,d-2}(\Omega,T)}&\lesssim_N \|\partial_t^\alpha u-\cL u\|_{\Psi^{\mu}\bL_{p,d+2p-2}(\Omega,T)}\,,
			\end{split}
		\end{align*}
		where $N=N(d,\alpha,p,\mu,\mathrm{C}_0(\Omega),\mathrm{C}_2(\Psi),\mathrm{C}_3(\psi,\Psi),\mathrm{C}_4)$.
		Combining this with Lemma \ref{21.05.13.9}, we have
		\begin{equation}\label{2206081117}
			\begin{aligned}
				\|u\|_{\Psi^{\mu}\mathring{\cH}_{p,d-2}^{\alpha,2}(\Omega,T)}\lesssim\,&\|u\|_{\Psi^{\mu}\bL_{p,d-2}(\Omega,T)}+\|\partial_t^\alpha u-\cL u\|_{\Psi^{\mu}\bL_{p,d+2p-2}(\Omega,T)}\\
				\lesssim\,&\|\partial_t^\alpha u-\cL u\|_{\Psi^{\mu}\bL_{p,d+2p-2}(\Omega,T)}\,.
			\end{aligned}
		\end{equation}
		By Lemma \ref{240911115}, \eqref{2206081117} also holds for all $u\in\Psi^{\mu} \mathring{\cH}_{p,d-2}^{\alpha,2}(\Omega,T)$.
		Therefore the \textit{a priori} estimates follow.
		The uniqueness of solutions also follows from \eqref{2206081117}.
		
		\textbf{Existence of solutions.}
		We first consider the case $\cL=\nu_1\Delta$.
		For a fixed $f\in\Psi^{\mu}\bL_{p,d+2p-2}(\Omega,T)$, Lemma \ref{2205241011} implies that there exists a sequence $\big(f^{(n)}\big)_{n\in\bN}\subset C_c^{\infty}([0,T]\times\Omega)$ such that $f^{(n)}\rightarrow f$ in $\Psi^{\mu}\bL_{p,d+2p-2}(\Omega,T)$.
		Make use of Lemmas \ref{21.05.25.300} and \ref{21.05.13.9} (for $\gamma=2$ and $s=0$) to obtain that there exists a solution $u^{(n)}\in \Psi^{\mu}\mathring{\cH}_{p,d-2}^{\alpha,2}(\Omega,T)$ of the equation
		\begin{align}\label{220613418}
			\partial_t^\alpha u^{(n)}=\nu_1\Delta u^{(n)}+f^{(n)}\,.
		\end{align}
		By \eqref{2206081117}, 
		\begin{alignat*}{2}
			\|u^{(n)}-u^{(m)}\|_{\Psi^{\mu}\mathring{\cH}_{p,d-2}^{\alpha,2}(\Omega,T)}&\lesssim\,&&\|\big(\partial_t^\alpha-\nu_1\Delta\big) \big(u^{(n)}-u^{(m)}\big)\|_{\Psi^{\mu}\bL_{p,d+2p-2}(\Omega,T)}\\
			&=&&\|f^{(n)}-f^{(m)}\|_{\Psi^{\mu}\bL_{p,d+2p-2}(\Omega,T)}\,.
		\end{alignat*}
		This implies that $\big(u^{(n)}\big)_{n\in\bN}$ is a Cauchy sequence in $\Psi^{\mu}\mathring{\cH}_{p,d-2}^{\alpha,2}(\Omega,T)$.
		Since $\Psi^{\mu}\mathring{\cH}_{p,d-2}^{\alpha,2}(\Omega,T)$ is a Banach space, there exists $u\in \Psi^{\mu}\mathring{\cH}_{p,d-2}^{\alpha,2}(\Omega,T)$ such that $u^{(n)}\rightarrow u$ in $\Psi^{\mu}\mathring{\cH}_{p,d-2}^{\alpha,2}(\Omega,T)$.
		Letting $n\rightarrow\infty$ in \eqref{220613418}, we have $\partial_t^\alpha u-\nu_1\Delta u=f$.
		Therefore the theorem is proved for the case $\cL=\nu_1\Delta$.
		
		Now consider a general operator $\cL:=\sum_{i,j}a^{ij}D_{ij}\in\cM_T(\nu_1,\nu_2)$.
		For $s\in[0,1]$, put
		$$
		\cL_s:=(1-s)\nu_1\Delta+s\cL=\sum_{i,j=1}^d\big((1-s)\nu_1\delta^{ij}+sa^{ij}\big)D_{ij}\,.
		$$
		For any $t\in(0,T]$, 
		\begin{align}\label{2206081243}
			\nu_1|\xi|^2\leq \sum_{i,j=1}^d\big((1-s)\nu_1\delta^{ij}+sa^{ij}(t)\big)\xi_i\xi_j\leq \nu_2|\xi|^2\,,
		\end{align}
		which implies that $\cL_s\in \cM_T(\nu_1,\nu_2)$.
		It follows from \eqref{2206081117} that
		\begin{align*}
			\|u\|_{\Psi^{\mu}\mathring{\cH}_{p,d-2}^{\alpha,2}(\Omega,T)}\leq N\|\partial_t^\alpha u-\cL_s u\|_{\Psi^{\mu}\bL_{p,d+2p-2}(\Omega,T)}
		\end{align*}
		for all $u\in \Psi^{\mu}\mathring{\cH}_{p,d-2}^{\alpha,2}(\Omega,T)$ and $s\in[0,1]$. Here, $N$ is the constant in \eqref{2206081117}, which is independent of $s$.
		By the method of continuity (see, \textit{e.g.}, \cite[Theorem 5.2]{GT}), the unique solvability for $\cL_0$ yields that for $\cL_1:=\cL$.
		
		(2) We first prove the existence of solutions.
		By Lemma \ref{240824401}, for $u_0\in \Psi^\mu B_{p,d+2/\alpha-2}^{\gamma+2-2/(p\alpha)}(\Omega)$, there exists $v\in \Psi^\mu \cH_{p,d-2}^{\alpha,\gamma+2}(\Omega,T)$ such that $v(0)=u_0$ and 
		\begin{align}\label{240824440}
			\|v\|_{\Psi^\mu \cH_{p,d-2}^{\alpha,\gamma+2}(\Omega,T)}\leq N(d,p,\alpha,\gamma,\mathrm{C}_2(\Psi),\mu)\|u_0\|_{\Psi^\mu B_{p,d+2/\alpha-2}^{\gamma+2-2/(p\alpha)}(\Omega)}\,.
		\end{align}
		By the definition of $\Psi^\mu\cH_{p,d-2}^{\alpha,\gamma+2}(\Omega,T)$, we have $\partial_t^\alpha v-\cL v\in  \bH_{p,d+2p-2}^\gamma(\Omega,T)$ and 
		\begin{alignat}{2}\label{240824441}
			\|\partial_t^\alpha v-\cL v\|_{\Psi^\mu \bH_{p,d+2p-2}^\gamma(\Omega,T)}\lesssim_{d,\nu_1,\nu_2} \|v\|_{\Psi^\mu \cH_{p,d-2}^{\alpha,\gamma+2}(\Omega,T)}\lesssim \|u_0\|_{\Psi^\mu B_{p,d+2/\alpha-2}^{\gamma+2-2/(p\alpha)}(\Omega)}\,.
		\end{alignat}
		By (1) of this theorem and Lemma \ref{240426524}.(2), there exists $w\in \Psi^\mu \cH_{p,d-2}^{\alpha,\gamma+2}(\Omega,T)$ such that
		$$
		\partial_t^\alpha w=\cL w+f-\big(\partial_t^\alpha v-\cL v\big)\quad\text{in}\quad (0,T]\quad ;\quad w(0,\cdot)=0\,.
		$$
		Moreover, by \eqref{240824444} and \eqref{240824441},
		\begin{align*}
			\|w\|_{\Psi^\mu \cH_{p,d-2}^{\alpha,\gamma+2}(\Omega,T)}\,&\lesssim \|f-(\partial_t^\alpha v-\cL v)\|_{\Psi^\mu \bH_{p,d+2p-2}^\gamma(\Omega,T)}\\
			&\lesssim  \|f\|_{\Psi^\mu \bH_{p,d+2p-2}^\gamma(\Omega,T)}+\|u_0\|_{\Psi^\mu B_{p,d+2/\alpha-2}^{\gamma+2-2/(p\alpha)}(\Omega)}\,.
		\end{align*}
		Put $u:=v+w$. 
		Then $u$ is a solution of \eqref{220616124}, and \eqref{240824443} follows from \eqref{240824440} and \eqref{240824441}.
		
		The uniqueness follows from (1) of this theorem. 
		Indeed, if $u\in \Psi^\mu \cH_{p,d-2}^{\alpha,\gamma+2}(\Omega,T)$ satisfies \eqref{220616124} with $f\equiv 0$ and $u_0=0$, then Lemma \ref{240426524} implies that $u\in\Psi^{\mu}\mathring{\cH}_{p,d-2}^{\alpha,\gamma+2}(\Omega,T)$ with $\partial_t^\alpha u-\cL u=0$.
		By the uniqueness of solutions in (1) of this theorem, we conclude that $u=0$ in $\Psi^{\mu}\mathring{\cH}_{p,d-2}^{\alpha,\gamma+2}(\Omega,T)$. 
		This implies that $\|u\|_{\Psi^{\mu}\cH_{p,d-2}^{\alpha,\gamma+2}(\Omega,T)}=0$ (see Lemma \ref{240426524}).
	\end{proof}
	
	We conclude this subsection by establishing the global uniqueness of solutions.
	\begin{thm}\label{220821002901}\,
		Let \eqref{hardy} holds for $\Omega$ and for each $k=1,\,2$, let  
		\begin{equation*}
			\begin{gathered}
				\text{$\psi_k$ is a superharmonic Harnack function in $\Omega$}\,\,,\quad\text{$\Psi_k$ is a regularization of $\psi_k$}\,\,,\\
				\gamma_k\in\bR\,\,,\,\,\,\,p_k\in(1,\infty)\,\,,\,\,\,\, \mu_k\in \cI(\psi_k,p_k,\nu_2/\nu_1)\,.
			\end{gathered}
		\end{equation*}
		Suppose that $	f\in\bigcap_{k=1,2}\Psi_k^{\,\mu_k}\bH_{p_k,d+2p_k-2}^{\gamma_k}(\Omega,T)$.
		
		\begin{enumerate}
			\item 
			For each $k=1,\,2$, let
			$u^{(k)}\in\Psi_k^{\,\mu_k}\mathring{\cH}_{p_k,d-2}^{\alpha,\gamma_k+2}(\Omega,T)$ be the solution to the equation
			$$
			\partial_t^\alpha u^{(k)}=\cL u^{(k)}+f\,.
			$$
			Then $u^{(1)}(t)=u^{(2)}(t)$ in $\cD'(\Omega)$ for almost every $t\in[0,T]$.
			
			\item Assume further that $p_1,\,p_2>\frac{1}{\alpha}$ and $u_0\in\bigcap_{k=1,2}\Psi_k^{\,\mu_k} B_{p_k,d+2/\alpha-2}^{\gamma_k+2-2/(\alpha p_k)}(\Omega)$. 				
			For each $k=1,\,2$, let
			$u^{(k)}\in\Psi_k^{\,\mu_k}\cH_{p_k,d-2}^{\alpha,\gamma_k+2}(\Omega,T)$ be the solution to the equation
			$$
			\partial_t^\alpha u^{(k)}=\cL u^{(k)}+f\quad;\quad u^{(k)}(0)=u_0\,.
			$$
			Then $u^{(1)}(t)=u^{(2)}(t)$ in $\cD'(\Omega)$ for all $t\in[0,T]$.
		\end{enumerate}
		
	\end{thm}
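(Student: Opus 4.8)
The plan is to reduce the statement to the case of smooth, compactly supported data, for which one can exhibit a single weak solution lying simultaneously in \emph{every} weighted solution space under consideration; the general case then follows by density together with the \textit{a priori} estimates already established in Theorems \ref{05.11.2} and \ref{22.02.18.6}.

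For part (1) I would first note that each $\Psi_k^{\,\mu_k}$ is again a regular Harnack function (Example \ref{21.05.18.2}.(3)), so Lemma \ref{2205241011} provides $f^{(n)}\in C_c^{\infty}\big((0,T)\times\Omega\big)$ with $f^{(n)}\to f$ \emph{simultaneously} in $\Psi_1^{\,\mu_1}\bH_{p_1,d+2p_1-2}^{\gamma_1}(\Omega,T)$ and in $\Psi_2^{\,\mu_2}\bH_{p_2,d+2p_2-2}^{\gamma_2}(\Omega,T)$. For fixed $n$, I would run the construction from the proof of Lemma \ref{21.05.25.300}, whose only inputs --- solvability of \eqref{2206201120} on an exhausting sequence of smooth bounded subdomains, the maximum principle for $\partial_t^\alpha-\cL$, and Theorem \ref{05.11.2} --- are available for a general $\cL\in\cM_T(\nu_1,\nu_2)$ and not only for $\Delta$. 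This produces $w^{(n)}\in L_\infty\big([0,T]\times\Omega\big)$ solving $\partial_t^\alpha w^{(n)}=\cL w^{(n)}+f^{(n)}$ weakly with $w^{(n)}(0)=0$ and satisfying \eqref{2206131031} for every positive superharmonic weight and every admissible exponent. The higher order estimates of Lemma \ref{21.05.13.9} then place $w^{(n)}$ in both $\Psi_1^{\,\mu_1}\mathring{\cH}_{p_1,d-2}^{\alpha,\gamma_1+2}(\Omega,T)$ and $\Psi_2^{\,\mu_2}\mathring{\cH}_{p_2,d-2}^{\alpha,\gamma_2+2}(\Omega,T)$, so the uniqueness part of Theorem \ref{22.02.18.6}.(1) forces $w^{(n)}$ to coincide with the solution $u^{(n)}_{k}$ obtained by solving the $f^{(n)}$-problem in the $k$-th space; in particular $u^{(n)}_1=u^{(n)}_2$ in $\cD'(\Omega)$ for a.e.\ $t$. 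Letting $n\to\infty$, linearity applied to the difference of solutions together with \eqref{240824444} gives $\|u^{(n)}_k-u^{(k)}\|_{\Psi_k^{\,\mu_k}\mathring{\cH}_{p_k,d-2}^{\alpha,\gamma_k+2}(\Omega,T)}\to 0$, hence convergence in $\Psi_k^{\,\mu_k}\bH_{p_k,d-2}^{\gamma_k+2}(\Omega,T)=L_{p_k}\big((0,T);\Psi_k^{\,\mu_k}H_{p_k,d-2}^{\gamma_k+2}(\Omega)\big)$; passing to one subsequence along which both $k=1$ and $k=2$ converge for a.e.\ $t$ in the respective weighted norms, and using \eqref{220526214} to see that such norm convergence controls the pairing with any fixed $\varphi\in C_c^{\infty}(\Omega)$, I would conclude $\la u^{(1)}(t),\varphi\ra=\la u^{(2)}(t),\varphi\ra$ for every $\varphi$ and a.e.\ $t\in[0,T]$.

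For part (2) I would reduce to part (1). Using that $C_c^{\infty}(\Omega)$ is dense in each of the relevant weighted Besov spaces (Lemma \ref{220527502111}.(2)) together with a common mollification, pick $u_0^{(n)}\in C_c^{\infty}(\Omega)$ converging to $u_0$ in both initial-data spaces, and $f^{(n)}$ as in part (1). For the smooth data $(f^{(n)},u_0^{(n)})$ the solution of \eqref{220616124} with $u(0)=u_0^{(n)}$ is again universal: it is the sum of the $w^{(n)}$ above and the solution of the zero-source problem with initial value $u_0^{(n)}$, the latter produced by the same smooth-exhaustion/maximum-principle argument and lying in every $\Psi_k^{\,\mu_k}\cH_{p_k,d-2}^{\alpha,\gamma_k+2}(\Omega,T)$, so uniqueness in Theorem \ref{22.02.18.6}.(2) forces the realizations in the two spaces to agree as distributions. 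Passing to the limit via \eqref{240824443} yields $u^{(1)}(t)=u^{(2)}(t)$ for a.e.\ $t$ as before, and since $p_1,p_2>\frac{1}{\alpha}$, Proposition \ref{2204160313} shows that $t\mapsto u^{(k)}(t)$ is continuous from $[0,T]$ into $\cD'(\Omega)$, so the almost-everywhere identity upgrades to an identity for every $t\in[0,T]$.

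The hard part will be the first step for a general operator: producing, for smooth compactly supported data, one weak solution that obeys the weighted bounds \eqref{2206131031} for \emph{all} superharmonic weights and \emph{all} admissible exponents at once, since it is precisely this simultaneous membership that lets the two uniqueness assertions of Theorem \ref{22.02.18.6} collapse the a priori distinct solutions $u^{(n)}_1$ and $u^{(n)}_2$ onto the same distribution. The main thing to verify carefully is that the construction of Lemma \ref{21.05.25.300} survives the passage from $\Delta$ to a time-measurable $\cL\in\cM_T(\nu_1,\nu_2)$ (solvability and the maximum principle on smooth bounded subdomains still hold, and Theorem \ref{05.11.2} is already stated for such $\cL$); the remaining subsequence bookkeeping and the final continuity-in-time upgrade in part (2) are routine.
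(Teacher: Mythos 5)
Your overall strategy---exhibit, for smooth compactly supported data, one solution lying in all the weighted spaces at once, then pass to the limit---is the right idea, but you place the weight of the argument in the wrong place and end up with a real gap, precisely at the spot you flag as ``the hard part.'' Running the construction of Lemma \ref{21.05.25.300} for a general $\cL\in\cM_T(\nu_1,\nu_2)$ is not a minor adaptation. That construction requires, on each smooth bounded subdomain $\Omega_n$, a solution $R_nH$ that is smooth up to $t=0$ and continuous up to $\partial\Omega_n$ with the integrability \eqref{241002610}; the paper obtains this from Lemma \ref{240426835}, which in turn leans on the weighted regularity theory of \cite{HKP2021} for the \emph{Laplacian}. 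No analogue of Lemma \ref{240426835} is established for time-measurable $\cL$, and the fractional maximum principle cited in \cite{LUCHKO2009218} is also stated only for $\Delta$. So ``solvability on exhausting smooth subdomains'' and ``maximum principle for $\partial_t^\alpha-\cL$'' are not freely available in the form you need, and your first step does not close.

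The paper sidesteps this entirely. It runs exactly your density argument, but only for $\cL_0:=\nu_1\Delta$, where Lemma \ref{21.05.25.300} (for $\Delta$, after rescaling time) applies verbatim: approximate $f$ by $f_n\in C_c^\infty$ via Lemma \ref{2205241011}, solve in the intersection $\mathring{X}:=\bigcap_k\Psi_k^{\mu_k}\mathring{\cH}_{p_k,d-2}^{\alpha,\gamma_k+2}(\Omega,T)$ using Lemmas \ref{21.05.25.300} and \ref{21.05.13.9}, and pass to the limit using the estimate of Theorem \ref{22.02.18.6}. This yields that $\partial_t^\alpha-\cL_0$ is a bijection from $\mathring{X}$ onto $Y:=\bigcap_k\Psi_k^{\mu_k}\bH_{p_k,d+2p_k-2}^{\gamma_k}(\Omega,T)$. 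Then, since Theorem \ref{22.02.18.6} gives the a priori estimate $\|u\|_{\mathring{X}}\lesssim\|\partial_t^\alpha u-\cL_ru\|_Y$ uniformly along the homotopy $\cL_r=(1-r)\nu_1\Delta+r\cL$ (the ellipticity bound \eqref{2206081243} is preserved), the method of continuity transfers bijectivity to $\cL_1=\cL$. The solution of \eqref{2204160344} in $\mathring{X}$ is thus unique, and since each $u^{(k)}$ solves the same equation in the larger space $\Psi_k^{\mu_k}\mathring{\cH}_{p_k,d-2}^{\alpha,\gamma_k+2}(\Omega,T)$, it must coincide with the $\mathring{X}$-solution, giving $u^{(1)}=u^{(2)}$ a.e. This is a purely soft functional-analytic step, and it is what lets the paper reuse Lemma \ref{21.05.25.300} unchanged rather than reproving it. Your Part (2) reduction to Part (1) and the continuity-in-$t$ upgrade via Proposition \ref{2204160313} are fine, but you should adopt the method-of-continuity route for Part (1) instead of trying to redo the smooth-domain construction for general~$\cL$.
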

	\begin{proof}
		(1) Define
		\begin{align}\label{2408251215}
			\mathring{X}:=\bigcap_{k=1,2}\Psi_k^{\,\mu_k}\mathring{\cH}_{p_k,d-2}^{\alpha,\gamma_k+2}(\Omega,T)\quad\text{and}\quad Y:=\bigcap_{k=1,2}\Psi_k^{\,\mu_k}\bH_{p_k,d+2p-2}^{\gamma_k}(\Omega,T)\,,
		\end{align}
		and recall that $f\in Y$.
		
		\textbf{Step 1.} Consider the case $\cL=\nu_1\Delta$.
		Lemma \ref{2205241011} yields that there exists $f_n\in C_c^{\infty}\big((0,T]\times\Omega\big)$ such that $f_n\rightarrow f$ in $Y$.
		Since $\mu_k\in \cI(\psi_k,p_k,\nu_2/\nu_1) \subset (-1/p_k,1-1/p_k)$, it follows from Lemmas \ref{21.05.25.300} and \ref{21.05.13.9} that there exists $u_n\in \mathring{X}$ such that
		$$
		\partial_t^\alpha u_n=\nu_1 \Delta u_n+f_n\,.
		$$
		By Theorem \ref{22.02.18.6}, we obtain that for each $k=1,\,2$,
		$$
		\lim_{n\rightarrow \infty}\|u_n-u^{(k)}\|_{\Psi_k^{\,\mu_k}\mathring{\cH}_{p_k,d-2}^{\alpha,\gamma_k+2}(\Omega,T)}\lesssim \lim_{n\rightarrow \infty}\|f_n-f\|_{\Psi_k^{\,\mu_k}\bH_{p_k,d+2p-2}^{\gamma_k}(\Omega,T)}=0\,.
		$$
		This implies that
		$$
		u^{(1)}(t)=u^{(2)}(t)\quad\text{in}\,\,\,\cD'(\Omega)\,\,,\,\,\,\,\text{for almost every}\,\,t\in[0,T]\,.
		$$
		Therefore the case $\cL=\nu_1\Delta$ is proved.
		
		Note that
		$u^{(1)}(\,\cdot\,)$($=u^{(2)}(\,\cdot\,)$) is the unique solution of
		\begin{align}\label{2206211148}
			\partial_t^\alpha u=\nu_1\Delta u+f\,,
		\end{align}
		in the class $\mathring{X}$.
		Indeed, $\Psi_1^{\,\mu_1}\mathring{\cH}_{p_1,d-2}^{\alpha,\gamma_1+2}(\Omega,T)$ admits a unique solution to equation \eqref{2206211148}, and $\mathring{X}\subset \Psi_1^{\,\mu_1}\mathring{\cH}_{p_1,d-2}^{\alpha,\gamma_1+2}(\Omega,T)$.

		\textbf{Step 2.} Let $\cL\in\cM_T(\nu_1,\nu_2)$.
		For $r\in[0,1]$, we denote $\cL_r:=(1-r)\nu_1\Delta+r\cL$.
		By \eqref{2206081243}, Theorem \ref{22.02.18.6} implies that 
		\begin{align*}
			\|u\|_{\mathring{X}}&:=\sum_{k=1,2}\|u\|_{\Psi_k^{\,\mu_k}\mathring{\cH}_{p_k,d-2}^{\alpha,\gamma_k+2}(\Omega,T)}\\
			&\leq N\sum_{k=1,2}\|\partial_t^\alpha u-\cL_ru\|_{\Psi_k^{\,\mu_k}\bH_{p_k,d+2p-2}^{\gamma_k+2}(\Omega,T)}=N\|\partial_t^\alpha u-\cL_ru\|_Y
		\end{align*}
		for all $u\in \mathring{X}$, where $N$ is independent of $u$ and $r\in[0,1]$.
		In addition, by the result in Step 1, the map $u\mapsto \partial_t^\alpha u-\cL_0u$ is a bijective map from $\mathring{X}$ to $Y$.
		Therefore the method of continuity ensures that for any $f\in Y$, there exists a unique solution $u\in \mathring{X}$ of the equation
		\begin{align}\label{2204160344}
			\partial_t^\alpha u=\cL u+f\,.
		\end{align}
		For each $k=1,\,2$,  $u^{(k)}$ is the unique solution of equation \eqref{2204160344} in $\Psi_k^{\,\mu_k}\mathring{\cH}_{p_k,d-2}^{\alpha,\gamma_k+2}(\Omega,T)$. 
		Therefore $u=u^{(k)}$ in $\Psi_k^{\,\mu_k}\cH_{p_k,d-2}^{\alpha,\gamma_k+2}(\Omega,T)$.
		Consequently, $u^{(1)}(t)=u(t)=u^{(2)}(t)$ for almost every $t\in (0,T]$.

		(2) Use the notations in \eqref{2408251215}, define 
		\begin{align*}
			\begin{gathered}
				X:=\bigcap_{k=1,2}\Psi_k^{\,\mu_k}\cH_{p_k,d-2}^{\alpha,\gamma_k+2}(\Omega,T)\quad\text{and}\quad Z:=\bigcap_{k=1,2}\Psi_k^{\,\mu_k}B_{p_k,d+2/\alpha-2}^{\gamma_k+2-2/(\alpha p_k)}(\Omega)\,,
			\end{gathered}
		\end{align*}
		and recall that $u_0\in Z$.
		By Lemma \ref{22.04.11.3}, there exists $u_{0,n}\in C_c^{\infty}(\Omega)$ such that $u_{0,n}\rightarrow u_0$ in $Z$.
		Recall that $T<\infty$, and hence 
		$$
		\|u_{0,n}\|_{\Psi_k^{\,\mu_k}\bH_{p_k,d+2p_k-2}^{\gamma_k}(\Omega,T)}=T^{1/p}\|u_{0,n}\|_{\Psi_k^{\,\mu_k}H_{p_k,d+2p_k-2}^{\gamma_k}(\Omega)}<\infty\,.
		$$
		By (1) of this theorem, there exists $v_n\in \mathring{X}$ such that
		$$
		\partial_t^\alpha v_n=\cL v_n+f+\cL u_{0,n}\,.
		$$
		In view of Lemma \ref{240426524}.(2), we regard $v_n$ as an element of $X$ with $v_n(0)=0$.
		Put $u_n(t,\cdot)=v_n(t,\cdot)+u_{0,n}$.
		Since $u_{0,n}\in C_c^{\infty}(\Omega)$ and $v_n\in X$, $u_n$ also belongs to $X$.
		Moreover, we have
		$$
		\partial_t^\alpha u_{n}-\cL u_{n}=\big(\partial_t^\alpha v_{n}-\cL v_{n}\big)+\big(\partial_t^\alpha u_{0,n}-\cL u_{0,n}\big)=(f+\cL u_{0,n})-\cL u_{0,n}=f
		$$ 
		and $u_n(0,\cdot)=v_{n}(0,\cdot)+u_{0,n}=u_{0,n}$.
		Note that for each $k=1,\,2$,
		$$
		\partial_t^\alpha\big(u^{(k)}-u_n\big)=\cL(u^{(k)}-u_n\big)\quad\text{in}\quad (0,T]\quad;\quad \big(u^{(k)}-u_n\big)(0,\cdot)=u_0-u_{0,n}\,.
		$$
		By Theorem \ref{22.02.18.6}.(2), we obtain that
		\begin{align*}
			\|u^{(k)}-u_{n}\|_{\Psi_k^{\,\mu_k}\cH_{p_k,d-2}^{\alpha,\gamma_k+2}(\Omega,T)}\lesssim \|u_0-u_{0,n}\|_{\Psi_k^{\,\mu_k}B_{p_k,d+2/\alpha-2}^{\gamma_k+2-2/(\alpha p)}(\Omega)}\,.
		\end{align*}
		This implies that for each $k=1,\,2$, $u_n$ converges to $u^{(k)}$ in $\Psi_k^{\,\mu_k}\cH_{p_k,d-2}^{\alpha,\gamma_k+2}(\Omega,T)$. 
		By \eqref{240911422}, for any $t\in(0,T]$, $u_n(t)$ converges to $u^{(k)}(t)$ in $\cD'(\Omega)$.
		Therefore we conclude that $u^{(1)}(t)=u^{(2)}(t)$ in $\cD'(\Omega)$ for all $t\in(0,T]$.
	\end{proof}

	\mysection{Application}\label{app.}

	We apply Theorem \ref{22.02.18.6} to various domain settings: the fat exterior condition, the thin exterior condition, convex domains, the totally vanishing exterior Reifenberg condition, and conic domains.
	Notably, we also treat parabolic equations with time-measurable coefficients in the case where a domain $\Omega$ is convex or satisfying the totally vanishing exterior Reifenberg condition.
	
	Throughout this section, we assume that $\Omega$ is a (possibly unbounded) domain in $\bR^d$ with $d \ge 2$.
	Recall that $\rho(x)$ denotes the boundary distance function $d(x,\partial\Omega)$, and $\trho$ is a regularization of $\rho$ provided by Lemma \ref{21.05.27.3}.(1).
	
	We summarize basic properties of classical superharmonic functions.
	
	\begin{lemma}\label{21.05.18.1}\,
		
		\begin{enumerate}
			\item Let $\phi_1,\,\phi_2$ be classical superharmonic functions in $\Omega$. Then $\phi_1\wedge \phi_2$ is also a classical superharmonic function in $\Omega$.
			
			\item Let $\{\phi_{i}\}_{i\in\cI}$ be an arbitrary family of positive classical superharmonic functions in $\Omega$.
			Then $\inf_{i\in\cI}\phi_{i}$ is a superharmonic function in $\Omega$.
			
			\item Let $\phi_1,\,\phi_2$ be positive classical superharmonic functions in $\Omega$. For any $s\in(0,1)$, $\phi_1^{s}\phi_2^{1-s}$ is also a classical superharmonic function in $\Omega$; in particular, $\phi_1^{s}$ is a classical superharmonic function for all $s\in(0,1)$.

			\item Let $\Omega_1$ and $\Omega_2$ be open sets in $\bR^d$ and $\phi_i$ be a classical superharmonic function in $\Omega_i$, for $i=1,\,2$.
			Suppose that 
			\begin{alignat*}{2}
				&\liminf_{x\rightarrow x_1,x\in\Omega_2}\phi_2(x)\geq \phi_1(x_1)\quad &&\text{for all}\quad  x_1\in \Omega_1\cap \partial\Omega_2\,;\\
				&\liminf_{x\rightarrow x_2,x\in\Omega_1}\phi_1(x)\geq \phi_2(x_2)\quad &&\text{for all}\quad  x_2\in \Omega_2\cap \partial\Omega_1\,.
			\end{alignat*}
			Then the function
			\begin{align*}
				\phi(x):=
				\begin{cases}
					\phi_1(x)&\quad x\in\Omega_1\setminus\Omega_2\\
					\phi_1(x)\wedge \phi_2(x) &\quad x\in \Omega_1\cap \Omega_2\\
					\phi_2(x)&\quad x\in\Omega_2\setminus\Omega_1\\
				\end{cases}
			\end{align*}
			is also a classical superharmonic function in $\Omega1\cup\Omega_2$.
			
		\end{enumerate}
	\end{lemma}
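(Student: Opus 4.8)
The plan is to check, separately for each of the four items, the three defining properties of a classical superharmonic function: lower semicontinuity, the solid-ball mean-value inequality $\phi(x)\ge\frac{1}{|B_r(x)|}\int_{B_r(x)}\phi$ whenever $\overline{B}_r(x)\subset\Omega$, and $\phi\not\equiv+\infty$ on each connected component. Two standard facts from potential theory will be used throughout: that being classical superharmonic is a \emph{local} property (lower semicontinuity together with the mean-value inequality for all sufficiently small radii forces it for all admissible radii), and the mollification scheme $\phi^{(\epsilon)}\nearrow\phi$ from Lemma \ref{21.04.23.3}.(4). Items (1) and (2) are quick; item (4) will be the main point.

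For (1): a minimum of two lower semicontinuous functions is lower semicontinuous; if $j\in\{1,2\}$ attains $(\phi_1\wedge\phi_2)(x)=\phi_j(x)$ then $(\phi_1\wedge\phi_2)(x)=\phi_j(x)\ge\frac{1}{|B_r(x)|}\int_{B_r(x)}\phi_j\ge\frac{1}{|B_r(x)|}\int_{B_r(x)}(\phi_1\wedge\phi_2)$; and $\phi_1\wedge\phi_2\le\phi_1$ gives the last property. By induction, every finite minimum of members of any family $\{\phi_i\}$ is classical superharmonic. For (2), I would set $\psi:=\inf_{i\in\cI}\phi_i$ (assuming $\cI\neq\emptyset$ and fixing $i_0$, so $0\le\psi\le\phi_{i_0}\in L^1_{\mathrm{loc}}$) and invoke the lower-envelope theorem of \cite{AG}: since $\{\phi_i\}$ is locally uniformly bounded below (by $0$), the lower semicontinuous regularization $\widehat\psi$ is classical superharmonic and $\{\widehat\psi\neq\psi\}$ is polar, hence Lebesgue-null; thus $\psi=\widehat\psi$ almost everywhere, and Lemma \ref{240315329} gives that $\psi$ is superharmonic. (Alternatively one first replaces $\{\phi_i\}$ by the downward-directed family of its finite minima supplied by (1).)

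For (3): Young's inequality gives $\phi_1^{s}\phi_2^{1-s}\le s\phi_1+(1-s)\phi_2\in L^1_{\mathrm{loc}}$, and $\phi_1^{s}\phi_2^{1-s}$ is lower semicontinuous as a product of powers of positive lower semicontinuous functions. For smooth positive $u,v$ with $\Delta u\le0$ and $\Delta v\le0$, writing $w=u^{s}v^{1-s}$ and using $\nabla w=w\big(s\nabla\log u+(1-s)\nabla\log v\big)$ together with $\Delta\log u=\Delta u/u-|\nabla\log u|^{2}$, one computes
\begin{align*}
\frac{\Delta w}{w}=\frac{s\,\Delta u}{u}+\frac{(1-s)\,\Delta v}{v}+\Big(\big|s\nabla\log u+(1-s)\nabla\log v\big|^{2}-s\,|\nabla\log u|^{2}-(1-s)\,|\nabla\log v|^{2}\Big)\le0,
\end{align*}
since the first two terms are nonpositive ($\Delta u,\Delta v\le0<u,v$) and the bracket is nonpositive by convexity of $\xi\mapsto|\xi|^{2}$; being smooth with nonpositive Laplacian, $w$ is classical superharmonic. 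For general positive classical superharmonic $\phi_1,\phi_2$, on each ball $B$ with $\overline{B}\subset\Omega$ I would apply this to $\phi_i^{(\epsilon)}$ (smooth, positive, classical superharmonic on $B$ for small $\epsilon$): then $(\phi_1^{(\epsilon)})^{s}(\phi_2^{(\epsilon)})^{1-s}$ is classical superharmonic on $B$ and increases, as $\epsilon\searrow0$, to $\phi_1^{s}\phi_2^{1-s}$; an increasing limit of classical superharmonic functions that is not $\equiv+\infty$ is classical superharmonic (monotone convergence in the mean-value inequality), so $\phi_1^{s}\phi_2^{1-s}$ is classical superharmonic on $B$ and hence on $\Omega$. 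Taking $\phi_2$ a positive constant yields the ``in particular'' claim.

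For (4): decompose $\Omega=\Omega_1\cup\Omega_2$ into $\Omega_1\setminus\overline{\Omega_2}$, $\Omega_2\setminus\overline{\Omega_1}$, $\Omega_1\cap\Omega_2$, $\Omega_1\cap\partial\Omega_2$, $\Omega_2\cap\partial\Omega_1$ (these cover $\Omega$, since no point of $\partial\Omega_1\cap\partial\Omega_2$ lies in $\Omega_1\cup\Omega_2$). On the three open pieces $\phi$ locally equals $\phi_1$, $\phi_2$, or $\phi_1\wedge\phi_2$, each classical superharmonic there. At $x_1\in\Omega_1\cap\partial\Omega_2$ one has $\phi(x_1)=\phi_1(x_1)$, and on a neighborhood contained in $\Omega_1$, $\phi$ equals $\phi_1$ off $\Omega_2$ and $\phi_1\wedge\phi_2$ on $\Omega_2$; combining lower semicontinuity of $\phi_1$ with the first hypothesis $\liminf_{\Omega_2\ni x\to x_1}\phi_2(x)\ge\phi_1(x_1)$ gives $\liminf_{x\to x_1}\phi(x)\ge\phi_1(x_1)=\phi(x_1)$, and the case $x_2\in\Omega_2\cap\partial\Omega_1$ is symmetric, so $\phi$ is lower semicontinuous on $\Omega$. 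For the mean-value inequality, by locality it suffices to treat $x_0\in\Omega_1\cap\partial\Omega_2$: pick a ball $B$ with $\overline{B}\subset\Omega_1$; since $\phi\le\phi_1$ throughout $\Omega_1$, for any $y\in B\cap\partial\Omega_2$ and $\overline{B}_r(y)\subset B$ one has $\frac{1}{|B_r(y)|}\int_{B_r(y)}\phi\le\frac{1}{|B_r(y)|}\int_{B_r(y)}\phi_1\le\phi_1(y)=\phi(y)$, while at the remaining points of $B$ the inequality holds for small radii as above; hence $\phi$ is classical superharmonic on $B$, and such balls cover $\Omega_1$ (and symmetrically $\Omega_2$), so $\phi$ is classical superharmonic on $\Omega$. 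Finally $\phi\le\phi_1$ on $\Omega_1$ and $\phi\le\phi_2$ on $\Omega_2$ exclude $\phi\equiv+\infty$ on a component. The main obstacle is precisely this pasting across the interfaces $\Omega_i\cap\partial\Omega_j$: I expect it to be handled by exploiting the one-sided bound $\phi\le\phi_i$ on $\Omega_i$ together with the locality of the mean-value property, rather than attempting to continue $\phi_j$ beyond $\partial\Omega_j$.
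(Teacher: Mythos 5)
Your proof is correct.  The paper itself does not supply an argument for this lemma: it states that (1) follows from the definition, refers (2) and (3) to \cite[Theorem 3.7.5, Corollary 3.4.4]{AG}, and refers (4) to \cite[Corollary 3.2.4]{AG}.  Your write-up reconstructs exactly what those references provide.  In (2) you invoke the same lower-envelope (Brelot--Cartan) theorem that underlies \cite[Theorem 3.7.5]{AG}, correctly noting that positivity supplies the local lower bound, that the exceptional set is polar hence Lebesgue-null, and that Lemma \ref{240315329} then upgrades the a.e.\ identity to superharmonicity.  In (3) your log-convexity computation is the substance of \cite[Corollary 3.4.4]{AG}, and the passage from smooth mollifications $\phi_i^{(\epsilon)}$ (via Lemma \ref{21.04.23.3}.(4)) to general $\phi_i$ by an increasing-limit argument is sound: the supremum of an increasing family of lower semicontinuous functions is lower semicontinuous, monotone convergence preserves the ball mean-value inequality, and the weighted AM--GM bound $\phi_1^s\phi_2^{1-s}\le s\phi_1+(1-s)\phi_2$ rules out $\equiv+\infty$.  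In (4) the one-sided comparison $\phi\le\phi_i$ on $\Omega_i$ together with locality of the mean-value inequality is precisely the mechanism behind the Pasting Lemma \cite[Corollary 3.2.4]{AG}, and your five-piece decomposition of $\Omega_1\cup\Omega_2$ (noting $\partial\Omega_1\cap\partial\Omega_2$ is disjoint from $\Omega_1\cup\Omega_2$) is exhaustive; the lower-semicontinuity check across the interfaces uses $\liminf\min(a_n,b_n)\ge\min(\liminf a_n,\liminf b_n)$, which holds.  So the content matches the cited results; what you gain by writing it out is a self-contained treatment, what the paper gains by citing is brevity.
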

	
	The proof of Lemma \ref{21.05.18.1} is as follows:
	(1) follows directly from the definition of classical superharmonic functions; (2) and (3) are proved in \cite[Theorem 3.7.5, Corollary 3.4.4]{AG}, respectively; (4) follows from \cite[Corollary 3.2.4]{AG}.
	
	Prior to discussing various domain conditions, it is worth recalling that the fat exterior condition and the thin exterior condition are among the most well-known criteria ensuring the Hardy inequality.
	These conditions are closely related to the geometric properties of $\Omega$, specifically the Hausdorff dimension and the Aikawa dimension of $\Omega^c$.
	
	\begin{lemma}\label{240928354}
		A domain $\Omega\subset \bR^d$ admits the Hardy inequality \eqref{hardy} if one of the following conditions holds:
		\begin{enumerate}
			\item(Fat exterior condition) There exist constants $s>d-2$ and $c>0$ such that
			\begin{align}\label{220617253}
				\cH^{s}_{\infty}\big(\Omega^c\cap \overline{B}(p,r)\big)\geq cr^{s}\quad\text{for all }p\in \partial\Omega\,\,\text{and}\,\,r>0\,.
			\end{align}
			Here, $\cH_{\infty}^{s}(E)$ is the Hausdorff content of $E\subset \bR^d$ defined by
			$$
			\cH_{\infty}^{s}(E):=\inf \Big\{\sum_{i\in\bN}r_i^{s}\,:\,E\subset \bigcup_{i\in\bN}B(x_i,r_i)\quad\text{where }x_i\in E\text{ and }r_i>0\Big\}\,.
			$$
			
			\item(Thin exterior condition) $\beta_0:=\dim_{\cA}(\Omega^c)<d-2$, where $\dim_{\cA}(E)$ denotes the Aikawa dimension of $E$, defined as the infimum of $\beta\geq 0$ for which there exists a constant $A_\beta$ such that
			$$
			\sup_{p\in E,\,r>0}\frac{1}{r^{\beta}}\int_{B_r(p)}\frac{1}{d(x,E)^{d-\beta}}\dd x\leq A_\beta<\infty\,,
			$$
			with considering $\frac{1}{0}=+\infty$.
		\end{enumerate}
		Moreover, if the first condition holds, then the constant $\mathrm{C}_0(\Omega)$ in \eqref{hardy} can be chosen to depend only on $d$, $s$, and $c$.
		If the second condition holds, then $\mathrm{C}_0(\Omega)$ can be chosen to depend only on $d$, $\beta_0$, and $\{A_\beta\}_{\beta>\beta_0}$.
	\end{lemma}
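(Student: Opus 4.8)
The plan is to treat the two conditions by separate, essentially classical, mechanisms, tracking the constants throughout. For the \emph{fat exterior condition} I would first show that \eqref{220617253} implies the capacity density condition \eqref{2302101253}, and then invoke the classical implication that the capacity density condition yields the Hardy inequality \eqref{hardy}. For the first step, use the standard comparison between Hausdorff content and the variational $2$-capacity: when $s>d-2$ there is $c_1=c_1(d,s)>0$ such that every compact $K\subseteq\overline{B}_r(p)$ satisfies $\mathrm{Cap}(K,B_{2r}(p))\ge c_1\,r^{d-2-s}\,\cH_\infty^s(K)$ (both sides scale like $r^{d-2}$, and $s>d-2$ is exactly the range in which the $s$-content is a lower capacity bound; the quickest route is Frostman's lemma, which produces a measure $\mu$ on $K$ with $\mu(K)\gtrsim r^s$ and $\mu(B_\rho(x))\le\rho^s$, whose Newtonian energy is $\lesssim\mu(K)r^{s-d+2}$ since $s-d+2>0$, giving $\mathrm{Cap}(K)\gtrsim\mu(K)^2/\text{energy}\gtrsim r^{d-2}$). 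Applying this with $K=\Omega^c\cap\overline{B}_r(p)$ and \eqref{220617253} gives $\mathrm{Cap}(\Omega^c\cap\overline{B}_r(p),B_{2r}(p))\ge c_1c\,r^{d-2}$, while $\mathrm{Cap}(\overline{B}_r(p),B_{2r}(p))\le c_2(d)\,r^{d-2}$, so \eqref{2302101253} holds with a density constant at least $c_1c/c_2$, depending only on $d,s,c$. The passage from the capacity density condition to \eqref{hardy} is classical (Ancona; \cite{lewis}; Wannebo) and quantitative --- one derives the pointwise Hardy inequality by chaining $W^{1,1}$-Poincar\'e estimates along a Whitney chain terminating near $\partial\Omega$ and using the capacity lower bound to dominate the average of $|f|$ on each Whitney ball by the average of $|\nabla f|$ --- so $\mathrm{C}_0(\Omega)=\mathrm{C}_0(d,s,c)$.

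For the \emph{thin exterior condition}, write $E:=\Omega^c$ and $\delta(x):=d(x,E)$, which coincides with $d(x,\partial\Omega)$ for $x\in\Omega$; fix $\beta\in(\beta_0,d-2)$, so the Aikawa bound holds with constant $A_\beta$. Let $f\in C_c^\infty(\Omega)$; since $\mathrm{supp}\,f$ is a compact subset of $\Omega$, $f$ extends to a function in $C_c^\infty(\bR^d)$ vanishing on $E$. The plan is to establish a pointwise potential estimate
$$|f(x)|\ \le\ C(d)\sum_{k\ge 0}2^{-k}\,\delta(x)\,|B_k(x)|^{-1}\int_{B_k(x)}|\nabla f|,$$
where, for a nearest point $\xi_x\in E$ of $x$, the balls $B_k(x)$ are centered on the segment $[x,\xi_x]$ at parameter $1-2^{-k}$ with radius a small fixed multiple of $2^{-k}\delta(x)$, chosen so that consecutive balls overlap with comparable radii; this follows by telescoping the Poincar\'e estimates $|f_{B_k(x)}-f_{B_{k+1}(x)}|\lesssim 2^{-k}\delta(x)\,|B_k(x)|^{-1}\int_{B_k(x)\cup B_{k+1}(x)}|\nabla f|$ together with $f_{B_k(x)}\to f(\xi_x)=0$. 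Squaring, integrating against $\delta(x)^{-2}\,dx$ over $\Omega$, and running a Schur test, the inequality to be checked is
$$\sup_{y\in\bR^d}\int_{\bR^d}\Big(\sum_{k\ge 0}2^{-k}\,|B_k(x)|^{-1}\,1_{B_k(x)}(y)\Big)\delta(x)^{-2}\,dx\ <\ \infty,$$
and this is where thinness enters: for fixed $y$ the set $\{x:y\in B_k(x)\}$ lies within distance $\lesssim 2^k\delta(y)$ of $E$ with $\delta(x)\simeq 2^k|x-y|$, so the $x$-integral reduces to a sum, over scales and over dyadic annuli around $E$, of quantities of the form $r^{-\beta}\int_{B_r(q)}\delta^{-(d-\beta)}\,dx\le A_\beta$ times geometric factors that sum precisely because the Aikawa codimension $d-\beta$ exceeds $2$; this gives $\mathrm{C}_0(\Omega)=\mathrm{C}_0(d,\beta_0,\{A_\beta\}_{\beta>\beta_0})$. (Equivalently, this is the known fact that $\dim_{\cA}(\Omega^c)<d-2$ forces the $2$-Hardy inequality for $\bR^d\setminus\Omega^c$.)

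The fat case is routine once those two cited implications are quoted, with care only in constant tracking. The substantive work is the thin case: arranging the chain $\{B_k(x)\}_k$ so consecutive balls overlap with comparable radii while still accumulating at $\xi_x$ (a genuine but minor point), and then the Schur/Fefferman--Stein bookkeeping that sums the contributions of all scales and all dyadic annuli around $E$ against the single Aikawa bound --- the strict inequality $\beta_0<d-2$ being exactly what makes the relevant geometric series converge and what confines the final constant to a function of $d$, $\beta_0$, and $\{A_\beta\}_{\beta>\beta_0}$. An alternative I would keep in reserve for the thin case is to verify Maz'ya's capacitary criterion for the weight $\delta^{-2}$ directly from the Aikawa condition, namely $\int_K\delta^{-2}\,dx\lesssim\mathrm{Cap}(K,\bR^d)$ for all compact $K\subset\Omega$; this sidesteps the chaining but trades it for capacitary estimates of comparable difficulty.
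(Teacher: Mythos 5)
The paper does not prove Lemma \ref{240928354}: it is a quoted result, with the fat case handled implicitly by Proposition \ref{240928313} (Hausdorff content $\Rightarrow$ capacity density) combined with the classical capacity-density-implies-Hardy theorem cited from \cite{lewis,Kinhardy,AA}, and the thin case referred to \cite{ward} (and \cite{lehr}). Your fat-exterior argument follows exactly that route and is correct; the Frostman energy estimate gives $\mathrm{Cap}(K,B_{2r})\gtrsim r^{d-2}$, and the constant tracking is straightforward.

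For the thin case, however, the Schur argument you sketch has a genuine gap. First a small but telling point: your displayed Schur condition carries an extraneous factor $\delta(x)^{-2}$, so it is dimensionally inconsistent and cannot be bounded by a constant. More seriously, after correcting this, the relevant quantity is $\sup_y\sum_k 2^{-k}\int_{\{x:\,y\in B_k(x)\}}|B_k(x)|^{-1}\,dx$, and this \emph{diverges} even when the Aikawa bound is used. On $E_k(y):=\{x:y\in B_k(x)\}$ one has $\delta(x)\simeq 2^k\delta(y)$ and $|B_k(x)|\simeq\delta(y)^d$, while the best the Aikawa estimate gives is the trivial bound $|E_k(y)|\lesssim A_\beta(2^k\delta(y))^d$ (plug $\delta\simeq 2^k\delta(y)$ into $\int_{B_{Cr}(\xi_y)}\delta^{-(d-\beta)}\lesssim A_\beta r^\beta$ with $r=2^k\delta(y)$); this yields $\sum_k 2^{-k}\cdot 2^{kd}=\sum_k 2^{k(d-1)}=\infty$, and a weighted Schur test $h=\delta^{-\alpha}$ or $h=\delta^{\beta}$ does not repair it (one needs simultaneously $\alpha<1$ and $\alpha>d-1$). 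The failure is not an artifact of loose bookkeeping: in the model case $E=\{0\}$, $\Omega=\bR^d\setminus\{0\}$, $d\ge 3$, your balls are $B_k(x)=B(2^{-k}x,c\,2^{-k}|x|)$, and the set $E_k(y)$ genuinely has measure $\simeq(2^k|y|)^d$, so the same divergence appears; yet Hardy of course holds there (seen e.g.\ by polar coordinates). So the Aikawa codimension exceeding $2$ is not what makes \emph{this particular} series converge, and the Cauchy--Schwarz/maximal-function variant $\int\big(\fint_{B_k(x)}|\nabla f|\big)^2dx\lesssim 2^{kd}\int|\nabla f|^2$ runs into the same $\sum 2^{k(d-1)}$. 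Something qualitatively different is needed: the literature handles the thin case by proving that $\dim_{\cA}(\Omega^c)<d-2$ forces $\delta^{-2}\,dx$ into a Maz'ya-admissible / trace class (your alternative sketch), or by the ground-state substitution against an explicit superharmonic comparison function $\simeq\rho^{-d+2+\beta}$ of the type the paper itself constructs in Theorem \ref{21.10.18.1}, which turns $\Delta\phi\le 0$ directly into the quadratic-form inequality via Lemma \ref{03.30}. I would develop that alternative rather than the chaining route, since the chaining scheme as written cannot be made to close.
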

	We refer the reader to \cite{ward} for further details and discussion of this dimensional dichotomy.
	
	\begin{remark}\label{241110236}
		The Aikawa dimension is equivalent to the Assouad dimension, which is defined in terms of a covering property (see \cite[Theorem 1.1]{LT}).
		For any $E\subset \bR^d$, the inequality $\dim_{\cH}(E)\leq \dim_{\cA}(E)$ holds,
		and the equality need not hold in general (see \cite[Section 2.2]{lehr}).
		However, if $E$ is Ahlfors regular, for example, if $E$ is a smooth manifold or has a self-similar structure such as the Cantor set or the Koch snowflake, then $\dim_{\cH}(E)=\dim_{\cA}(E)$; see \cite[Lemma 2.1]{lehr} and \cite[Theorem 4.14]{Mattila}.	
	\end{remark}

	\subsection{Domains with fat exterior: Harmonic measure decay property}\label{fatex}\,\,
	
	We begin by recalling several conditions equivalent to the fat exterior condition.
	For $p\in\partial\Omega$ and $r>0$, we denote by $w(\,\cdot\,,p,r):=w\big(\,\cdot\,,\Omega\cap B_r(p),\Omega\cap \partial B_r(p)\big)$ the \textit{harmonic measure} of $\Omega\cap \partial B_r(p)$ over $\Omega\cap B_r(p)$, which is defined as the Perron-Wiener-Brelot solution $u$ of the equation 
	\begin{align*}
		\Delta u=0\quad\text{in}\,\,\,\Omega\cap B_r(p)\quad;\quad u=1_{\Omega\cap \partial B_r(p)}\quad\text{on}\,\,\,\partial \big(\Omega\cap B_r(p)\big)\,.
	\end{align*}
	In other words,
	\begin{align}\label{241031116}
		\begin{split}
			w(x,p,r):=\inf\big\{\phi(x)\,:\,\,&\text{$\phi$ is a classical superharmonic function in $\Omega\cap B_r(p)$}\\
			&\,\, \text{and $\underset{y\rightarrow z}{\liminf}\,\phi(y)\geq 1_{\Omega\cap \partial B_r(p)}(z)$ for all $z\in\partial \big(\Omega\cap B_r(p)\big)$}\big\}\,.
		\end{split}
	\end{align}
	(see Figure \ref{230113736} below).
	Note that $\Omega\cap \partial B_r(p)$ is a relatively open subset of $\partial\big(\Omega\cap B_r(p)\big)$.

	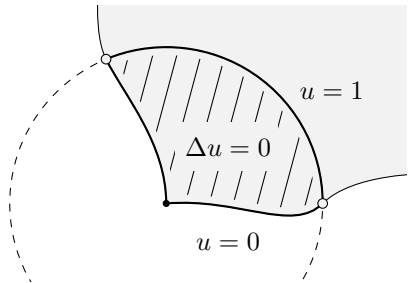
\begin{figure}[h]
		\begin{tikzpicture}[> = Latex]
			\begin{scope}
				\begin{scope}[scale=0.8]
					\clip (-4,-1.8) rectangle (4,2.8);
					
					\begin{scope}[shift={(0.5,0)}]
						\fill[gray!10] (-0.5,-0.5) .. controls +(0,1) and +(0.3,-0.6) ..(-1.5,1.9) .. controls +(-0.3,0.6) and +(0.2,-0.2) .. (-1.7,4) -- (4,4) -- (4,0) .. controls +(-0.2,0) and +(0.5,0.5) .. (2.1,-0.5) .. controls +(-0.5,-0.5) and +(1.2,0.1)..(-0.5,-0.5);
						
					\end{scope}

					\begin{scope}[shift={(0.5,0)}]
						\clip (-0.5,-0.5) .. controls +(0,1) and +(0.3,-0.6) ..(-1.5,1.9) arc (acos(-5/13):0:2.6)  .. controls +(-0.5,-0.5) and +(1.2,0.1)..(-0.5,-0.5);
						\foreach \i in {-4,-3.6,...,3}
						{\draw (\i,-2.8)--(\i+1.5,2.8);}
						\path[fill=gray!10] (-0.35,0.05) rectangle (1.35,0.85);
						
						\draw[gray!10,line width=6pt] (2.1,-0.5) arc (0:acos(-5/13):2.6);
						\draw[gray!10,line width=6pt] (2.1,-0.5) .. controls +(-0.5,-0.5) and  +(1.2,0.1)..	(-0.5,-0.5) .. controls +(0,1) and +(0.3,-0.6) ..(-1.5,1.9);
						
					\end{scope}
					
					\begin{scope}[shift={(0.5,0)}]
						\draw[dashed] (-0.5,-0.5) circle (2.6);
						\draw
						(-0.5,-0.5) .. controls +(0,1) and +(0.3,-0.6) ..(-1.5,1.9) .. controls +(-0.3,0.6) and +(0.2,-0.2) .. (-1.7,4);
						\draw
						(4,0) .. controls +(-0.2,0) and +(0.5,0.5) .. (2.1,-0.5) .. controls +(-0.5,-0.5) and  +(1.2,0.1)..	(-0.5,-0.5);

						\draw[ line width=0.8pt] (2.1,-0.5) arc (0:acos(-5/13):2.6);
						\draw[line width=0.8pt] (2.1,-0.5) .. controls +(-0.5,-0.5) and  +(1.2,0.1)..	(-0.5,-0.5) .. controls +(0,1) and +(0.3,-0.6) ..(-1.5,1.9);
						\draw[fill=gray!10] (2.1,-0.5) circle (0.08);
						\draw[fill=gray!10] (-1.5,1.9) circle (0.08);
						\draw[fill=black] (-0.5,-0.5) circle (0.05);
					\end{scope}

				\end{scope}
				
				\begin{scope}[shift={(0.4,0)}]
					\draw (0.42,0.35) node  {$\Delta u=0$} ;
					\draw (1.8,1.1) node (1) {$u=1$};
					\draw (0.42,-0.9) node (0) {$u=0$};
					
				\end{scope}

			\end{scope}
		\end{tikzpicture}
		\caption{$u:=w(\,\cdot\,,p,r)$}\label{230113736}
	\end{figure}
	
	\begin{defn}\label{2301021117}
		A domain $\Omega$ is said to satisfy the \textit{local harmonic measure decay property} with exponent $\lambda>0$ (abbreviated as `$\mathbf{LHMD}(\lambda)$'), if there exists a constant $M_{\lambda}>0$ such that for any $p\in\partial \Omega$ and $r>0$,
		\begin{align}\label{21.08.03.1}
			w(x,p,r)\leq M_{\lambda}\left(\frac{|x-p|}{r}\right)^{\lambda}\quad\text{for all}\,\,\, x\in \Omega\cap B(p,r)
		\end{align}
	\end{defn}
	
	\begin{remark}
		$\mathbf{LHMD}$ is closely related to the H\"older continuity of the PWB solutions.
		For the moment, we assume that $\Omega$ is bounded and satisfies the following property:
		\begin{itemize}
			\item[] For any $f\in C(\partial\Omega)$, the PWB solution of the equation 
			$$
			\Delta u=0\quad\text{in}\,\,\,\Omega\quad;\quad u=f\quad\text{on}\,\,\,\partial\Omega
			$$
			coincides with the classical solution; that is, $u\in C^\infty(\Omega)\cap C(\overline{\Omega})$ and the equation holds pointwise.
		\end{itemize}
		Here, the PWB solution $u$ is defined as in \eqref{241031116}, by replacing $\big(\Omega\cap B_r(p),1_{\Omega\cap \partial B_r(p)}\big)$ with $(\Omega,f)$.
		We denote this solution $u$ by $H_\Omega f$.
		For $\lambda\in(0,1]$, we denote by $\|H_\Omega\|_{\lambda}$ the operator norm of $H_\Omega$ from $C^{0,\lambda}(\partial \Omega)$ to $C^{0,\lambda}(\Omega)$, \textit{i.e.},
		$$
		\|H_\Omega\|_{\lambda}:=\sup_{f\in C^{0,\lambda}(\partial \Omega),f\not\equiv 0}\frac{\|H_\Omega f\|_{C^{0,\lambda}(\Omega)}}{\|f\|_{C^{0,\lambda}(\partial\Omega)}}\,.
		$$
		It is shown in \cite[Theorems 2 and 3]{aikawa2002} that $\|H_\Omega\|_1=\infty$. Moreover, for $\lambda\in(0,1)$, if $\|H_\Omega\|_{\lambda}<\infty$ then $\Omega$ satisfies $\mathbf{LHMD}(\lambda)$.
		Conversely, if $\Omega$ satisfies $\mathbf{LHMD}(\lambda')$ for some $\lambda'>\lambda$, then $\|H_\Omega\|_{\lambda}<\infty$.
	\end{remark}

	\begin{example}[Section 5 in \cite{Seo202404}]\label{240928314}\,
		
		\begin{enumerate}
			
			\item Every convex domain satisfies \textbf{LHMD}($1$), where $M_1$ in \eqref{21.08.03.1} depends only on $d$.
			
			\item Let $\Omega$ be a domain satisfying the totally vanishing exterior Reifenberg condition (see Definition \ref{2209151117}).
			Then for any $\epsilon>0$, $\Omega$ satisfies \textbf{LHMD}($1-\epsilon$), where $M_{1-\epsilon}$ in \eqref{21.08.03.1} depends only on $d$, $\epsilon$, $\{R_{0,\delta}/R_{\infty,\delta}\}_{\delta>0}$.
			Here, $R_{0,\delta}$ and $R_{\infty,\delta}$ are constants in \eqref{220916111}.
			
			\item For $\delta>0$, we denote by $\Lambda_\delta$ the first eigenvalue of the Dirichlet spherical Laplacian on $\{\sigma=(\sigma_1,\sigma_2,\ldots,\sigma_d)\in\bS^{d-1}\,:\,\sigma_1>-\cos \delta\}$.
			We also define
			\begin{align}\label{241018612}
				\lambda_{\delta}:=-\frac{d-2}{2}+\sqrt{\Big(\frac{d-2}{2}\Big)^2+\Lambda_{\delta}}\,.
			\end{align}
			In particular, when $d=2$, we set $\lambda_0=\frac{1}{2}$.
			It is worth mentioning that $\lambda_\delta=\frac{\pi}{2(\pi-\delta)}$ if $d=2$, $\lambda_\delta=\frac{\delta}{\pi-\delta}$ if $d=4$, and $\lambda_\delta>0$ for all $d\geq 3$ and $\delta>0$; for more information of $\lambda_\delta$, see \cite{BCG,FH}.
			Let $\delta\geq 0$ if $d=2$, and $\delta>0$ if $d\geq 3$, and suppose that $\Omega\subset \bR^d$ is a bounded domain satisfying the exterior $(\delta,R)$-cone condition, $0<R\leq \infty$.
			Here, the exterior $(\delta,R)$-cone condition is that for every $p\in\partial\Omega$, there exists a unit vector $e_p\in\bR^d$ such that
			\begin{align*}
				\{x\in B_R(p)\,:\,(x-p)\cdot e_p\geq |x-p|\cos\delta\}\subset \Omega^c\,. 
			\end{align*}
			(see Figure \ref{230212745} below).
			\begin{figure}[h]\centering
				\begin{tikzpicture}[>=Latex]
					\begin{scope}[shift={(-4.5,0)}]
						\clip (-1.5,-1.5) rectangle (1.5,1.5);
						\begin{scope}[name prefix = p-]
							\coordinate (A) at (0,0.7);
							\coordinate (B) at (-1.4,0.35);
							\coordinate (C) at (0,-1.4);
							\coordinate (D) at (1.4,0.35);
						\end{scope}
						
						\begin{scope}
							\draw[fill=gray!20]
							(p-A) .. controls +(-0.5,{sqrt(3)*0.5}) and +(0,0.7)..
							(p-B) .. controls +(0,-0.7) and +(-0.5,0.5)..
							(p-C) .. controls +(0.5,0.5) and +(0,-0.7) ..
							(p-D) .. controls +(0,0.7) and +(0.5,{sqrt(3)*0.5}) .. (p-A);
						\end{scope}
						
					\end{scope}
					\node[align=center] at (-4.5,-2.34) {A. Lipschitz boundary\\condition};

					\begin{scope}
						\clip (-1.5,-1.7) rectangle (1.5,1.5);
						\begin{scope}[name prefix = p-]
							\coordinate (A) at (0,0.7);
							\coordinate (B) at (-1.4,0.35);
							\coordinate (C) at (0,-1.4);
							\coordinate (D) at (1.4,0.35);
						\end{scope}
						
						\begin{scope}
							\draw[fill=gray!20]
							(p-A) .. controls +(-0.5,{sqrt(3)*0.5}) and +(0,0.7)..
							(p-B) .. controls +(0,-1) and +(0,1.4)..
							(p-C) .. controls +(0,1.4) and +(0,-1) ..
							(p-D) .. controls +(0,0.7) and +(0.5,{sqrt(3)*0.5}) .. (p-A);
						\end{scope}
						\draw[dashed] (p-C) circle (0.25);
						
					\end{scope}
					\node[align=center] at (0,-2.4) {B. Exterior\\$(\frac{\pi}{3},\infty)$-cone condition};
					\node[align=center] at (0, -3.5) {(does not satisfy Lipschitz\\boundary conddition)};
					
					\begin{scope}[shift={(4.5,0)}]
						\clip (-1.5,-1.7) rectangle (1.5,1.5);
						\begin{scope}[name prefix = p-]
							\coordinate (A) at (0,0.35);
							\coordinate (B) at (-1.4,0.45);
							\coordinate (C) at (0,-1.4);
							\coordinate (D) at (1.4,0.45);
						\end{scope}
						
						\begin{scope}
							\draw[fill=gray!20]
							(p-A) ..controls +(0,1.1) and +(0,0.9) ..
							(p-B) .. controls +(0,-1.1) and +(0,1.4)..
							(p-C) .. controls +(0,1.4) and +(0,-1.1) ..
							(p-D) .. controls +(0,0.9) and +(0,1.1) .. (p-A);
						\end{scope}
						\draw[dashed] (p-A) circle (0.25);
						\draw[dashed] (p-C) circle (0.25);
					\end{scope}
					\node[align=center] at (4.5,-2.37) {C. Exterior\\$(0,\infty)$-cone condition};
					\node[align=center] at (4.5, -3.5) {(does not satisfy $(\delta,R)$-cone \\condition, $\forall\,\,\,\delta,\,R>0$)};
					
					%
					%
					%
					%
					
				\end{tikzpicture}
				\caption{Examples for the exterior cone condition}\label{230212745}
			\end{figure}
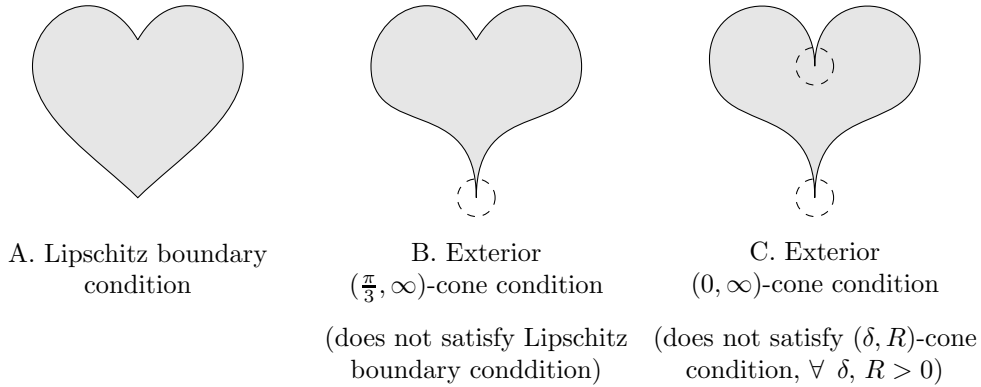
			Then $\Omega$ satisfies \textbf{LHMD}($\lambda_\delta$), where $M_{\lambda_{\delta}}$ in \eqref{21.08.03.1} depends only on $d$, $\delta$, and $\mathrm{diam}(\Omega)/R$.
			Moreover, if $\Omega$ is an unbounded domain satisfying the $(\delta,\infty)$-exterior cone condition, then $\Omega$ also satisfies \textbf{LHMD}($\lambda_\delta$).
		\end{enumerate}
	\end{example}
	
	Here is a relation between the fat exterior condition and \textbf{LHMD}($\lambda$), which is stated in \cite[Lemma 3]{AA} and \cite[Theorem 7.22]{kinnunen2021} (or, see \cite[Lemma 4.10]{Seo202404}).
	
	\begin{prop}\label{240928313}
		The following conditions are equivalent:
		\begin{enumerate}
			\item There exists $s>d-2$ such that the fat exterior condition \eqref{220617253} holds.
			
			\item There exists $\lambda>0$ such that $\mathbf{LHMD}(\lambda)$ holds.
			
			\item There exists $\epsilon_0>0$ such that
			\begin{align}\label{22.02.22.1}
				\inf_{p\in\partial\Omega,r>0}\frac{\mathrm{Cap}\big(\Omega^c\cap \overline{B}(p,r),B(p,2r)\big)}{\mathrm{Cap}\big( \overline{B}(p,r),B(p,2r)\big)}\geq \epsilon_0>0\,.
			\end{align}
			Here, $\mathrm{Cap}\big(K,B\big)$ is the capacity of a compact set $K\subset B$ relative to an open ball $B$, defined as follows: 
			\begin{align}\label{230324942}
				\mathrm{Cap}(K,B):=\inf\left\{\|\nabla f\|_2^2\,:\,f\in C_c^{\infty}(B)\,\,\,,\,\,\, f\geq 1\,\,\text{on}\,\,K\right\}\,.
			\end{align}
		\end{enumerate}
		In particular, constants $(c,s)$ in  \eqref{220617253}, $(\lambda,M_\lambda)$ in \eqref{21.08.03.1}, and $\epsilon_0$ in \eqref{22.02.22.1} depend only on another and $d$.
	\end{prop}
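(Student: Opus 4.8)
The plan is to establish the cycle $(1)\Rightarrow(3)\Rightarrow(2)\Rightarrow(3)\Rightarrow(1)$ — concretely, the four implications $(1)\Rightarrow(3)$, $(3)\Rightarrow(2)$, $(2)\Rightarrow(3)$ and $(3)\Rightarrow(1)$ — keeping every constant quantitative so that $(c,s)$ in \eqref{220617253}, $(\lambda,M_\lambda)$ in \eqref{21.08.03.1} and $\epsilon_0$ in \eqref{22.02.22.1} control one another together with $d$. The implication $(1)\Rightarrow(3)$ is the soft direction: given $\cH^{s}_\infty(\Omega^c\cap\overline B(p,r))\geq cr^{s}$ with $s>d-2$, Frostman's lemma yields a nonzero Radon measure $\mu$ supported on $\Omega^c\cap\overline B(p,r)$ with $\mu(\bR^d)\simeq cr^{s}$ and $\mu(B(y,t))\lesssim t^{s}$ for all $y$ and $t$. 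Estimating the Newtonian energy of $\mu$ by decomposing the kernel $|x-y|^{2-d}$ over dyadic annuli and using the growth bound — the resulting sum converges precisely because $s>d-2$ — one checks that a fixed multiple of the truncated potential of $\mu$ is admissible for $\mathrm{Cap}(\Omega^c\cap\overline B(p,r),B(p,2r))$, so that $\mathrm{Cap}(\Omega^c\cap\overline B(p,r),B(p,2r))\gtrsim r^{d-2}$; since $\mathrm{Cap}(\overline B(p,r),B(p,2r))\simeq r^{d-2}$ by scaling, $(3)$ follows with $\epsilon_0=\epsilon_0(d,s,c)$. This is essentially \cite[Lemma 3]{AA}.

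For $(3)\Rightarrow(2)$ I would run the uniform Wiener/Maz'ya estimate. Fix $p\in\partial\Omega$ and $r>0$, and write $w:=w(\cdot,p,r)$, so that $0\leq w\leq 1$ is harmonic in $\Omega\cap B_r(p)$ with PWB boundary value $0$ on the part of $\partial(\Omega\cap B_r(p))$ lying in $B_r(p)$, i.e. on $\Omega^c\cap B_r(p)$. The core is a one-scale oscillation decay: using the maximum principle together with a Caccioppoli estimate and the capacity lower bound from $(3)$ applied in the ball $B(p,t)$ for $t\leq r/2$, one obtains a fixed $\tau\in(0,\tfrac12)$ and $\eta=\eta(d,\epsilon_0)\in(0,1)$ with $\sup_{\Omega\cap B(p,\tau t)}w\leq\eta\sup_{\Omega\cap B(p,t)}w$; this is the standard decay lemma for functions vanishing on a uniformly fat complement (cf. \cite[Theorem 7.22]{kinnunen2021} and \cite{lewis}). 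Iterating it over $t=r,\tau r,\tau^{2}r,\dots$ down to the scale $|x-p|$ gives $w(x,p,r)\leq M_\lambda(|x-p|/r)^{\lambda}$ with $\lambda=\log(1/\eta)/\log(1/\tau)$ and $M_\lambda$ depending only on $d$ and $\epsilon_0$, which is $(2)$.

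For $(2)\Rightarrow(3)$ I would return from harmonic measure decay to a capacity lower bound: the decay $(2)$ forces $1-w(\cdot,p,r)$ to be close to $1$ near $p$, and combining this with the classical two-sided comparison between the harmonic measure of $\Omega^c\cap\overline B(p,r)$ viewed from $\partial B(p,r/2)$ and the ratio $\mathrm{Cap}(\Omega^c\cap\overline B(p,r),B(p,2r))/r^{d-2}$ produces $(3)$ with $\epsilon_0=\epsilon_0(d,\lambda,M_\lambda)$. The one step that is not bookkeeping — and the part I expect to be the main obstacle — is $(3)\Rightarrow(1)$: condition $(3)$ is only a borderline $\cH^{d-2}$-scale statement, and it must be upgraded to the fat exterior condition with some strictly supercritical exponent $s>d-2$. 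This is exactly the self-improvement phenomenon for uniformly fat sets: the validity of $(3)$ uniformly over all $p\in\partial\Omega$ and all $r>0$ implies, by Lewis's theorem \cite{lewis} (see also \cite{kinnunen2021}), that the same inequality persists with the Newtonian capacity replaced by a $q$-capacity for some $q<2$; and uniform $q$-fatness with $q$ slightly below $2$ yields, via the elementary comparison $\mathrm{Cap}_q(E,B(p,2r))\lesssim\cH^{d-q}_\infty(E)$ for compact $E\subset B(p,r)$, the fat exterior condition with $s=d-q>d-2$. Once the three conditions are linked in this way, the claim that $(c,s)$, $(\lambda,M_\lambda)$ and $\epsilon_0$ depend only on each other and $d$ follows from the quantitative form of each step, organized as in \cite[Lemma 4.10]{Seo202404}.
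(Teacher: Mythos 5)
The paper does not prove Proposition~\ref{240928313} itself: it defers entirely to \cite[Lemma 3]{AA}, \cite[Theorem 7.22]{kinnunen2021}, and \cite[Lemma 4.10]{Seo202404}. Your proposal fills in precisely the standard argument behind those references, and, taken as a whole, it is correct and quantitative. The four implications you chain together are exactly the classical ones: Frostman plus an energy estimate for $(1)\Rightarrow(3)$; the uniform oscillation-decay (Maz'ya/Wiener) iteration for $(3)\Rightarrow(2)$; the two-sided comparison between boundary harmonic measure of $\Omega^c\cap\overline B(p,r)$ and the normalized capacity for $(2)\Rightarrow(3)$; and, for the genuinely nontrivial step $(3)\Rightarrow(1)$, Lewis's self-improvement of uniform $2$-fatness to uniform $q$-fatness for some $q<2$, followed by the elementary comparison $\mathrm{Cap}_q(E,B(p,2r))\lesssim_{d,q}\cH^{d-q}_\infty(E)$ for compact $E\subset\overline B(p,r)$ (where one should cap the covering radii at, say, $r/100$ before applying subadditivity; if some radius exceeds this, a single ball already contributes $\gtrsim r^{d-q}$). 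You correctly single out Lewis's theorem as the load-bearing ingredient; without the self-improvement one would only obtain $(3)$ at the borderline exponent $d-2$, not the strict inequality $s>d-2$ required in $(1)$.

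Two small points you should tidy up. First, your energy estimate in $(1)\Rightarrow(3)$ is written with the kernel $|x-y|^{2-d}$, which is constant when $d=2$; since the proposition is stated for all $d\geq 2$, the two-dimensional case should be run with the logarithmic kernel $\log(2r/|x-y|)$ and the renormalized capacity $\mathrm{Cap}(\overline B(p,r),B(p,2r))\simeq 1$ for $d=2$ — the computation is the same, only the kernel changes. Second, in $(2)\Rightarrow(3)$ you should make explicit the step that normalizes $M_\lambda$: since $M_\lambda$ need not be $<1$, the raw bound $w(x,p,r)\leq M_\lambda(|x-p|/r)^\lambda$ is only informative once $|x-p|\leq(2M_\lambda)^{-1/\lambda}r$, at which scale one gets $1-w\geq\tfrac12$ on $\Omega\cap B(p,\delta r)$ with $\delta=\delta(\lambda,M_\lambda)$, and it is this uniform lower bound that feeds into the capacity comparison. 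With these edits the argument is complete and delivers the quantitative dependence of $(c,s)$, $(\lambda,M_\lambda)$, and $\epsilon_0$ on one another and $d$, as the proposition asserts.
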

	
	A well-known sufficient condition for \eqref{22.02.22.1} to hold is that
	\begin{align}\label{Acondition}
		\inf_{p\in\partial\Omega,r>0}\frac{|\Omega^c\cap \overline{B}(p,r)|}{| \overline{B}(p,r)|}\geq \epsilon_1>0
	\end{align}
	(see, \textit{e.g.}, \cite[Example 6.18]{kinnunen2021}).
	For further discussion of the capacity density condition, see \cite{Kinhardy, lewis} and the references therein.
	
	In view of Proposition \ref{240928313}, we consider domains satisfying $\mathbf{LHMD}(\lambda)$ for some $\lambda>0$, instead of \eqref{220617253}.
	This assumption is implied by certain geometric conditions, as shown in Example \ref{240928314}.
	
	The following lemma is presented in \cite[Theorem 4.12]{Seo202404}:
	
	\begin{lemma}\label{21.11.08.1}
		Let $\Omega$ satisfy $\mathbf{LHMD}(\lambda)$, $\lambda>0$.
		Then for any $\lambda'\in(0,\lambda)$, there exists a superharmonic function $\phi$ in $\Omega$ such that
		$$
		N^{-1}\rho(x)^{\lambda'}\leq \phi(x)\leq N\rho(x)^{\lambda'}\,,
		$$
		where $N=N(\lambda,\lambda',M_{\lambda})>0$.
	\end{lemma}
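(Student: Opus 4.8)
<br>

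The plan is to construct $\phi$ as an infimum over boundary points of explicit positive classical superharmonic functions built from harmonic measures at dyadic scales, and then read off the two–sided bound from the identity $\rho(x)=\inf_{p\in\partial\Omega}|x-p|$ together with Lemma \ref{21.05.18.1}.(2).

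Fix $\lambda'\in(0,\lambda)$ and a scale ratio $\kappa>1$ (any fixed value, say $\kappa=2$, suffices). For $p\in\partial\Omega$ and $j\in\bZ$ let $e_j^p$ be the function on $\Omega$ equal to the harmonic measure $w(\,\cdot\,,p,\kappa^j)$ on $\Omega\cap B_{\kappa^j}(p)$ and equal to $1$ on $\Omega\setminus B_{\kappa^j}(p)$. The first point to check is that $e_j^p$ is a positive classical superharmonic function on all of $\Omega$: this follows from Lemma \ref{21.05.18.1}.(4) applied with $\Omega_1=\Omega$, $\phi_1\equiv1$ and $\Omega_2=\Omega\cap B_{\kappa^j}(p)$, $\phi_2=w(\,\cdot\,,p,\kappa^j)$ (harmonic, hence classical superharmonic, on $\Omega_2$), since there $\phi_1\wedge\phi_2=w$ (as $0\le w\le1$) and the two lower–semicontinuity hypotheses of that lemma hold trivially — one because $\Omega_2\cap\partial\Omega_1=\emptyset$, the other because $w\le1=\liminf\phi_1$. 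By construction $0\le e_j^p\le1$ and $e_j^p\equiv1$ outside $B_{\kappa^j}(p)$, while the hypothesis $\mathbf{LHMD}(\lambda)$ (Definition \ref{2301021117}) supplies the decisive upper bound $e_j^p(x)\le M_\lambda\bigl(|x-p|\,\kappa^{-j}\bigr)^{\lambda}$ for $x\in\Omega\cap B_{\kappa^j}(p)$.

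Next, for each $p\in\partial\Omega$ set
$$
\omega_p(x):=(1-\kappa^{-\lambda'})\sum_{j\in\bZ}\kappa^{\lambda' j}\,e_j^p(x).
$$
Using $e_j^p\le1$ for the tail $j\to-\infty$ and $e_j^p(x)\le M_\lambda|x-p|^{\lambda}\kappa^{-\lambda j}$ for the tail $j\to+\infty$, together with $0<\lambda'<\lambda$, the series converges locally uniformly, and being an everywhere–finite increasing limit of partial sums of positive classical superharmonic functions it defines a positive classical superharmonic function $\omega_p$ on $\Omega$. For the comparison, fix $x$, put $t=|x-p|$ and choose $m\in\bZ$ with $\kappa^m\le t<\kappa^{m+1}$: the terms $j\le m$ have $e_j^p(x)=1$ and contribute a geometric sum comparable to $t^{\lambda'}$, while the terms $j\ge m+1$ are dominated by $M_\lambda t^{\lambda}\sum_{j\ge m+1}\kappa^{(\lambda'-\lambda)j}\lesssim t^{\lambda'}$; hence $N^{-1}|x-p|^{\lambda'}\le\omega_p(x)\le N|x-p|^{\lambda'}$ on $\Omega$ with $N=N(\lambda,\lambda',M_\lambda)$, uniformly in $p$.

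Finally put $\phi:=\inf_{p\in\partial\Omega}\omega_p$. Since each $\omega_p$ is a positive classical superharmonic function, Lemma \ref{21.05.18.1}.(2) gives that $\phi$ is superharmonic on $\Omega$ (equivalently, by Lemma \ref{240315329}, a.e.\ equal to a classical superharmonic function), and the uniform comparison just obtained yields $\phi(x)\simeq\inf_{p\in\partial\Omega}|x-p|^{\lambda'}=\rho(x)^{\lambda'}$ with constant $N(\lambda,\lambda',M_\lambda)$, which is the claim. I expect the only genuinely delicate step to be the first one — confirming that the glued/extended harmonic measure $e_j^p$ is really a classical superharmonic function on the whole of $\Omega$ (the behaviour across $\partial B_{\kappa^j}(p)$), and that superharmonicity and the comparison survive the passage to the infinite sum and to the infimum; the geometric–series estimates in the middle step are routine bookkeeping.
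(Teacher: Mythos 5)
Your construction — sum of glued harmonic measures over geometric scales $\kappa^j$, followed by an infimum over boundary points — is correct and is precisely the type of argument used in \cite[Theorem 4.12]{Seo202404}, which this paper cites for Lemma~\ref{21.11.08.1}. The geometric-series bookkeeping, the passage to the increasing limit, and the application of Lemma~\ref{21.05.18.1}.(2) to the infimum over $p$ are all fine, and the two-sided bound $\omega_p(x)\simeq|x-p|^{\lambda'}$ then gives $\phi\simeq\rho^{\lambda'}$ via $\rho(x)=\inf_{p\in\partial\Omega}|x-p|$.

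There is, however, a genuine slip in the one step you yourself flagged as delicate. When you apply Lemma~\ref{21.05.18.1}.(4) with $\Omega_1=\Omega$, $\phi_1\equiv 1$, $\Omega_2=\Omega\cap B_{\kappa^j}(p)$, $\phi_2=w(\cdot,p,\kappa^j)$, the two conditions to check are (a) $\liminf_{x\to x_1,\,x\in\Omega_2}\phi_2(x)\ge\phi_1(x_1)$ for $x_1\in\Omega_1\cap\partial\Omega_2=\Omega\cap\partial B_{\kappa^j}(p)$, and (b) $\liminf_{x\to x_2,\,x\in\Omega_1}\phi_1(x)\ge\phi_2(x_2)$ for $x_2\in\Omega_2\cap\partial\Omega_1=\emptyset$. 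Your two stated justifications — ``$\Omega_2\cap\partial\Omega_1=\emptyset$'' and ``$w\le 1=\liminf\phi_1$'' — both address (b), which is vacuous anyway; neither addresses (a). Condition (a) is the substantive one: it demands $\liminf_{x\to x_1}w(x,p,\kappa^j)\ge 1$ at every $x_1\in\Omega\cap\partial B_{\kappa^j}(p)$, i.e.\ that the harmonic measure actually attains the boundary value $1$ there. This is true, but needs a reason: any such $x_1$ is a regular boundary point of $\Omega\cap B_{\kappa^j}(p)$ (the complement contains $\bR^d\setminus B_{\kappa^j}(p)$, so an exterior sphere/cone barrier is available), and since $\Omega$ is open the boundary function $1_{\Omega\cap\partial B_{\kappa^j}(p)}$ is identically $1$ in a relative neighborhood of $x_1$ on $\partial(\Omega\cap B_{\kappa^j}(p))$; hence the PWB solution converges to $1$ at $x_1$. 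With that sentence inserted, the gluing is justified and the rest of your argument goes through unchanged.
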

	
	Combining Lemma \ref{21.11.08.1} with Theorem \ref{22.02.18.6}, we obtain the following result:
	
	\begin{corollary}\label{22.02.19.3}
		Let $\Omega\subset\bR^d$ satisfy $\mathbf{LHMD}(\lambda)$, $\lambda>0$, and consider the equation
		\begin{align}\label{2208131023}
			\partial_t^\alpha u=\Delta u+f\quad\text{in}\,\,(0,T]\,,
		\end{align}
		where $0<\alpha\leq 1$ and $0<T< \infty$.
		Then for any $p\in(1,\infty)$ and $\theta\in\bR$ satisfying
		\begin{align*}
			-2-(p-1)\lambda<\theta-d<-2+\lambda\,,
		\end{align*}
		and $\gamma\in\bR$,	the following statements hold:

		\begin{enumerate}
			\item 
			For any $f\in \bH^{\gamma}_{p,\theta+2p}(\Omega,T)$, equation \eqref{2208131023}
			has a unique solution $u$ in $\mathring{\cH}^{\alpha,\gamma+2}_{p,\theta}(\Omega,T)$.
			Moreover, 
			\begin{align}\label{241012333}
				\|u\|_{\mathring{\cH}^{\alpha,\gamma+2}_{p,\theta}(\Omega,T)}\leq N\| f\|_{\bH^{\gamma}_{p,\theta+2p}(\Omega,T)}\,,
			\end{align}
			where $N=N(d,p,\alpha,\gamma,\theta,\lambda,M_{\lambda})$.
			
			\item If $\frac{1}{p}<\alpha$, then for any $u_0\in B^{\gamma+2-2/(p\alpha)}_{p,\theta+2/\alpha}(\Omega)$ and $f\in  \bH^{\gamma}_{p,\theta+2p}(\Omega,T)$, equation \eqref{2208131023} with $u(0,\cdot)=u_0$
			has a unique solution $u$ in $\cH^{\alpha,\gamma+2}_{p,\theta}(\Omega,T)$.
			Moreover, 
			\begin{align}\label{2408244431111}
				\|u\|_{\cH^{\alpha,\gamma+2}_{p,\theta}(\Omega,T)}\leq N\left(\| u_0\|_{B^{\gamma+2-2/(p\alpha)}_{p,\theta+2/\alpha}(\Omega)}+\| f\|_{\bH^{\gamma}_{p,\theta+2p}(\Omega,T)}\right)\,,
			\end{align}
			where $N=N(d,p,\alpha,\gamma,\theta,\lambda,M_{\lambda})$.
			
		\end{enumerate}
	\end{corollary}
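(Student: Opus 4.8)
The plan is to deduce Corollary \ref{22.02.19.3} from Theorem \ref{22.02.18.6} by feeding it a superharmonic weight comparable to a power of the boundary distance, and then rewriting the resulting weighted spaces in the standard scale. I would first reduce to $0<T<\infty$, recovering $T=\infty$ at the end by exhaustion. Since $\mathbf{LHMD}(\lambda)$ implies the fat exterior condition \eqref{220617253} (Proposition \ref{240928313}), which implies the Hardy inequality \eqref{hardy} (Lemma \ref{240928354}.(1)), $\Omega$ satisfies the hypothesis of Theorem \ref{22.02.18.6} with $\mathrm{C}_0(\Omega)=\mathrm{C}_0(d,\lambda,M_\lambda)$. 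Given $p,\theta$ with $-2-(p-1)\lambda<\theta-d<-2+\lambda$, the two inequalities are equivalent to $\lambda>\theta-d+2$ and $\lambda>(d-2-\theta)/(p-1)$, so I would fix $\lambda'\in(0,\lambda)$, chosen as an explicit function of $d,p,\theta,\lambda$, with $\lambda'>\max\{\theta-d+2,\ (d-2-\theta)/(p-1)\}$.

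The core step is to produce the weight. By Lemma \ref{21.11.08.1} there is a superharmonic $\phi$ on $\Omega$ with $N^{-1}\rho^{\lambda'}\le\phi\le N\rho^{\lambda'}$, $N=N(\lambda,\lambda',M_\lambda)$. Since $\rho=d(\cdot,\Omega^c)$ is a Harnack function (Example \ref{21.05.18.2}.(1)), so is $\rho^{\lambda'}$, hence so is $\phi$; thus $\phi$ is a superharmonic Harnack function, and $\Psi:=\trho^{\lambda'}$ is a regularization of it (Example \ref{21.05.18.2}.(3) together with $\trho^{\lambda'}\simeq\rho^{\lambda'}\simeq\phi$), with $\mathrm{C}_2(\Psi)=\mathrm{C}_2(d,\lambda')$ and $\mathrm{C}_3(\phi,\Psi)=\mathrm{C}_3(d,\lambda,\lambda',M_\lambda)$. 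Because $\cL=\Delta\in\cM_T(1,1)$, Proposition \ref{05.11.1}.(1) gives $\cI(\phi,p,1)=(-\tfrac1p,1-\tfrac1p)$, and the choice of $\lambda'$ is precisely what forces $\mu:=(d-2-\theta)/(\lambda' p)$ to lie in this interval. Applying Theorem \ref{22.02.18.6} with $\phi,\Psi,\mu,\gamma$ then yields unique solvability of \eqref{2208131023} in $\Psi^{\mu}\mathring{\cH}^{\alpha,\gamma+2}_{p,d-2}(\Omega,T)$ with data in $\Psi^{\mu}\bH^{\gamma}_{p,d+2p-2}(\Omega,T)$, and, when $\tfrac1p<\alpha$, the inhomogeneous statement with initial data in $\Psi^{\mu}B^{\gamma+2-2/(p\alpha)}_{p,d+2/\alpha-2}(\Omega)$.

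It remains to translate the spaces. Since $\Psi^{\mu}=\trho^{\mu\lambda'}$ is a regular Harnack function and $\mu\lambda' p=d-2-\theta$, the shift identity \eqref{220526558} gives $\Psi^{\mu}X^{\gamma'}_{p,\vartheta}(\Omega)=X^{\gamma'}_{p,\vartheta-(d-2-\theta)}(\Omega)$ for $X\in\{H,B\}$ and all $\vartheta$, with equivalence constants depending only on $d,p,\gamma',\vartheta,\mu,\lambda'$; taking $\vartheta=d-2,\ d-2+2p,\ d+2p-2,\ d+2/\alpha-2$ converts $\Psi^{\mu}\mathring{\cH}^{\alpha,\gamma+2}_{p,d-2}$, $\Psi^{\mu}\cH^{\alpha,\gamma+2}_{p,d-2}$, $\Psi^{\mu}\bH^{\gamma}_{p,d+2p-2}$ and $\Psi^{\mu}B^{\gamma+2-2/(p\alpha)}_{p,d+2/\alpha-2}$ into $\mathring{\cH}^{\alpha,\gamma+2}_{p,\theta}$, $\cH^{\alpha,\gamma+2}_{p,\theta}$, $\bH^{\gamma}_{p,\theta+2p}$ and $B^{\gamma+2-2/(p\alpha)}_{p,\theta+2/\alpha}$, which are exactly the spaces in the statement. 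Tracking the constants ($\mathrm{C}_0,\mathrm{C}_2,\mathrm{C}_3$ as above, $\mathrm{C}_4=\mathrm{C}_4(p,\mu)$ from Proposition \ref{05.11.1}, and $\lambda',\mu$ functions of $d,p,\theta,\lambda$) gives $N=N(d,p,\alpha,\gamma,\theta,\lambda,M_\lambda)$ as in \eqref{241012333} and \eqref{2408244431111}. For $T=\infty$, I would run the finite-interval result on each $(0,n)$, patch the solutions by uniqueness, and pass the estimates to the limit using that the $L_p((0,n);\cdot)$-norms increase to the $L_p((0,\infty);\cdot)$-norms. I anticipate no real obstacle: the only points requiring care are the verification that $\phi$ is genuinely a superharmonic Harnack function and that $\Psi^{\mu}$ and $\trho^{\mu\lambda'}$ generate the same space — i.e. the bookkeeping that places $\mu$ in $\cI(\phi,p,1)$ and matches the weight exponents.
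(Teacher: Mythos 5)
Your proposal is correct and follows essentially the same route as the paper: choose $\lambda'\in(0,\lambda)$ strictly interior to the window, obtain the Hardy inequality via $\mathbf{LHMD}(\lambda)\Rightarrow$ capacity/fat-exterior $\Rightarrow$ \eqref{hardy}, invoke Lemma \ref{21.11.08.1} for a superharmonic $\phi\simeq\rho^{\lambda'}$ with regularization $\Psi=\trho^{\lambda'}$, set $\mu=(d-2-\theta)/(\lambda' p)\in(-1/p,1-1/p)$, apply Theorem \ref{22.02.18.6}, and unwind the weights via \eqref{220526558}. The paper's own write-up is terser and does not explicitly treat $T=\infty$ (Theorem \ref{22.02.18.6} is stated under \eqref{241028314} with $T<\infty$), whereas you correctly note that the constant is $T$-independent and close the gap by exhaustion over $(0,n)$; this is a genuine but minor supplement to what the paper leaves implicit.
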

	
	\begin{proof}
		Take $\lambda'\in(0,\lambda)$ such that
		$$
		-2-(p-1)\lambda'<\theta-d<-2+\lambda'.
		$$
		Lemma \ref{240928354} and Proposition \ref{240928313} imply that $\Omega$ admits the Hardy inequality \eqref{hardy}, where $\mathrm{C}_0(\Omega)$ can be chosen to depend only on $d$, $\alpha$, and $M_{\alpha}$ (in \eqref{21.08.03.1}). 
		By Lemma \ref{21.11.08.1}, there exists a superharmonic Harnack function $\phi$, such that $\trho^{\,\lambda'}$ is a regularization of $\phi$.
		In addition, $\mathrm{C}_3(\phi,\trho^{\,\lambda'})$ depends only on $d$, $\lambda$, $\lambda'$, $M_{\lambda}$.
		Therefore, applying Theorems \ref{22.02.18.6} with $\Psi := \trho^{\,\lambda'}$ (see Proposition \ref{05.11.1}.(1) and \eqref{220526558}), the proof is complete.
	\end{proof}
	
	We next present further results for domains with a fat exterior.
	The first result is an unweighted $L_p$ solvability result when $p$ is close to $2$.
	The second result concerns the H"older regularity of solutions in domains that satisfy the capacity density condition \eqref{22.02.22.1}.

	We denote by $\mathring{W}_p^1(\Omega)$ the closure of $C_c^{\infty}(\Omega)$ in
	$$
	W_p^1(\Omega):=\big\{f\in\cD'(\Omega)\,:\,\|f\|_{W_p^1(\Omega)}:=\|f\|_p+\|\nabla f\|_p<\infty\big\}\,.
	$$

	\begin{thm}\label{230210357}
		Let $\Omega$ satisfy the capacity density condition \eqref{22.02.22.1}. 
		Then, for any $0<\nu_1\leq \nu_2<\infty$, there exists $\epsilon\in(0,1)$, depending only on $d$, $\epsilon_0$ (in \eqref{22.02.22.1}), $\nu_1$, $\nu_2$, such that the following holds:
		
		Let $\cL\in \cM_T(\nu_1,\nu_2)$ and consider the equation
		\begin{align}\label{22081310231111}
			\partial_t^\alpha u=\cL u+f^0+\sum_{i=1}^dD_if^i\quad\text{in}\quad (0,T]\quad;\quad u(0,\cdot)=0
		\end{align}
		in the sense of \eqref{240419720}.
		If $p\in(2-\epsilon,2+\epsilon)$ and $f^0,\,\ldots,\,f^d\in L_p\big((0,T]\times \Omega\big)$, then equation \eqref{22081310231111}
		has a unique solution $u$ in $L_p\big((0,T];\mathring{W}^{1}_{p}(\Omega)\big)$.
		Moreover, we have
		\begin{align}\label{230204228}
			\begin{split}
				&\|\nabla u\|_{L_p((0,T]\times \Omega)}+\min(T^{\alpha/2},D_\Omega)^{-1}\|u\|_{L_p((0,T]\times \Omega)}\\
				\lesssim_{d,p,\epsilon_0} \,&\min(T^{\alpha/2},D_\Omega)\|f^0\|_{L_p((0,T]\times \Omega)}+\sum_{i=1}^d\|f^i\|_{L_p((0,T]\times \Omega)}\,,
			\end{split}
		\end{align}
		where $D_{\Omega}:=\sup_{x\in\Omega}d(x,\partial\Omega)$.
	\end{thm}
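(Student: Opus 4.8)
The plan is to derive Theorem \ref{230210357} from the weighted solvability theorem, Theorem \ref{22.02.18.6}, by choosing the superharmonic weight $\psi$ and the weight exponents so that, for $p$ near $2$, the weighted norms collapse to unweighted ones. The geometric input is as follows: by Proposition \ref{240928313} the capacity density condition \eqref{22.02.22.1} is equivalent to $\mathbf{LHMD}(\lambda)$ for some $\lambda=\lambda(d,\epsilon_0)>0$ with $M_{\lambda}=M_{\lambda}(d,\epsilon_0)$, and (through the fat exterior condition, Lemma \ref{240928354}) it forces the Hardy inequality \eqref{hardy} with $\mathrm{C}_0(\Omega)$ depending only on $d,\epsilon_0$. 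Fix $\lambda':=\lambda/2$; by Lemma \ref{21.11.08.1} there is a superharmonic function $\psi$ on $\Omega$ with $N^{-1}\trho^{\,\lambda'}\le\psi\le N\trho^{\,\lambda'}$, $N=N(d,\epsilon_0)$, and since $\rho=d(\cdot,\partial\Omega)$ and hence $\rho^{\lambda'}$ is a Harnack function (Example \ref{21.05.18.2}.(1)), $\psi$ is a superharmonic Harnack function with regularization $\Psi:=\trho^{\,\lambda'}$ whose constants $\mathrm{C}_2(\Psi),\mathrm{C}_3(\psi,\Psi)$ depend only on $d,\epsilon_0$.

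\emph{Reduction and existence.} Put $\mu:=\frac{p-2}{p\lambda'}$ and $\gamma:=-1$. Using the equivalent norm \eqref{2409098051} one checks that the weight powers cancel: $\Psi^{\mu}\bH^{1}_{p,d-2}(\Omega,T)$ carries the norm $\|\rho^{-1}u\|_{L_p((0,T]\times\Omega)}+\|\nabla u\|_{L_p((0,T]\times\Omega)}$ up to $N(d,p,\epsilon_0)$, and by Corollary \ref{240909836} (with $k=1$) the datum $f^0+\sum_i D_if^i$ lies in $\Psi^{\mu}\bH^{-1}_{p,d+2p-2}(\Omega,T)$ with norm $\lesssim\|\rho f^0\|_{L_p}+\sum_i\|f^i\|_{L_p}$. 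Since $C_c^{\infty}(\Omega)$ is dense in $\Psi^{\mu}H^1_{p,d-2}(\Omega)$ and in $\mathring{W}_p^1(\Omega)$ and $\|v\|_{\mathring{W}_p^1}\le(1+D_{\Omega})\|v\|_{\Psi^{\mu}H^1_{p,d-2}}$ on $C_c^{\infty}(\Omega)$ when $D_{\Omega}<\infty$, we obtain $\Psi^{\mu}\mathring{\cH}^{\alpha,1}_{p,d-2}(\Omega,T)\hookrightarrow L_p\big((0,T];\mathring{W}_p^1(\Omega)\big)$. By Proposition \ref{05.11.1}.(1) the interval \eqref{22.04.12.1029} (with $\delta=\nu_1/\nu_2$) lies in $\cI(\psi,p,\nu_1/\nu_2)$ and equals at $p=2$ the set $\big(-\tfrac{1}{2}\sqrt{\nu_1/\nu_2},\tfrac{1}{2}\sqrt{\nu_1/\nu_2}\big)\ni 0$, so by continuity there is $\epsilon=\epsilon(d,\epsilon_0,\nu_1,\nu_2)\in(0,1)$ with $\mu=\frac{p-2}{p\lambda'}\in\cI(\psi,p,\nu_1/\nu_2)$ whenever $|p-2|<\epsilon$, and then $\mathrm{C}_4=\mathrm{C}_4(\nu_1/\nu_2,p,\mu)$. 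Thus Theorem \ref{22.02.18.6}.(1), applied with this $\psi,\mu$ and $\gamma=-1$, produces a solution in $\Psi^{\mu}\mathring{\cH}^{\alpha,1}_{p,d-2}(\Omega,T)$, hence in $L_p\big((0,T];\mathring{W}_p^1(\Omega)\big)$, and \eqref{240824444} unwinds, using $\rho\le D_{\Omega}$, to $\|\nabla u\|_{L_p}+D_{\Omega}^{-1}\|u\|_{L_p}\lesssim_{d,p,\epsilon_0}D_{\Omega}\|f^0\|_{L_p}+\sum_i\|f^i\|_{L_p}$. For uniqueness, a solution $w\in L_p\big((0,T];\mathring{W}_p^1(\Omega)\big)$ of the homogeneous problem satisfies, by the $L_p$-Hardy inequality valid under \eqref{22.02.22.1} (see the references cited after \eqref{22.02.22.1}), $w\in\Psi^{\mu}\bH^{1}_{p,d-2}(\Omega,T)$, so $\cL w\in\Psi^{\mu}\bH^{-1}_{p,d+2p-2}(\Omega,T)$ and Lemma \ref{21.05.13.9} places $w$ in $\Psi^{\mu}\mathring{\cH}^{\alpha,1}_{p,d-2}(\Omega,T)$; the uniqueness part of Theorem \ref{22.02.18.6} then gives $w=0$.

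\emph{Sharpening $D_\Omega$ to $\min(T^{\alpha/2},D_\Omega)$.} This is the main point, and it also subsumes the case $D_\Omega=\infty$, where the reduction above breaks down because $\|\rho f^0\|_{L_p}$ may be infinite. I would first rescale parabolically so that $T=1$: under $\partial_t^{\alpha}$ this is $t\mapsto t/T$, $x\mapsto x/T^{\alpha/2}$, which replaces $\Omega$ by $T^{-\alpha/2}\Omega$ and $D_{\Omega}$ by $T^{-\alpha/2}D_{\Omega}$, preserves $\cM_T(\nu_1,\nu_2)$ and the constant $\epsilon_0$, and converts \eqref{230204228} into the same estimate with $T=1$. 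It therefore suffices to prove, for $T=1$, the bound with $D_{\Omega}$ replaced by $\min(1,D_{\Omega})$; when $D_{\Omega}\le1$ this is already done, and for $D_{\Omega}>1$ (in particular $D_\Omega=\infty$) the remaining Poincaré gain $\min(1,D_\Omega)$ on $\|u\|_{L_p}$ must come from the time variable. I would obtain it by re-running the proofs of Theorem \ref{05.11.2} and Lemma \ref{21.05.25.300} with $\psi$ replaced by the truncated superharmonic function $\psi\wedge1$ (superharmonic by Lemma \ref{21.05.18.1}.(1), with a regularization of controlled constants by Example \ref{21.05.18.2}.(4)) and $\rho^{-2}$ replaced by $(\rho\wedge1)^{-2}$, the excess term $\int_0^1\!\!\int_{\{\rho\ge1\}}|u|^p(\psi\wedge1)^{-\mu p}$ being absorbed by the non-negative contribution $p\int_0^1\!\!\int_{\Omega}(\partial_t^{\alpha}u)\,u|u|^{p-2}(\psi\wedge1)^{-\mu p}$ (Lemma \ref{240426733}) together with the truncated Hardy inequality $\int|f/(\rho\wedge1)|^2\lesssim\int|\nabla f|^2+\int|f|^2$, and then threading the outcome through Lemmas \ref{21.05.13.9} and \ref{22.04.15.0430} exactly as in the proof of Theorem \ref{22.02.18.6}. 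Carrying out this truncated parabolic estimate uniformly in $T$, and the accompanying bookkeeping of the weighted spaces in the unbounded case, is where essentially all the work lies; everything else is a repackaging of the machinery already in place.
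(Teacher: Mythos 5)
Your reduction is essentially the paper's: choosing $\mu=\frac{p-2}{p\lambda'}$ with $\Psi=\trho^{\lambda'}$ amounts to taking $\theta=d-p$, which is exactly the paper's \textbf{Sol$(p,d-p)$}; the embedding of $\mathring W_p^1(\Omega)$ into $H^1_{p,d-p}(\Omega)$ via the $L_p$ Hardy inequality is also the paper's step (b); and the continuity argument producing $\epsilon$ from \eqref{22.04.12.1029} near $p=2$ is the same. So existence, uniqueness, and the estimate with the crude factor $D_\Omega$ in place of $\min(T^{\alpha/2},D_\Omega)$ are correct and match the paper. The gap is precisely where you place it: the sharpening to $\min(T^{\alpha/2},D_\Omega)$ (which also covers $D_\Omega=\infty$).

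Your sketch of that sharpening does not work, and the flaw is concrete. You propose to absorb the excess zeroth-order term $\int_0^1\!\int_{\{\rho\ge1\}}|u|^p(\psi\wedge1)^{-\mu p}$ into the ``non-negative contribution'' $p\int_0^1\!\int_\Omega(\partial_t^\alpha u)\,u|u|^{p-2}(\psi\wedge1)^{-\mu p}$, citing Lemma \ref{240426733}. But Lemma \ref{240426733} only says that term is $\ge 0$; a term being non-negative means you may \emph{discard} it (as in \eqref{230328742}), not that you may use it to absorb other terms. Indeed for $\alpha=1$ the term equals $\frac1p\int_\Omega|u(T)|^p\phi^{-\mu p}\,dx$, which controls only the terminal slice, not $\int_0^T\|u(t)\|_p^p\,dt$; no truncation of $\psi$ or $\rho$ repairs this. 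The actual mechanism in the paper is the Volterra time-Poincar\'e bound \eqref{240112315}: since $u(0)=0$, one has $u(t)=I_t^\alpha(\partial_t^\alpha u)(t)$, and on a bounded time interval $\|u\|_{\bH_{p,d+p}^\gamma(\Omega,1)}\lesssim\|\partial_t^\alpha u\|_{\bH_{p,d+p}^\gamma(\Omega,1)}$ by Young's/Minkowski's inequality applied to the fractional integral (\eqref{2410071000}--\eqref{2410071030}); interpolating this against the two-derivative control in the solution norm yields $\|u\|_{\bH_{p,d}^{\gamma+1}(\Omega,1)}\lesssim\|u\|_{\mathring{\cH}_{p,d-p}^{\alpha,\gamma+2}(\Omega,1)}$, which is what actually supplies the unweighted $\|u\|_p$ control uniformly in $D_\Omega$.

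Separately, your worry that the reduction breaks when $D_\Omega=\infty$ because $\|\rho f^0\|_p$ may diverge is addressed in the paper not by truncating weights, but by the splitting $u=\trho v+w$, where $v$ solves $\partial_t^\alpha v=\cL v+\trho^{-1}f^0$ (so the datum $\trho^{-1}f^0\in\bL_{p,d+p}$ requires only $f^0\in L_p$, no $\rho$ factor) and $w$ compensates the commutator $\cL(\trho v)-\trho\cL v$. The needed control $\|v\|_{\bH_{p,d}^1}\lesssim\|f^0\|_p$ again comes from \eqref{240112315}. If you want to salvage your truncated-weight idea you would have to replace the nonnegativity observation by an explicit time-Poincar\'e estimate derived from the Volterra representation; at that point you would have rediscovered \eqref{240112315}, and truncating the spatial weights becomes unnecessary.
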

	
	\begin{proof}
		We say that \textbf{Sol($p,\theta$)} holds if the following assertion is true:
		\begin{itemize}
			\item[] For any $\cL\in \cM_T(\nu_1,\nu_2)$ and $f\in \bH^{\gamma}_{p,\theta+2p}(\Omega,T)$,
			the equation
			$$
			\partial_t^\alpha u=\cL u+f\quad\text{in}\,\,(0,T]
			$$
			has a unique solution $u$ in $\mathring{\cH}^{\alpha,\gamma+2}_{p,\theta}(\Omega,T)$.
			Moreover, there exists a constant $N>0$, depending only on $d$, $p$, $\theta$, $\alpha$, $\nu_1$, $\nu_2$, and $\epsilon_0$ in \eqref{22.02.22.1}, such that
			$$
			\|u\|_{\mathring{\cH}^{\alpha,\gamma+2}_{p,\theta}(\Omega,T)}\leq N\| f\|_{\bH^{\gamma}_{p,\theta+2p}(\Omega,T)}\,.
			$$
		\end{itemize} 
		
		We first note the following two results for the capacity density condition:
		\begin{itemize}
			\item[(a)] By Proposition \ref{240928313} and Lemma \ref{240928354}, $\Omega$ admits the Hardy inequality \eqref{hardy}, where $\mathrm{C}_0(\Omega)$ can be chosen to depend only on $d$ and $\epsilon_0$.
			Moreover, by Lemma \ref{21.11.08.1}, there exists $\lambda'>0$ and a superharmonic function $\phi$ in $\Omega$ such that $\psi\simeq_{d,\epsilon_0}\rho^{\lambda'}$,
			where ${\lambda'}$ depends only on $d$ and $\epsilon_0$.
			By Theorem \ref{22.02.18.6} (with $\Psi=\trho^{\lambda'}$) and Proposition \ref{05.11.1}.(1), if $\theta\in\bR$ satisfies
			\begin{align}\label{230404901}
				-\frac{(p-1)\lambda'}{p\big(\sqrt{\nu_2/\nu_1}-1\big)/2+1}<\theta-d+2<\frac{(p-1)\lambda'}{p\big(\sqrt{\nu_2/\nu_1}+1\big)/2-1}\,,
			\end{align}
			then \textbf{Sol($p,\theta$)} holds.
			The first term in \eqref{230404901} goes to $-\lambda'\sqrt{\nu_1/\nu_2}$ as $p\rightarrow 2$, while the second term in \eqref{230404901} goes to $\lambda'\sqrt{\nu_1/\nu_2}$ as $p\rightarrow 2$.
			Therefore, there exists $\epsilon_1>0$ (depending only on $\nu_1$, $\nu_2$, and $\alpha$) such that if $p\in(2-\epsilon_1,2+\epsilon_1)$, then $\theta:=d-p$ satisfies \eqref{230404901}, and thus \textbf{Sol($p,d-p$)} holds.
			
			\item[(b)] It is implied by \cite[Theorem 1, Theorem 2]{lewis} (or see \cite[Theorem 3.7, Corollary 3.11]{Kinhardy}) that there exists $\epsilon_2\in(0,1)$ depending only on $d$ and $\epsilon_0$ such that for any $p>2-\epsilon_2$, if $u\in \mathring{W}_p^1(\Omega)$, then 
			\begin{align}\label{230213147}
				\int_{\Omega}\Big|\frac{u(x)}{\rho(x)}\Big|^p\dd x\leq N(d,p,\epsilon_0)\int_{\Omega}|\nabla u|^p\dd x\quad \forall\,\, u\in C_c^{\infty}(\Omega)\,.
			\end{align}
			This implies that $L_{p,d}(\Omega)\cap H_{p,d-p}^1(\Omega)\subset \mathring{W}_p^1(\Omega)$.
			Indeed, $L_{p,d}(\Omega)\cap H_{p,d-p}^1(\Omega)$ is continuously embedded in $W_p^1$, and $C_c^{\infty}(\Omega)$ is dense in $L_{p,d}(\Omega)\cap H_{p,d-p}^1(\Omega)$ (see Lemma \ref{22.04.11.3}).
		\end{itemize}
		Fix $0<\epsilon<\min(\epsilon_1,\epsilon_2)$ and consider $p\in (2-\epsilon,2+\epsilon)$.
		Then for any $p\in(2-\epsilon,2+\epsilon)$, \textbf{Sol($p,d-p$)} holds and $\mathring{W}_p^1(\Omega)\subset H_{p,d-p}^1(\Omega)$.
		
		\textbf{Step 1. Uniqueness of the solution.}
		Suppose that $u\in L_p\big((0,T];\mathring{W}_p^1(\Omega)\big)$ satisfies
		\begin{align}\label{2410061159}
			\partial_t^\alpha u=\cL u\quad \text{in}\quad (0,T]\quad;\quad u(0,\cdot)\equiv 0\,.
		\end{align}
		Since $\mathring{W}_p^1(\Omega)\subset H_{p,d-p}^1(\Omega)$ (see \eqref{230213147} and \eqref{2409098051}), we have $L_p\big((0,T];\mathring{W}_p^1(\Omega)\big)\subset \bH_{p,d-p}^1(\Omega,T)$.
		Consequently, $u\in\bH_{p,d-p}^1(\Omega,T)$ and $\partial_t^\alpha u=\cL u\in\bH_{p,d+p}^{-1}(\Omega,T)$.
		Since \eqref{240420138} holds, we have $u\in \mathring{\cH}_{p,d-p}^{\alpha,1}(\Omega,T)$.
		Since \textbf{Sol($p,d-p$)} holds, the solution of \eqref{2410061159} is unique in $\mathring{\cH}_{p,d-p}^{\alpha,1}(\Omega,T)$, and therefore $u\equiv 0$.

		\textbf{Step 2. Existence of solutions and estimate \eqref{230204228}.}
		To prove the existence of solutions, it is enough to find a solution in $\bL_{p,d}(\Omega,T)\cap \bH_{p,d-p}^1(\Omega,T)$.
		Indeed, if $u\in \bL_{p,d}(\Omega,T)\cap \bH_{p,d-p}^1(\Omega,T)$, then there exists $u_n\in C_c^{\infty}((0,T)\times \Omega)$ such that $u_n\rightarrow u$ in $\bL_{p,d}(\Omega,T)\cap \bH_{p,d-p}^1(\Omega,T)$ (see Lemma \ref{2205241011}).
		Since $L_{p,d}(\Omega)\cap H_{p,d-p}^1(\Omega)\subset \mathring{W}_p^1(\Omega)$, $(u_n)_{n\in\bN}$ is a Cauchy sequence in $L_p\big((0,T];\mathring{W}_p^1(\Omega)\big)$, so that $u=\lim_{n\rightarrow\infty}u_n\in L_p\big((0,T];\mathring{W}_p^1(\Omega)\big)$.
		Hence, $\bL_{p,d}(\Omega,T)\cap \bH_{p,d-p}^1(\Omega,T)\subset L_p\big((0,T];\mathring{W}_p^1(\Omega)\big)$.
		
		Without loss of generality, we assume that $T=1$ by dilation.
		Note that $\epsilon_0$ in \eqref{22.02.22.1} is invariant even if $\Omega$ is replaced by $r\Omega=\{rx\,:\,x\in\Omega\}$ for any $r>0$.
		
		In this step, we use \eqref{2409098051} and the inequalities 
		$$
		D_{\Omega}^{\,-1}\|u\|_p\leq \|\rho^{-1}u\|_p\simeq \|u\|_{\bL_{p,d-p}(\Omega,1)}\quad\text{and}\quad \|f\|_{\bL_{p,d+p}(\Omega,1)}\simeq \|\rho f\|_p\leq D_{\Omega}\|f\|_p
		$$
		(see \eqref{2409098051}) without mentioning. 
		Additionally, we employ the fact that
		\begin{align}\label{240112315}
			\|u\|_{\bH_{p,d-p}^{\gamma+2}(\Omega,1)}+\|u\|_{\bH_{p,d}^{\gamma+1}(\Omega,1)}+\|u\|_{\bH_{p,d+p}^\gamma(\Omega,1)}\lesssim_{\alpha,\gamma,p,d} \|u\|_{\mathring{\cH}_{p,d-p}^{\alpha,\gamma+2}(\Omega,1)}\,.
		\end{align}
		The first term of LHS in \eqref{240112315} is estimated by the definition of $\mathring{\cH}_{p,d-p}^{\alpha,\gamma+2}(\Omega,1)$.
		For the third term of LHS in \eqref{240112315}, note that for arbitrary $\widetilde{\eta}\in C_c^{\infty}\big([0,1)\big)$, by using \eqref{240419720} with  $\eta:=\int_t^1\frac{(s-t)^{-1+\alpha}}{\Gamma(\alpha)}\widetilde{\eta}(s)\dd s$, we have
		\begin{align}\label{2410071000}
			\int_0^1\la u(t),\phi\ra\widetilde{\eta}(t)\dd t=\int_0^1 I_t^\alpha\la \partial_t^\alpha u(t),\phi\ra \widetilde{\eta}(t)\dd t\,.
		\end{align}
		For RHS in \eqref{2410071000}, note that by a similar way with \eqref{240426508}, we have
		\begin{align}\label{2410071031}
			\int_0^1 \Big(I_t^{\alpha}\|\partial_t^\alpha u(t)\|_{H_{p,\theta+2p}^\gamma(\Omega)} \Big)^p\dd t\lesssim_{p,\alpha}  \|\partial_t^\alpha u\|_{\bH_{p,\theta+2p}^{\gamma}(\Omega,1)}^p\,.
		\end{align}
		Hence $I_t^{\alpha}|\la \partial_t^\alpha u(t),\phi\ra|$ is well-defined as an element of $L_p\big((0,1]\big)$ (see \eqref{220526214}).
		Since \eqref{2410071000} holds for all $\widetilde{\eta}\in C_c^{\infty}\big([0,1)\big)$, we obtain that for almost every $t\in(0,1]$, $\la u(t),\phi\ra=I_t^\alpha\la \partial_t^\alpha u(t),\phi\ra$ and
		\begin{align}\label{2410071030}
			\|u(t)\|_{H_{p,\theta+2p}^\gamma(\Omega)}\lesssim_{d,p,\theta,\gamma} I_t^\alpha\|\partial_t^\alpha u(t)\|_{H_{p,\theta+2p}^{\gamma}(\Omega)}\,,
		\end{align}
		(see \eqref{220526214}).
		\eqref{2410071031} and \eqref{2410071030} imply the estimate for the third term of LHS in \eqref{240112315}.
		The second term of LHS in \eqref{240112315} is estimated by the first and third term of LHS in \eqref{240112315} (see \eqref{241031137}).
		Therefore, \eqref{240112315} is proved.

		\textbf{Step 2.1)} Since \textbf{Sol($p,d-p$)} holds, there exists $v\in \mathring{\cH}_{p,d-p}^{\alpha,2}(\Omega,1)$ such that $\partial_t^\alpha v=\cL v+\trho^{\,-1}f^0$ and 
		\begin{align}\label{240119538}
			\|v\|_{\mathring{\cH}_{p,d-p}^{\alpha,2}(\Omega,1)}\lesssim_{d,p,\epsilon_0}\left\|\trho^{-1} f^0\right\|_{\bL_{p,d+p}(\Omega,1)}\simeq_{p,d} \|f^0\|_p\,.
		\end{align}
		By \eqref{240112315} and \eqref{240119538}, we have
		\begin{align}\label{230403637}
			\|v\|_{\bH_{p,d}^1(\Omega,T)}\lesssim \|v\|_{\mathring{\cH}_{p,d-p}^{\alpha,2}(\Omega,1)}\lesssim_{d,p,\epsilon_0} \|f^0\|_p\,.
		\end{align}
		Put 
		$$
		\widetilde{f}:=f^0-\partial_t^\alpha (\trho\,v)+\cL(\trho\,v)=2\Big[\sum_{i= 1}^da_{ij}(t)D_i\big(vD_i\trho )\Big]-v\cL \trho\,,
		$$
		and observe that
		\begin{align*}
			\begin{split}
				\big\|\widetilde{f}\big\|_{\bH_{p,d+p}^{-1}(\Omega,1)}					&\lesssim_{d,p}\|v\|_{\bL_{p,d}(\Omega,1)}\lesssim_{d,p} \|v\|_{\bH_{p,d}^1(\Omega,1)}\lesssim_{d,p,\epsilon_0}\left\|f^0\right\|_p\,,
			\end{split}
		\end{align*}
		where the first inequality follows from \eqref{220526558} and \eqref{2411011115}, while the third one follows from \eqref{230403637}.
		Since \textbf{Sol($p,\theta$)} holds, there exists $w\in \mathring{\cH}_{p,d-p}^{\alpha,1}(\Omega,1)$ such that
		\begin{align}\label{240122813}
			\begin{gathered}
				\partial_t^\alpha w=\cL w+\sum_{i= 1}^dD_if^i+\widetilde{f}\quad,\quad \text{and}\\
				\|w\|_{\mathring{\cH}_{p,d-p}^{\alpha,1}(\Omega,1)}\lesssim_{d,p,\epsilon_0}\sum_{i= 1}^d\|f^i\|_{L_{p,d}(\Omega)}+\big\|\widetilde{f}\big\|_{H_{p,d+p}^{-1}(\Omega)}\lesssim \sum_{i=0}^d\|f^i\|_{p}\,.
			\end{gathered}
		\end{align}
		Put $u=\trho\,v+w$.
		Then $u$ is a solution of equation \eqref{22081310231111} and satisfies
		\begin{alignat}{2}
			&&&\|u_x\|_p+(1+D_\Omega^{-1})\|u\|_p
			\lesssim_{d,p}\|u\|_{\bL_{p,d}(\Omega,1)}+\|u\|_{\bH_{p,d-p}^1(\Omega,1)}\nonumber\\
			&\lesssim_{d,p}\,&&\|w\|_{\bL_{p,d}(\Omega,1)}+\|w\|_{\bH_{p,d-p}^1(\Omega,1)}+\|v\|_{\bL_{p,d+p}(\Omega,1)}+\|v\|_{\bH_{p,d}^1(\Omega,1)}\label{230204149}\\
			&\lesssim_{d,p}\,&& \|w\|_{\mathring{\cH}_{p,d-p}^{\alpha,1}(\Omega,1)}+\|v\|_{\mathring{\cH}_{p,d-p}^{\alpha,2}(\Omega,1)}\lesssim_{d,p,\epsilon_0}\sum_{i\geq 0}\|f^i\|_p\,,\nonumber
		\end{alignat}
		where the last inequality follows from \eqref{230403637} and \eqref{240122813}; note that \eqref{230204149} also implies that $u\in \bL_{p,d}(\Omega,1)\cap \bH_{p,d-p}^1(\Omega,1)$.
		
		This result also completes the proof for the case $D_\Omega=\infty$.
		
		\textbf{Step 2.2)} Next, we consider the case $D_{\Omega}<\infty$.
		Observe that
		\begin{equation}	\label{230404924}
			\begin{alignedat}{2}
				\Big\|f^0+\sum_{i\geq 1}D_if^i\Big\|_{\bH_{p,d+p}^{-1}(\Omega,1)}&\lesssim_{d,p}\,&&\|f^0\|_{\bL_{p,d+p}(\Omega,1)}+\sum_{i\geq 1}\|f^i\|_{\bL_{p,d}(\Omega,1)}\\
				&\leq&& D_{\Omega}\|f^0\|_p+\sum_{i\geq 1}\|f^i\|_p<\infty\,.
			\end{alignedat}
		\end{equation}
		Since \textbf{Sol($p,d-p$)} holds, there exists $\widetilde{u}\in \mathring{\cH}_{p,d-p}^{\alpha,1}(\Omega,1)$ such that
		\begin{align}\label{230404925}
			\partial_t^\alpha \widetilde{u}=\cL \widetilde{u}+f^0+\sum_{i\geq 1}D_if^i\quad\text{and}\quad 	\|\widetilde{u}\|_{\mathring{\cH}_{p,d-p}^{\alpha,1}(\Omega,1)}\lesssim \|f^0+\sum_{i\geq 1}D_if^i\|_{\bH_{p,d+p}^{-1}(\Omega,1)}\,.
		\end{align}
		By \eqref{240112315}, \eqref{230404924}, and \eqref{230404925}, we obtain that
		\begin{equation}\label{230204150}
			\begin{aligned}
				&\|\nabla\widetilde{u}\|_{p}+D_\Omega^{-1}\|\widetilde{u}\|_{p}+\|\widetilde{u}\|_{p}\lesssim_{d,p}\|\widetilde{u}\|_{\bH_{p,d-p}^1(\Omega,1)}+ \|\widetilde{u}\|_{\bH_{p,d+p}^{-1}(\Omega,1)}\\
				\lesssim_{d,p}\,&\|\widetilde{u}\|_{\mathring{\cH}_{p,d-p}^{\alpha,1}(\Omega,1)}\lesssim_{d,p,\epsilon_0} D_{\Omega}\|f^0\|_p+\sum_{i\geq 1}\|f^i\|_p\,.
			\end{aligned}
		\end{equation}
		By \eqref{230204150}, we have $\widetilde{u}\in \bL_{p,d}(\Omega,1)\cap \bH_{p,d-p}^1(\Omega,1)$.
		
		Since $\widetilde{u}$ in Step 2.2 coincides with $u$ in Step 2.1 (by the result in Step 1), \eqref{230204228} follows from \eqref{230204149} and \eqref{230204150}.
	\end{proof}

	\begin{thm}\label{220602322}
		Let $\Omega$ be a bounded domain satisfying $\mathbf{LHMD}(\lambda)$.
		For $T\in(0,\infty)$ and $0<\alpha\leq 1$, consider the equation
		\begin{align}
			\partial_t^\alpha u=\Delta u+f_0+\sum_{i=1}^dD_if_i\quad\text{in}\quad (0,T]\times \Omega\,;\label{241010159}\\
			u(0,\cdot)=u_0\quad \text{and}\quad u|_{(0,T]\times \partial\Omega}\equiv 0\,,\label{2410101591}
		\end{align}
		where $u_0:\Omega\rightarrow \bR$ and $f_i:(0,T]\times \Omega\rightarrow \bR$, $i=0,\,1,\,\ldots,\,d$, are measurable functions.
		Here, $\partial_t^\alpha u$ in \eqref{241010159} is understood in the sense of \eqref{240419514}, while \eqref{2410101591} is interpreted pointwise.
		
		\begin{enumerate}
			\item If
			\begin{align*}
				\begin{gathered}
					I:=\sup_{x\in\Omega}\bigg(|\rho^{-\lambda}u_0|+|\rho^{-\lambda+1}\nabla u_0|\bigg)<\infty\quad\text{and}\\
					F:=\sup_{\substack{0<t\leq T\\x\in\Omega}}\bigg(|\rho^{-\lambda+2}f_0|+\sum_{i=1}^d|\rho^{-\lambda+1}f_i|\bigg)<\infty\,,
				\end{gathered}	
			\end{align*}
			then there exists a solution $u\in C([0,T]\times \overline{\Omega})$ of \eqref{241010159} and \eqref{2410101591}.
			Moreover, for any $\epsilon \in (0,1]$ and $\delta>0$, 
			\begin{align}\label{241012325}
				|u(t,x)|+\rho(x)^{1-\epsilon}\sup_{\substack{0\leq s\leq T\\y\in B(x,\rho(x)/2)}}\frac{|u(t,x)-u(s,y)|}{|x-y|^{1-\epsilon}+|t-s|^{(1-\epsilon)\alpha/2}}\leq N(F+I)\cdot \rho(x)^{\lambda-\delta}\,,
			\end{align}
			where $N$ depends only on $d$, $\alpha$, $\lambda$, $M_\lambda$, the upper bounds of $\mathrm{diam}(\Omega)$ and $T$, and $\epsilon$.
			In particular, for any $\lambda_1\in (0,\lambda)$, we have
			\begin{align}\label{2410110011}
				\sup_{\substack{t,s\in [0,T]\\x,y\in\Omega}}\frac{|u(t,x)-u(s,y)|}{|x-y|^{\lambda_1}+(t-s)^{\lambda_1\alpha/2}}\lesssim F+I\,.
			\end{align}

			\item Let $\psi$ be a bounded superharmonic Harnack function in $\Omega$.
			If there exist constants $\epsilon\in(0,1)$ and $\delta\in(0,\lambda\epsilon]$ such that 
			\begin{align*}
				\begin{gathered}
					I:=\sup_{x\in\Omega}\bigg(|\psi^{-1+\epsilon}\rho^{-\delta}u_0|+|\psi^{-1-\epsilon}\rho^{1-\delta}\nabla u_0|\bigg)<\infty\quad\text{and}\\
					F:=\sup_{\substack{0<t\leq T\\x\in\Omega}}\bigg(|\psi^{-1+\epsilon}\rho^{2-\delta}f_0|+\sum_{i=1}^d|\psi^{-1+\epsilon}\rho^{1-\delta}f_i|\bigg)<\infty\,,
				\end{gathered}	
			\end{align*}
			then there exists a solution $u\in C([0,T]\times \overline{\Omega})$ of \eqref{241010159} and \eqref{2410101591}.
			Moreover, for any $\delta_1\in[0,\delta)$,
			\begin{align}\label{241012331}
				|u(t,x)|\leq N(F+I)\cdot \psi(x)^{1-\epsilon}\rho(x)^{\delta_1}
			\end{align}
			for all $t\in[0,T]$ and $x\in\Omega$, where $N$ depends only on $d$, $\alpha$, $\lambda$, $M_\lambda$, the upper bounds of $\mathrm{diam}(\Omega)$ and $T$, $\mathrm{C}_1(\psi)$, $\epsilon$, $\delta$, $\delta_1$.
		\end{enumerate}
		
	\end{thm}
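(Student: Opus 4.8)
The plan is to deduce both parts from the weighted $L_p$‑solvability of Section~\ref{mainresultsection} (Corollary~\ref{22.02.19.3} for part~(1), Theorem~\ref{22.02.18.6}.(2) for part~(2)) combined with two embeddings: the spatial Sobolev–Hölder embedding \eqref{241031713} of Lemma~\ref{220527502}.(8) and the parabolic time‑regularity embedding of Proposition~\ref{2204160313}. The standing inputs are that $\mathbf{LHMD}(\lambda)$ forces the Hardy inequality \eqref{hardy} (Proposition~\ref{240928313} together with Lemma~\ref{240928354}) and, for every $\lambda'\in(0,\lambda)$, the existence of a superharmonic function $\phi\simeq\rho^{\lambda'}$ (Lemma~\ref{21.11.08.1}); since $\Omega$ is bounded, we will freely use that $\rho$ is bounded and that $\int_\Omega\rho^{s}\,\mathrm dx<\infty$ whenever $s\ge 0$, and we may assume $\delta$ (in part~(1)) as small as we wish, the stated bounds only weakening as $\delta$ increases.

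For part~(1), fix $\epsilon\in(0,1)$ and a small $\delta>0$, take $p$ large (depending on $d,\alpha,\lambda,\epsilon,\delta$), set $\gamma:=-1$, and choose $\theta$ with $\theta-d$ just below $-p(\lambda-\delta)-2/\alpha$, so that both the admissibility range $-2-(p-1)\lambda<\theta-d<-2+\lambda$ of Corollary~\ref{22.02.19.3} and the inequality $\lambda p+\theta-d\ge 0$ hold. From $|u_0|\lesssim I\rho^{\lambda}$, $|\nabla u_0|\lesssim I\rho^{\lambda-1}$ (and a first‑order difference estimate for the Besov seminorm \eqref{2409098052}) and from $|f_0|\lesssim F\rho^{\lambda-2}$, $|f_i|\lesssim F\rho^{\lambda-1}$ (using $L_p\hookrightarrow H_p^{-1}$), one verifies $u_0\in B^{\gamma+2-2/(p\alpha)}_{p,\theta+2/\alpha}(\Omega)$ with norm $\lesssim I$ and $f:=f_0+\sum_iD_if_i\in\bH^{\gamma}_{p,\theta+2p}(\Omega,T)$ with norm $\lesssim F$; here boundedness of $\Omega$ (integrability of positive powers of $\rho$, guaranteed by $\lambda p+\theta-d\ge0$) is exactly what is used. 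Corollary~\ref{22.02.19.3}.(2) then gives a unique $u\in\cH^{\alpha,\gamma+2}_{p,\theta}(\Omega,T)$ with $\|u\|_{\cH^{\alpha,\gamma+2}_{p,\theta}(\Omega,T)}\lesssim I+F$. To obtain \eqref{241012325} I apply Proposition~\ref{2204160313} twice: with $\beta=1/p$ it yields $u\in C\big([0,T];B^{\gamma+2-2/(p\alpha)}_{p,\theta+2/\alpha}(\Omega)\big)$ with sup‑norm $\lesssim\|u\|_{\cH}$, and with $\beta=(1-\epsilon)\alpha/2+1/p$ it yields $\|u(t)-u(s)\|_{B^{\gamma+2-2\beta/\alpha}_{p,\theta+2p\beta/\alpha}(\Omega)}\lesssim|t-s|^{(1-\epsilon)\alpha/2}\|u\|_{\cH}$; for $p$ large the relevant smoothness indices exceed $1-\epsilon$ and $0$ respectively, so \eqref{241031713} (with $\Psi\equiv1$, $k=0$, Hölder exponent $1-\epsilon$ resp.\ $0$) applies, and one checks the two resulting $\rho$‑weight exponents coincide. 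Splitting $u(t,x)-u(s,y)=[u(t,x)-u(t,y)]+[u(t,y)-u(s,y)]$ and using $\rho(y)\simeq\rho(x)$ for $|y-x|<\rho(x)/2$ gives \eqref{241012325} with prefactor $\rho(x)^{-\theta/p-2/(p\alpha)}$, which is $\gtrsim\rho(x)^{\lambda-\delta}$ on the bounded $\Omega$; continuity on $[0,T]\times\overline\Omega$ and the boundary condition follow since $|u|\lesssim\rho^{\lambda-\delta}\to0$ at $\partial\Omega$ while $u$ is locally Hölder in $(0,T]\times\Omega$ and, by the $\beta=1/p$ estimate, continuous in $t$ down to $t=0$ with $u(0,\cdot)=u_0\in C(\overline\Omega)$. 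Finally \eqref{2410110011} follows from \eqref{241012325} with the pointwise bound: for $\lambda_1\in(0,\lambda)$, $\lambda_1<1$, take $\epsilon=1-\lambda_1$, $\delta<\lambda-\lambda_1$ and use \eqref{241012325} for close points and $|u|\lesssim\rho^{\lambda-\delta}$ for points with $|x-y|\gtrsim\rho(x)$.

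For part~(2), fix the bounded superharmonic Harnack function $\psi$, $\epsilon\in(0,1)$, $\delta\in(0,\lambda\epsilon]$ and $\delta_1\in(0,\delta)$. Pick $\lambda'<\lambda$ close enough to $\lambda$ that $\lambda'\epsilon>\delta_1$, let $\phi\simeq\rho^{\lambda'}$ be from Lemma~\ref{21.11.08.1}, take $p$ large and $\mu\in(-1/p,1-1/p)$ so close to $1$ that $s:=(\mu-1+\epsilon)\lambda'\ge\delta_1+(d+2/\alpha-2)/p$ (as $p\to\infty$ this needs only $\delta_1<\lambda'\epsilon$). Put $a:=(1-\epsilon)/\mu$, $b:=(\mu-1+\epsilon)/\mu$, so $a,b\ge0$ and $a+b=1$; by Lemma~\ref{21.05.18.1}.(3) (together with Example~\ref{21.05.18.2} and Lemma~\ref{21.05.27.3}) the function $\chi:=\psi^{a}\phi^{b}$ is a positive classical superharmonic Harnack function with regularization $\Psi_\chi\simeq\psi^{a}\rho^{b\lambda'}$, and $\chi^{\mu}\simeq\psi^{1-\epsilon}\rho^{s}$. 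Since $\mu\in\cI(\chi,p,1)=(-\tfrac1p,1-\tfrac1p)$ (Proposition~\ref{05.11.1}.(1)), Theorem~\ref{22.02.18.6}.(2) applies; the hypotheses on $u_0,f_0,f_i$, which carry the extra decay $\rho^{\delta}$ with $\delta>s$, give — verified by the same integral computations as in part~(1), using $\chi^{-\mu}\simeq\psi^{-(1-\epsilon)}\rho^{-s}$ and the boundedness of $\psi$ to absorb the $\psi^{1+\epsilon}$ in the gradient hypothesis — that $u_0\in\Psi_\chi^{\mu}B^{\gamma+2-2/(p\alpha)}_{p,d+2/\alpha-2}(\Omega)$ with norm $\lesssim I$ and $f:=f_0+\sum_iD_if_i\in\Psi_\chi^{\mu}\bH^{\gamma}_{p,d+2p-2}(\Omega,T)$ with norm $\lesssim F$, where $\gamma=-1$. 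Theorem~\ref{22.02.18.6}.(2) then yields $u\in\Psi_\chi^{\mu}\cH^{\alpha,\gamma+2}_{p,d-2}(\Omega,T)$ with norm $\lesssim I+F$, and Proposition~\ref{2204160313} ($\beta=1/p$) with \eqref{241031713} ($\Psi=\Psi_\chi^{\mu}$, $k=0$, Hölder exponent $0$) gives $|u(t,x)|\lesssim(I+F)\chi(x)^{\mu}\rho(x)^{-(d+2/\alpha-2)/p}\simeq(I+F)\psi(x)^{1-\epsilon}\rho(x)^{s-(d+2/\alpha-2)/p}$ with $s-(d+2/\alpha-2)/p\ge\delta_1$, which is \eqref{241012331}; continuity on $[0,T]\times\overline\Omega$ and the boundary condition follow as before since $\psi^{1-\epsilon}\rho^{\delta_1}\le\|\psi\|_\infty^{1-\epsilon}\rho^{\delta_1}\to0$ at $\partial\Omega$.

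The analytic ingredients are off the shelf; the real work is the weight bookkeeping, and the one genuinely delicate step is in part~(2). Because Theorem~\ref{22.02.18.6} only admits $\mu<1$, using $\psi$ directly would produce $\psi^{(1-\epsilon)\mu}$ rather than the required $\psi^{1-\epsilon}$; folding the auxiliary $\phi\simeq\rho^{\lambda'}$ into $\chi=\psi^{a}\phi^{b}$ with $a=(1-\epsilon)/\mu$ and $a+b=1$ is what simultaneously restores the exact exponent $1-\epsilon$ on $\psi$ and manufactures a small positive boundary gain $\rho^{s}$ — paid for by the $\rho^{\delta}$‑decay of the data — large enough to beat the $\rho^{-(d+2/\alpha-2)/p}$ loss of the Sobolev embedding. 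Getting $\mu\to1$, $\lambda'\to\lambda$, $p\to\infty$, $\gamma=-1$ and $\theta$ (in part~(1)) to fit together within the admissible ranges is the step requiring care. A secondary technical point is matching the weight exponents produced by the $\beta=1/p$ and $\beta=(1-\epsilon)\alpha/2+1/p$ uses of Proposition~\ref{2204160313} in part~(1), which is precisely what lets the spatial and temporal pieces of the modulus in \eqref{241012325} combine with a common prefactor.
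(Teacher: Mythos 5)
Your plan follows the same blueprint as the paper: get $u$ in a weighted solution space via Theorem~\ref{22.02.18.6} (or Corollary~\ref{22.02.19.3}), then transfer to pointwise Hölder bounds through Proposition~\ref{2204160313} and \eqref{241031713}, and in part~(2) manufacture the intermediate superharmonic weight by interpolating $\psi$ against the $\rho^{\lambda'}$-type function from Lemma~\ref{21.11.08.1}. Part~(2) is in fact essentially identical to the paper's argument: your $\chi=\psi^{a}\phi^{b}$ with $a=(1-\epsilon)/\mu$ and $a+b=1$ is exactly the paper's $s^{1-r}\psi^{r}$ with $r=(1-\epsilon)/\mu$, and the choice of $\mu$ so that $(\mu-1+\epsilon)\lambda'\geq\delta_1+(d-2+2/\alpha)/p$ matches the paper's choice $\lambda_1\mu(1-r)=\delta_1+(d-2+2/\alpha)/p$.

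In part~(1) you take a genuinely slightly different route: the paper uses two different pairs $(p,\theta)$ — one for the spatial modulus ($t=s$) and one for the temporal modulus ($x=y$) — and therefore needs the global-uniqueness Theorem~\ref{220821002901} to identify the two solutions; you instead fix a single $(p,\theta)$ and apply Proposition~\ref{2204160313} with the two values $\beta=1/p$ and $\beta=1/p+(1-\epsilon)\alpha/2$. This is a real simplification: it avoids the global-uniqueness step, and the weight exponents produced by the two $\beta$'s do differ exactly by $1-\epsilon$, which is absorbed by the $\rho^{1-\epsilon}$ prefactor in \eqref{241012325}, so your observation that they "coincide" after normalization is correct.

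There is, however, one concrete slip in part~(1): you choose $\theta-d$ just below $-p(\lambda-\delta)-2/\alpha$. With that choice, the exponent produced by \eqref{241031713} is $-(\theta+2/\alpha)/p\approx(\lambda-\delta)-d/p$, so you obtain $|u|\lesssim(F+I)\rho^{\lambda-\delta-d/p}$, not $|u|\lesssim(F+I)\rho^{\lambda-\delta}$. Since $\rho\to0$ at $\partial\Omega$, $\rho^{\lambda-\delta-d/p}\gtrsim\rho^{\lambda-\delta}$ is the wrong inequality to conclude with: it is a strictly weaker bound near the boundary, and boundedness of $\Omega$ does not fix it. The remedy is trivial but necessary: choose $\theta$ (not $\theta-d$) just below $-p(\lambda-\delta)-2/\alpha$, or equivalently $\theta-d$ just below $-p(\lambda-\delta)-2/\alpha-d$; alternatively run the argument with $\delta$ replaced by some $\delta'\in(0,\delta)$ and $p$ large enough that $\delta'+d/p\leq\delta$, and then upgrade to the stated $\delta$ using $\rho^{\lambda-\delta'-d/p}\leq\mathrm{diam}(\Omega)^{\delta-\delta'-d/p}\rho^{\lambda-\delta}$. (A secondary nit: to get continuity at $t=0$ you need the $\beta>1/p$ estimate from Proposition~\ref{2204160313}, not $\beta=1/p$ alone, since $\beta=1/p$ only gives a uniform-in-$t$ bound, not a modulus of continuity in $t$.)
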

	
	\begin{proof}
		(1) \textbf{Step 1.} We first consider the case of $\epsilon=\delta\in(0,1]$, where $\epsilon$ satisfies $\frac{d+2/\alpha}{\epsilon}>\frac{2}{\lambda}-1$.
		For any $p\geq \frac{d+2/\alpha}{\epsilon}$ and $d-p\lambda \leq \theta\leq d+\lambda-2$, we have $-2-(p-1)\lambda<\theta-d<-2+\lambda$.
		In addition,
		\begin{align*}
			\begin{gathered}
				\|f\|_{\bH_{p,\theta+2p}^{-1}(\Omega,T)}\lesssim \|f_0\|_{\bL_{p,\theta+2p}(\Omega,T)}+\sum_{i=1}^d\|f_i\|_{\bL_{p,\theta+p}(\Omega,T)}\lesssim \mathrm{diam}(\Omega)^{\frac{\theta-d}{p}+\lambda}\big(T\cdot |\Omega|\big)^{1/p}F\,,\\
				\|u_0\|_{B_{p,\theta+2/\alpha}^{1-2/(\alpha p)}(\Omega)}\lesssim \|u_0\|_{H_{p_0,\theta_0}^{1}(\Omega)}\lesssim \mathrm{diam}(\Omega)^{\frac{\theta_0-d}{p_0}+\lambda}|\Omega|^{1/p_0}I\,, 
			\end{gathered}
		\end{align*}
		where $p_0:=\frac{d}{d+2/\alpha}p$ and $\theta_0-d:=\frac{d}{d+2/\alpha}(\theta-d)$ (see \eqref{241111154} and \eqref{2411011108}).
		Therefore, by Theorem \ref{22.02.18.6}, there exists a solution $u\in \cH_{p,\theta}^{1}(\Omega,T)$ of the equation $\partial_t^\alpha u=\Delta u+f$ with $u(0)=u_0$ such that
		$$
		\|u\|_{\cH_{p,\theta}^{\alpha,1}(\Omega,T)}\lesssim F+I\,.
		$$
		By the global uniqueness (Theorem \ref{220821002901}), these $u$ coincide for all $p$ and $\theta$ in the above.
		
		We first prove \eqref{241012325} in the case $t=s$.
		By taking $p:=\frac{d+2/\alpha}{\epsilon}$ and $\theta:=-p\lambda+d$, we have
		\begin{align}\label{2410110051}
			\sup_{0\leq t\leq T}|u(t)|_{0,1-\epsilon}^{(-\lambda+\epsilon)}\lesssim \sup_{0\leq t\leq T}\|u(t)\|_{B_{p,\theta+2/\alpha}^{1-2/(\alpha p)}}\lesssim \|u\|_{\cH_{p,\theta}^{\alpha,1}(\Omega,T)}\lesssim F+I\,,
		\end{align}
		where the first and second inequalities follow from \eqref{241031713} and Proposition \ref{2204160313}, respectively.
		
		Next, we prove \eqref{241012325} for the case $x=y$. 
		Take large enough $p>1$ such that $p>\frac{d+2/\alpha}{\epsilon}$.
		Put $\beta=\frac{1}{p}+(1-\epsilon)\frac{\alpha}{2}$ and $\theta=-\frac{2}{\alpha}-(\lambda-\epsilon)p$, so that $\theta-d<-2+\lambda$.
		Then by $1-2\beta/\alpha-d/p>0$ and $\theta/p+2\beta/\alpha=1-\lambda$, it follows from  \eqref{241031713} and Proposition \ref{2204160313} that
		\begin{align}\label{241031223}
			\begin{split}		|u(t)-u(s)|^{(1-\lambda)}_{0,0}\leq\,&|u(t)-u(s)|_{0,1-2\beta/\alpha-d/p}^{(\theta/p+2\beta/\alpha)}\lesssim \|u(t)-u(s)\|_{B_{p,\theta+2p\beta/\alpha}^{1-2\beta/\alpha}(\Omega)}\\
				\lesssim\,& |t-s|^{\beta-1/p}\|u\|_{\cH_{p,\theta}^{\alpha,1}(\Omega,T)}\lesssim|t-s|^{(1-\epsilon)\alpha/2}\big(F+I\big)
			\end{split}
		\end{align}
		
		Combining \eqref{2410110051} and \eqref{241031223} yields \eqref{241012325}.
		
		\textbf{Step 2.} For general $\epsilon\in(0,1]$ and $\delta>0$, take $\epsilon_0>0$ small enough such that $\epsilon_0\le\min(\epsilon,\delta)$ and $\frac{d+2/\alpha}{\epsilon_0}>\frac{2}{\lambda}-1$.
		Since $\epsilon_0\leq \delta$, we have
		$$
		|u(t,x)|\lesssim (F+I)\rho(x)^{\lambda-\epsilon_0}\leq \mathrm{diam}(\Omega)^{\delta-\epsilon_0}(F+I)\rho(x)^{\lambda-\delta}.
		$$
		Moreover, for $y\in B\big(x,\rho(x)/2\big)$, since $\frac{\rho(x)}{2}\leq \rho(y)\leq 2\rho(x)$, we have
		\begin{align*}
			&\rho(x)^{1-\epsilon}\frac{|u(t,x)-u(s,y)|}{|x-y|^{1-\epsilon}+|t-s|^{(1-\epsilon)\alpha/2}}\\
			\lesssim\,&  \big(|u(t,x)|+|u(s,y)|\big)^{(\epsilon-\epsilon_0)/(1-\epsilon_0)}\left(\rho(x)^{1-\epsilon_0}\frac{|u(t,x)-u(s,y)|}{|x-y|^{1-\epsilon_0}+|t-s|^{(1-\epsilon_0)\alpha/2}}\right)^{(1-\epsilon)/(1-\epsilon_0)}\\
			\lesssim\,&(F+I)\rho(x)^{\lambda-\epsilon_0}\leq \mathrm{diam}(\Omega)^{\delta-\epsilon_0}(F+I)\rho(x)^{\lambda-\delta}\,.
		\end{align*}
		
		\textbf{Step 3.} It remains to prove \eqref{2410110011}.
		Without loss of generality, assume that $\rho(x)\geq \rho(y)$. 
		In \eqref{241012325}, put $\epsilon=1-\lambda_1$ and $\delta=\lambda-\lambda_1$. 
		Then we obtain that
		\begin{align}\label{241031909}
			\sup_{\substack{0\leq t\leq T\\x\in\Omega}}\rho(x)^{-\lambda_1}|u(t,x)|+\sup_{\substack{t,s\in [0,T]\\x\in\Omega,|x-y|<\rho(x)/2}}\frac{|u(t,x)-u(s,y)|}{|x-y|^{\lambda_1}+(t-s)^{\lambda_1\alpha/2}}\lesssim F+I\,.
		\end{align}
		Therefore it suffices to consider the case $|x-y|>\frac{\rho(x)}{2}$.
		If $|x-y|\geq \frac{\rho(x)}{2}$, then 
		\begin{align*}
			\frac{|u(t,x)-u(s,y)|}{|x-y|^{\lambda_1}+|t-s|^{\alpha \lambda_1/2}}\lesssim_{\lambda_1} \frac{|u(t,x)|+|u(t,y)|}{\rho(x)^{\lambda_1}} \leq \frac{|u(t,x)|}{\rho(x)^{\lambda_1}}+\frac{|u(t,y)|}{\rho(y)^{\lambda_1}}\lesssim F+I\,,
		\end{align*}
		where the last inequality follows from \eqref{241031909}.

		(2) Since $0\leq \delta_1< \delta\leq \lambda \epsilon$, there exists $\lambda_1\in(0,\lambda)$, such that $\delta_1<\lambda_1\epsilon$.
		Take a sufficiently large $p$ such that 
		\begin{align}\label{2410311008}
			\frac{1}{p}<\min\Big(\frac{\lambda_1\epsilon-\delta_1}{d-2+2/\alpha+\lambda_1},\frac{\delta-\delta_1}{d+2/\alpha}\Big)\,,
		\end{align}
		and put
		$$
		\mu:=1-\epsilon+\frac{\delta_1}{\lambda_1}+\frac{d-2+2/\alpha}{\lambda_1 p}\quad,\quad r:=\frac{1-\epsilon}{\mu}\,.
		$$
		Then we have
		$$
		\mu\in(0,1-1/p\big)\,\,,\,\,\,\,r\in[0,1]\,\,,\,\,\,\,\mu r=1-\epsilon\,\,,\,\,\,\, \lambda_1\mu(1-r)=\frac{d-2+2/\alpha}{p}+\delta_1\,.
		$$
		By Theorem \ref{21.11.08.1}, there exists a superharmonic function  $s$ satisfying $s\simeq \rho^{\lambda_1}$.
		Set $\Psi:=\trho^{\,\lambda_1(1-r)}\widetilde{\psi}^{r}$, which is a regularization of the superharmonic function $s^{1-r}\psi^r$ (see  Lemma \ref{21.05.18.1}.(3)).
		Note that
		\begin{align*}
			\begin{gathered}
				\Psi^{\mu}\cH_{p,d-2}^{1}(\Omega,T)=\widetilde{\psi}^{1-\epsilon}\trho^{\lambda_1\mu(1-r)}\cH_{p,d-2}^{1}(\Omega,T)=\widetilde{\psi}^{\,1-\epsilon}\cH_{p,-2/\alpha-\delta_1p}^{1}(\Omega,T)\,,\\
				\Psi^{\mu}\bH_{p,d+2p-2}^{-1}(\Omega,T)=\widetilde{\psi}^{\,1-\epsilon}\bH_{p,2p-2/\alpha-\delta_1p}^{-1}(\Omega,T)\,,\\
				\Psi^{\mu}B_{p,d-2+2/\alpha}^{1-2/(\alpha p)}(\Omega)=\widetilde{\psi}^{1-\epsilon}B_{p,-\delta_1p}^{1-2/(\alpha p)}(\Omega)\subset \widetilde{\psi}^{1-\epsilon}H^1_{p_0,-\delta_1p_0}(\Omega)
			\end{gathered}
		\end{align*}
		(see \eqref{220526558} and \eqref{2411011108}), where $p_0=\frac{d}{d+2/\alpha}p$. \eqref{2409098051} and \eqref{241111154} imply that
		\begin{align*}
			&\big\|f^0+\sum_{i=1}^dD_if^i\big\|_{\Psi^{\mu}\bH_{p,d+2p-2}^{-1}(\Omega,T)}+\big\|u_0\big\|_{\Psi^{\mu}B_{p,d-2+2/\alpha}^{1-2/(\alpha p)}(\Omega)}\\
			\lesssim\,&\big\|f^0+\sum_{i=1}^dD_if^i\big\|_{\widetilde{\psi}^{\,1-\epsilon}\bH_{p,2p-2/\alpha-\delta_1 p}^{-1}(\Omega,T)}+\big\|u_0\big\|_{\widetilde{\psi}^{1-\epsilon}H_{p_0,-\delta_1 p_0}^{1}(\Omega)}\\
			\lesssim\,& \mathrm{diam}(\Omega)^{\delta-\delta_1-(d+2/\alpha)/p}\Big(\big(T\,|\Omega|\big)^{1/p}F+|\Omega|^{1/p_0} I\Big)\,.
		\end{align*}
		Therefore Theorem \ref{22.02.18.6} implies that there exists a solution 
		$$
		u\in \Psi^{\mu}\cH_{p,d-2}^1(\Omega,T)=\widetilde{\psi}^{1-\epsilon}\cH_{p,-2/\alpha-\delta_1p}^1(\Omega,T)
		$$ 
		of equation \eqref{241010159}.
		By the global uniqueness, Theorem \ref{220821002901}, these $u$ coincide for all $p$ and $\mu$ in the above.
		Since $1-\frac{2}{\alpha p}-\frac{d}{p}>0$ (see \eqref{2410311008}), there exists $\beta_t>0$ such that $1-\frac{2}{\alpha p}-\frac{d}{p}-\frac{2\beta_t}{\alpha}>0$.
		\eqref{241031713} and Proposition \ref{2204160313} imply that for any $0\leq s<t\leq T$,
		\begin{align*}
			\frac{\big|\widetilde{\psi}^{-1+\epsilon}\big(u(t)-u(s)\big)\big|_{0,1-2/(\alpha p)-d/p-2\beta_t/\alpha}^{(-\delta_1+2\beta_t/\alpha)}}{|t-s|^{\beta_t}}\,&\lesssim \frac{\|u(t)-u(s)\|_{\widetilde{\psi}^{(1-\epsilon)}B_{p,-\delta_1p+2p\beta_t/\alpha}^{1-2\beta_t/\alpha-2/(\alpha p)}}}{|t-s|^{\beta_t}}\\
			&\lesssim \|u\|_{\widetilde{\psi}^{1-\epsilon}\cH_{p,-2/\alpha-\delta_1 p}^1(\Omega,T)}\,.
		\end{align*}
		This shows that $u\in C([0,T]\times \Omega)$.
		Moreover, by applying \eqref{241031713} and Proposition \ref{2204160313} again, we have
		\begin{align*}
			\sup_{0\leq t\leq T}\big|\widetilde{\psi}^{-1+\epsilon}u(t)\big|_{0,1-2/(\alpha p)-d/p}^{(-\delta_1)}\,&\lesssim \|u(t)\|_{\widetilde{\psi}^{1-\epsilon}B_{p,0}^{1-2/(\alpha p)}(\Omega)} \\
			&\lesssim \|u\|_{\widetilde{\psi}^{1-\epsilon}\cH_{p,-2/\alpha-\delta_1 p}^1(\Omega,T)}\lesssim F+I
		\end{align*}
		Since
		$$
		\sup_{x\in\Omega}|\psi(x)^{-1+\epsilon}\rho(x)^{-\delta_1}u(t,x)|\simeq  |\widetilde{\psi}^{-1+\epsilon}u(t)|_{0,0}^{(-\delta_1)}\lesssim \big|\widetilde{\psi}^{-1+\epsilon}u(t)\big|_{0,1-2/(\alpha p)-d/p}^{(-\delta_1)}\,,
		$$
		\eqref{241012331} is proved.
		Since $\psi$ is bounded, taking $\delta_1>0$ yields $u(t,x)\to 0$ as $(t,x)\to [0,T]\times\partial\Omega$, which means that $u\in C([0,T]\times\overline{\Omega})$ and $u|_{[0,T]\times\partial\Omega}\equiv0$.
	\end{proof}

	\subsection{Domains with thin exterior: Aikawa dimension}\label{0062}\,
	
	Recall the definition of the Aikawa dimension.
	For a set $E\subset\bR^d$, the Aikawa dimension of $E$, denoted by $\dim_{\cA}(E)$, is defined by
	\begin{align}\label{241018630}
		\dim_{\cA}(E)=\inf\Big\{\beta\geq 0\,:\,\sup_{p\in E,\,r>0}\frac{1}{r^\beta}\int_{B_{r}(p)}\frac{1}{d(y,E)^{d-\beta}}\dd y<\infty\Big\}
	\end{align}
	with considering $\frac{1}{0}=\infty$.
	For more information on the Aikawa dimension, see Remark \ref{241110236}.
	
	The following theorem is given in \cite[Theorem 4.17]{Seo202404}:
	\begin{thm}\label{21.10.18.1}
		Suppose there exists $\beta<d-2$ such that
		\begin{align}\label{22.02.08.2}
			\sup_{p\in \Omega^c,\,r>0}\frac{1}{r^\beta}\int_{B_{r}(p)}\frac{1}{d(y,\Omega^c)^{d-\beta}}\dd y\leq A_{\beta}<\infty\,.
		\end{align}
		Then there exists a superharmonic function $\phi$ in $\Omega$ such that
		\begin{align*}
			N^{-1}\rho(x)^{-d+2+\beta}\leq \phi(x)\leq N \rho(x)^{-d+2+\beta}\,,
		\end{align*}
		where $N=N(d,\beta,A_\beta)>0$.
	\end{thm}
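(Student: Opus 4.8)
The plan is to build the desired superharmonic function $\phi$ by averaging the fundamental solution of the Laplacian against the complement $\Omega^c$, using the Aikawa-type bound \eqref{22.02.08.2} to control the growth. Concretely, since $\beta < d-2$, consider the Riesz-type potential
\begin{align*}
\phi(x) := \int_{\Omega^c} \frac{1}{|x-y|^{d-2}}\,\frac{1}{d(y,\Omega^c)^{d-\beta}}\,\dd y \qquad (x\in\Omega),
\end{align*}
or a dyadically truncated/renormalized variant of it. The kernel $|x-y|^{-(d-2)}$ is harmonic in $x$ away from $y$ and superharmonic (indeed a positive multiple of the Newtonian kernel) on all of $\bR^d$ in the distributional sense, so for any nonnegative measure $\dd\nu(y) := d(y,\Omega^c)^{-(d-\beta)}\,1_{\Omega^c}(y)\,\dd y$ the potential $\phi = \int |x-y|^{-(d-2)}\,\dd\nu(y)$ is superharmonic on $\Omega$ (this is standard, and also follows from Lemma \ref{21.05.18.1}.(2) once one knows each truncated piece is superharmonic and the infimum/limit structure is handled). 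So the superharmonicity is essentially free; the real content is the two-sided bound $\phi(x) \simeq \rho(x)^{-d+2+\beta}$.

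For the two-sided estimate, fix $x\in\Omega$ and let $r := \rho(x) = d(x,\partial\Omega)$, and pick $p\in\partial\Omega\subset\Omega^c$ with $|x-p| = r$. I would split the integral defining $\phi(x)$ into dyadic annuli $\{y\in\Omega^c : 2^{k}r \le |x-y| < 2^{k+1}r\}$ for $k\ge -1$ (the innermost shell $|x-y|<r$ is empty since $B_r(x)\subset\Omega$, which is also where the power $-d+2+\beta$ — rather than something larger — comes from). On the $k$-th annulus, $|x-y|^{-(d-2)} \simeq (2^k r)^{-(d-2)}$, so the contribution is comparable to $(2^k r)^{-(d-2)}\,\nu\big(B_{2^{k+1}r}(x)\big)$. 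To bound $\nu\big(B_{2^{k+1}r}(x)\big)$ I would apply \eqref{22.02.08.2}: cover $\Omega^c\cap B_{2^{k+1}r}(x)$ by boundedly many balls $B_{2^{k+1}r}(p_j)$ centered at points $p_j\in\Omega^c$, and use $\int_{B_{2^{k+1}r}(p_j)} d(y,\Omega^c)^{-(d-\beta)}\,\dd y \le A_\beta (2^{k+1}r)^{\beta}$. This gives $\nu\big(B_{2^{k+1}r}(x)\big) \lesssim_{d,A_\beta} (2^k r)^{\beta}$, hence the $k$-th term is $\lesssim (2^k r)^{-(d-2)+\beta} = (2^k)^{\beta-d+2}\,r^{\beta-d+2}$, and since $\beta - d + 2 < 0$ the geometric series in $k\ge -1$ converges, yielding $\phi(x) \lesssim_{d,\beta,A_\beta} r^{-d+2+\beta}$. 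For the lower bound, I would keep only the single annulus near $p$: the set $\Omega^c\cap B_{2r}(p)$ carries $\nu$-mass bounded below by a dimensional constant times $r^{\beta}$ — this needs a lower bound on $\int_{\Omega^c\cap B_{cr}(p)} d(y,\Omega^c)^{-(d-\beta)}\,\dd y$, which one gets for free because $d(y,\Omega^c)\le |y-p|$ is small there, so $d(y,\Omega^c)^{-(d-\beta)}$ is large, and one just integrates over a fixed small ball around $p$ intersected with $\Omega^c$ — and on that annulus $|x-y|^{-(d-2)}\gtrsim r^{-(d-2)}$, giving $\phi(x)\gtrsim r^{-(d-2)+\beta}$.

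The main obstacle I anticipate is making the lower bound fully rigorous and uniform: one must ensure that $\Omega^c$ genuinely has enough mass near $p$, i.e. that $\int_{\Omega^c\cap B_{cr}(p)} d(y,\Omega^c)^{-(d-\beta)}\,\dd y \gtrsim r^\beta$ with a constant depending only on $d,\beta$ (and not on $\Omega$). This is where one must be careful that the thin-exterior hypothesis \eqref{22.02.08.2} is used in the correct direction — it is an upper bound on a potential-type integral, and the lower bound on $\phi$ instead exploits only that $p\in\Omega^c$ together with the elementary estimate $d(y,\Omega^c)\le|y-p|$, so the lower bound holds for \emph{any} set $\Omega^c$ containing $p$ and does not actually require \eqref{22.02.08.2}. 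A secondary technical point is justifying superharmonicity and local integrability of $\phi$ rigorously (finiteness of $\phi(x)$ for $x\in\Omega$ follows from the convergent dyadic sum above; superharmonicity then follows from the standard fact that Newtonian potentials of nonnegative measures are superharmonic, or by exhibiting $\phi$ as an increasing limit of the smooth superharmonic truncations and invoking Lemma \ref{21.05.18.1} and Lemma \ref{240315329}). Since the statement is quoted from \cite[Theorem 4.17]{Seo202404}, I would ultimately cite that reference for the details, presenting the dyadic decomposition above as the guiding idea.
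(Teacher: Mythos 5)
Your construction places the measure on the wrong set. You define $\dd\nu(y) := d(y,\Omega^c)^{-(d-\beta)}\,1_{\Omega^c}(y)\,\dd y$ and $\phi(x) := \int_{\Omega^c}|x-y|^{-(d-2)}\,\dd\nu(y)$, but for $y\in\Omega^c$ one has $d(y,\Omega^c)=0$, so the weight is identically $+\infty$ on the domain of integration. Moreover, since $d-\beta>2>0$, hypothesis \eqref{22.02.08.2} forces $|\Omega^c|=0$: if some $\Omega^c\cap B_r(p)$ had positive Lebesgue measure, the integral in \eqref{22.02.08.2} would already be infinite on that set. Thus $\nu$ is a ``density $+\infty$ on a Lebesgue-null set'' and is not a meaningful object, and your lower-bound argument (integrating $d(y,\Omega^c)^{-(d-\beta)}$ over $\Omega^c\cap B_{cr}(p)$, again a null set) collapses to $0$ rather than producing a factor $r^\beta$. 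The remark that ``the innermost shell $|x-y|<r$ is empty'' is a symptom of the same misplacement: in the correct construction that inner ball is precisely where the dominant contribution lives.

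The repair is to take the Newtonian potential of $d(\,\cdot\,,\Omega^c)^{-(d-\beta)}$ over the \emph{full} space (equivalently over $\Omega$, which differs from $\bR^d$ by a null set):
\begin{align*}
\phi(x) := \int_{\bR^d}\frac{1}{|x-y|^{d-2}}\,\frac{\dd y}{d(y,\Omega^c)^{d-\beta}}\,.
\end{align*}
This is the Newtonian potential of a nonnegative locally integrable density (local integrability is exactly \eqref{22.02.08.2}), hence superharmonic, and the two-sided bound now falls out of your dyadic scheme. For $x\in\Omega$ with $r:=\rho(x)$: on $|x-y|<r/2$ one has $d(y,\Omega^c)\simeq r$, so that piece alone contributes $\simeq r^{-(d-\beta)}\int_{|z|<r/2}|z|^{-(d-2)}\dd z\simeq r^{\beta-d+2}$, which gives the lower bound with no further input. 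For $|x-y|\ge r/2$ your dyadic-annuli estimate works essentially as written, once $\nu$ is replaced by $d(\,\cdot\,,\Omega^c)^{-(d-\beta)}\dd y$ and $B_{2^{k+1}r}(x)$ is absorbed into $B_{2^{k+2}r}(p)$ for a nearest boundary point $p\in\partial\Omega\subset\Omega^c$, yielding $\sum_{k\ge -1}(2^k r)^{\beta-d+2}\lesssim r^{\beta-d+2}$ via \eqref{22.02.08.2}. So the dyadic decomposition and the use of \eqref{22.02.08.2} to sum the far-field shells are the right ideas; what must change is the domain of integration, and consequently the source of the lower bound.
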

	
	\begin{corollary}\label{22.02.19.300}
		Let $\Omega$ satisfy $\beta_0:=\dim_{\cA}(\Omega^c)<d-2$.
		For any $p\in(1,\infty)$ and $\theta\in\bR$ satisfying
		$$
		\beta_0<\theta<(d-2-\beta)p+\beta_0\,,
		$$
		and $\gamma\in\bR$,
		the assertions in Corollary \ref{22.02.19.3} holds for this $\Omega$, $p$, $\theta$, and $\gamma$.
		Here, $N$ in \eqref{241012333} and \eqref{2408244431111} depend only on $d$, $p$, $\gamma$, $\theta$, $\beta_0$, $\{A_\beta\}_{\beta>\beta_0}$.
	\end{corollary}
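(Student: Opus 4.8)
The plan is to deduce the corollary from the main solvability theorem, Theorem \ref{22.02.18.6}, by feeding it the superharmonic function produced by Theorem \ref{21.10.18.1}. First I would use that the $\theta$-range in the hypothesis is open: since $\beta_0=\dim_{\cA}(\Omega^c)<d-2$ and $\beta_0<\theta<(d-2-\beta_0)p+\beta_0$, I can fix $\beta\in(\beta_0,d-2)$ close enough to $\beta_0$ so that simultaneously (i) \eqref{22.02.08.2} holds with some $A_\beta<\infty$ (possible because $\beta>\beta_0$ is above the Aikawa dimension) and (ii) $\beta<\theta<(d-2-\beta)p+\beta$ (possible by continuity, as $\beta\to\beta_0$ and $(d-2-\beta)p+\beta\to(d-2-\beta_0)p+\beta_0$ when $\beta\searrow\beta_0$). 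With \eqref{22.02.08.2} in hand, Lemma \ref{240928354}.(2) gives the Hardy inequality \eqref{hardy} for $\Omega$ with $\mathrm{C}_0(\Omega)$ controlled by $d,\beta_0,\{A_\beta\}_{\beta>\beta_0}$, and Theorem \ref{21.10.18.1} produces a superharmonic function $\phi$ on $\Omega$ with $N^{-1}\rho^{-d+2+\beta}\le\phi\le N\rho^{-d+2+\beta}$, $N=N(d,\beta,A_\beta)$.

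Next I would observe that $\phi$ is a superharmonic Harnack function whose regularization may be taken to be $\Psi:=\trho^{\,-d+2+\beta}$: $\rho=d(\cdot,\Omega^c)$ is a Harnack function by Example \ref{21.05.18.2}.(1), hence so is any power of it and so is $\phi$ (being comparable to $\rho^{-d+2+\beta}$), while $\trho^{\,-d+2+\beta}$ is a regular Harnack function by Example \ref{21.05.18.2}.(3) and is comparable to $\phi$; here $\mathrm{C}_2(\Psi)$ depends only on $d,\beta$ and $\mathrm{C}_3(\phi,\Psi)$ only on $d,\beta,A_\beta$.

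Then I would apply Theorem \ref{22.02.18.6} to $\psi:=\phi$ with the exponent $\mu:=\dfrac{\theta-(d-2)}{(d-2-\beta)p}$. Because the equation here has $\cL=\Delta$ (i.e.\ $\nu_1=\nu_2=1$), Proposition \ref{05.11.1}.(1) gives $\cI(\phi,p,1)=(-\tfrac1p,1-\tfrac1p)$, and a one-line computation shows that $\mu$ lies in this interval exactly when $\beta<\theta<(d-2-\beta)p+\beta$, which we arranged; moreover $\mathrm{C}_4$ can be taken to depend only on $p,\mu$. It remains to identify the weighted spaces: since $\Psi^{\mu}=\trho^{\,\mu(-d+2+\beta)}$ and $\mu(d-2-\beta)p=\theta-(d-2)$, repeated use of \eqref{220526558} (valid on both the $H$- and $B$-scales, hence on the $\mathring{\cH}$- and $\cH$-scales) gives $\Psi^{\mu}\bH^{\gamma}_{p,d+2p-2}(\Omega,T)=\bH^{\gamma}_{p,\theta+2p}(\Omega,T)$, $\Psi^{\mu}\mathring{\cH}^{\alpha,\gamma+2}_{p,d-2}(\Omega,T)=\mathring{\cH}^{\alpha,\gamma+2}_{p,\theta}(\Omega,T)$, $\Psi^{\mu}\cH^{\alpha,\gamma+2}_{p,d-2}(\Omega,T)=\cH^{\alpha,\gamma+2}_{p,\theta}(\Omega,T)$ and $\Psi^{\mu}B^{\gamma+2-2/(p\alpha)}_{p,d+2/\alpha-2}(\Omega)=B^{\gamma+2-2/(p\alpha)}_{p,\theta+2/\alpha}(\Omega)$, with norm equivalences whose constants depend only on $d,p,\gamma,\theta$ (through $\mu,\beta,\mathrm{C}_2(\Psi)$). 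Substituting these identities into the two conclusions of Theorem \ref{22.02.18.6} yields precisely the assertions of Corollary \ref{22.02.19.3}, with $N$ depending only on $d,p,\gamma,\theta,\beta_0,\{A_\beta\}_{\beta>\beta_0}$.

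The only step that needs genuine care is the weight bookkeeping: one must check that the correct power is $-d+2+\beta$ (so that $\Psi^{\mu}$ shifts the summability exponent from $d-2$ to $\theta$) and that the admissibility constraint $\mu\in(-\tfrac1p,1-\tfrac1p)$ transforms into exactly the interval for $\theta$ appearing in the statement — combined with the openness argument that lets $\beta\searrow\beta_0$ without losing either \eqref{22.02.08.2} or the constraint on $\theta$. Everything else is a direct invocation of Theorems \ref{22.02.18.6} and \ref{21.10.18.1}, Lemma \ref{240928354}, Proposition \ref{05.11.1}, and the weighted-space manipulations in Lemma \ref{220527502}.
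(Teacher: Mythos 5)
Your proposal is correct and follows essentially the same route as the paper's proof: fix $\beta\in(\beta_0,d-2)$ close enough to $\beta_0$ so that $\beta<\theta<(d-2-\beta)p+\beta$, invoke Lemma \ref{240928354}.(2) for the Hardy inequality and Theorem \ref{21.10.18.1} for the superharmonic function comparable to $\rho^{-d+2+\beta}$, then apply Theorem \ref{22.02.18.6} with $\Psi=\trho^{\,-d+2+\beta}$ via Proposition \ref{05.11.1}.(1) and \eqref{220526558}. Your explicit computation of $\mu=\frac{\theta-(d-2)}{(d-2-\beta)p}$ and verification that $\mu\in(-\tfrac1p,1-\tfrac1p)$ reproduces exactly the bookkeeping the paper leaves implicit.
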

	\begin{proof}[Proof of Corollary \ref{22.02.19.300}]
		By Lemma \ref{240928354}, $\Omega$ admits the Hardy inequality \eqref{hardy}, and $\mathrm{C}_0(\Omega)$ can be chosen to depend only on $d$, $\beta_0$, and $\{A_\beta\}_{\beta>\beta_0}$. 
		
		Take $\beta\in(\beta_0,d-2)$ such that
		$$
		\beta<\theta<(d-2-\beta)p+\beta\,.
		$$
		By Theorem \ref{21.10.18.1}, there exists a superharmonic function $\phi$ such that $\trho^{\,-d+2+\beta}$ is a regularization of $\phi$, and the constants $\mathrm{C}_2(\trho^{\,-d+2+\beta})$ and $\mathrm{C}_3(\phi,\trho^{\,-d+2+\beta})$ (in Definition \ref{21.10.14.1}.(3)) can be chosen to depend only on $d$, $\beta$, and $A_\beta$.
		Therefore, by Theorem \ref{22.02.18.6} for $\Psi:=\trho^{\,-d+2+\beta}$ (see  Proposition \ref{05.11.1}.(1) and \eqref{220526558}), the proof is complete.
	\end{proof}

	\subsection{Convex domains}\label{convex}\,
	
	We recall the definition of a convex set.
	A set $E\subset\bR^d$ is said to be \emph{convex} if $(1-t)x+ty\in E$ for all $x,y\in E$ and $t\in[0,1]$.
	Note that, as is well known (see \textit{e.g.} \cite[Lemma~5.8]{Seo202404}), an open set $\Omega\subset\bR^d$ is convex if and only if, for every $p\in\partial\Omega$, there exists a unit vector $e_p\in\bR^d$ such that
	\begin{align}\label{21.08.17.1}
		\Omega\subset \{x\,:\,(x-p)\cdot e_p<0\}=:U_p
	\end{align}
	(see Figure \ref{2304201156} below).
	\begin{figure}[h]\centering
		\begin{tikzpicture}[>=Latex]
			\begin{scope}
				\clip (3,-2) -- (3,1.8) -- (-3,1.8) -- (-3,-2);

				\begin{scope}
					\draw[fill=black] (2,-0.3) circle (0.05);
				\end{scope}

				\begin{scope}
					\fill[gray!30] (2,-0.3) -- (1, 1) -- (-1.5,0.7) arc(120:240:1) -- (-1,-1.2) -- (1,-1) -- (2,-0.3); 
				\end{scope}

				\begin{scope}
					\draw (2,-0.3) -- (1, 1) -- (-1.5,0.7) arc(120:240:1) -- (-1,-1.2) -- (1,-1) -- (2,-0.3); 
					\draw[line width=0.8] ($(2,-0.3)+3*(0.195,0.65)$) -- ($(2,-0.3)+{-3}*(0.24,0.8)$);
				\end{scope}
				
			\end{scope}
			
			\begin{scope}
				\draw[->] (2,-0.3) -- +(${0.5}*(1,-0.3)$);
				\draw (2.6,-0.75) node {$e_p$};
				\draw (2.8,1.1) node {$F_p$};
				\draw[white](3,0) circle(0.1);
			\end{scope}
			
		\end{tikzpicture}
		\caption{$e_p$ in \eqref{21.08.17.1}, and $F_p$ in \eqref{230114328}}\label{2304201156}
	\end{figure}

	Recall the definitions of $\mathrm{M}(\nu_1,\nu_2)$ and $\cM_T(\nu_1,\nu_2)$ at the beginning of Section \ref{0030}.
	
	\begin{thm}\label{22.02.19.5} Let $\Omega$ be a convex domain.
		For any $(\alpha^{ij})_{d\times d}\in\bigcup\limits_{0<\nu\leq 1}\mathrm{M}(\nu^2,1)$,
		$$
		\sum_{i,j=1}^d\alpha^{ij}D_{ij}\rho\leq 0
		$$
		in the sense of distributions.
	\end{thm}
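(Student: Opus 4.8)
The plan is to show that $\rho$ is concave on $\Omega$ and then deduce the distributional inequality by mollification. Throughout we may assume $\Omega\neq\bR^d$ (otherwise $\rho\equiv+\infty$ and there is nothing to prove), so $\partial\Omega$ is closed and nonempty.

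First I would record the representation of $\rho$ as an infimum of affine functions. For each $p\in\partial\Omega$, by the characterization of convex open sets recalled before \eqref{21.08.17.1} there is a unit vector $e_p$ with $\Omega\subset U_p:=\{x:(x-p)\cdot e_p<0\}$. Put $\ell_p(x):=-(x-p)\cdot e_p$, an affine function that is positive on $\Omega$ and equals $d(x,\partial U_p)$ there. Since $\partial U_p\subset\Omega^c$, we get $\rho(x)\le \ell_p(x)$ for every $x\in\Omega$ and every $p$. Conversely, for fixed $x\in\Omega$ choose $q\in\partial\Omega$ with $|x-q|=\rho(x)$; since $q\in\partial U_q$ this gives $\ell_q(x)=d(x,\partial U_q)\le|x-q|=\rho(x)$. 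Hence $\rho=\inf_{p\in\partial\Omega}\ell_p$ on $\Omega$, an infimum of affine functions over the convex open set $\Omega$, so $\rho$ is concave, finite and continuous (in fact $1$-Lipschitz) on $\Omega$.

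Next I would upgrade this to the distributional statement. Fix a symmetric matrix $(\alpha^{ij})_{d\times d}\in\mathrm{M}(\nu^2,1)$; it is positive semidefinite. Let $\rho^{(\epsilon)}$ be the mollification of $\rho$ as in \eqref{21.04.23.1}. Since convolving a concave function with a nonnegative mollifier preserves concavity, $\rho^{(\epsilon)}$ is smooth and concave on $\Omega_\epsilon:=\{x\in\Omega:d(x,\Omega^c)>\epsilon\}$, so $D^2\rho^{(\epsilon)}(x)\le 0$ as a symmetric matrix and therefore $\sum_{i,j}\alpha^{ij}D_{ij}\rho^{(\epsilon)}(x)\le 0$ for every $x\in\Omega_\epsilon$. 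Given a nonnegative $\zeta\in C_c^{\infty}(\Omega)$ and $\epsilon$ small enough that $\mathrm{supp}\,\zeta\subset\Omega_\epsilon$, integration by parts gives
\[
\int_\Omega \rho^{(\epsilon)}\sum_{i,j}\alpha^{ij}D_{ij}\zeta\,\dd x=\int_\Omega\Big(\sum_{i,j}\alpha^{ij}D_{ij}\rho^{(\epsilon)}\Big)\zeta\,\dd x\le 0 .
\]
Letting $\epsilon\to 0$ and using $\rho^{(\epsilon)}\to\rho$ uniformly on $\mathrm{supp}\,\zeta$, we obtain $\int_\Omega\rho\sum_{i,j}\alpha^{ij}D_{ij}\zeta\,\dd x\le 0$, that is, $\sum_{i,j}\alpha^{ij}D_{ij}\rho\le 0$ in $\cD'(\Omega)$.

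I do not expect a serious obstacle: the only points requiring care are the supporting-hyperplane identity $\rho=\inf_p\ell_p$ (which uses nothing beyond \eqref{21.08.17.1} and the existence of a nearest boundary point) and the behaviour of the mollification near $\partial\Omega$, which is handled by restricting to $\Omega_\epsilon$. If one prefers to avoid mollification altogether, an equivalent route is to diagonalize $(\alpha^{ij})=\sum_k\lambda_k e_ke_k^{\top}$ with $\lambda_k\ge 0$, note that $\sum_{i,j}\alpha^{ij}D_{ij}\rho=\sum_k\lambda_k\,\partial^2_{e_k}\rho$, and invoke the one-dimensional fact that a concave function has nonpositive distributional second derivative along every line.
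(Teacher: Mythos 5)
Your proposal is correct. The supporting-hyperplane representation $\rho=\inf_{p\in\partial\Omega}\ell_p$ is identical to the first step of the paper's proof (where the affine function is called $W_p$), but you then diverge: you observe that $\rho$ is concave, mollify, and use positive semidefiniteness of $(\alpha^{ij})$ against the nonpositive Hessian of the mollified function, whereas the paper picks a square root $\mathrm B\in\mathrm M(\nu,1)$ of $\mathrm A=(\alpha^{ij})$, observes that $W_p(\mathrm B\,\cdot\,)$ is harmonic, and invokes Lemma~\ref{21.05.18.1}.(2) to conclude that $\rho(\mathrm B\,\cdot\,)$, as an infimum of classical superharmonic functions, is superharmonic, after which the change of variables $x\mapsto\mathrm Bx$ converts $\Delta$ into $\sum\alpha^{ij}D_{ij}$. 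Your route is a bit more elementary and slightly more general (it works for any positive semidefinite $\mathrm A$, with no need for a positive-definite symmetric square root), while the paper's is shorter given the superharmonicity toolbox it already has in place and sidesteps any discussion of Hessians or mollifiers; both are fully rigorous.
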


	We temporarily assume Theorem \ref{22.02.19.5} in order to prove Corollary \ref{2208131026}.
	
	\begin{corollary}\label{2208131026}
		Let $\Omega\subset \bR^d$ be a convex domain, and consider the equation 
		\begin{align}\label{2409245551111}
			\partial_t^\alpha u=\cL u+f\quad\text{in}\,\, (0,T]\,,
		\end{align}
		where $0<\alpha\leq 1$, $0<T\leq \infty$, and $\cL\in\cM_T(\nu,\nu^{-1})$ with $0<\nu\leq 1$.
		Then for any $p\in(1,\infty)$ and $\theta\in\bR$ satisfying
		\begin{align*}
			-p-1<\theta-d<-1\,,
		\end{align*}
		and $\gamma\in\bR$,	the following assertion holds:			
		
		\begin{enumerate}
			\item 
			For any $f\in \bH^{\gamma}_{p,\theta+2p}(\Omega,T)$, equation \eqref{2409245551111}
			has a unique solution $u$ in $\mathring{\cH}^{\alpha,\gamma+2}_{p,\theta}(\Omega,T)$.
			Moreover, we have
			\begin{align}\label{240824442}
				\|u\|_{\mathring{\cH}^{\alpha,\gamma+2}_{p,\theta}(\Omega,T)}\leq N\| f\|_{\bH^{\gamma}_{p,\theta+2p}(\Omega,T)}\,,
			\end{align}
			where $N$ depends only on $d$, $p$, $\gamma$, $\theta$, $\nu$.
			
			\item If $\frac{1}{p}<\alpha$, then for any $u_0\in B^{\gamma+2-2/(p\alpha)}_{p,\theta+2/\alpha}(\Omega)$ and $f\in  \bH^{\gamma}_{p,\theta+2p}(\Omega,T)$, equation \eqref{2409245551111} with $u(0,\cdot)=u_0$
			has a unique solution $u$ in $\cH^{\alpha,\gamma+2}_{p,\theta}(\Omega,T)$.
			Moreover, we have
			\begin{align}\label{24082444311111}
				\|u\|_{\cH^{\alpha,\gamma+2}_{p,\theta}(\Omega,T)}\leq N\left(\| u_0\|_{B^{\gamma+2-2/(p\alpha)}_{p,\theta+2/\alpha}(\Omega)}+\| f\|_{\bH^{\gamma}_{p,\theta+2p}(\Omega,T)}\right)\,,
			\end{align}
			where $N$ depends only on $d$, $p$, $\gamma$, $\theta$, $\nu$.
			
		\end{enumerate}
		
		In particular, the domain $\Omega$ is not necessarily bounded, and the constants $N$ in \eqref{240824442} and \eqref{24082444311111} are independent of $\Omega$.
	\end{corollary}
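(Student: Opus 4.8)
The plan is to apply the main solvability theorem, Theorem \ref{22.02.18.6}, with the superharmonic Harnack function $\psi:=\rho$, and then to check that the weighted spaces appearing there collapse to the scale $\mathring{\cH}^{\alpha,\gamma+2}_{p,\theta}(\Omega,T)$ of the statement. Three preliminary facts are needed, all with $\Omega$-independent constants. First, a convex domain admits the Hardy inequality \eqref{hardy} with $\mathrm{C}_0(\Omega)$ depending only on $d$: for every $p\in\partial\Omega$ the closed half-space $U_p^c$ from \eqref{21.08.17.1} is contained in $\Omega^c$, so $\Omega^c\cap\overline B(p,r)$ contains a half-ball of radius $r$ and the fat exterior condition \eqref{220617253} holds with $s=d$ and a dimensional $c$; Lemma \ref{240928354} then gives $\mathrm{C}_0(\Omega)=\mathrm{C}_0(d)$. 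Second, taking $(\alpha^{ij})=(\delta^{ij})$ in Theorem \ref{22.02.19.5} shows $\Delta\rho\le0$ in the sense of distribution, while Example \ref{21.05.18.2}.(1) (with $E=\Omega^c$, using $\rho=d(\cdot,\Omega^c)$ on $\Omega$) shows $\rho$ is a Harnack function with $\mathrm{C}_1(\rho)=3$; hence $\psi:=\rho$ is a superharmonic Harnack function, and by Lemma \ref{21.05.27.3}.(1) (or directly via $\trho$ together with Remark \ref{230212657}) it admits a regularization $\Psi:=\trho$ with $\mathrm{C}_2(\trho)$ and $\mathrm{C}_3(\rho,\trho)$ depending only on $d$.

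The third ingredient fixes the admissible $\mu$. Since $\cL\in\cM_T(\nu,\nu^{-1})$ we have $\nu_1/\nu_2=\nu^2$, and Theorem \ref{22.02.19.5} says exactly that $\sum_{i,j}\alpha^{ij}D_{ij}\rho\le0$ for every $(\alpha^{ij})\in\mathrm{M}(\nu^2,1)$; thus Proposition \ref{05.11.1}.(2) applies with $\phi=\rho$ and $\delta=\nu^2$, giving $\cI(\rho,p,\nu^2)=(-\tfrac1p,1-\tfrac1p)$ and a constant $\mathrm{C}_4$ depending only on $\nu$, $p$, $\mu$. Given $\theta$ with $-p-1<\theta-d<-1$, I would set $\mu:=(d-2-\theta)/p$; the inequality $\theta-d<-1$ is equivalent to $\mu>-\tfrac1p$ and $\theta-d>-p-1$ is equivalent to $\mu<1-\tfrac1p$, so $\mu\in\cI(\rho,p,\nu^2)$, and Theorem \ref{22.02.18.6} is applicable.

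Theorem \ref{22.02.18.6}.(1) then produces the unique solution of \eqref{2409245551111} in $\trho^{\,\mu}\mathring{\cH}^{\alpha,\gamma+2}_{p,d-2}(\Omega,T)$ when $f\in\trho^{\,\mu}\bH^{\gamma}_{p,d+2p-2}(\Omega,T)$, and part (2) handles the non-zero initial datum when $\alpha>1/p$. It remains to translate the weights: by the shift identity \eqref{220526558}, $\|\trho^{-\mu}g\|_{X^{\gamma}_{p,\vartheta}}\simeq\|g\|_{X^{\gamma}_{p,\vartheta-\mu p}}$ for every $\vartheta$, and since $\mu p=d-2-\theta$ one obtains, with equivalence of norms,
\begin{align*}
\trho^{\,\mu}\mathring{\cH}^{\alpha,\gamma+2}_{p,d-2}(\Omega,T)=\mathring{\cH}^{\alpha,\gamma+2}_{p,\theta}(\Omega,T),\qquad
\trho^{\,\mu}\bH^{\gamma}_{p,d+2p-2}(\Omega,T)=\bH^{\gamma}_{p,\theta+2p}(\Omega,T),
\end{align*}
and likewise $\trho^{\,\mu}B^{\gamma+2-2/(p\alpha)}_{p,d+2/\alpha-2}(\Omega)=B^{\gamma+2-2/(p\alpha)}_{p,\theta+2/\alpha}(\Omega)$; this yields \eqref{240824442} and \eqref{24082444311111}. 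For the dependence of $N$, the constant in Theorem \ref{22.02.18.6} depends on $d,\alpha,p,\gamma,\mu$ and on $\mathrm{C}_0(\Omega),\mathrm{C}_2(\Psi),\mathrm{C}_3(\psi,\Psi),\mathrm{C}_4$, each of which the preliminary facts bound in terms of $d$, $p$, $\mu$, $\nu$ only; since $\mu$ is an explicit function of $d,p,\theta$, the constant reduces to $N(d,\alpha,p,\gamma,\theta,\nu)$ and is in particular independent of $\Omega$. I expect the only genuine difficulty to lie in Theorem \ref{22.02.19.5} — the distributional inequality $\sum\alpha^{ij}D_{ij}\rho\le0$ on an arbitrary, possibly unbounded and non-smooth, convex domain for every constant-coefficient elliptic operator dominated by $\Delta$; once that is granted, the corollary is a matter of assembling the above citations and tracking the weight indices.
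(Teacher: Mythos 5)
Your argument is correct and follows essentially the same route as the paper's proof: establish the Hardy inequality with a dimensional constant, use Theorem~\ref{22.02.19.5} together with Proposition~\ref{05.11.1}.(2) to obtain $\cI(\rho,p,\nu^2)=(-1/p,1-1/p)$ with $\mathrm{C}_4$ independent of $\Omega$, take $\mu=(d-2-\theta)/p$, and invoke Theorem~\ref{22.02.18.6} with $\Psi=\trho$ plus the weight-shift identity~\eqref{220526558}. The only cosmetic difference is that you verify the Hardy inequality directly via the fat exterior condition (half-ball containment) rather than citing Example~\ref{240928314}.(1) for $\mathbf{LHMD}(1)$; both paths pass through Proposition~\ref{240928313}/Lemma~\ref{240928354} and are equivalent.
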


	\begin{proof}[Proof of Corollary \ref{2208131026}]
		Since $\Omega$ is convex, it follows from Lemma \ref{240928354}, Example \ref{240928314}.(1), and Proposition \ref{240928313} that $\Omega$ admits the Hardy inequality \eqref{hardy}, where $\mathrm{C}_0(\Omega)$ can be chosen to depend only on $d$.
		
		Note that $\tilde\rho$ is a regularization of $\rho$, and the constants $\mathrm{C}_2(\tilde\rho)$ and $\mathrm{C}_3(\rho,\tilde\rho)$ (in Definition \ref{21.10.14.1}) can be chosen to depend only on $d$.
		It follows from Proposition \ref{05.11.1}.(2) that for any $\mu\in (-1/p,1-1/p)$, $\mu$ is in $ \cI(\rho,\nu^2,p)$.
		In addition, the constant $\mathrm{C}_4$ in \eqref{21.07.12.1} can be chosen to depend only on $\mu$, $p$, and $\nu$.
		Put
		$$
		\mu=-\frac{\theta-d+2}{p}\in\Big(-\frac{1}{p},1-\frac{1}{p}\,\Big)
		$$
		and apply Theorem \ref{22.02.18.6} with $\Psi:=\trho$ (see  Proposition \ref{05.11.1}.(1) and \eqref{220526558}).
		This completes the proof.
	\end{proof}

	\begin{proof}[Proof of Theorem \ref{22.02.19.5}]
		For each $p\in\partial\Omega$, define $W_p(x) := (p - x)\cdot e_p$, where $e_p$ is a unit vector satisfying \eqref{21.08.17.1}.
		We first claim that
		\begin{align}\label{22.02.17.10}
			\rho(x)=\inf_{p\in\partial\Omega}W_p(x)\quad\text{for all}\,\,\,x\in\Omega\,.
		\end{align}
		For a fixed $x\in\Omega$, we have
		$$
		\inf_{p\in\partial\Omega}W_p(x)= \inf_{p\in\partial\Omega}d(x,F_p)\geq \rho(x)\,,
		$$ 
		where 
		\begin{align}\label{230114328}
			F_p:=\{y\in\bR^d:(y-p)\cdot e_p=0\}\subset \Omega^c
		\end{align}
		(see Figure \ref{2304201156} above).
		For the inverse inequality, take $p_x\in\partial\Omega$ such that $|x-p_x|=\rho(x)$.
		Since 
		$$
		B\big(x,\rho(x)\big)\subset \Omega\quad\text{and}\quad p_x\in\partial B\big(x,\rho(x)\big)\,,
		$$
		we obtain that $e_{p_x}=\frac{p_x-x}{|p_x-x|}$.
		Therefore
		$$
		\inf_{p\in\partial\Omega}W_p(x)\leq W_{p_x}(x)=|p_x-x|=\rho(x).
		$$
		Consequently, \eqref{22.02.17.10} is proved.
		
		Let $\mathrm{A}=(\alpha^{ij})_{d\times d}\in \mathrm{M}(\nu^2,1)$, $\nu\in(0,1]$, and take $\mathrm{B}\in \mathrm{M}(\nu,1)$ such that $\mathrm{B}^2=\mathrm{A}$.
		For any $p\in\partial\Omega$, 
		$$
		\Delta \big(W_{p}(\mathrm{B}\,\cdot\,)\big)\equiv 0\quad\text{on}\,\,\mathrm{B}^{-1}\Omega\,.
		$$ 
		By \eqref{22.02.17.10} and Lemma \ref{21.05.18.1}.(2), we obtain that $\rho(\mathrm{B}\,\cdot\,)$ is an infimum of classical superharmonic functions, and therefore $\rho(\mathrm{B}\,\cdot\,)$ is a superharmonic function.
		Consequently, we have
		$$
		\langle\alpha^{ij}D_{ij}\rho,\zeta\rangle=\det(\mathrm{A})^{1/2}\big\langle\Delta (\rho(\mathrm{B}\,\cdot\,)),\zeta(\mathrm{B}\,\cdot\,)\big\rangle\leq 0
		$$
		for any $\zeta\in C_c^{\infty}(\Omega)$ with $\zeta\geq 0$.
	\end{proof}

	\subsection{Exterior Reifenberg condition}\label{ERD}\,
	
	The vanishing Reifenberg condition was introduced by Reifenberg \cite{Reifcondition} and has since been extensively studied in the literature (see, \textit{e.g.}, \cite{Relliptic, CKL, KenigToro3, TT}).
	The following definition is given in \cite{Relliptic, KenigToro3}:
	For $\delta\in(0,1)$ and $R>0$, a domain $\Omega\subset \bR^d$ is said to satisfy the $(\delta,R)$-\textit{Reifenberg condition} if for every $p\in\partial\Omega$ and $r\in(0,R]$, there exists a unit vector $e_{p,r}\in\bR^d$ such that
	\begin{align}\label{22.02.26.41}
		\begin{split}
			&\Omega\cap B_r(p)\subset \{x\in B_r(p)\,:\,(x-p)\cdot e_{p,r}<\delta r\}\quad\text{and}\\
			&\Omega\cap B_r(p)\supset \{x\in B_r(p)\,:\,(x-p)\cdot e_{p,r}>-\delta r\}\,.
		\end{split}
	\end{align}
	In addition, $\Omega$ is said to satisfy the \textit{vanishing Reifenberg condition} if for any $\delta\in(0,1)$, there exists $R_{\delta}>0$ such that $\Omega$ satisfies the $(\delta,R_{\delta})$-Reifenberg condition.
	Note that the vanishing Reifenberg condition is weaker than having $C^1$ boundary; see Example \ref{220910305}.
	
	In this subsection, we deal with the totally vanishing exterior Reifenberg condition, which was introduced in~\cite{Seo202404}.
	We obtain a result similar to Corollary \ref{2208131026} for domains satisfying the totally vanishing exterior Reifenberg condition (see Corollary \ref{22.07.17.109}).
	
	\begin{defn}[Exterior Reifenberg condition]\label{2209151117}\,\,
		
		\begin{enumerate}
			
			\item
			$\mathbf{ER}_{\Omega}$ denotes the set of all $(\delta,R)\in[0,1]\times\bR_+$ satisfying the following: for each $p\in\partial\Omega$, and each connected component $\Omega_{p,R}^{(i)}$ of $\Omega\cap B(p,R)$, there exists a unit vector $e_{p,R}^{(i)}\in\bR^d$ such that
			\begin{align*}
				\Omega_{p,R}^{(i)}\subset \{x\in B_R(p)\,:\,(x-p)\cdot e_{p,R}^{(i)}<\delta R\}\,.
			\end{align*}
			We denote by $\delta(R) := \delta_{\Omega}(R)$ the infimum of all $\delta$ such that $(\delta, R) \in \mathbf{ER}_{\Omega}$.
			
			\item For $\delta\in[0,1]$, $\Omega$ is said to satisfy the \textit{totally $\delta$-exterior Reifenberg condition} (abbreviate to `$\langle\mathrm{TER}\rangle_\delta$'), if there exist $0<R_{0,\delta}\leq R_{\infty,\delta}<\infty$ such that
			\begin{align}\label{220916111}
				\delta_{\Omega}(R)\leq \delta\quad\text{whenever}\quad R\leq R_{0,\delta}\,\,\,\text{or}\,\,\,R\geq R_{\infty,\delta}\,.
			\end{align}
			
			\item $\Omega$ is said to satisfy the \textit{totally vanishing exterior Reifenberg condition} (abbreviate to `$\langle\mathrm{TVER}\rangle$'), if
			$\Omega$ satisfies $\langle\mathrm{TER}\rangle_\delta$ condition for all $\delta\in(0,1]$. In other words,
			$$
			\lim_{R\rightarrow 0}\delta_{\Omega}(R)=\lim_{R\rightarrow \infty}\delta_{\Omega}(R)=0\,.
			$$
		\end{enumerate}
	\end{defn}
	For a comparison between the vanishing Reifenberg condition and $\langle\mathrm{TVER}\rangle$, see Figure \ref{230212856} and Example \ref{220910305} below.
	
	\begin{remark}\label{241109258}
		As shown in \cite[Lemma 5.12]{Seo202404}, for any $R>0$, $\big(\delta(R),R\big)\in\mathbf{ER}_\Omega$.
	\end{remark}
	
	In this subsection, we provide results on domains satisfying $\langle\mathrm{TER}\rangle_\delta$ for sufficiently small $\delta>0$.
	However, our main interest is the condition $\langle\mathrm{TVER}\rangle$.

	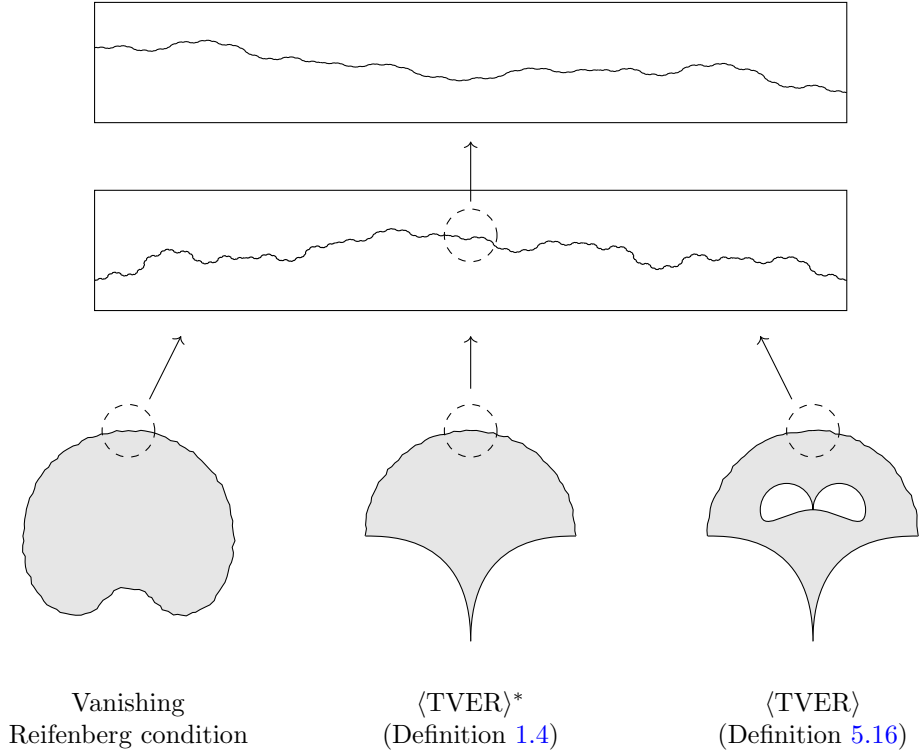
\begin{figure}[h]\centering
		\begin{tikzpicture}
			\begin{scope}[shift={(0,4.5)}]
				\begin{scope}[shift={(0,2)}]
					\draw (-5,-1) rectangle (5,0.6);
					\clip (-5,-1) rectangle (5,0.6);
					\draw[decoration={Koch, Koch angle=15, Koch order=4}] 
					decorate {(-5,-0) -- (-2,-0.2)};
					\draw[decoration={Koch, Koch angle=12, Koch order=4}] 
					decorate {(1.7,-0.3)--(-2,-0.2)};
					\draw[decoration={Koch, Koch angle=16.5, Koch order=4}] 
					decorate {(1.7,-0.3) -- (5,-0.6)};
					
				\end{scope}
				
				\begin{scope}[shift={(0,-0.5)}]
					\draw[dashed] (0,0) circle (0.35);
					\draw[->] (0,0.45) -- (0,1.25); 
					\draw (-5,-1) rectangle (5,0.6);
					\clip (-5,-1) rectangle (5,0.6);
					\draw[decoration={Koch, Koch angle=30, Koch order=4}] 
					decorate {(-5,-0.6) -- (-3,-0.3)};
					\draw[decoration={Koch, Koch angle=27.5, Koch order=4}] 
					decorate {(-1.8,-0.1)--(-3,-0.3)};
					\draw[decoration={Koch, Koch angle=21.25, Koch order=4}] 
					decorate {(-1.8,-0.1) -- (-0.3,0)};
					\draw[decoration={Koch, Koch angle=22.5, Koch order=4}] 
					decorate {(1.5,-0.1)--(-0.3,0)};
					\draw[decoration={Koch, Koch angle=28.75, Koch order=4}] 
					decorate {(3.5,-0.3)--(1.5,-0.1)};
					\draw[decoration={Koch, Koch angle=24.75, Koch order=4}] 
					decorate {(3.5,-0.3) -- (5,-0.6)};
					
					
				\end{scope}
				
			\end{scope}
			
			\begin{scope}[scale=0.7, shift={(-6.5,0)}]
				\draw[decorate, decoration={random steps,segment length=2, amplitude=0.5}, fill=gray!20]
				(2,0) arc(0:180:2) .. controls +(0,-1) and +(-0.5,0) .. (-1,-1.5)..controls +(0.5,0) and +(-0.5,0) .. (0,-1).. controls +(0.5,0) and +(-0.5,0) .. (1,-1.5)..controls +(0.5,0) and +(0,-1)..(2,0);
				
				\begin{scope}
					\draw[dashed] (0,2) circle (0.5);
					\draw[->] (0.4,2.6) -- (1,3.8);
						%
						%
						%
				\end{scope}
				
				\node[align=center] at (0,-3.5) {Vanishing\\Reifenberg condition};
			\end{scope}

			\begin{scope}[scale=0.7]
				\begin{scope}
					\draw[decorate, decoration={random steps,segment length=2, amplitude=0.5}, fill=gray!20]
					(2,0) arc(0:180:2);
					\fill[gray!20] (-2,0)--(0,-2) -- (2,0)--(0,2);
					\fill[white]
					(0,-2) ..controls +(0,2) and +(-0.5,0) .. (2,0)--(2,-2);
					\draw (0,-2) ..controls +(0,2) and +(-0.5,0) .. (2,0);
					\fill[white]
					(0,-2) ..controls +(0,2) and +(0.5,0) .. (-2,0)--(-2,-2);
					\draw (0,-2) (0,-2) ..controls +(0,2) and +(0.5,0) .. (-2,0);
				\end{scope}
				\draw[dashed] (0,2) circle (0.5);
				\draw[->] (0,2.8) -- (0,3.8); 
				
				\node[align=center] at (0,-3.5) {$\la \mathrm{TVER}\ra^*$\\(Definition \ref{221013228})};
				
			\end{scope}

			\begin{scope}[scale=0.7, shift={(6.5,0)}]
				\begin{scope}
					\draw[decorate, decoration={random steps,segment length=2, amplitude=0.5}, fill=gray!20]
					(2,0) arc(0:180:2);
					\fill[gray!20] (-2,0)--(0,-2) -- (2,0)--(0,2);
					\fill[white]
					(0,-2) ..controls +(0,2) and +(-0.5,0) .. (2,0)--(2,-2);
					\draw (0,-2) ..controls +(0,2) and +(-0.5,0) .. (2,0);
					\fill[white]
					(0,-2) ..controls +(0,2) and +(0.5,0) .. (-2,0)--(-2,-2);
					\draw (0,-2) (0,-2) ..controls +(0,2) and +(0.5,0) .. (-2,0);

					\draw[fill=white, shift={(0,0.5)}] (0,0) arc(0:180:0.5) .. controls +(0,-0.5) and +(-0.5,0) .. (0,0) .. controls +(0.5,0) and +(0,-0.5) .. (1,0) arc(0:180:0.5);
				\end{scope}
				\draw[dashed] (0,2) circle (0.5);
				\draw[->] (-0.4,2.6) -- (-1,3.8); 
				
				\node[align=center] at (0,-3.5) {$\la \mathrm{TVER}\ra$\\(Definition \ref{2209151117})};
				
			\end{scope}
		\end{tikzpicture}
		
		\caption{Totally vanishing exterior Reifenberg condition}\label{230212856}
	\end{figure}

	\begin{example}[Example 5.13 in \cite{Seo202404}]\label{220910305}
		\,\,
		
		\begin{enumerate}
			\item If $\Omega$ is bounded, then $\delta(R)\leq \frac{\mathrm{diam}(\Omega)}{R}$, and hence $\lim_{R\to\infty}\delta(R)=0$.
			
			\item
			If $\Omega$ satisfies the $(\delta,R_1)$-Reifenberg condition,
			then $\delta(R)\leq \delta$ for all $R\leq R_1$.
			In particular, if $\Omega$ satisfies the vanishing Reifenberg condition, then $\lim_{R\rightarrow 0}\delta(R)=0$.
			
			\item Suppose that $\Omega$ is bounded, and for each $p\in\partial\Omega$ there exist $R>0$ and $f\in\lambda_{\ast}(\bR^{d-1})$ such that
			$$
			\Omega\cap B(p,R)=\big\{y=(y',y_n)\in\bR^{d-1}\times \bR\,:\,|y|<R\,\,\,\text{and}\,\,\,y_n>f(y')\big\}\,,
			$$
			where $(y', y_n) = (y_1, \ldots, y_n)$ is an orthonormal coordinate system with origin at $p$, and $\lambda_\ast(\bR^{d-1})$ denotes the little Zygmund class, \textit{i.e.}, the set of all $f \in C(\bR^{d-1})$ such that
			$$
			\lim_{h\rightarrow 0}\sup_{x\in\bR^{d-1}}\frac{|f(x+h)-2f(x)+f(x-h)|}{|h|}=0\,.
			$$
			Then $\Omega$ satisfies the vanishing Reifenberg condition, and hence $\langle\mathrm{TVER}\rangle$.
			
			\item If $\Omega$ satisfies the exterior $R_0$-ball condition, then $\delta(R)\le R/(2R_0)$, and hence $\lim_{R\to 0}\delta(R)=0$.
			
			\item If $\Omega$ is the intersection of finitely many domains satisfying the totally vanishing Reifenberg condition, then $\Omega$ also satisfies $\langle\mathrm{TVER}\rangle$.
		\end{enumerate}
	\end{example}
	
	A sufficient condition for $\lim_{R\to\infty}\delta_{\Omega}(R)=0$ is $\delta_{\Omega}(R)\lesssim 1/R$, and this holds if $\Omega$ is bounded.
	It was shown in \cite[Proposition~5.14]{Seo202404} that, for a general domain $\Omega$,
	\begin{align}\label{241025520}
		\delta_{\Omega}(R)\lesssim 1/R\quad\text{if and only if}\quad \sup_{p\in\partial\Omega}d\big(p,\partial(\Omega_{\mathrm{c.h.}})\big)<\infty\,,
	\end{align}
	where $\Omega_{\mathrm{c.h.}}$ is the convex hull of $\Omega$, \textit{i.e.}, 
	$$
	\Omega_{\mathrm{c.h.}}=\{tx+(1-t)y\,:\,x,\,y\in\Omega\,,\,\,t\in[0,1]\}\,.
	$$
	
	We now state the main result of this subsection.
	We temporarily assume Theorem~\ref{22.02.19.6} (its proof will be given at the end of this subsection) in order to prove Corollary~\ref{22.07.17.109}.
	
	\begin{thm}\label{22.02.19.6}
		For any $\nu\in(0,1]$ and $\epsilon\in(0,1)$, there exists $\delta_1>0$ depending only on $d,\,\epsilon,\,\nu$ such that, if 
		$\Omega$ satisfies $\langle\mathrm{TER}\rangle_{\delta_1}$,
		then there exists a measurable function $\phi:\Omega\rightarrow \bR$ such that the following hold:
		\begin{enumerate}
			\item For any $(\alpha^{ij})_{d\times d}\in\mathrm{M}(\nu^2,1)$, $\sum_{i,j=1}^d\alpha^{ij}D_{ij}\phi\leq 0$ in the sense of distributions.
			
			\item There exists $N=N(d,\nu,\epsilon,R_{0,\delta}/R_{\infty,\delta})>0$ such that
			$$
			N^{-1}\rho(x)^{1-\epsilon}\leq \phi(x)\leq N\rho(x)^{1-\epsilon}\quad\text{for all}\,\,\,x\in\Omega\,,
			$$
			where $R_{0,\delta}$ and $R_{\infty,\delta}$ are the constants in \eqref{220916111}.
		\end{enumerate}
	\end{thm}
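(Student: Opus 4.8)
The plan is to construct $\phi$ by gluing together, across dyadic scales, barriers of the form $(\text{affine})^{1-\epsilon}$ on the approximating half-spaces furnished by the exterior Reifenberg condition, in the spirit of the proof of Theorem~\ref{22.02.19.5} and of the barrier constructions in \cite{Seo202404}. The one elementary input that makes the whole operator family $\mathrm{M}(\nu^2,1)$ harmless at once is the following computation: if $\ell(x)=a\cdot x+b$ is affine and positive on an open set $U$, then for every $(\alpha^{ij})_{d\times d}\in\mathrm{M}(\nu^2,1)$,
\[
\sum_{i,j=1}^d\alpha^{ij}D_{ij}\big(\ell^{1-\epsilon}\big)=-\epsilon(1-\epsilon)\,\ell^{-1-\epsilon}\sum_{i,j=1}^d\alpha^{ij}a_ia_j\le-\epsilon(1-\epsilon)\,\nu^2|a|^2\,\ell^{-1-\epsilon}\le 0\qquad\text{on }U .
\]
Hence $\ell^{1-\epsilon}$ is a classical superharmonic function on $U$, and composing it with any symmetric $\mathrm{B}$ with $\mathrm{B}^2=(\alpha^{ij})_{d\times d}$ again yields a function of exactly this type on $\mathrm{B}^{-1}U$; so item (1) will hold for any $\phi$ assembled from such pieces by taking minima and by the patching Lemma~\ref{21.05.18.1}.(4), because Lemma~\ref{21.05.18.1}.(1),(2),(4) transfer to the operator family through this linear change of variables exactly as in the proof of Theorem~\ref{22.02.19.5}. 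In the end $\phi$ need only be measurable and superharmonic in the distributional sense, which is automatic by Lemma~\ref{240315329}.

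\textbf{The barriers and the two comparisons.} Fix $\nu,\epsilon$ and let $\delta_1=\delta_1(d,\nu,\epsilon)$ be small, with the constraints recorded below; assume $\Omega$ satisfies $\langle\mathrm{TER}\rangle_\delta$ with $\delta\le\delta_1$ and radii $R_{0,\delta}\le R_{\infty,\delta}$. For each $p\in\partial\Omega$ and each dyadic $R=2^k$ with $R\le R_{0,\delta}$ or $R\ge R_{\infty,\delta}$, Remark~\ref{241109258} gives $(\delta(R),R)\in\mathbf{ER}_\Omega$, so for each component $\Omega^{(i)}_{p,R}$ of $\Omega\cap B_R(p)$ there is a unit vector $e^{(i)}_{p,R}$ with $\Omega^{(i)}_{p,R}\subset\{(x-p)\cdot e^{(i)}_{p,R}<\delta R\}$. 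Put $\ell^{(i)}_{p,R}(x):=\delta R-(x-p)\cdot e^{(i)}_{p,R}$; since $\{(x-p)\cdot e^{(i)}_{p,R}\ge\delta R\}\subset\Omega^c$, one gets $0<\rho(x)\le\ell^{(i)}_{p,R}(x)\le\delta R+|x-p|\le(1+\delta)R$ on $\Omega^{(i)}_{p,R}\cap B_R(p)$. The building blocks are $c_k\big(\ell^{(i)}_{p,R}\big)^{1-\epsilon}$ with positive weights $c_k=c_k(d,\nu,\epsilon,R_{0,\delta}/R_{\infty,\delta})$, and $\phi$ is obtained by gluing all these blocks (first, at a fixed $p$, across scales into a function $\psi_p$ on $\Omega$, then $\phi:=\inf_{p\in\partial\Omega}\psi_p$). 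On every block $\phi\ge c_k\rho^{1-\epsilon}$, and a glued function is the pointwise minimum of its blocks on overlaps, so the lower bound in (2) follows once $\inf_k c_k$ is bounded below. For the upper bound, given $x\in\Omega$ choose $p\in\partial\Omega$ with $|x-p|=\rho(x)$ and the dyadic $R$ with $\rho(x)<R\le2\rho(x)$; when this $R$ is good --- the case for $\rho(x)\lesssim R_{0,\delta}$ and for $\rho(x)\gtrsim R_{\infty,\delta}$ --- then $\ell^{(i)}_{p,R}(x)\le\delta R+\rho(x)\le3\rho(x)$, and the corresponding block gives $\phi(x)\le c_k(3\rho(x))^{1-\epsilon}$; for the remaining bounded window $\rho(x)\in[c\,R_{0,\delta},c^{-1}R_{\infty,\delta}]$ one uses a block at scale $R_{\infty,\delta}$ (or $R_{0,\delta}$) together with $\ell^{(i)}_{p,R}(x)\le(1+\delta)R$, which yields $\phi(x)\lesssim(R_{\infty,\delta}/R_{0,\delta})^{1-\epsilon}\rho(x)^{1-\epsilon}$ and is exactly where the dependence of $N$ on $R_{0,\delta}/R_{\infty,\delta}$ enters.

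\textbf{The gluing, and the main obstacle.} What remains --- and this is the technical heart --- is to assemble the blocks into one superharmonic $\phi$: one glues consecutive-scale blocks via Lemma~\ref{21.05.18.1}.(4) with the renormalizations $c_k$, controlling the discrepancy between two neighbouring affine barriers by the Reifenberg tilting estimate $|e^{(i)}_{p,R}-e^{(j)}_{p,2R}|\lesssim\delta$ (valid once $\delta_1$ is small, by comparing the two slabs on $B_R(p)$). The genuine difficulty is that the exterior Reifenberg flatness is one-sided, so $\ell^{(i)}_{p,R}$ controls $\rho$ only from above and two neighbouring barriers need not be comparable near $\partial B_R(p)$; consequently the patch domains and the interfaces at which one glues must be chosen with care --- e.g.\ along suitable level sets of the $\ell^{(i)}_{p,R}$, where the barriers are bounded below --- so that both $\liminf$ hypotheses of Lemma~\ref{21.05.18.1}.(4) hold at every interface while $\sup_k c_k/\inf_k c_k$ stays bounded by a function of $R_{0,\delta}/R_{\infty,\delta}$. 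This multiscale bookkeeping, together with the choice of $\delta_1(d,\nu,\epsilon)$, with checking that the pulled-back interfaces remain admissible for Lemma~\ref{21.05.18.1}.(4) after the change of variables, and with the observation that for bounded $\Omega$ the family stabilizes for $R\gtrsim\mathrm{diam}(\Omega)$ (capping $\phi$ at height $\simeq(\mathrm{diam}\,\Omega)^{1-\epsilon}$), is where I expect the real work to lie; everything else reduces to the displayed computation and to $\rho\le\ell^{(i)}_{p,R}\le(1+\delta)R$. The construction should closely parallel the barrier constructions for $\mathbf{LHMD}$ and thin-exterior domains in \cite{Seo202404}.
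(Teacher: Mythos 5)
Your pointwise computation that, for any affine $\ell>0$ and any $(\alpha^{ij})\in\mathrm{M}(\nu^2,1)$,
$\sum\alpha^{ij}D_{ij}(\ell^{1-\epsilon})=-\epsilon(1-\epsilon)\ell^{-1-\epsilon}\sum\alpha^{ij}a_ia_j\le 0$,
is correct and elegant — but the route you take from there diverges from the paper's and, as written, has a genuine gap that is not merely "bookkeeping."

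The paper does not use $\ell^{1-\epsilon}$. It uses the auxiliary barrier $w_{p,R}$ of Lemma~\ref{21.08.24.1}, whose two defining features are precisely what make the multi-scale gluing routine: $w_{p,R}\equiv 1$ on $\{x\in\Omega:|x-p|>(1-\delta)R\}$ (a \emph{constant} normalization on a whole spherical annulus, which gives a natural superharmonic extension of each block to all of $\Omega$ by constancy), and $w_{p,R}\le M\delta$ on $\Omega\cap B(p,\delta R)$. Consequently $\phi_{p,k}:=\delta^{k(1-\epsilon)}((1-\eta)w_{p,\delta^k}+\eta)$ has a sharp, spherically uniform value $\delta^{k(1-\epsilon)}$ on $\partial B(p,\delta^k)$ and is $\le\delta^{(k+1)(1-\epsilon)}$ on $\overline{B}(p,\delta^{k+1})$; the $\liminf$ hypotheses of Lemma~\ref{21.05.18.1}.(4) at the spherical interfaces then hold \emph{automatically}, for every fixed $\mathrm{B}\in\mathrm{M}(\nu,1)$. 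Your affine barrier $\ell_{p,R}^{1-\epsilon}$ has neither property: it is not constant on any sphere $\partial B(p,r)$ (its values there range from $\simeq(\delta R)^{1-\epsilon}$ near the slab face to $\simeq R^{1-\epsilon}$ on the far side), and it has no natural superharmonic extension of a single block to the rest of $\Omega$. Truncating by $R^{1-\epsilon}$ or trying to glue at level sets of $\ell_{p,R}$ does not fix this, because the component $\Omega^{(i)}_{p,R}$ is only known to lie in a half-slab — you have no control of $\Omega$ near the level sets and no control of which level sets reach $\partial\Omega\cap B_R(p)$. So the infimum you propose at the very end is an infimum of functions that are not globally defined and superharmonic on $\Omega$, and Lemma~\ref{21.05.18.1}.(2) does not apply.

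The tilting estimate $|e^{(i)}_{p,R}-e^{(j)}_{p,2R}|\lesssim\delta$ you invoke to control the interface discrepancy also fails in general for the \emph{exterior} Reifenberg condition: the condition only puts $\Omega\cap B_R(p)$ inside a half-slab and imposes no fatness of $\Omega$ there, so the admissible directions at a given scale form a possibly large cone, and nothing forces consecutive scales to choose nearby directions. (Contrast this with the two-sided Reifenberg condition \eqref{22.02.26.41}, where the $\supset$ half does force consecutive tilts to be $O(\delta)$.) Finally, the "cap at $(\mathrm{diam}\,\Omega)^{1-\epsilon}$" remark only applies to bounded $\Omega$, whereas the theorem allows unbounded domains and controls the large-scale end through $R_{\infty,\delta}$. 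In short, the key idea you would need — a barrier that is superharmonic for the whole operator family, that is small near $p$, \emph{and} that normalizes to a constant near $\partial B_R(p)$ — is exactly the content of Lemma~\ref{21.08.24.1}, and replacing it with affine powers removes the very feature on which the gluing relies.
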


	\begin{corollary}\label{22.07.17.109}
		Consider the equation 
		\begin{align*}
			\partial_t^\alpha u=\cL u+f\quad\text{in}\,\, (0,T]\,,
		\end{align*}
		where $0<\alpha\leq 1$, $0<T\leq \infty$, and $\cL\in\cM_T(\nu,\nu^{-1})$ with $0<\nu\leq 1$.
		Then, for any $p\in(1,\infty)$ and $\theta\in\bR$ satisfying
		\begin{align*}
			-p-1<\theta-d<-1\,,
		\end{align*}
		there exists $\delta>0$, depending only on $d$, $p$, $\theta$, and $\nu$, such that the following holds: 
		\begin{itemize}
			\item[]  If $\Omega$ satisfies $\langle\mathrm{TER}\rangle_\delta$, then the assertions in Corollary~\ref{2208131026} hold for this $\Omega$, $p$, $\theta$, and any $\gamma>0$.
			In addition, $N$ in \eqref{240824442} and \eqref{24082444311111} depend only on $d$, $p$, $\gamma$, $\theta$, $\nu$, $R_{0,\delta}/R_{\infty,\delta}$.
			Here, $R_{0,\delta}$ and $R_{\infty,\delta}$ are the constants in \eqref{220916111}.
		\end{itemize}
	\end{corollary}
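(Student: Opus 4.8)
The plan is to follow the proof of Corollary \ref{2208131026} almost verbatim, with the convex-domain input $\sum_{i,j=1}^{d}\alpha^{ij}D_{ij}\rho\le 0$ (Theorem \ref{22.02.19.5}) replaced by the function produced by Theorem \ref{22.02.19.6}, and with an extra power $1-\epsilon$ in the weight that is calibrated to $p$ and $\theta$. First I would record the elementary reduction: every coefficient matrix of an operator in $\cM_T(\nu,\nu^{-1})$ is $\nu^{-1}$ times a matrix in $\mathrm{M}(\nu^2,1)$, so the ellipticity ratio relevant to Theorems \ref{22.02.18.6} and \ref{22.02.19.6} is $\nu_1/\nu_2=\nu^2$, and $\sum_{i,j}a^{ij}(t)D_{ij}\phi\le 0$ is unchanged under this rescaling. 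Given $p\in(1,\infty)$ and $\theta$ with $-p-1<\theta-d<-1$, set $\mu_0:=-(\theta-d+2)/p$, which lies strictly inside $(-\frac{1}{p},1-\frac{1}{p})$, and then choose $\epsilon=\epsilon(p,\theta)\in(0,1)$ small enough that $\mu:=\mu_0/(1-\epsilon)$ also lies in $(-\frac{1}{p},1-\frac{1}{p})$ (possible by continuity, since $\mu_0$ is interior to that interval). Let $\delta:=\delta_1(d,\epsilon,\nu)$ be the threshold furnished by Theorem \ref{22.02.19.6}; since $\epsilon$ depends only on $p$ and $\theta$, this $\delta$ depends only on $d,p,\theta,\nu$. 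For any $\Omega$ satisfying $\langle\mathrm{TER}\rangle_\delta$, the Hardy inequality \eqref{hardy} holds with $\mathrm{C}_0(\Omega)$ controlled only by $d$ and $R_{0,\delta}/R_{\infty,\delta}$; this rests on the same groundwork that gives $\mathbf{LHMD}$-type decay for such boundaries (cf. Example \ref{240928314}.(2), Proposition \ref{240928313}, Lemma \ref{240928354}).

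Next I would invoke Theorem \ref{22.02.19.6} to obtain a measurable $\phi:\Omega\to\bR$ with $\sum_{i,j=1}^{d}\alpha^{ij}D_{ij}\phi\le 0$ in the sense of distributions for every $(\alpha^{ij})\in\mathrm{M}(\nu^2,1)$, and $N^{-1}\rho^{\,1-\epsilon}\le\phi\le N\rho^{\,1-\epsilon}$ with $N=N(d,\nu,\epsilon,R_{0,\delta}/R_{\infty,\delta})$. Taking $(\alpha^{ij})$ to be the identity matrix shows $\Delta\phi\le 0$, so $\phi$ is a positive superharmonic function; since $\trho$ is a regular Harnack function, so is $\trho^{\,1-\epsilon}$ (Example \ref{21.05.18.2}.(3)), and by comparability it is a regularization of $\phi$ with $\mathrm{C}_2(\trho^{\,1-\epsilon})$ depending only on $d,\epsilon$ and $\mathrm{C}_3(\phi,\trho^{\,1-\epsilon})$ controlled by $d,\nu,\epsilon,R_{0,\delta}/R_{\infty,\delta}$. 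Hence $\phi$ is a superharmonic Harnack function. Because $\phi$ satisfies $\sum_{i,j}\alpha^{ij}D_{ij}\phi\le 0$ for all $(\alpha^{ij})\in\mathrm{M}(\nu^2,1)$, Proposition \ref{05.11.1}.(2) (applied with parameter $\nu^2$) yields $\cI(\phi,p,\nu^2)=(-\frac{1}{p},1-\frac{1}{p})$ with the constant $\mathrm{C}_4$ in \eqref{21.07.12.1} depending only on $\nu^2,p,\mu$; in particular $\mu\in\cI(\phi,p,\nu^2)$.

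Finally I would apply Theorem \ref{22.02.18.6} with $\psi=\phi$, regularization $\Psi=\trho^{\,1-\epsilon}$, the exponent $\mu$ above, and the given $\gamma$. By construction $(1-\epsilon)\mu p=d-2-\theta$, so \eqref{220526558} identifies (up to equivalence of norms) $\Psi^{\mu}\bH^{\gamma}_{p,d+2p-2}(\Omega,T)$ with $\bH^{\gamma}_{p,\theta+2p}(\Omega,T)$, $\Psi^{\mu}\mathring{\cH}^{\alpha,\gamma+2}_{p,d-2}(\Omega,T)$ with $\mathring{\cH}^{\alpha,\gamma+2}_{p,\theta}(\Omega,T)$, $\Psi^{\mu}\cH^{\alpha,\gamma+2}_{p,d-2}(\Omega,T)$ with $\cH^{\alpha,\gamma+2}_{p,\theta}(\Omega,T)$, and $\Psi^{\mu}B^{\gamma+2-2/(p\alpha)}_{p,d+2/\alpha-2}(\Omega)$ with $B^{\gamma+2-2/(p\alpha)}_{p,\theta+2/\alpha}(\Omega)$; thus the two parts of Theorem \ref{22.02.18.6} become exactly the two parts of Corollary \ref{2208131026}, and tracking the dependences of $N=N(d,\alpha,p,\gamma,\mu,\mathrm{C}_0(\Omega),\mathrm{C}_2(\Psi),\mathrm{C}_3(\psi,\Psi),\mathrm{C}_4)$ through the reductions above leaves a bound depending only on $d,p,\gamma,\theta,\nu,R_{0,\delta}/R_{\infty,\delta}$. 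The real work sits outside this argument, in Theorem \ref{22.02.19.6} itself (and in the uniform Hardy inequality for $\langle\mathrm{TER}\rangle_\delta$ domains), both of which are already available; within the corollary the one delicate point is the calibration of $\epsilon$: replacing $\rho$ by $\rho^{\,1-\epsilon}$ divides the admissible weight exponent by $1-\epsilon$, and the requirement that $\mu_0/(1-\epsilon)$ stay inside $(-\frac{1}{p},1-\frac{1}{p})$ is precisely what forces the Reifenberg threshold $\delta$ to depend on $p$ and $\theta$ and not merely on $d$ and $\nu$.
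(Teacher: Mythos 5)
Your proposal is essentially the paper's own argument: fix $\epsilon$ so that $\mu=-\tfrac{\theta-d+2}{p(1-\epsilon)}\in(-\tfrac{1}{p},1-\tfrac{1}{p})$, let $\delta$ be the threshold from Theorem \ref{22.02.19.6} for that $\epsilon$, take $\phi\simeq\rho^{1-\epsilon}$ from that theorem, note that $\trho^{\,1-\epsilon}$ regularizes it, feed $\mu\in\cI(\phi,p,\nu^2)$ (via Proposition \ref{05.11.1}.(2)) into Theorem \ref{22.02.18.6}, and translate via \eqref{220526558}. The algebraic reformulation of the range condition on $\theta$ (your $-p-1+(p-1)\epsilon<\theta-d<-1-\epsilon$ being equivalent to $\mu_0/(1-\epsilon)\in(-\tfrac1p,1-\tfrac1p)$) is the same calibration the paper does.

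One small point to tighten: you take $\delta$ equal to the threshold $\delta_1$ from Theorem \ref{22.02.19.6} and then assert that any $\Omega$ satisfying $\langle\mathrm{TER}\rangle_{\delta}$ admits the Hardy inequality. That second assertion is not automatic for an arbitrary $\delta\in(0,1)$ --- $\langle\mathrm{TER}\rangle_{\delta}$ for $\delta$ close to $1$ is a very weak condition and does not by itself yield the capacity-density estimate behind \eqref{hardy}. The paper introduces a separate threshold $\delta_2$ (depending only on $d$, via the cited result guaranteeing the Hardy inequality for $\langle\mathrm{TER}\rangle_{\delta}$ domains with $\delta$ small) and sets $\delta:=\delta_1\wedge\delta_2$. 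You should do the same so that both ingredients --- the uniform superharmonic function and the uniform Hardy constant --- are simultaneously available. With that correction the argument closes exactly as in the paper.
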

	\begin{proof}[Proof of Corollary \ref{22.07.17.109}]
		Take $\epsilon\in(0,1)$ such that
		$$
		-p-1+(p-1)\epsilon<\theta-d<-1-\epsilon\,.
		$$
		For this $\epsilon$ and any $\nu\in(0,1]$, let $\delta_1>0$ denote the constant in Theorem~\ref{22.02.19.6}.
		It was shown in \cite[Theorem~5.17, Lemma~4.10]{Seo202404} that there exists $\delta_2>0$, depending only on $d$, such that, if $\Omega$ satisfies $\langle\mathrm{TER}\rangle_\delta$, then $\Omega$ admits the Hardy inequality~\eqref{hardy}.
		Here, $\mathrm{C}_0(\Omega)$ depends only on $d$, $\delta$, and $R_{0,\delta}/R_{\infty,\delta}$.
		
		Put $\delta=\delta_1\wedge \delta_2>0$.
		Then $\Omega$ admits the Hardy inequality.
		Let $\phi$ be the function in Theorem \ref{22.02.19.6}.
		By Proposition \ref{05.11.1}.(2), we obtain that for
		$$
		\mu:=-\frac{\theta-d+2}{p(1-\epsilon)}\in\Big(-\frac{1}{p},1-\frac{1}{p}\Big)\,,
		$$
		we have $\mu\in\cI(\phi,\nu^2,p)$.
		In addition, for this $\mu$, $\mathrm{C}_4$ in \eqref{21.07.12.1} can be chosen to depend only on $\mu$, $\nu$, and $p$.
		Note that  $\trho^{\,1-\epsilon}$ is a regularization of $\phi$, and 
		the constants $\mathrm{C}_2(\trho^{\,1-\epsilon})$ and $\mathrm{C}_3(\phi,\trho^{\,1-\epsilon})$ (in Definition \ref{21.10.14.1}) can be chosen to depend only on $d$, $\epsilon$, $\nu$ and $R_{0,\delta}/R_{\infty,\delta}$.
		By applying Theorem~\ref{22.02.18.6} with $\Psi := \trho^{,1-\epsilon}$ (see Proposition \ref{05.11.1}.(1) and \eqref{220526558}), this completes the proof.
	\end{proof}
	
	To prove Theorem \ref{22.02.19.6}, we first state the following lemma:
	
	\begin{lemma}\label{21.08.24.1}
		Suppose that $(\delta,R)\in\mathbf{ER}_{\Omega}$.
		For any $\nu\in(0,1)$ and $p\in\partial\Omega$, there exists a continuous function $w_{p,R}:\Omega\rightarrow (0,1]$ satisfying the following:
		\begin{enumerate}
			\item For any $\mathrm{B}\in\mathrm{M}(\nu,1)$, $w_{p,R}(\mathrm{B}\,\cdot\,)$ is a classical superharmonic function in $\mathrm{B}^{-1}\Omega$.
			
			\item $w_{p,R}=1$ on $\{x\in\Omega\,:\,|x-p|>(1-\delta)R\}$.
			\item $w_{p,R}\leq M\delta$ in $\Omega\cap B(p,\delta R)$.
		\end{enumerate}
		Here, $M$ depends only on $\nu$ and $d$, and in particular, it is independent of $\delta$.
	\end{lemma}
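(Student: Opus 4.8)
The plan is to encode requirement (1) --- superharmonicity after every admissible linear change of variables --- through the Pucci maximal operator $\mathcal{M}^{+}:=\mathcal{M}^{+}_{\nu^{2},1}$, $\mathcal{M}^{+}(D^{2}u)=\sup_{\mathrm{A}\in\mathrm{M}(\nu^{2},1)}\sum_{i,j}\alpha^{ij}D_{ij}u$. Since $\mathrm{B}\in\mathrm{M}(\nu,1)$ forces $\mathrm{B}^{2}\in\mathrm{M}(\nu^{2},1)$, any $u$ with $\mathcal{M}^{+}(D^{2}u)\le0$ on an open set $\cO$ is, for each such $\mathrm{B}$, a viscosity --- hence distributional, hence classical --- supersolution of $\Delta(u\circ\mathrm{B})=0$ on $\mathrm{B}^{-1}\cO$; so it is enough to produce a continuous $w_{p,R}:\Omega\to(0,1]$ with $\mathcal{M}^{+}(D^{2}w_{p,R})\le0$ on $\Omega$ satisfying (2) and (3). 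Fix $p\in\partial\Omega$ and $R>0$ with $(\delta,R)\in\mathbf{ER}_{\Omega}$, and write $B:=B_{(1-\delta)R}(p)$. If $\partial B\cap\Omega=\emptyset$ I set $w_{p,R}\equiv\min(1,M\delta)$ (then (2) is vacuous and (3) is immediate). Otherwise I let $w_{p,R}\equiv1$ on $\Omega\setminus B$ and, on $\Omega\cap B$, let $w_{p,R}$ be the Perron--Wiener--Brelot solution for $\mathcal{M}^{+}$ of the Dirichlet problem with boundary data $1$ on $\partial B\cap\Omega$ and $0$ on $\partial\Omega\cap B$. Existence, uniqueness, and $0\le w_{p,R}\le1$ follow from Perron's method and the comparison principle for the uniformly elliptic $\mathcal{M}^{+}$; and $w_{p,R}>0$ on $\Omega\cap B$ by the strong maximum principle, the data being $1$ on the non-empty relatively open set $\partial B\cap\Omega$.

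For (1) and (2): property (2) holds by construction. For (1), $w_{p,R}$ solves $\mathcal{M}^{+}(D^{2}w_{p,R})=0$ on $\Omega\cap B$ and is constant off $\overline B$, so only an interface point $x_{0}\in\partial B\cap\Omega$ needs attention. Such $x_{0}$ is a regular boundary point of $\Omega\cap B$ (its complement contains the exterior of $B$), and the data is locally constant $\equiv1$ near $x_{0}$ --- the sole discontinuity of the data lies on the ``corner'' $\partial B\cap\partial\Omega$, which sits \emph{outside} $\Omega$; hence $w_{p,R}$ is continuous at $x_{0}$ with $w_{p,R}(x_{0})=1=\max_{\Omega}w_{p,R}$. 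Any smooth $\psi$ touching $w_{p,R}$ from below at $x_{0}$ therefore has a local maximum there, so $D^{2}\psi(x_{0})\le0$ and $\mathcal{M}^{+}(D^{2}\psi(x_{0}))\le0$; thus $w_{p,R}$ is an $\mathcal{M}^{+}$-supersolution on all of $\Omega$. (Alternatively, one glues the constant $1$ on $\Omega\setminus\overline{B_{(1-2\delta)R}(p)}$ to $w_{p,R}$ on $\Omega\cap B$ via Lemma \ref{21.05.18.1}.(4), applied after composing with $\mathrm{B}$, using $w_{p,R}\le1$ and $w_{p,R}\to1$ along $\partial B$.) Continuity of $w_{p,R}$ on $\Omega$ is clear away from $\partial B$ and was just checked on $\partial B\cap\Omega$.

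Property (3) is the crux. Let $x\in\Omega\cap B_{\delta R}(p)$ and let $i$ index the component $\Omega_{p,R}^{(i)}$ of $\Omega\cap B_{R}(p)$ containing $x$; then $\Omega_{p,R}^{(i)}\subset K_{i}:=\{\,y:(y-p)\cdot e_{p,R}^{(i)}<\delta R\,\}$, a half-space whose bounding hyperplane is at distance $\delta R$ from $p$, so $x$ sits at depth at most $2\delta R$ below it. The component $W_{x}$ of $x$ in $\Omega\cap B$ satisfies $W_{x}\subset\Omega_{p,R}^{(i)}\subset K_{i}$, has boundary data $\le1$ on $\partial W_{x}\setminus\partial\Omega$ and $0$ on $\partial W_{x}\cap\partial\Omega$; comparison of $w_{p,R}$ on $W_{x}$ with the $\mathcal{M}^{+}$-harmonic measure of the round part of a half-ball $\mathcal{H}\supset W_{x}$ built on $K_{i}$ (vanishing on the flat face $\{(y-p)\cdot e_{p,R}^{(i)}=\delta R\}$) gives $w_{p,R}(x)\le\omega(x)$ for that half-ball harmonic measure. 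Finally, because $\omega$ vanishes on a \emph{flat} portion of $\partial\mathcal{H}$ with $\omega\le1$ and $\mathcal{M}^{+}$ uniformly elliptic with ellipticity constants depending only on $\nu,d$, the boundary $C^{1,\alpha}$ estimate for fully nonlinear uniformly elliptic equations at a flat boundary with zero data yields $\omega(y)\lesssim_{\nu,d}\operatorname{dist}(y,\{(\cdot-p)\cdot e_{p,R}^{(i)}=\delta R\})/R$ near that face, whence $\omega(x)\le M\delta$ with $M=M(\nu,d)$.

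The main obstacle is precisely this last estimate. The Krylov--Safonov boundary H\"older estimate only gives $\omega(x)\lesssim\delta^{\alpha}$ for some $\alpha=\alpha(\nu,d)<1$, which is useless downstream (the exponent produced in Theorem \ref{22.02.19.6} would then be bounded away from $1$ no matter how small $\delta$ is); one genuinely needs the \emph{linear} decay, i.e.\ the flat-boundary $C^{1,\alpha}$ regularity of $\mathcal{M}^{+}$, to make $M$ depend only on $\nu$ and $d$. A secondary technical point, to be handled with care in the comparison step, is the behaviour near the ``edges'' where the data-$1$ and data-$0$ portions of the boundary meet: there the naive comparison can fail, and one repairs it by perturbing the comparison barrier by a small multiple of an auxiliary robustly superharmonic function that is $\gtrsim1$ near such edges (chosen dimension-by-dimension), while the geometric fact that the true corners $\partial B\cap\partial\Omega$ lie outside $\Omega$ guarantees that none of this affects $w_{p,R}$ as a function on $\Omega$ or its superharmonicity there.
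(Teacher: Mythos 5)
Your construction is essentially correct, but it takes a substantially different and more machinery-heavy route than the paper. The paper's proof (see the hint at the end of the lemma, and the barrier $f(t)=\alpha_1-\beta_1 t^{2-\nu^{-2}d}$ used inside the proof of Theorem \ref{22.02.19.6}) is a purely explicit barrier construction: on each component $\Omega^{(i)}_{p,R}$ one writes down a concrete function built from the affine function $x\mapsto(\delta R-(x-p)\cdot e^{(i)})/R$ (which is annihilated by every $\cL\in\mathrm M(\nu^2,1)$) together with the radial function $F_0(|x-p|/R)$ with $F_0(t)=1-t^{2-\nu^{-2}d}$ (which one checks by direct computation is $\cL$-superharmonic for all such $\cL$), then takes a minimum with the constant $1$ and glues across $\partial B_{(1-\delta)R}(p)$ via Lemma \ref{21.05.18.1}. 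Properties (1)--(3) are verified from the explicit formula; no existence theory, comparison principle, or boundary regularity for fully nonlinear equations is invoked. You instead define $w_{p,R}$ as the Perron solution of $\mathcal M^+(D^2u)=0$ in $\Omega\cap B_{(1-\delta)R}(p)$, compare it with the $\mathcal M^+$-harmonic measure $\omega$ of a model half-ball, and then invoke flat-boundary $C^{1,\alpha}$ regularity of $\mathcal M^+$ to get the linear decay in (3). This is workable, and you correctly identify that Krylov--Safonov boundary H\"older decay is insufficient and genuine linear decay is needed; but the standard proof of that flat-boundary Lipschitz decay is itself a barrier argument with a quadratic test function (a supersolution $v(y)=a\,y_d/R+b(|y'|^2-c\,y_d^2)/R^2$ with $c=(d-1)/\nu^2$ makes $\mathcal M^+(D^2v)\le0$), so the Perron/comparison detour ultimately leans on the same kind of explicit barrier computation the paper uses directly, with extra overhead (Perron existence for $\mathcal M^+$, comparison principle for viscosity solutions, boundary regularity) that is avoidable here.

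Two technical points in your writeup need sharpening. First, you never pin down the half-ball $\mathcal H$; it must be exactly $\mathcal H=K_i\cap B_{(1-\delta)R}(p)$, not a larger ball, or else the inequality $w_{p,R}\le\omega$ fails on $\partial W_x\cap\partial B_{(1-\delta)R}(p)$ where $w_{p,R}=1$ but $\omega<1$. Second, the ``edge'' repair you flag as a secondary obstacle is not actually needed: any $y_0\in\overline{W_x}$ with $(y_0-p)\cdot e^{(i)}=\delta R$ and $|y_0-p|=(1-\delta)R$ necessarily lies in $\partial\Omega$ (it cannot lie in $\Omega^{(i)}$ since $\Omega^{(i)}\subset K_i$ is a strict inclusion, and an elementary topological argument rules out its lying in a different component of $\Omega\cap B_R(p)$), so the Dirichlet datum there is $0$ and the comparison with the nonnegative $\tilde\omega$ (assembled component-by-component so that it is a genuine member of the upper Perron class on all of $\Omega\cap B_{(1-\delta)R}(p)$) goes through without any auxiliary perturbation.
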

	\begin{proof}
		The proof of this lemma is almost the same as that of \cite[Lemma 5.20]{Seo202404}, by setting $F_0(t)=1-t^{2-d/\nu^2}$ instead of \cite[(5.21)]{Seo202404}.
		We leave the complete proof to the reader.
	\end{proof}

	\begin{proof}[Proof of Theorem \ref{22.02.19.6}]
		It suffices to consider the case $\nu\in(0,1)$.
		Let $M>0$ be the constant in Lemma \ref{21.08.24.1}.
		For a fixed $\epsilon\in (0,1)$, choose $\delta\in(0,1)$ sufficiently small such that $M\delta < \delta^{1-\epsilon}$, and then choose $\eta\in(0,1)$ sufficiently small such that
		$$
		(1-\eta)M\delta+\eta\leq \delta^{1-\epsilon}\,.
		$$
		We assume that $\Omega$ satisfies \eqref{220916111} for this $\delta$.
		By scaling and using the fact that $(\delta(R),R)\in\mathbf{ER}_\Omega$ (see Remark \ref{241109258}), we may assume without loss of generality that $R_{\infty,\delta}=1$, and that $(\delta,R)\in\mathbf{ER}_{\Omega}$ whenever $R\leq R_{0,\delta}(\leq 1)$ or $R\geq 1$.

		\textbf{Step 1.}
		Put
		$$
		k_0=\min\big\{k\in\bN\,:\,\delta^k\leq R_{0,\delta}\big\}\quad\text{and}\quad \cI=\{k\in\bZ\,:\,k\leq 0\,\,\,\text{or}\,\,\, k\geq k_0\}\,,
		$$
		so that $(\delta,\delta^k)\in\mathbf{ER}_\Omega$ for every $k\in\cI$.
		For each $p\in\partial\Omega$ and $k\in\cI$, put
		\begin{align*}
			\phi_{p,k}=\delta^{k(1-\epsilon)}\Big((1-\eta)\, w_{p,\delta^k}+\eta\Big)\,,
		\end{align*}
		where $w_{p,\delta^k}$ is the function $w_{p,R}$ in Lemma \ref{21.08.24.1} with $R=\delta^k$.
		Note that
		\begin{alignat*}{2}
			&\qquad \phi_{p,k}(x)\leq \delta^{(k+1)(1-\epsilon)}&&\text{on}\quad \Omega\cap \overline{B}(p,\delta^{k+1})\,;\\
			&\qquad \phi_{p,k}(x)=\delta^{k(1-\epsilon)} &&\text{on}\quad \Omega\cap \partial B(p,\delta^k)\,;\\
			&\eta\cdot \delta^{k(1-\epsilon)}\leq \phi_{p,k}\leq \delta^{k(1-\epsilon)}\qquad&&\text{on}\quad \Omega\cap B(p,\delta^k)\,.
		\end{alignat*}
		Put
		\begin{alignat*}{2}
			&\phi_p^{(1)}(x):=\inf\{\phi_{p,k}(x)\,:\,k\geq k_0\,\,,\,\,|x-p|< \delta^k\}\quad&&\text{for}\quad |x-p|< \delta^{k_0}\,;\\
			&\phi_p^{(2)}(x):=\inf\{\phi_{p,k}(x)\,:\,k\leq 0\,\,,\,\,|x-p|< \delta^k\}\quad&&\text{for}\quad |x-p|>\delta\,.
		\end{alignat*}
		In this step, we first claim that, for any $\mathrm{B}\in\mathrm{M}(\nu,1)$, $\phi_p^{(1)}(\mathrm{B}\,\cdot\,)$ and $\phi_p^{(2)}(\mathrm{B}\,\cdot\,)$ are classical superharmonic functions in
		$$
		\{\mathrm{B}^{-1}x\,:\,x\in\Omega\cap B(p,\delta^{k_0})\}\quad\text{and}\quad \{\mathrm{B}^{-1}x\,:\,x\in \Omega\setminus \overline{B}(p,\delta)\}\,,
		$$
		respectively.
		The second claim is that for each $i=1,\,2$, $\phi_p^{(i)}(x)$ satisfies
		\begin{align}\label{2209161228}
			\eta |x-p|^{1-\epsilon}\leq \phi_p^{(i)}(x)\leq \delta^{-1+\epsilon}|x-p|^{1-\epsilon}
		\end{align}
		on its domain.
		By similarity, we only provide the proof for $\phi^{(1)}_p$.
		
		For the first claim, it suffices to prove that, for each $\mathrm{B}\in\mathrm{M}(\nu,1)$ and $k_1\in\bZ$ with $k_1\geq k_0$, the function $\phi_p^{(1)}(\mathrm{B}\,\cdot\,)$ is a classical superharmonic function in 
		$$
		U:=\{\mathrm{B}^{-1}x\in\Omega\,:\,\delta^{k_1+2}<|x-p|<\delta^{k_1}\}\,.
		$$
		For $x\in U$, put
		\begin{align*}
			v_{p,k_1}(x)=
			\begin{cases}
				\phi_{p,k_1}(x) &\text{if}\quad \delta^{k_1+1}\leq |x-p|< \delta^{k_1}\\
				\phi_{p,k_1}(x)\wedge \phi_{p,k_1+1}(x)&\text{if}\quad \delta^{k_1+2}<|x-p|< \delta^{k_1+1}\,.
			\end{cases}
		\end{align*}
		Since $\phi_{p,k_1}\leq \phi_{p,k_1+1}$ on $\Omega\cap \partial B(p,\delta^{k_1+1})$, Lemma \ref{21.05.18.1}.(4) implies that $v_{p,k_1}(\mathrm{B}\,\cdot\,)$ is a classical superharmonic function in $U$.
		Observe that
		$$
		\phi_p^{(1)}(x)=v_{p,k_1}(x)\wedge \inf\{\phi_{p,k}(x)\,:\,k_0\leq k\leq k_1-1\}
		$$
		on $U$.
		If $\eta\,\delta^{k(1-\epsilon)}\geq \delta^{k_1(1-\epsilon)}$ then
		$$
		v_{p,k_1}(x)\leq \phi_{p,k_1}(x)\leq \delta^{k_1(1-\epsilon)}\leq \eta\,\delta^{k(1-\epsilon)}\leq \phi_{p,k}(x)\,.
		$$
		Therefore we have
		\begin{align*}
			\phi_p^{(1)}(\mathrm{B}\,\cdot\,)=v_{p,k_1}(\mathrm{B}\,\cdot\,)\wedge \inf\{\phi_{p,k}(\mathrm{B}\,\cdot\,)\,:\, k_0\leq k\leq k_1\quad\text{and}\quad \eta\, \delta^{k(1-\epsilon)}\leq \delta^{k_1(1-\epsilon)}\}\,.
		\end{align*}
		This implies that on $U$, $\phi_p^{(1)}(\mathrm{B}\,\cdot\,)$ is the minimum of finitely many classical superharmonic functions.
		It follows from Lemma \ref{21.05.18.1}.(1) that $\phi_p^{(1)}(\mathrm{B}\,\cdot\,)$ is a classical superharmonic function in $U$.
		
		For the second claim, let $x\in\Omega$ satisfy $\delta^{k_1+1}\leq |x-p|< \delta^{k_1}$ for $k_1\in\bZ$ with $k_1\geq k_0$.
		Since $\phi^{(1)}_{p,k_1}(x)\leq \delta^{k_1(1-\epsilon)}$, and 
		$\phi^{(1)}_{p,k}(x)\geq \eta \delta^{k(1-\epsilon)}\geq \eta \delta^{k_1(1-\epsilon)}$ for all $k_0\leq k\leq k_1$,
		we obtain that $\eta\,\delta^{k_1(1-\epsilon)}\leq \phi_p^{(1)}(x)\leq \delta^{k_1(1-\epsilon)}$.
		This implies \eqref{2209161228}.

		\textbf{Step 2.}
		Observe that
		\begin{equation}\label{230105945}
			\begin{alignedat}{2}
				&\phi_p^{(1)}(x)\leq \phi_{p,k_0}(x)\leq \delta^{(k_0+1)(1-\epsilon)}\quad&&\text{if}\,\,\,|x-p|=\delta^{k_0+1}\,\,;\\
				&\phi_p^{(1)}(x)=\phi_{p,k_0}(x)=\delta^{k_0(1-\epsilon)}\quad &&\text{if}\,\,\,|x-p|=\delta^{k_0}\,.
			\end{alignedat}
		\end{equation}
		Put $\gamma=-\nu^{-2}d+2<0$ and take $\alpha_1,\,\beta_1\in\bR$ such that the function
		$$
		f(t):=\alpha_1-\beta_1t^{\gamma}
		$$
		satisfies
		\begin{align}\label{23010594511}
			f(\delta^{k_0+1})=\delta^{(k_0+1)(1-\epsilon)}\quad\text{and}\quad f(\delta^{k_0})=\delta^{k_0(1-\epsilon)}\,.
		\end{align}
		Since $f(\delta^{k_0+1})<f(\delta^{k_0})$, we have $\beta_1>0$. 
		Because $\gamma=-\nu^{-2}d+2<0$ and $\beta_1>0$, we have the following:
		\begin{itemize}
			\item For any $(\alpha^{ij})_{d\times d}\in\mathrm{M}(\nu^2,1)$, 
			$$
			\sum_{i,j=1}^d\alpha_{ij}D_{ij}\Big(f\big(|\cdot-p|\big)\Big)\leq 0\quad \text{in}\quad \bR^d\setminus \{p\}\,.
			$$
			
			\item $f(t)$ increases as $t\rightarrow \infty$.
			In particular, $f(t)\geq \delta^{(k_0+1)(1-\epsilon)}$ for all $t\geq \delta^{k_0+1}$.
		\end{itemize}
		
		Take $\alpha_2>0,\,\beta_2\in\bR$ such that
		\begin{align}\label{2301051008}
			\alpha_2\eta\delta^{1-\epsilon}+\beta_2=f(\delta)\quad\text{and}\quad \alpha_2\delta^{-1+\epsilon}+\beta_2=f(1)\,,
		\end{align}
		and put $\widetilde{\phi}_p^{(2)}:=\alpha_2\phi_p^{(2)}+\beta_2$.
		We define
		\begin{align}\label{230105954}
			\phi_p=
			\begin{cases}
				\vspace{1mm}
				\,\,\, \phi_p^{(1)}&\quad \text{on}\,\,\,\big\{x\in\Omega\,:\,|x-p|\leq \delta^{k_0+1}\big\}\\
				\vspace{1mm}
				\,\,\,\phi_p^{(1)}\wedge f(|\,\cdot\,-p|)&\quad \text{on}\,\,\,\big\{x\in\Omega\,:\,\delta^{k_0+1}<|x-p|< \delta^{k_0}\big\}\\
				\vspace{1mm}
				\,\,\,f(|\,\cdot\,-p|)&\quad \text{on}\,\,\,\big\{x\in\Omega\,:\,\delta^{k_0}\leq |x-p|\leq \delta\big\}\\
				\vspace{1mm}
				\,\,\,f(|\,\cdot\,-p|)\wedge \widetilde{\phi}_p^{(2)} &\quad \text{on}\,\,\,\big\{x\in\Omega\,:\,\delta<|x-p|< 1\big\}\\
				\vspace{1mm}
				\,\,\,\widetilde{\phi}_p^{(2)}&\quad \text{on}\,\,\,\big\{x\in\Omega\,:\,|x-p|\geq 1\big\}\,.
			\end{cases}
		\end{align}
		
		Observe that 
		\begin{align*}
			\text{$\phi_p^{(1)}(x)\leq f(|x-p|)$\quad if $|x-p|=\delta^{k_0+1}$}\quad\text{and}\quad \text{$\phi_p^{(1)}(x)\geq f(|x-p|)$\quad if $|x-p|=\delta^{k_0}$}
		\end{align*}
		(see \eqref{230105945} and \eqref{23010594511}), and 
		\begin{align*}
			\text{$f(|x-p|)\leq \widetilde{\phi}_p^{(2)}$\quad if $|x-p|=\delta$}\quad\text{and}\quad \text{$f(|x-p|)\geq \widetilde{\phi}_p^{(2)}$\quad if $|x-p|=1$}
		\end{align*}
		(see \eqref{2209161228} and \eqref{2301051008}).
		By Lemma \ref{21.05.18.1}.(4), it follows that for any $\mathrm{B}\in\mathrm{M}(\nu,1)$, $\widetilde{\phi}_p(\mathrm{B}\,\cdot\,)$ is a classical superharmonic function in $\mathrm{B}^{-1}\Omega$.

		\textbf{Step 3.}
		We claim that for every $x\in\Omega$,
		\begin{align}\label{2209161229}
			N^{-1}|x-p|^{1-\epsilon}\leq \phi_p(x)\leq N|x-p|^{1-\epsilon}\,,
		\end{align}
		where $N=N(d,\epsilon,\nu,R_{0,\delta})>0$.
		Recall the definition of $\phi_p$ in \eqref{230105954}.
		
		\textbf{Step 3.1)} By \eqref{2209161228}, 
		\begin{align}\label{230105957}
			\eta|x-p|^{1-\epsilon}\leq \phi_p^{(1)}(x)\leq \delta^{-1+\epsilon}|x-p|^{1-\epsilon}\quad\text{on}\quad \big\{x\in\Omega\,:\,|x-p|<\delta^{k_0}\big\}\,.
		\end{align}
		
		\textbf{Step 3.2)} Since $f(t)=\alpha_1-\beta_1t^{\gamma}$, $\beta_1>0$, and $\gamma<-d+2\leq 0$,
		we have
		\begin{align}\label{2301059571}
			\delta^{(k_0+1)(1-\epsilon)}=f(\delta^{k_0+1})\leq f\big(|x-p|\big)\leq f(1)\quad\text{if}\quad \delta^{k_0+1}<|x-p|< 1\,.
		\end{align}
		
		\textbf{Step 3.3)} Note that $\widetilde{\phi}_p^{(2)}=\alpha_2\phi_p^{(2)}+\beta_2$ and $\alpha_2>0$. 
		Take $K\geq 1$ such that 
		$$
		\alpha_2\eta K^{1-\epsilon}\geq 2|\beta_2|\,.
		$$
		For $x\in \Omega$ satisfying $\delta<|x-p|<K$, it follows from \eqref{2209161228}, \eqref{2301051008}, and \eqref{2301059571} that
		\begin{equation}\label{2301048512}
			\begin{aligned}
				\delta^{(k_0+1)(1-\epsilon)}\leq f(\delta)=\alpha_2\eta\delta^{1-\epsilon}+\beta_2\leq \widetilde{\phi}_p^{(2)}(x)\leq \alpha_2\delta^{-1+\epsilon}K^{1-\epsilon}+\beta_2\,.
			\end{aligned}
		\end{equation}
		If $|x-p|\geq K$, then 
		$$
		2|\beta_2|\leq \alpha_2\eta K^{1-\epsilon}\leq \alpha_2\eta|x-p|^{1-\epsilon}\,.
		$$
		By \eqref{2209161228}, we have
		\begin{align}\label{2301048513}
			\frac{\eta\alpha_2}{2}|x-p|^{1-\epsilon}\leq \widetilde{\phi}_p^{(2)}(x)\leq \alpha_2\big(\delta^{-1+\epsilon}+\frac{\eta}{2}\,\big)|x-p|^{1-\epsilon}\,.
		\end{align}
		
		Since $k_0,\eta,\,\alpha_1\,\beta_1,\,\alpha_2,\,\beta_2,\,K$ depend only on $d,\,\nu,\,\epsilon,\,\delta,\,R_{0,\delta}$, 
		\eqref{230105954}--\eqref{2301048513} imply \eqref{2209161229}.

		\textbf{Step 4.}
		Put $\phi(x):=\inf_{p\in\partial\Omega}\phi_{p}(x)$.
		Then 
		$$
		N^{-1}\rho(x)\leq \phi(x)\leq N\rho(x)\,,
		$$
		where $N$ is the same constant as in \eqref{2209161229}.
		For any fixed $\mathrm{B}\in \mathrm{M}(\nu,1)$, because $\phi_p(\mathrm{B}\,\cdot\,)$ is a classical superharmonic function in $\mathrm{B}^{-1}\Omega$, Lemma \ref{21.05.18.1}.(2) yields that $\phi(\mathrm{B}\,\cdot\,)$ is superharmonic on $\mathrm{B}^{-1}\Omega$.
	\end{proof}

	\subsection{Conic domains}\label{0074}\,
	
	$\bS^{d-1}$ denotes the $(d-1)$-dimensional unit sphere in $\bR^d$, that is, $\{x\in\bR^d\,:\,|x|=1\}$.
	We denote by $A_{\bS}$ the surface measure on $\bS^{d-1}$.
	Note that for any nonnegative Borel function $F$ on $\bR^{d}\setminus\{0\}$,
	$$
	\int_{\bR^{d}\setminus\{0\}} F(x)\dd x=\int_0^{\infty}\left(\int_{\bS^{d-1}}F(r\sigma)\dd A_{\bS}(\sigma)\right)r^{d-1}\dd r\,.
	$$
	Let $\cM$ be a connected and relatively open subset of $\bS^{d-1}$, and define
	\begin{align*}
		\Omega=\big\{x\in\bR^d\setminus\{0\}\,:\,\frac{x}{|x|}\in\cM\big\}
	\end{align*}
	which is the conic domain generated by $\cM$ (see Figure \ref{230222651} below). 
	\begin{figure}[ht]
		\begin{tikzpicture}[> = Latex, scale=0.8]
			
			\begin{scope}
				
				\begin{scope}[shift={(-6,0)}, scale=0.8]
					\fill[gray!20] (0,0)--(-1.65,-2.0)--(-2.0,-2.0)--(2.0,-2.0)--(2.0,2.0)--(-2.0,2.0)--(-1.65,2.0)--(0,0);
					
					\clip (-2.2,-2.25)--(2.4,-2.25)--(2.4,2.4)--(-2.4,2.4);	
					\draw[->] (-2.2,0) -- (2.4,0);
					\draw[->] (0,-2.2) -- (0,2.4);
					\draw (0,0) -- (-1.69125,2.05);
					\draw (0,0) -- (-1.69125,-2.05);
					\begin{scope}
					\end{scope}
				\end{scope}

				\begin{scope}[shift={(0,-.2)}]
					\clip (-2.2,-2)--(2.4,-2)--(2.4,2.4)--(-2.4,2.4);	
					\begin{scope}[shift={(-1.3,-1.4)}, scale=1.2]
						
						\begin{scope}[scale=0.9]
							\clip (0,0) circle (3.6) ;
							\clip (-3,-3) -- (3,-3) -- (3,3) -- (-3,3);
							\draw[dashed] (0,0) -- (4.4,4.4);
							\draw[dashed] (0,0) -- (2.8,5.2);
							\draw (0,0) -- (4.4,2.2);
							\draw (0,0) -- (1.2,4.8);
						\end{scope}

						\begin{scope}[scale=1.6]
							\draw[fill=gray!25]
							(1.1,0.55) .. controls (1.4,0.7) and (1.2,1) .. (1.1,1.1) .. controls (1.0,1.2) and (0.8,1.25) .. (0.7,1.3) .. controls (0.5,1.4) and (0.33,1.32) .. (0.3,1.2) .. controls (0.23,0.92) and (0.5,0.85) .. (0.6,0.7) .. controls (0.7,0.55) and (1,0.5) .. (1.1,0.55);
						\end{scope}

						\begin{scope}[scale=1.6]
							\draw (1.1,0.55) .. controls (1.4,0.7) and (1.2,1) .. (1.1,1.1) .. controls (1.0,1.2) and (0.8,1.25) .. (0.7,1.3) .. controls (0.5,1.4) and (0.33,1.32) .. (0.3,1.2);
						\end{scope}
						
						\begin{scope}[scale=1.6]
							\draw (0.3,1.2) .. controls (0.23,0.92) and (0.5,0.85) .. (0.6,0.7) .. controls (0.7,0.55) and (1,0.5) .. (1.1,0.55);
						\end{scope}
						
						\begin{scope}[scale=1.6]
							\clip (1.1,0.55) .. controls (1.4,0.7) and (1.2,1) .. (1.1,1.1) .. controls (1.0,1.2) and (0.8,1.25) .. (0.7,1.3) .. controls (0.5,1.4) and (0.33,1.32) .. (0.3,1.2) .. controls (0.23,0.92) and (0.5,0.85) .. (0.6,0.7) .. controls (0.7,0.55) and (1,0.5) .. (1.1,0.55);

							%
							\draw[gray] (0,1.7) arc(90:0:0.54 and 1.95);
							\draw[gray] (0,1.7) arc(90:0:0.795 and 1.92);
							\draw[gray] (0,1.7) arc(90:0:1.03 and 1.85);
							\draw[gray] (0,1.7) arc(90:0:1.23 and 1.787);
							\draw[gray] (0,1.7) arc(90:0:1.4 and 1.71);
							
							\draw[gray] (-0.15,1.14) arc(-90:0:1.7*0.565 and 0.6*0.565);
							\draw[gray] (-0.15,1.00) arc(-90:0:1.7*0.66 and 0.6*0.66);
							\draw[gray] (-0.15,0.84) arc(-90:0:1.7*0.745 and 0.6*0.745);
							\draw[gray] (-0.15,0.675) arc(-90:0:1.7*0.82 and 0.6*0.82);
							\draw[gray] (-0.15,0.5) arc(-90:0:1.7*0.89 and 0.6*0.89);
						\end{scope}
						
						\draw[thin] (0,0) -- (1.8,2.1);
					\end{scope}
				\end{scope}

				\begin{scope}[shift={(4.7,-1.7)},scale=1.2]
					\clip (-.2,-.2) -- (2.5,-.2) -- (2.5,2.7) -- (-.2,2.7);
					\clip (0,0) circle (3.0) ;
					\begin{scope}
						
						\draw (0,0) -- (4.4,2.2); 
						\draw[dashed] (0,0) -- (5,4);
						\draw[dashed] (0,0) -- (3,5);
						\draw (0,0) -- (1.3,4.7);
						\draw (0,0) -- (1.70,2.20);
					\end{scope}
					
					
					\begin{scope}[scale=1.6]
						\draw[line width=0.3mm] (0.51, 0.66) arc(27:54.5:0.575 and 1.42) arc(101:87:1.63 and 1.2) arc(60:40:1.42 and 1.39) arc(10.5:-10:1.115 and 0.9) arc(53:74:1.83 and 0.6);
					\end{scope}

					\begin{scope}[scale=1.6]
						\clip (0.51, 0.66) arc(27:54.5:0.575 and 1.42) arc(101:87:1.63 and 1.2) arc(60:40:1.42 and 1.39) arc(10.5:-10:1.115 and 0.9) arc(53:74:1.83 and 0.6);

						\fill[gray!25] (0.51, 0.66) arc(27:54.5:0.575 and 1.42) arc(101:87:1.63 and 1.2) arc(60:40:1.42 and 1.39) arc(10.5:-10:1.115 and 0.9) arc(53:74:1.83 and 0.6);
						%
						
						\draw[gray] (0,1.7) arc(90:0:0.54 and 1.95);
						\draw[gray] (0,1.7) arc(90:0:0.795 and 1.92);
						\draw[gray] (0,1.7) arc(90:0:1.03 and 1.85);
						\draw[gray] (0,1.7) arc(90:0:1.23 and 1.787);
						\draw[gray] (0,1.7) arc(90:0:1.4 and 1.71);
						
						\draw[gray] (-0.15,1.14) arc(-90:0:1.7*0.565 and 0.6*0.565);
						\draw[gray] (-0.15,1.00) arc(-90:0:1.7*0.66 and 0.6*0.66);
						\draw[gray] (-0.15,0.84) arc(-90:0:1.7*0.745 and 0.6*0.745);
						\draw[gray] (-0.15,0.675) arc(-90:0:1.7*0.82 and 0.6*0.82);
						\draw[gray] (-0.15,0.5) arc(-90:0:1.7*0.89 and 0.6*0.89);
					\end{scope}
					
					\begin{scope}[scale=1.6]	
						
					\end{scope}

					\draw (0,0) -- (1.70,2.20);
				\end{scope}


			\end{scope}
		\end{tikzpicture}
		\caption{Conic domains}\label{230222651}
	\end{figure}
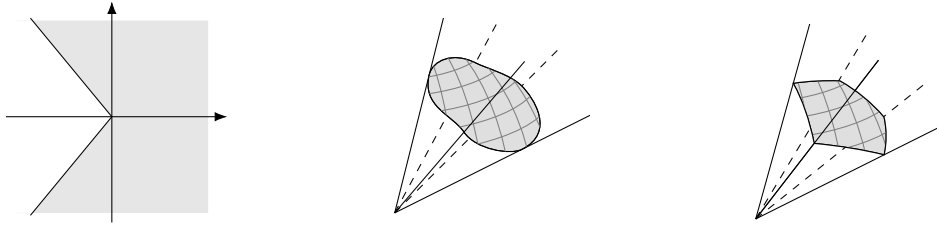
	We denote
	$$
	B_R^{\Omega}:=\Omega\cap B_R(0)\quad\text{and}\quad Q_R^{\Omega}:=(1-R^2,1]\times B_R^{\Omega}\,.
	$$
	
	In this subsection, for a certain class of sets $\cM$ including Lipschitz cones (see Assumption \ref{2207111151}), we prove that if $u$ satisfies
	\begin{align*}
		\begin{cases}
			u_t=\Delta u\quad&\text{in}\quad Q_1^{\Omega}\,;\\
			u=0\quad&\text{on}\quad (0,1]\times\big((\partial \Omega)\cap B_1(0)\big)\,,
		\end{cases}
	\end{align*}
	then for any $\lambda\in(0,\lambda_0)$ and  $0<R<1$,
	\begin{align}\label{2301111236}
		|u(t,x)|\lesssim_{\cM,\lambda,R} |x|^{\lambda}\sup_{Q_1^{\Omega}}|u|\quad\text{whenever}\quad (t,x)\in Q_R^{\Omega}
	\end{align}
	(see Remark \ref{2301111204}), where $\lambda_0$ is the constant defined in \eqref{220818848}.
	
	\begin{remark}
		As shown in \cite{Kozlov}, estimate \eqref{2301111236} is closely related to heat kernel estimates.
		In \cite[Lemma 3.9]{Kozlov}, Kozlov and Nazarov used an estimate of the form \eqref{2301111236} to derive bounds for the kernel of parabolic equations in $C^{1,1}$ cones.
	\end{remark}
	
	Before stating the main result of this subsection (Theorem \ref{2208221223}), we introduce the notions of the spherical gradient and the spherical Laplacian, formulated without differential geometric terminology.
	For a function $f$ on $\cM$, we denote $F_f(x):=f\big(\frac{x}{|x|}\big)$.
	We also denote
	\begin{align*}
		C^{\infty}(\cM)&:=\text{the set of all $f:\cM\rightarrow\bR$ for which $F_f\in C^{\infty}(\Omega)$}\,;\\
		C_c^{\infty}(\cM)&:=\{f\in C^{\infty}(\cM)\,:\,\mathrm{supp}(f)\subset \cM\}\,.
	\end{align*}
	The spherical gradient and spherical Laplacian of $f\in C^{\infty}(\cM)$, denoted by $\nabla_\bS f$ and $\Delta_\bS f$, are defined by
	\begin{align*}
		\nabla_\bS f=\nabla F_f|_{\cM}\quad\text{and}\quad \Delta_\bS f=\Delta F_f|_{\cM}\,.
	\end{align*}
	Direct calculation shows that:
	\begin{itemize}
		\item For any $f\in C_c^{\infty}(\cM)$ and $g\in C^{\infty}(\cM)$\,,
		\begin{align*}
			\int_{\cM}\big(\nabla_\bS f,\nabla_\bS g\big)_{\mathbb{R}^d}\dd A_\bS=-\int_{\cM}(\Delta_\bS f) g\,\dd A_\bS\,,
		\end{align*}
		where $(\,\cdot\,,\,\cdot\,)_{\mathbb{R}^d}$ is the inner product on $\bR^d$.
		
		\item For any $F\in C^{\infty}(\Omega)$,
		\begin{align}\label{220818255}
			|\nabla F|^2=|D_r F|^2+\frac{1}{r^2}|\nabla_\bS F|^2\,.
		\end{align}
		
		\item For a function $F\in C^{\infty}(\Omega)$,
		\begin{align}\label{220716632}
			\Delta F=D_{rr}F+\frac{d-1}{r}D_r F+\frac{1}{r^2}\Delta_\bS F\,.
		\end{align}
	\end{itemize}
	In \eqref{220818255} and \eqref{220716632}, $F$ is also viewed as a function defined on $\bR_+\times\cM$, given by $(r,\sigma)\mapsto F(r\sigma)$.
	We leave it to the reader to verify that $\nabla_{\bS}$ (resp. $\Delta_{\bS}$) coincides with the gradient (resp. Laplace–Beltrami operator) associated with the standard differential structure on $\bS^{d-1}$; see \cite{JJ} for details.

	We make certain assumptions on $\cM$ in order to apply Theorem \ref{220602322}.(2).

	\begin{assumption}\label{2207111151}
		Define  $\partial_\bS \cM:=\overline{\cM}\setminus\cM$.
		\begin{enumerate}
			\item $\cM$ is a connected (relatively) open set in $\bS^{d-1}$ with $\overline{\cM}\neq \bS^{d-1}$.
			\item
			\begin{align}\label{220820634}
				\inf_{\substack{p\in\partial_\bS\cM \\ r\in(0,1]}}\frac{A_\bS\big(\{\sigma\in \bS^{d-1}\setminus\cM\,:\,|\sigma-p|<r\}\big)}{r^{d-1}}>0\,,
			\end{align}
			
			\item Let $w_0(\sigma)$ be the first (positive) Dirichlet eigenfunction of the spherical Laplacian $\Delta_\bS$ on $\cM$ (see Proposition \ref{230413525}.(1)).
			There exist constants $A,\,N>0$ such that
			\begin{align}\label{2208181123}
				w_0(\sigma)\geq N^{-1}d(\sigma,\partial_\bS\cM)^A\,.
			\end{align}
		\end{enumerate}
	\end{assumption}

	By $\mathring{W}_2^1(\cM)$, we denote the closure of $C_c^{\infty}(\cM)$ in
	$$
	W_2^1(\cM):=\{f\in\cD'(\cM)\,:\,\|f\|_{L_2(\cM)}+\|\nabla_{\bS}f\|_{L_2(\cM)}<\infty\}\,.
	$$
	
	\begin{prop}\label{230413525}\,
		
		\begin{enumerate}
			\item If Assumption \ref{2207111151}.(1) holds, then 
			\begin{align}\label{230107830}
				\Lambda_0:=\inf_{\substack{w\in C_c^{\infty}(\cM)\\w\not\equiv 0}}\frac{\int_{\cM}|\nabla_\bS w|^2\dd A_\bS}{\int_{\cM}|w|^2\dd A_\bS}>0\,,
			\end{align}
			and there exists a unique $w_0$ in $C^{\infty}(\cM)\cap \mathring{W}_2^1(\cM)$ such that
			\begin{align}\label{230413250}
				w_0>0\quad,\quad \int_{\cM}|w_0|^2\dd A_\bS=1\quad,\quad\Delta_\bS w_0+\Lambda_0w_0=0\,.
			\end{align}
			Moreover, $w_0$ is bounded on $\cM$, and the function
			\begin{align}\label{220818845}
				W_0(x):=|x|^{\lambda_0}w_0(x/|x|)
			\end{align}
			is a positive harmonic function in $\Omega$, where
			\begin{align}\label{220818848}
				\lambda_0:=-\frac{d-2}{2}+\sqrt{\Lambda_0+\Big(\frac{d-2}{2}\Big)^2}>0\,.
			\end{align}

			\item If Assumption \ref{2207111151}.(2) holds, then 
			\begin{align}\label{2410311138}
				\inf_{p\in\partial\Omega,\, r>0}\frac{\big|\Omega^c\cap B_r(p)\big|}{\big|B_r(p)\big|}>0\,.
			\end{align}
			
			\item Let $-e_d\notin \overline{\cM}$ and let $\phi_d$ be the stereographic projection from $\bS^{d-1}\setminus\{-e_d\}$ to $\bR^{d-1}$ defined by 
			\begin{align}\label{2208221047}
				\phi_d(\sigma_1,\ldots,\sigma_{d-1},\sigma_d)=\Big(\frac{\sigma_1}{1+\sigma_d},\ldots,\frac{\sigma_{d-1}}{1+\sigma_d}\Big)\,.
			\end{align}
			If $\phi_d(\cM)$ is a John domain in $\bR^{d-1}$ (see Remark \ref{220819318} for its definition), then Assumption \ref{2207111151}.(3) holds.
		\end{enumerate}
	\end{prop}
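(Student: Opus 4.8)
The plan is to prove the three assertions separately: (1) by standard spectral theory of the Laplace--Beltrami operator on the compact manifold $\bS^{d-1}$ plus a one-line computation with the polar form of the Laplacian; (2) by an elementary scaling and covering argument exploiting that $\Omega^c$ is a cone; and (3) by a conformal change of variables via the stereographic projection $\phi_d$ followed by the John-domain estimate Lemma \ref{220819318}. For (1): first I would note that $\overline{\cM}\neq\bS^{d-1}$ makes $\bS^{d-1}\setminus\overline{\cM}$ a nonempty open set, so extension by zero embeds $\mathring{W}_2^1(\cM)$ into $W_2^1(\bS^{d-1})$, and since $\bS^{d-1}$ is compact the embedding $W_2^1(\bS^{d-1})\hookrightarrow L_2(\bS^{d-1})$ is compact. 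A minimizing-sequence argument then shows that the infimum $\Lambda_0$ in \eqref{230107830} is attained and is \emph{strictly positive} (a normalized minimizer with vanishing Dirichlet energy would be a nonzero constant supported away from a nonempty open set). Replacing a minimizer by its modulus gives a minimizer $w_0\ge 0$ solving $\Delta_\bS w_0+\Lambda_0 w_0=0$ weakly; interior elliptic regularity gives $w_0\in C^{\infty}(\cM)$, the strong maximum principle/Harnack inequality upgrades $w_0\ge 0$ to $w_0>0$ on the connected $\cM$, and simplicity of the bottom eigenvalue gives uniqueness of the normalized positive solution, i.e.\ \eqref{230413250}. Boundedness of $w_0$ I would obtain by observing that its zero extension is a nonnegative weak subsolution of $-\Delta_\bS v\le\Lambda_0 v$ on all of $\bS^{d-1}$ and invoking De Giorgi--Nash--Moser local boundedness with compactness. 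Harmonicity of $W_0$ in \eqref{220818845} is then immediate from \eqref{220716632}: $\Delta W_0=|x|^{\lambda_0-2}\big(\lambda_0(\lambda_0+d-2)-\Lambda_0\big)w_0$, which vanishes since $\lambda_0$ in \eqref{220818848} is the positive root of $t^2+(d-2)t=\Lambda_0$, and $W_0>0$ on $\Omega$ because $w_0>0$.

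For (2): since $\Omega=\{x\neq 0:x/|x|\in\cM\}$ is open and positively homogeneous, $\partial\Omega=\{0\}\cup\{x\neq 0:x/|x|\in\partial_\bS\cM\}$ and $\Omega^c$ is the cone over $\bS^{d-1}\setminus\cM$. For $p=0$ one computes $|\Omega^c\cap B_r|=\tfrac{r^d}{d}A_\bS(\bS^{d-1}\setminus\cM)$, a fixed positive fraction of $|B_r|$ because $\bS^{d-1}\setminus\overline{\cM}$ is a nonempty open set. For $p\neq 0$ with $q:=p/|p|\in\partial_\bS\cM$: if $r\ge 2|p|$ then $B_r(p)\supset B_{r/2}(0)$ and the previous case applies; if $r<2|p|$ I would put inside $\Omega^c\cap B_r(p)$ the truncated sector $\{s\sigma:\ |s-|p||<r/4,\ \sigma\in\bS^{d-1}\setminus\cM,\ |\sigma-q|<r/(4|p|)\}$ and bound its measure below by $\gtrsim r\,|p|^{d-1}\,(r/|p|)^{d-1}\simeq r^d$ using the surface-density hypothesis \eqref{220820634} at the radius $r/(4|p|)\le 1$; dividing by $|B_r(p)|\simeq r^d$ yields \eqref{2410311138}.

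For (3): assume $-e_d\notin\overline{\cM}$, so that $D:=\phi_d(\cM)$ is a \emph{bounded} John domain in $\bR^{d-1}$ with $\partial D=\phi_d(\partial_\bS\cM)$. The stereographic projection \eqref{2208221047} is conformal with conformal factor $\tfrac{4}{(1+|y|^2)^2}$, so $v:=w_0\circ\phi_d^{-1}$ is a positive solution on $D$ of a uniformly elliptic equation $\Delta v+b\cdot\nabla v+cv=0$ with $b,c$ smooth and bounded on the compact set $\overline D$ (explicitly $b=(d-3)\nabla\log\tfrac{2}{1+|y|^2}$ and $c=\tfrac{4\Lambda_0}{(1+|y|^2)^2}$). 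Positive solutions of such an equation obey the scale-invariant interior Harnack inequality on balls contained in $D$ (the constant being uniform after rescaling, which sends the lower-order terms to $0$), so $v$ is a Harnack function on $D$ in the sense of Definition \ref{21.10.14.1}; Lemma \ref{21.05.27.3}.(1) then furnishes a regular Harnack regularization $\widetilde v\simeq v$ of $v$ on $D$, and Lemma \ref{220819318} applied to $\widetilde v$ on the John domain $D$ yields constants $N,A>0$ with $v(y)\gtrsim\widetilde v(y)\gtrsim d(y,\partial D)^{A}$. Finally, $\phi_d^{-1}$ is bi-Lipschitz on a bounded neighbourhood of $\overline D$, so $d(\sigma,\partial_\bS\cM)\simeq d(\phi_d(\sigma),\partial D)$ for $\sigma\in\cM$, and pulling back the bound gives $w_0(\sigma)\gtrsim d(\sigma,\partial_\bS\cM)^{A}$, which is \eqref{2208181123}.

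I expect the main obstacle to be step (3): carrying out the conformal change of variables carefully enough that the transformed operator genuinely has coefficients bounded on $\overline D$ and its positive solutions satisfy a scale-invariant Harnack inequality, so that Lemma \ref{220819318} can be quoted verbatim; once that is in place, the comparison of boundary distances under the bi-Lipschitz stereographic map is routine. The potentially delicate points in (1) (the compact Sobolev embedding for $\mathring{W}_2^1$ of a general open subset and De Giorgi--Nash--Moser boundedness) and the geometric case split in (2) are standard and should not cause trouble.
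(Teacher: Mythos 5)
Your proof is correct and follows the same overall skeleton as the paper's (variational characterization of $\Lambda_0$ for (1), the volume-density estimate by case analysis on $r$ for (2), and the John-domain bound via Lemma \ref{220819318} for (3)), but some details are filled in differently. For (1), the paper simply cites \cite[Theorems 10.11--10.22]{AGriog} for the existence, uniqueness, positivity and smoothness of $w_0$, and cites \cite[Theorem 3.13.1]{lady2} for boundedness (after passing through the stereographic projection); your self-contained argument via compact embedding, replacing a minimizer by its modulus, De Giorgi--Nash--Moser for the zero extension, and the strong maximum principle is a reasonable expansion of the same content. For (2), the paper exploits the scaling homogeneity of the cone to reduce to $p\in\partial_\bS\cM$ and then compares $B_r(p)$ with polar "annular sectors" for $r\in(0,1]$, while treating $r\geq 2$ by $B_r(p)\supset B_{r/2}(0)$; your version handles $p=0$ separately and constructs an explicit truncated sector inside $B_r(p)$ for $p\neq 0$. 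Both are elementary and equivalent in spirit. For (3), the genuine difference is how one establishes that $w_0\circ\phi_d^{-1}$ is a Harnack function on $D=\phi_d(\cM)$: you transform the eigenvalue PDE under the conformal map to a uniformly elliptic equation with bounded lower-order coefficients on $D$ and invoke the interior Harnack inequality for positive solutions; the paper instead uses that the harmonic extension $W_0$ is a regular Harnack function on the cone $\Omega$ (Example \ref{21.05.18.2}.(2)) and transfers this Harnack property to $D$ directly through the bi-Lipschitz map $\phi_d$ and the equivalence $d(\sigma,\partial\Omega)\simeq d(\sigma,\partial_\bS\cM)$, appealing to \eqref{241012350}. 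Your route is more computational (you must check that the conformal factor yields bounded coefficients and that the Harnack constant is uniform on the bounded domain $D$), while the paper's is slicker but relies on the sphere/cone interplay and the cited comparability of boundary distances; you are also more careful in explicitly invoking Lemma \ref{21.05.27.3}.(1) before applying Lemma \ref{220819318}, whose statement technically requires a regular Harnack function, whereas the paper applies it directly to the (merely continuous) Harnack function $\widetilde w_0$, relying implicitly on the fact that the proof of that lemma only uses continuity and the Harnack inequality.
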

	\begin{proof}
		(1) \eqref{230107830} follows from \cite[Theorems 10.13, 10.18, 10.22]{AGriog}.
		It is proved in \cite[Theorem 10.11, Corollary 10.12]{AGriog} that there exists a unique $w_0\in C^{\infty}(\cM)\cap \mathring{W}_2^1(\cM)$ satisfying \eqref{230413250}.
		
		To prove the boundedness of $w_0$, without loss of generality, we assume that $-e_d:=(0,\ldots,0,-1)\notin\overline{\cM}$.
		We denote by $\phi_d$ the stereographic projection from $\bS^{d-1}\setminus\{-e_d\}$ onto $\bR^{d-1}$ defined by \eqref{2208221047}.
		Then $\phi_d(\cM)$ is a bounded domain in $\bR^{d-1}$.
		Consider the function $\widetilde{w}_0:=w_0\circ \phi_d^{-1}:\phi_d(\cM)\rightarrow \bR$.
		Then $\widetilde{w}_0$ belongs to $\mathring{W}_2^1\big(\phi_d(\cM)\big)$ and satisfies
		\begin{align}\label{2411111144}
			\sum_{i,j=1}^{d-1}a^{ij}D_{ij}\widetilde{w}_0+\sum_{i=1}^{d-1}b^iD_i\widetilde{w}_0+\Lambda_0\widetilde{w}_0=0\quad\text{in}\quad \phi_d(\cM)\subset \bR^{d-1}\,,
		\end{align}
		where $a^{ij},\,b^{i}\in C^{\infty}(\bR^{d-1})$ ($i,\,j=1,\,\ldots,\,d-1$) are smooth functions in $\bR^{d-1}$ such that there exists $\nu>0$ satisfying
		$$
		\nu|\xi|^2\leq \sum_{i,j=1}^{d-1}a^{ij}(x)\xi_i\xi_j\leq \nu^{-1}|\xi|^2\qquad\forall\,\,\xi=(\xi_1,\ldots,\xi_{d-1})\in\bR^{d-1}\,\,,\,\,x\in\phi_d(\cM)\,.
		$$
		The boundedness of $\widetilde{w}_0$ follows from classical results for elliptic equations (see, \textit{e.g.}, \cite[Theorem 3.13.1]{lady2}), and hence $w_0$ is bounded.
		
		It follows directly from \eqref{220716632} that the function $W_0$ in \eqref{220818845} is harmonic in $\Omega$.

		(2) 
		For any $p\in\partial_\bS\cM$ and $r\in(0,1)$, we have
		\begin{align*}
			&\big\{s\sigma\in\bR^{d}\,:\,s\in(1-r/2,1+r/2),\,\sigma\in \bS^{d-1}\cap B_{r/2}(p)\big\}\\
			\subset\,&B_r(p)\subset\big\{s\sigma\in\bR^{d}\,:\,s\in(1-r,1+r),\,\sigma\in \bS^{d-1}\cap B_{2r}(p)\big\}\,.
		\end{align*}
		Therefore \eqref{220820634} holds if and only if
		\begin{align}\label{2208201149}
			\inf_{\substack{p\in\partial_\bS\cM\\ r\in(0,1]}}\frac{\big|\Omega^c\cap B_r(p)\big|}{\big| B_r(p)\big|}>0\,.
		\end{align}
		If $r\geq 2$, then $B_r(p)\supset B_{r/2}(0)$.
		Hence, 
		\begin{align}\label{2208206491}
			\inf_{p\in\partial_\bS\cM,\, r\geq 2}\frac{\big|\Omega^c\cap B_r(p)\big|}{\big| B_r(p)\big|}\geq2^{-d}\inf_{r\geq 1}\frac{\big|\Omega^c\cap B_{r}(0)\big|}{\big| B_r(0)\big|}=2^{-d}\frac{A_{\bS}\left(\bS^{d-1}\setminus\cM\right)}{A_\bS(\bS^{d-1})}>0\,.
		\end{align}
		Consequently, \eqref{2410311138} is implied by \eqref{2208201149} and \eqref{2208206491}.
		
		(3) We denote $U_{\cM}:=\phi_d(\cM)$.
		It follows from Example \ref{21.05.18.2}.(2) that $W_0$ (in \eqref{220818845}) is a Harnack function in $\Omega$.
		Since $W_0$ is a continuous Harnack function, and $\phi_d$ (resp. $\phi_d^{-1}$) is Lipschitz continuous on $\cM$ (resp. $U_\cM$), we obtain that $\widetilde{w}_0:=w_0\circ \phi_d^{-1}$ is a continuous Harnack function in $U_{\cM}$ (see \eqref{241012350}).
		In addition, $d(\sigma,\partial_\bS\cM)\simeq d\big(\phi_d(\sigma),\partial U_{\cM}\big)$.
		By Remark \ref{220819318}, since $\Omega$ is a John domain, there exists a constant $A>0$ such that $\widetilde{w}_0(x')\gtrsim d(x',\partial U_{\cM})^A$ for all $x'\in \phi_d(\cM)$.
		Thus, \eqref{2208181123} is proved.
	\end{proof}

	\begin{thm}\label{2208221223}
		Let $\cM\subset \bS^{d-1}$ ($d\ge 2$) satisfy Assumption \ref{2207111151}, and suppose that $u\in C^{\infty}(Q_1^{\Omega})$ satisfies
		\begin{align*}
			&u_t=\Delta u\quad\text{in}\quad Q_1^{\Omega}\quad;\quad \lim_{(t,x)\rightarrow (t_0,x_0)}u(t,x)=0\,\,\,\,\text{whenever}\,\,\,\, 0<t_0\leq 1\,\,,\,\,x_0\in(\partial\Omega)\cap B_1\,.
		\end{align*}
		Then for any $\epsilon\in(0,1)$ and $R \in(0,1)$,
		\begin{align*}
			|u(t,x)|\leq N\Big(\sup_{Q_1^{\Omega}}|u|\Big)W_0(x)^{1-\epsilon}\qquad \forall\,\, (t,x)\in Q_R^{\Omega}\,,
		\end{align*}
		where $W_0$ is the function defined in \eqref{220818845} and $N=N(\cM,\epsilon,R)>0$.
	\end{thm}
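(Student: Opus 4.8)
The plan is to reduce the pointwise bound, by a parabolic cut-off, to the weighted solvability statement Theorem~\ref{220602322}.(2) on a bounded truncated cone, taking the superharmonic Harnack weight to be $W_0$ itself; since a single application of that theorem cannot already produce the exponent $1-\epsilon$, I would then run a finite bootstrap that improves the power of $W_0$ step by step.

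First I would assemble the framework data. Fix $R<R''<1$ and set $D:=B_{R''}^\Omega$. By Assumption~\ref{2207111151}.(2) and Proposition~\ref{230413525}.(2), $\Omega^c$ — hence $D^c$ — satisfies the volume density condition, so by Lemma~\ref{240928354} and Proposition~\ref{240928313} $D$ admits the Hardy inequality \eqref{hardy} and satisfies $\mathbf{LHMD}(\lambda)$ for some $\lambda\in(0,1]$, all constants depending only on $\cM$ and $d$. By Proposition~\ref{230413525}.(1), $W_0$ is a positive harmonic function on $\Omega$, hence a superharmonic regular Harnack function there (Example~\ref{21.05.18.2}.(2)); it is bounded on $D$, and since $d(\cdot,\partial D)=\rho$ near $\partial\Omega$ and near the vertex while on the rest of $D$ the Harnack property is an interior matter, $\psi:=W_0|_D$ is a bounded superharmonic Harnack function on $D$. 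Put $M:=\sup_{Q_1^\Omega}|u|$, assumed finite. For bookkeeping I would also record that, because $D^c$ has the volume density property, standard parabolic boundary regularity together with interior parabolic estimates yields an exponent $\lambda_*=\lambda_*(\cM,d)\in(0,1]$ with $|D_x^j u(t,x)|\le N_1 M\,\rho(x)^{\lambda_*-j}$ for $j=0,1,2$, uniformly on $[\tau,1]\times B_{R''}^\Omega$ for each fixed $\tau\in(0,1)$; this crude decay serves only to place cut-offs of $u$ inside the scale $\mathring{\cH}_{p,\theta}$ for a suitably large fixed $p=p(\epsilon,\lambda,\lambda_*)$.

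Now the iteration. I would choose nested radii $R''>R_0>R_1>\dots>R$ and times $0<t_1^{(1)}<t_2^{(1)}\le t_1^{(2)}<t_2^{(2)}\le\cdots<1-R^2$, and for each $k$ cut-offs $\zeta_k\in C_c^\infty(B_{R_{k-1}})$ with $\zeta_k\equiv 1$ on $\overline{B_{R_k}}$ and $\chi_k\in C^\infty(\bR)$ with $\chi_k\equiv 0$ on $(-\infty,t_1^{(k)}]$, $\chi_k\equiv 1$ on $[t_2^{(k)},\infty)$, and set $v_k:=\chi_k\zeta_k u$ on $D_k:=B_{R_{k-1}}^\Omega$. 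Then $v_k$ vanishes at $t=0$ and on $(0,1]\times\partial D_k$, and $\partial_t v_k=\Delta v_k+g^{(k)}$ with $g^{(k)}=g_0^{(k)}+\sum_i D_i g_i^{(k)}$, $g_0^{(k)}=\chi_k'\zeta_k u+\chi_k u\Delta\zeta_k$, $g_i^{(k)}=-2\chi_k u D_i\zeta_k$, all supported in $\{|x|\le R_{k-1},\ t\ge t_1^{(k)}\}$ and, away from $\{|x|\le R_k\}$, only in the annulus $\{R_k\le|x|\le R_{k-1}\}$. The crude decay of $u$ makes $v_k\in\mathring{\cH}_{p,d-2}^{1,1}(D_k,1)$ (and likewise the solution built below), so by the uniqueness in Theorem~\ref{22.02.18.6}.(1) with $\psi\equiv 1$ (equivalently Theorem~\ref{05.11.2} and the density Lemma~\ref{240911115}), $v_k$ is the unique $\mathring{\cH}_{p,d-2}^{1,1}(D_k,1)$-solution of its equation. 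I claim inductively that there are constants $C_k$ and exponents $0=c_0<c_1<\dots$ with $c_k-c_{k-1}$ bounded below by a fixed positive amount depending only on $A,\lambda_0,\lambda$, such that $|u|\le C_k M\,\psi^{c_k}$ on $\{|x|\le R_k,\ t\ge t_2^{(k)}\}$; the case $k=0$ is $|u|\le M$. Given the bound at step $k-1$, its validity set contains $\mathrm{supp}(g^{(k)})$, so $|g_0^{(k)}|,|g_i^{(k)}|\lesssim_k C_{k-1}M\,\psi^{c_{k-1}}$ there. Choosing $\epsilon^{(k)}\in[\epsilon,1)$ so that $1-\epsilon^{(k)}-c_{k-1}$ equals the admissible step size and $\delta^{(k)}:=\tfrac12\min(\lambda\epsilon^{(k)},1)$, and using $W_0\gtrsim|x|^{\lambda_0-A}\rho^{A}$ near $\partial\Omega$ and $W_0\simeq|x|^{\lambda_0}$ near the vertex, one checks $F^{(k)}:=\sup\big(|\psi^{-1+\epsilon^{(k)}}\rho^{2-\delta^{(k)}}g_0^{(k)}|+\sum_i|\psi^{-1+\epsilon^{(k)}}\rho^{1-\delta^{(k)}}g_i^{(k)}|\big)\lesssim_k C_{k-1}M<\infty$. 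Theorem~\ref{220602322}.(2) on $D_k$ (weight $\psi$, zero initial data) then gives a solution $\bar v_k\in C([0,1]\times\overline{D_k})$ with $|\bar v_k(t,x)|\le N F^{(k)}\psi(x)^{1-\epsilon^{(k)}}\rho(x)^{\delta_1}$ for any $\delta_1\in(0,\delta^{(k)})$; since $\bar v_k$ also lies in $\mathring{\cH}_{p,d-2}^{1,1}(D_k,1)$, uniqueness gives $v_k=\bar v_k$, and on $\{\chi_k\zeta_k=1\}\supseteq\{|x|\le R_k,\ t\ge t_2^{(k)}\}$ this reads $|u|\le C_k M\,\psi^{c_k}$ with $c_k=1-\epsilon^{(k)}$ and $C_k\lesssim_k C_{k-1}$. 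After $N=N(\epsilon,A,\lambda_0,\lambda)$ steps $c_N=1-\epsilon$, and since $\{\chi_N\zeta_N=1\}\supseteq Q_R^\Omega$ while $\rho\le\mathrm{diam}(D_N)$ and $\psi=W_0$, we obtain $|u(t,x)|\le N'M\,W_0(x)^{1-\epsilon}$ on $Q_R^\Omega$ with $N'=N'(\cM,\epsilon,R)$.

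The hard part is the bootstrap and its support bookkeeping. A single application of Theorem~\ref{220602322}.(2) with the weight $W_0^{-1+\epsilon}$ does not suffice: near the lateral boundary $\partial\Omega$, where $W_0$ may vanish only like $\rho^A$ with $A$ large, the cut-off source is too singular against $W_0^{-1+\epsilon}$ unless one already knows a comparable decay of $u$ there (morally a parabolic boundary Harnack, not available here). The remedy — improving the power of $W_0$ by a fixed positive amount at each step, where the cut-off at step $k$ must be supported precisely where the step-$(k-1)$ estimate holds — is what forces the nested cut-offs and the careful tracking of the supports of $g^{(k)}$ together with the three exponents $A$, $\lambda_0$, $\lambda$ governing, respectively, the lateral, vertex, and Hardy behaviour; the auxiliary exponent $\lambda_*$ enters only so that the $v_k$ lie in $\mathring{\cH}_{p,d-2}^{1,1}$, where the framework's uniqueness (hence the identification $v_k=\bar v_k$) applies.
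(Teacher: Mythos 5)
Your proposal follows essentially the same route as the paper: cut off in space-time, view the truncated function as a solution to an inhomogeneous heat equation with zero initial and boundary data, apply Theorem~\ref{220602322}.(2) with the weight $\psi=W_0$ on the bounded truncated cone, and bootstrap the power of $W_0$ by a fixed positive amount per step, with the derivatives of the cut-off at step $k$ supported where the step-$(k-1)$ bound is available. Your reading of the obstruction to a one-shot argument (that $W_0$ may degenerate like $\rho^A$ on the lateral boundary, so the cut-off source is too singular against $W_0^{-1+\epsilon}$) is precisely what drives the iteration in the paper, and your handling of the vertex ($W_0\simeq|x|^{\lambda_0}$) and the lateral boundary ($W_0\gtrsim|x|^{\lambda_0-A}\rho^A$) matches.

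The one genuine divergence is in how you certify that the cut-off lies in a weighted solution space where the framework's uniqueness applies. You invoke interior parabolic Schauder estimates together with De Giorgi–Nash–Moser boundary H\"older regularity to get a crude polynomial decay $|D_x^ju|\lesssim\rho^{\lambda_*-j}$, and then take $p$ large enough (depending on $\lambda_*$) so that the cut-offs belong to $\mathring{\cH}_{p,d-2}^{\alpha,1}$. The paper instead works in the Hilbert-space scale $\cH_{2,d-2}^1(B_1^\Omega,1)$: it solves the cut-off equation there using Corollary~\ref{22.02.19.3}, shows via Theorem~\ref{220602322} and the global uniqueness Theorem~\ref{220821002901} that this solution is continuous up to the parabolic boundary with zero boundary values, and then identifies it with the cut-off $v=\eta u$ by the maximum principle. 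The paper's identification step is lighter (no external regularity theory, no choice of large $p$, no auxiliary $\lambda_*$), and the step-size bookkeeping is cleaner (one inequality $\rho^{1/2}\lesssim W_0^{1/(2K)}$ with $K=A\vee\lambda_0$ gives the fixed increment $1/(2K)$), whereas your version bundles the step size into the implicit constant. Both versions are correct, but the maximum-principle identification avoids having to quantify and propagate the extra parameters $p$ and $\lambda_*$.
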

	Recall that $\mathring{W}_2^1\big(B_1^{\Omega}\big)$ is the closure of $C_c^{\infty}\big(B_1^{\Omega}\big)$ in $W_2^1\big(B_1^{\Omega}\big)$.

	\begin{remark}\label{2301111204}
		Theorem \ref{2208221223} implies that if $u$ satisfies the assumptions of the theorem and $\lambda\in(0,\lambda_0)$, where $\lambda_0$ is given in \eqref{220818848}, then
		\begin{align}\label{2208221229}
			|u(t,x)|\leq N(\cM,\lambda,R)\Big(\sup_{Q_1^{\Omega}}|u|\Big)|x|^{\lambda}\quad\text{on}\,\,\,Q_{R}^{\Omega}\,.
		\end{align}
		This holds because $w_0$ in \eqref{220818845} is bounded.
		
		It is important to note that for $\lambda>\lambda_0$, estimate \eqref{2208221229} does not hold in general. 
		Indeed, observe that $u(t,x):=W_0(x)$ satisfies the assumptions of Theorem \ref{2208221223}. 
		Given the definition of $W_0(x)$ in \eqref{220818845}, there is no constant $N$ such that \eqref{2208221229} holds with $u(t,x)=W_0(x)$ and $\lambda>\lambda_0$.
		To complete this claim, we need to show that $w_0$ in Proposition \ref{230413525}.(1) satisfies $w_0\in C(\overline{\cM})$ and $w_0|_{\partial_\bS \cM}\equiv 0$.
		To do this, it suffices to prove that the function $\widetilde{w}_0$ in the proof of Proposition \ref{230413525}.(1) belongs to $C\big(\overline{\phi_d(\cM)}\big)$ and $\widetilde{w}_0|_{\partial\phi_d(\cM)}=0$, assuming, without loss of generality, that $-e_d\notin \overline{\cM}$.
		Assumption \ref{2207111151} implies that $\phi_d(\cM)$ satisfies the volume density condition \eqref{Acondition} in $\bR^{d-1}$.
		Since $\widetilde{w}_0\in \mathring{W}_2^1\big(\phi_d(\cM)\big)$ satisfies equation \eqref{2411111144}, it follows from classical results for elliptic equations (see, \textit{e.g.}, \cite[Theorem 3.14.1]{lady2}) that $\widetilde{w}_0\in C\big(\overline{\phi_d(\cM)}\big)$ and $\widetilde{w}_0|_{\partial\phi_d(\cM)}=0$.
	\end{remark}

	\begin{proof}[Proof of Theorem \ref{2208221223}]
		\textbf{Step 1.}
		Put $K=\max(A,\lambda_0)$ where $A$ and $\lambda_0$ are the constants in \eqref{2208181123} and \eqref{220818848}, respectively.
		A direct calculation (see, \textit{e.g.}, \cite[Lemma 3.4.(1)]{ConicPDE}) shows that for any $\sigma\in\cM$,
		\begin{align*}
			d(\sigma,\partial\Omega)\leq d(\sigma,\partial_\bS\cM)\leq 2d(\sigma,\partial\Omega)\,.
		\end{align*}
		Therefore, we obtain that for any $x\in\Omega\cap B_1(0)$,
		$$
		d(x,\partial B_1^\Omega)^K\leq d(x,\partial\Omega)^{K}\simeq  |x|^{K}d\big(x/|x|,\partial_\bS\cM\big)^{K}\leq |x|^{\lambda_0}d(x/|x|,\partial_\bS\cM)^A\lesssim W_0(x)\,.
		$$
		Put $\epsilon_i=\epsilon+\frac{i}{2K}$ for $i\in\bN_0$, and take $i_0\in\bN$ such that $\epsilon_{i_0}< 1\leq \epsilon_{i_0+1}$.
		Since $W_0$ is bounded on $B_1^\Omega$ (see Proposition \ref{230413525}.(1)), we have
		$$
		\sup_{Q_{1}^\Omega}|W_0^{-1+\epsilon_{i_0+1}}u|\lesssim_{\Omega,\epsilon}\sup_{Q_{1}^{\Omega}}|u|\,.
		$$
		Hence, it remains to prove that for any $i\in\{0,1,\ldots,i_0\}$ and $0<R_1<R_2\le 1$,
		\begin{align}\label{2208210059}
			\sup_{Q_{R_1}^\Omega}|W_0^{-1+\epsilon_{i}}u|\lesssim N(\Omega,\epsilon,R,r)\sup_{Q_{R_2}^\Omega}|W_0^{-1+\epsilon_{i+1}}u|\,.
		\end{align}
		
		\textbf{Step 2.}
		Take $\eta\in C^{\infty}\big(\bR\times \bR^d\big)$ such that 
		\begin{align*}
			\eta(t,x)=
			\begin{cases}
				\vspace{0.5mm}
				1\quad&\text{if}\quad t>1-R_1^2\quad\text{and}\quad|x|<R_1\,;\\
				0\quad&\text{if}\quad t<1-R_2^2\quad\text{or}\quad|x|>R_2\,.
			\end{cases}
		\end{align*}
		Put 
		\begin{align}\label{2304141114}
			v=u\eta\,\,,\,\,\,\,f^0:=\big(\partial_t\eta+\Delta \eta\big)u\,\,,\,\,\,\,f^i:=-2u D_i\eta\quad (i=1,\,\ldots,\,d)\,,
		\end{align}
		so that $v\in C\big(\overline{Q_1^{\Omega}}\big)\cap C^{\infty}(Q_1^{\Omega})$ satisfies 
		\begin{align*}
			\partial_tv=\Delta v+f_0+\sum_{i=1}^d D_if^i\quad\text{in}\quad Q_1^{\Omega}\quad ;\quad v\equiv 0\quad\text{on}\quad  \overline{Q_1^{\Omega}}\setminus Q_1^{\Omega}\,.
		\end{align*}
		We recall that by Propositions \ref{230413525}.(2) and \ref{240928313} and \eqref{Acondition}, $B_1^\Omega$ satisfies $\mathbf{LHMD}(\lambda)$ for some $\lambda\in(0,1)$.
		Therefore, the results in Section \ref{fatex} are applicable to $B_1^\Omega$.
		
		\textbf{Step 2.1)} We first show that $v\in\cH_{2,d-2}^1(B_1^\Omega,1)$.
		Since
		\begin{align*}
			\Big\|f^0+\sum_{i=1}^dD_if^i\Big\|_{\bH_{2,d+2}^{-1}(B_1^{\Omega},1)}\lesssim \left\|f^0\right\|_{\bL_{2,d+2}(B_1^{\Omega},1)}+\sum_{i=1}^d\left\|f^i\right\|_{\bL_{2,d}(B_1^{\Omega},1)}\lesssim \sup_{Q_1^{\Omega}}|u|
		\end{align*}
		(see \eqref{241111154} and \eqref{2409098051}),
		there exists $\widetilde{v}\in \cH_{2,d-2}^1(B_1^\Omega,1)$ such that 
		\begin{align}\label{24111111223}
			\partial_t\widetilde{v}=\Delta \widetilde{v}+f^0+\sum_{i=1}^d D_if^i\quad\text{and}\quad \widetilde{v}(0,\,\cdot\,)=0\,.
		\end{align}
		Since $f^0,f^i\in C^{\infty}(Q_1^{\Omega})$, we obtain that $\widetilde{v}\in C^{\infty}(Q_1^{\Omega})$.
		Moreover, because $f^0,\,f^i\in C(\overline{Q_1^\Omega})$, Theorems \ref{220602322} and \ref{220821002901} yield that $\widetilde{v}\in C(\overline{Q_1^\Omega})$ and that $v=0$ on $\overline{Q_1^\Omega}\setminus Q_1^\Omega$.
		The maximum principle implies that $v\equiv \widetilde{v}$, and therefore $v\in \cH_{2,d-2}^1(B_1^{\Omega},1)$.
		
		\textbf{Step 2.2)}
		To prove \eqref{2208210059}, suppose that RHS in \eqref{2208210059} is finite.
		Recall that $B_1^\Omega$ satisfies $\mathbf{LHMD}(\lambda)$ for some $\lambda\in(0,\frac{1}{2})$.
		In addition, $v\in\cH_{2,d-2}^1(\Omega,1)$ (in \eqref{2304141114}) is a solution of equation \eqref{24111111223}.
		Since $W_0$ is a regular Harnack function (see Example \ref{21.05.18.2}.(2)), and 
		\begin{align*}
			&\sup_{Q_{R_2}^\Omega}\Big(\big|W_0^{-1+\epsilon_i}d(\cdot,\partial B_1^\Omega)^{2-\lambda \epsilon}f\big|+\big|W_0^{-1+\epsilon_i}d(\cdot,\partial B_1^\Omega)^{1-\lambda \epsilon}f^i\big|\Big)\\
			\lesssim_N\,&\sup_{Q_{R_2}^{\Omega}}\big|W_0^{-1+\epsilon_i}d(\cdot,\partial B_1^\Omega)^{1-\lambda\epsilon}u\big|\\
			\lesssim_N\,&\sup_{Q_{R_2}^{\Omega}}\big|W_0^{-1+\epsilon_{i}+1/(2K)}u\big|=\sup_{Q_{R_2}^{\Omega}}\big|W_0^{-1+\epsilon_{i+1}}u\big|\,,
		\end{align*}
		where $N=N(R_1,R_2,\cM,\epsilon)$.
		Note that $1-\lambda \epsilon\geq \frac{1}{2}$, so $d(\cdot,\partial B_1^\Omega)^{1-\lambda \epsilon}\leq d(\cdot,\partial B_1^\Omega)^{1/2}$.
		Theorem \ref{220602322}.(2) (with Theorem \ref{220821002901}) implies that $v$ satisfies
		\begin{align}\label{230413551} 
			\begin{aligned}
				\sup_{(0,1]\times\Omega}\big|W_0^{-1+\epsilon_i}v| \,&\lesssim\sup_{Q_{R_2}^\Omega}\Big(\big|W_0^{-1+\epsilon_i}d(\cdot,\partial B_1^\Omega)^{2-\lambda \epsilon}f\big|+\big|W_0^{-1+\epsilon_i}d(\cdot,\partial B_1^\Omega)^{1-\lambda \epsilon}f^i\big|\Big)\\
				&\lesssim\sup_{Q_{R_2}^{\Omega}}\big|W_0^{-1+\epsilon_{i+1}}u\big|\,.	
			\end{aligned}				
		\end{align}
		Since $v\equiv u$ in $Q_{R_1}^{\Omega}$, \eqref{230413551} implies \eqref{2208210059}.
		Hence, the claim is proved.
	\end{proof}

	\appendix
	
	\mysection{Weighted Besov space}
	
	\begin{prop}\label{220418435}
		Let $\Phi$ be a regular Harnack function, and let $p\in(1,\infty)$ and $\theta\in\bR$.
		For any $k\in\bN_0$ and $s\in (0,1)$,
		\begin{align}\label{220609106}
			\begin{aligned}
				\|\Phi f\|_{B_{p,\theta}^{k+s}}^p \simeq_N\,&\sum_{i=0}^k\int_{\Omega}|\rho^i D_x^if|^p\Phi^p\rho^{\theta-d}\dd x\\
				&\,\,+\int_{\Omega}\bigg(\int_{|x-y|<\frac{\rho(x)}{2}}\frac{|D^{k}_xf(x)-D^{k}_xf(y)|^p}{|x-y|^{d+s p}}dy\bigg)\Phi(x)^p\rho(x)^{(k+s)p +\theta-d}\dd x
			\end{aligned}
		\end{align}
		where  $N=N(d,p,k,s,\mathrm{C}_2(\Phi))$.
	\end{prop}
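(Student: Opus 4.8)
The plan is to unfold the global weighted Besov norm into its localized‑and‑rescaled pieces via the partition of unity $\{\zeta_{0,(n)}\}_{n\in\bZ}$ and the dilations $x\mapsto\ee^{n}x$ from Definition \ref{220610533}, exactly as in the proof of Lemma \ref{240911814}, and then match the pieces with the two terms on the right of \eqref{220609106}. Since $s\in(0,1)$, the excerpt identifies $B_p^{k+s}(\bR^d)$ with the Sobolev--Slobodeckij space $W_p^{k+s}(\bR^d)$, whose norm splits as
\[
\|h\|_{W_p^{k+s}(\bR^d)}^p=\sum_{i=0}^k\|D_x^ih\|_{L_p(\bR^d)}^p+\int_{\bR^d}\int_{|x-y|<1}\frac{|D_x^kh(x)-D_x^kh(y)|^p}{|x-y|^{d+sp}}\dd y\,\dd x .
\]
Consequently $\|\Phi f\|_{B_{p,\theta}^{k+s}(\Omega)}^p=\sum_{n\in\bZ}\ee^{n\theta}\big\|(\zeta_{0,(n)}\Phi f)(\ee^n\cdot)\big\|_{W_p^{k+s}(\bR^d)}^p$, and the task is to prove this sum is comparable, with constants depending only on $d,p,k,s,\mathrm{C}_2(\Phi)$, to the right-hand side of \eqref{220609106}.

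First I would strip off the factor $\Phi$. Because $\Phi$ is a regular Harnack function, on $\mathrm{supp}(\zeta_{0,(n)})$ one has $\Phi\simeq\Phi(x_n)$ for any reference point $x_n$ there, and the rescaled functions $y\mapsto\Phi(\ee^ny)/\Phi(x_n)$ and its reciprocal have $C^m$-norms on a fixed neighbourhood of $\mathrm{supp}\big((\zeta_0)_{(n)}(\ee^n\cdot)\big)$ bounded uniformly in $n$ by a constant $N(d,m,\mathrm{C}_2(\Phi))$ (from $|D_x^j\Phi|\le\mathrm{C}_2(\Phi)^{(j)}\rho^{-j}\Phi$). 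Using that such $C^{k+1}$ multipliers together with their reciprocals are bounded on $W_p^{k+s}(\bR^d)$ (Lemma \ref{220527502}.(4), or a direct Leibniz computation for the integer derivatives plus an elementary splitting of the difference quotient for the Gagliardo seminorm), together with the fact that a fixed enlarged cutoff $\zeta_{1,(n)}(\ee^n\cdot)$ equals $1$ on $\mathrm{supp}(\zeta_{0,(n)})$, one gets $\|(\zeta_{0,(n)}\Phi f)(\ee^n\cdot)\|_{W_p^{k+s}(\bR^d)}\simeq_N\Phi(x_n)\|(\zeta_{0,(n)}f)(\ee^n\cdot)\|_{W_p^{k+s}(\bR^d)}$. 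This reduces \eqref{220609106} to the $\Phi$-free relation between $\sum_{n}\ee^{n\theta}\Phi(x_n)^p\|(\zeta_{0,(n)}f)(\ee^n\cdot)\|_{W_p^{k+s}(\bR^d)}^p$ and the two terms of \eqref{220609106}. The integer-derivative contribution $\sum_{n}\ee^{n\theta}\Phi(x_n)^p\sum_{i\le k}\|D_x^i[(\zeta_{0,(n)}f)(\ee^n\cdot)]\|_{L_p}^p$ is, after summation over $n$, comparable to $\sum_{i=0}^k\int_\Omega|\rho^iD_x^if|^p\Phi^p\rho^{\theta-d}\dd x$ — this is precisely Lemma \ref{240911814} (equivalently, apply it with $\Psi:=\Phi^{-1}$, which is again regular Harnack with $\mathrm{C}_2(\Phi^{-1})$ controlled by $d,\mathrm{C}_2(\Phi)$ by Example \ref{21.05.18.2}.(3), since $\|\Phi f\|_{H_{p,\theta}^k(\Omega)}=\|f\|_{\Phi^{-1}H_{p,\theta}^k(\Omega)}$).

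The remaining, genuinely new point is the fractional term. Writing $[\,\cdot\,]$ for the Gagliardo $s$-seminorm over $\{|x-y|<1\}$, I would show that after summing over $n$ (using bounded overlap of the supports and $\rho\simeq\ee^n$ there),
\[
\sum_{n\in\bZ}\ee^{n\theta}\Phi(x_n)^p\big[D_x^k\big((\zeta_{0,(n)}f)(\ee^n\cdot)\big)\big]^p\ \simeq_N\ \int_\Omega\bigg(\int_{|x-y|<\rho(x)/2}\frac{|D_x^kf(x)-D_x^kf(y)|^p}{|x-y|^{d+sp}}\dd y\bigg)\Phi(x)^p\rho(x)^{(k+s)p+\theta-d}\dd x
\]
modulo terms that are themselves $\lesssim\sum_{i\le k}\int_\Omega|\rho^iD_x^if|^p\Phi^p\rho^{\theta-d}\dd x$ and hence absorbable into the integer part; combining with the previous paragraph then yields \eqref{220609106} in both directions. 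Concretely, in $D_x^k\big((\zeta_{0,(n)}f)(\ee^n\cdot)\big)$ one separates by Leibniz the top term $\zeta_{0,(n)}(\ee^n\cdot)\,\ee^{-nk}(D_x^kf)(\ee^n\cdot)$ from the remainders carrying a derivative of $\zeta_{0,(n)}$ (for which $|D_x^j\zeta_{0,(n)}|\le N\ee^{-nj}$), the latter contributing only integer-derivative quantities; and one trades the unit-scale cutoff $|x-y|<1$ of the rescaled seminorm for the exact-scale cutoff $|\ee^ny-\ee^nz|<\rho/2$, estimating the intermediate annulus $\rho/2\le|x-y|\lesssim\rho$ via the elementary bound $\int_{c\le|x-y|\le C}\frac{|g(x)-g(y)|^p}{|x-y|^{d+sp}}\dd y\lesssim_{c,C,s,p}|g(x)|^p+\int_{|x-y|\le C}|g(y)|^p\dd y$, again of integer-derivative type. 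I expect this bookkeeping — keeping track of the cutoff sitting inside the difference quotient and of the dilation factors $\ee^{nk}$ absorbed into $D_x^k$ — to be the main obstacle; it is routine but delicate and closely parallels the arguments in \cite[Lemma 3.12, Proposition 3.13]{Seo202404} (reproduced here as Lemma \ref{240911814} and the Hölder estimate \eqref{240912238}), where the analogous difference-quotient term is handled. If preferred, one may instead first settle the case $k=0$ and then reduce the general case to it by Lemma \ref{220527502}.(7) together with the Leibniz-type bounds \eqref{2411011115}, the fractional-seminorm reconciliation above being unchanged.
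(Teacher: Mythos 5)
Your opening reduction is incorrect, and the error propagates through the entire proposal. You claim that on $\mathrm{supp}(\zeta_{0,(n)})$ one has $\Phi\simeq\Phi(x_n)$ for a single reference point $x_n$, and on this basis you pull the constant $\Phi(x_n)^p$ outside each localized $W_p^{k+s}$-norm, asserting that $\Phi(\ee^n\cdot)/\Phi(x_n)$ and its reciprocal are uniformly $C^m$-bounded multipliers. But $\mathrm{supp}(\zeta_{0,(n)})$ is the whole dyadic shell $\{x\in\Omega:\ee^{n-1}\le\trho(x)\le\ee^{n+1}\}$, not a ball of radius comparable to $\rho$. The Harnack property \eqref{22.02.17.1} (equivalently \eqref{241012350}) only compares $\Phi$ within balls $B(x,r\rho(x))$, and a regular Harnack function can oscillate without bound along a fixed shell. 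For instance, on the strip $\Omega=\bR^{d-1}\times(0,1)$ the positive harmonic function $\Phi(x)=e^{\pi x_1}\sin(\pi x_d)$ is regular Harnack by Example \ref{21.05.18.2}.(2), yet $\sup_{\mathrm{shell}}\Phi/\inf_{\mathrm{shell}}\Phi=\infty$ on every shell because of the factor $e^{\pi x_1}$. So $\Phi(\ee^n\cdot)/\Phi(x_n)$ is not even in $L_\infty$ on the relevant support. Both of your subsequent displays — the identification of the integer-derivative sum with $\sum_i\int|\rho^iD_x^if|^p\Phi^p\rho^{\theta-d}\dd x$ and the fractional-seminorm matching — carry a shell constant $\Phi(x_n)^p$ on the left and the genuine pointwise weight $\Phi(x)^p$ on the right, and these cannot be reconciled.

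The paper's proof never approximates $\Phi$ by a constant on a shell. Step 1 establishes the $\Phi\equiv1$, $k=0$ base case \eqref{220609102} by folding the weight $\rho^{(\theta-d)/p+s}$ and the dyadic factor $\ee^{n((\theta-d)/p+s)}$ into a modified pair: with $F:=\trho^{(\theta-d)/p+s}f$ and $\eta(t):=t^{-(\theta-d)/p-s}\zeta_0(t)$ one has the \emph{exact} identity $\eta_{(n)}F=\ee^{n((\theta-d)/p+s)}\zeta_{0,(n)}f$, so all weights remain pointwise and the $I_{2,1},I_{2,2},I_{2,3}$ split reproduces the target without any per-shell approximation. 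Step 2 then restores $\Phi$ by the ball-scale mean-value estimate \eqref{2206091243}, which uses only $|\nabla(\Phi\trho^{(\theta-d)/p+s})|\lesssim\rho^{-1}\Phi\trho^{(\theta-d)/p+s}$ on segments of length $<\rho(x)/2$; the resulting error is of the same type as $I_{2,2}$ and is absorbed. Step 3 reduces $k\ge1$ to $k=0$ via \eqref{240911813} — the alternative route you mention at the end. Your outline can be repaired by keeping $\Phi$ pointwise throughout and replacing the per-shell constant extraction with such a ball-scale mean-value argument, but the repaired argument then essentially reproduces the paper's Steps 1--3.
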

	\begin{proof}
		\textbf{Step 1.} We first show that
		\begin{align}\label{220609102}
			\|f\|_{B_{p,\theta}^{s}(\Omega)}^p\simeq\,&\|f\|_{L_{p,\theta}(\Omega)}+\int_{\Omega}\int_{|x-y|\leq \frac{\rho(x)}{2}}\frac{|(\trho^{(\theta-d)/p+s}f)(x)-(\trho^{(\theta-d)/p+s}f)(y)|^p}{|x-y|^{d+s p}}\dd x\dd y\,.
		\end{align}
		We recall the following equivalent norm for Besov spaces:
		\begin{align}\label{2205181108}
			\|f\|_{B_{p}^{s}}^p\simeq_{d,p,s} \|f\|_{L_p}^p+\iint_{\bR^d\times \bR^d}\frac{|f(x)-f(y)|^p}{|x-y|^{d+s p}}\dd x\dd y
		\end{align}
		(see, \textit{e.g.}, \cite[Theorem 2.5.7/(i)]{triebel2}).
		Recall that for $\xi\in C_c^{\infty}(\bR_+)$, we denote $\xi_{(n)}(x):=\xi(\ee^{-n}\trho(x))$.
		Applying \eqref{2205181108}, we obtain
		\begin{alignat*}{2}
			\|f\|_{B_{p,\theta}^{s}(\Omega)}^p&\simeq_N&& \sum_{k\in\bZ}\ee^{n\theta}\big\|\big(\zeta_{0,(n)}f\big)(\ee^n\,\cdot\,)\big\|_p^p\\
			&\quad&&+\sum_{k\in\bZ}\ee^{n\theta}\iint_{\bR^d\times \bR^d}\frac{\big|\big(\zeta_{0,(n)}f\big)(\ee^nx)-\big(\zeta_{0,(n)}f\big)(\ee^ny)\big|^p}{|x-y|^{d+s p}}\dd x\dd y\\
			&=:&&I_1+I_2\,.
		\end{alignat*}
		\eqref{2409098051} implies that $I_1\simeq_{d,p,\theta} \|f\|_{L_{p,\theta}}^p$.
			By a change of variables, setting $F=\trho^{\,(\theta-d)/p+s}f$ and $\eta(t)=t^{-(\theta-d)/p-s}\zeta_0(t)$, we obtain
		\begin{alignat*}{3}
			I_2\,&=&&\sum_{k\in\bZ}\iint_{\bR^d\times \bR^d}\frac{|\zeta_{0,(n)}(x)f(x)-\zeta_{0,(n)}(y)f(y)|^p}{|x-y|^{d+s p}}\ee^{n(\theta-d+s p)}\dd x\dd y\\
			&=&&\sum_{k\in\bZ}\iint_{\bR^d\times \bR^d}\frac{|\eta_{(n)}(x)F(x)-\eta_{(n)}(y)F(y)|^p}{|x-y|^{d+s p}}\dd x\dd y\\
			&\lesssim_p&& \sum_{k\in\bZ}\iint_{\left\{|x-y|\geq \rho(x)/2\right\}}\frac{|\eta_{(n)}(x)F(x)|^p+|\eta_{(n)}(y)F(y)|^p}{|x-y|^{d+s p}}\dd x\dd y\\
			&&& +\sum_{k\in\bZ}\iint_{\left\{|x-y|\leq \rho(x)/2\right\}}\frac{|\eta_{(n)}(x)-\eta_{(n)}(y)|^p}{|x-y|^{d+s p}}|F(x)|^p\dd x\dd y\\
			&&&+\sum_{k\in\bZ}\iint_{\left\{|x-y|\leq \rho(x)/2\right\}}|\eta_{(n)}(y)|^p\frac{|F(x)-F(y)|^p}{|x-y|^{d+s p}}\dd x\dd y\\
			&=:&&I_{2,1}+I_{2,2}+I_{2,3}\,.
		\end{alignat*}
		We now estimate each term $I_{2,1}$, $I_{2,2}$, and $I_{2,3}$.
		Observe that
		\begin{align}\label{2206091204}
			N^{-1}\leq \sum_{n\in\bZ}|\eta(\ee^{-n} t)|^p\leq N \quad\text{and}\quad\sum_{n\in\bZ}\ee^{-np}|\eta'(\ee^{-n} t)|^p\leq N t^{-p}\,,
		\end{align}
		for any $t>0$, where $N=N(d,p,\theta,s)$.
		It follows from \eqref{2206091204} that
		\begin{align}\label{2206091221}
			I_{2,1}\simeq_N \int_{\Omega}\int_{y:|x-y|\geq \frac{\rho(x)}{2}}\frac{|F(x)|^p+|F(y)|^p}{|x-y|^{d+s p}}\dd y\dd x\simeq_N \int_{\Omega}|f(x)|^p \rho(x)^{\theta-d}\dd x\,,
		\end{align}
		where $N=N(d,p,\theta,s)$; for the last inequality, note that if $|x-y|\geq \frac{\rho(x)}{2}$, then $|x-y|\geq \frac{\rho(y)}{3}$.
		To estimate $I_{2,2}$, observe that for $x,\,y\in\Omega$ with $|x-y|<\frac{\rho(x)}{2}$, 
		\begin{align}\label{220609621}
			\begin{split}
				&\sum_{n\in\bZ}|\eta_{(n)}(x)-\eta_{(n)}(y)|^p\lesssim_N \sum_n|x-y|^p\ee^{-np}\Big(\int_0^1|\eta'(\ee^{-n}\trho(x_r))|\dd r\Big)^p\\
				\leq\,\,\,\,& |x-y|^p\int_0^1\sum_{n}\ee^{-np}|\eta'(\ee^{-n}\trho(x_r))|^p \dd r \lesssim_N |x-y|^p\int_0^1\trho(x_r)^{-p}\dd r\,,
			\end{split}
		\end{align}
		where $x_r=(1-r)x+ry$ and $N=N(d,p,\theta,s)$.
		Here, the first inequality follows from the fact that $|\nabla \trho|$ is bounded in $\Omega$, and the last inequality follows from \eqref{2206091204}.
		Since $\rho(x_r)\geq \frac{\rho(x)}{2}$, we have
		$$
		\sum_n|\eta_{(n)}(x)-\eta_{(n)}(y)|^p\lesssim_ N\,|x-y|^p\rho(x)^{-p}\,,
		$$
		where $N=N(d,p,\theta,s)$.
		Hence, we obtain
		\begin{align}\label{2206091223}
			I_{2,2}\,&\lesssim \int_{\Omega}\int_{y:|x-y|\leq \frac{\rho(x)}{2}}\frac{|F(x)|^p\rho(x)^{-p}}{|x-y|^{d-(1-s)p}}\dd y\dd x\lesssim_{d,s,p} \int|f(x)|^p\rho(x)^{\theta-d}\dd x.
		\end{align}
		By \eqref{2206091221}-\eqref{2206091223} and that $I_{2,3}\lesssim I_2+I_{2,2}\lesssim \|f\|_{B_{p,\theta}^{s}}^p$, 
		we have $\|f\|_{B_{p,\theta}^{s}(\Omega)}^p\simeq \|f\|_{L_{p,\theta}(\Omega)}+I_{2,3}$.
		By applying \eqref{2206091204} to $I_{2,3}$, we conclude that \eqref{220609102} holds.
		
		\textbf{Step 2.}
		We now prove \eqref{220609106} for the case $k=0$.
		Denote $F:=\trho^{(\theta-d)/p+s}f$.
		Since $\Phi\cdot \trho^{(\theta-d)/p+s}$ is a regular Harnack function, if $|x-y|<\rho(x)/2$, then
		\begin{align}
			&\Big|\big|\Phi(x)F(x)-\Phi(y)F(y)\big|-\Phi(x)\trho(x)^{(\theta-d)/p+s}\big|f(x)-f(y)\big|\Big|\nonumber\\
			\leq \,&\big|\Phi(x)\trho(x)^{(\theta-d)/p+s}-\Phi(y)\trho(y)^{(\theta-d)/p+s}\big||f(y)|\label{2206091243}\\
			\leq \,&N |x-y|\cdot\Phi(y)\rho(y)^{-1}|F(y)|\nonumber
		\end{align}
		where $N=N(d,\mathrm{C}_2(\Phi))$.
		By combining \eqref{220609102} (applied to $\Psi F$ instead of $f$), \eqref{2206091243}, together with the fact that
		\begin{align*}
			\int_{\Omega}\int_{y:|x-y|<\rho(y)}\frac{\big(|x-y|\cdot\Phi(y)\rho(y)^{-1}|F(y)|\big)^p}{|x-y|^{d+s p}}\dd y\dd x\lesssim\int_{\Omega}|f(y)|^p\Phi(y)^p\rho(y)^{\theta-d}\dd y\,,
		\end{align*}
		we conclude that \eqref{220609106} holds for $k=0$.
		
		\textbf{Step 3.} Let $k\geq 1$.
		It follows from \eqref{240911813} that
		$$
		\|\Phi f\|_{B_{p,\theta}^{k+s}(\Omega)}\simeq \sum_{i=0}^{k-1}\|\Phi D_x^if\|_{B_{p,\theta+ip}^{s}(\Omega)}+\|\Phi D_x^kf\|_{B_{p,\theta+kp}^{s}(\Omega)}\,.
		$$
		By \eqref{2411011107} and \eqref{240911813}, we have
		\begin{align*}
			\sum_{i=0}^{k-1}\|\Phi D_x^if\|_{L_{p,\theta+ip}(\Omega)}\lesssim\,& \sum_{i=0}^{k-1}\|\Phi D_x^if\|_{B_{p,\theta+ip}^{s}(\Omega)}\lesssim \sum_{i=0}^{k-1}\|\Phi D_x^if\|_{H_{p,\theta+ip}^{1}(\Omega)}\\
			\simeq\,&\sum_{i=0}^{k}\|\Phi D_x^if\|_{L_{p,\theta+ip}(\Omega)}\lesssim \sum_{i=0}^{k-1}\|\Phi D_x^if\|_{L_{p,\theta+ip}(\Omega)}+\|\Psi D_x^k f\|_{B_{p,\theta+kp}^{s}(\Omega)}\,.
		\end{align*}
		Hence, we have
		\begin{align*}
			\|\Phi f\|_{B_{p,\theta}^{k+s}(\Omega)}\simeq \sum_{i=0}^k\|\Phi D_x^if\|_{B_{p,\theta+ip}^{s}(\Omega)} \simeq \sum_{i=0}^{k-1}\|\Phi D_x^if\|_{L_{p,\theta+ip}(\Omega)}+\|\Psi D_x^k f\|_{B_{p,\theta+kp}^{s}(\Omega)}\,.
		\end{align*}
		By Lemma \ref{240911814} and Step 2 (\eqref{220609106} for $k=0$), the proof is complete.
	\end{proof}

	\begin{lemma}[Extension lemma]\label{240824401}
		Let $\alpha\in(0,1]$ and $p\in(1,\infty)$ be such that $\alpha>1/p$.
		For any $u_0\in \Psi B_{p,\theta+2/\alpha}^{\gamma+2-2/(\alpha p)}(\Omega)$, there exists $u\in \Psi \cH_{p,\theta}^{\gamma+2}(\Omega,\infty)$ such that $u(0)=u_0$ and 
		\begin{align}\label{240417615}
			\|u\|_{\Psi \cH_{p,\theta}^{\gamma+2}(\Omega,\infty)}\leq N\|u_0\|_{\Psi B_{p,\theta+2/\alpha}^{\gamma+2-2/(\alpha p)}(\Omega)}\,,
		\end{align}
		where $N=N(d,\alpha,p,\gamma,\theta)$.
	\end{lemma}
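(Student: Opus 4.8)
The plan is to reduce to the model case $\Psi\equiv 1$ and $\Omega=\bR^d$, where the trace and extension theorem is classical, and then transfer back via the localization maps $S^\Psi$ and $R^\Psi$ from the proof of Lemma \ref{220527502111}.(4)--(5). First I would note that, since $f\mapsto \Psi^{-1}f$ is an isometric isomorphism $\Psi B_{p,\theta'}^{\gamma'}(\Omega)\to B_{p,\theta'}^{\gamma'}(\Omega)$ and $\Psi\cH_{p,\theta'}^{\gamma'}(\Omega,T)\to \cH_{p,\theta'}^{\gamma'}(\Omega,T)$ for every $\gamma',\theta'$, it suffices to construct the extension for $\Psi\equiv 1$. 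Next, using the defining partition-of-unity norm (Definition \ref{220610533}) together with the equivalent norms \eqref{2409098051} and Proposition \ref{220418435}, I would recall that the problem on $\Omega$ with weight $\theta$ is equivalent, coordinatewise in $n\in\bZ$, to a family of problems on $\bR^d$ after the rescalings $x\mapsto \ee^n x$, $t\mapsto \ee^{2n/\alpha}t$, with summability weight $\ee^{n\theta}$ — exactly the scheme used in the proof of Lemma \ref{21.05.13.9} and Proposition \ref{2204160313}.

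The key step is the model case: for $u_0\in B_p^{\gamma+2-2/(\alpha p)}(\bR^d)$ there is $u\in\cH_p^{\gamma+2}(\bR^d,\infty)$ with $u(0)=u_0$ and $\|u\|_{\cH_p^{\gamma+2}(\bR^d,\infty)}\lesssim \|u_0\|_{B_p^{\gamma+2-2/(\alpha p)}(\bR^d)}$. I would take $u$ to be the solution of $\partial_t^\alpha u=\Delta u$ with initial data $u_0$, i.e.\ $u(t)=P_t^\alpha u_0$ where $P_t^\alpha$ is the (fractional) heat semigroup / resolvent family; the required a priori estimate is precisely the trace theorem for $\cH_p^{\alpha,\gamma+2}(\bR^d,T)$, which is \cite[Theorem 7.1]{Krylov2001} for $\alpha=1$ and the corresponding result in \cite{DONG2021107494} / \cite{DONG2019289} for $0<\alpha<1$ (this is the same $B_p^{\gamma+2-2/(\alpha p)}$ trace space already appearing in Definition \ref{240419512}.(2) and in the statement of Proposition \ref{2204160313}). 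A cosmetic point: those references are stated on a finite interval, so I would either work on $(0,T]$ and then rescale/patch, or absorb a zeroth-order term by writing $\partial_t^\alpha u=\Delta u-u+u$ as in the proof of Lemma \ref{22.04.14.1}, to get a $T$-independent extension on $(0,\infty)$; the scaling $t\mapsto \ee^{2n/\alpha}t$ and the $\ee^{2n}\|\,\cdot\,\|$ bookkeeping already present in Lemma \ref{21.05.13.9} show this is harmless.

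Then I would globalize. Setting $v_n(t,x):=u_0(\ee^n x)\zeta_{0,(n)}(\ee^n x)$-extended, more precisely applying the model-case extension operator to each rescaled localized piece $(\zeta_{0,(n)}u_0)(\ee^n\cdot)$, and then reassembling with $R^\Psi$ (using the fattened cutoffs $\zeta_{1,(n)}$), one obtains $u\in\cH_{p,\theta}^{\gamma+2}(\Omega,\infty)$. Commuting $\Delta$ past the cutoffs $\zeta_{0,(n)}$ produces exactly the same commutator terms $F_{n,ij}$ estimated in \eqref{22.04.15.1} via \eqref{2411011115} and \eqref{220526558}, so the contribution of $\partial_t^\alpha u=\Delta u$ to the $\cH$-norm is controlled by $\|u\|_{\bH_{p,\theta}^{\gamma+2}(\Omega,\infty)}$, which in turn is controlled by $\|u_0\|_{B_{p,\theta+2/\alpha}^{\gamma+2-2/(\alpha p)}(\Omega)}$ through the per-$n$ estimate and summation with weights $\ee^{n\theta}$ (here the exponent $2/\alpha$ on the weight is dictated by the $t\mapsto \ee^{2n/\alpha}t$ scaling, matching the statement). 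Checking $u(0)=u_0$ in $\cD'(\Omega)$ is immediate from $u_n(0)=(\zeta_{0,(n)}u_0)(\ee^n\cdot)$ and $\sum_n\zeta_{0,(n)}\equiv 1$, together with the continuity-in-$t$ statement \eqref{240911422}.

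The main obstacle is purely organizational rather than deep: one must verify that the localize--rescale--extend--reassemble construction actually lands in $\cH_{p,\theta}^{\gamma+2}(\Omega,\infty)$ with the right norm, i.e.\ that the extension operator commutes well enough with $\zeta_{0,(n)}$ and the dilations, and that the zeroth-order-term trick makes the model estimate genuinely $T$-independent (so the resulting bound on $(0,\infty)$ does not blow up). All of this is routine given Lemmas \ref{21.05.13.9}, \ref{220527502}, \ref{22.04.11.3} and the cited $\bR^d$-trace theorems; no new analytic input is needed beyond what is already used for Proposition \ref{2204160313} and Theorem \ref{22.02.18.6}.
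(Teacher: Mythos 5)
Your proof is correct in outline but takes a genuinely different route from the paper. The paper's proof is much shorter and more abstract: it first identifies the initial-data space as a real interpolation space,
\[
\Psi B_{p,\theta+2/\alpha}^{\gamma+2-2/(\alpha p)}(\Omega)=\big(\Psi H_{p,\theta}^{\gamma+2}(\Omega),\,\Psi H_{p,\theta+2p}^{\gamma}(\Omega)\big)_{1/(\alpha p),\,p}\,,
\]
using \eqref{241108522} (Lemma \ref{220527502111}.(5)), and then invokes an abstract trace--extension theorem from \cite{trace2023} valid for any interpolation couple $\{X_0,X_1\}$: given $u_0\in (X_0,X_1)_{1/(\alpha p),p}$, one produces $u\in L_p(\bR_+;X_0)$ and $f\in L_p(\bR_+;X_1)$ with $I_t^{1-\alpha}(u-u_0)=I_t^1 f$ and the desired norm bound. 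Substituting $X_0=\Psi H_{p,\theta}^{\gamma+2}(\Omega)$, $X_1=\Psi H_{p,\theta+2p}^\gamma(\Omega)$ finishes in a few lines; there is no model case on $\bR^d$, no localization, and $T=\infty$ comes for free since \cite{trace2023} is formulated on $\bR_+$. Your route --- reduce to $\Psi\equiv 1$, localize and rescale to $\bR^d$, extend via the $\bR^d$ trace theorem, reassemble --- is the same scheme used for Lemma \ref{21.05.13.9} and Proposition \ref{2204160313} and would also work, but it trades the paper's one-shot abstract argument for a fair amount of bookkeeping and implicitly re-derives one direction of \eqref{241108522}. One point you flag as ``cosmetic'' deserves emphasis: to get a $T$-independent extension on $(0,\infty)$ from the damped equation $\partial_t^\alpha u=\Delta u-u$, you need the Mittag--Leffler-type resolvent family to lie in $L_p(\bR_+)$ in time, which for $0<\alpha<1$ requires exactly $\alpha p>1$ (algebraic decay $\sim t^{-\alpha}$ at infinity); this is the hypothesis of the lemma, so it works, but it is not merely cosmetic and should be verified rather than assumed.
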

	
	\begin{proof}
		Note that
		$$
		\Psi B_{p,\theta+2/\alpha}^{\gamma+2-2/(\alpha p)}(\Omega)=\big(\Psi H_{p,\theta}^{\gamma+2}(\Omega),\Psi H_{p,\theta+2p}^{\gamma}(\Omega)\big)_{1/(\alpha p),p}
		$$
		(see Lemma \ref{241108522}),
		where $\big(X_0,X_1\big)_{1/(\alpha p),p}$ is the real interpolation space introduced in \cite{triebel}.
		It was proved in \cite{trace2023} that for any interpolation couple $(X_0,X_1)$ and any $u_0\in (X_0,X_1)_{1/(\alpha p),p}$, there exist $u\in L_p(\bR_+;X_0)$ and $f\in L_p(\bR_+;X_1)$ such that $u(0)=u_0$,
		$$
		\frac{1}{\Gamma(1-\alpha)}\int_0^t(t-s)^{-\alpha}\big(u(s)-u_0\big)\dd s=\int_0^tf(s)\dd s\,,
		$$
		and 
		\begin{align}\label{240417614}
			\|u\|_{L_p(\bR_+;X_0)}+\|f\|_{L_p(\bR_+;X_1)}\lesssim_{\alpha,p}\|u_0\|_{(X_0,X_1)_{1/(\alpha p),p}}\,.
		\end{align}
		By putting $X_0:=\Psi H_{p,\theta}^{\gamma+2}(\Omega)$ and $X_1:=\Psi H_{p,\theta+2p}^{\gamma}(\Omega)$, we obtain $u\in \Psi \cH_{p,\theta}^{\gamma+2}(\Omega,\infty)$ such that $u(0)\equiv u_0$ and $\partial_t^\alpha u=f$. 
		Moreover, \eqref{240417614} yields \eqref{240417615}.
	\end{proof}
	
	\mysection{Auxiliary result for the fractional heat equation}
	Throughout this section, we fix constants $T$ with $0<T<\infty$.
	
	\begin{lemma}\label{240426835}
		Let $\alpha\in(0,1)$ and let $\Omega$ be a bounded smooth domain.
		For any $f\in C_c^{\infty}\big((0,T]\times \Omega\big)$, there exists a unique function $$
		u\in C^{\infty}\big((0,T]\times \Omega\big)\cap C\big([0,T]\times \overline{\Omega}\big)
		$$
		such that $\mathrm{supp}(u)\subset (0,T]\times \overline{\Omega}$ and
		$$
		\partial_t^\alpha u=\Delta u+f\quad\text{in}\quad (0,T]\times \Omega\quad ;\quad u(t,x)=0\quad \text{if}\,\,\,\,t=0\,\,\,\,\text{or}\,\,\,\,x\in\partial\Omega\,.
		$$
		Moreover, such a function $u$ satisfies, for any $p\in(1,\infty)$, 
		$$
		\int_0^T\int_\Omega|u(t,x)|^{p-1}|D_x^2u(t,x)|\dd x\dd t<\infty\,.
		$$
	\end{lemma}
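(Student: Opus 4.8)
The plan is to construct the solution by separation of variables using the Dirichlet eigenfunctions of $\Omega$, and then to read off all the stated regularity from the super-polynomial decay of the Fourier coefficients of $f$.

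\textbf{Spectral setup.} Since $\Omega$ is bounded and smooth, let $\{(\lambda_n,\phi_n)\}_{n\ge1}$ be the eigenpairs of $-\Delta$ on $\Omega$ with zero Dirichlet data, with $(\phi_n)$ an orthonormal basis of $L_2(\Omega)$, $0<\lambda_1\le\lambda_2\le\cdots\to\infty$, $\phi_n\in C^\infty(\overline\Omega)$ and $\phi_n|_{\partial\Omega}=0$. Iterating the elliptic estimate on $\Delta\phi_n=-\lambda_n\phi_n$ and Sobolev embedding give $\|\phi_n\|_{C^k(\overline\Omega)}\le N(k,\Omega)\,\lambda_n^{m_k}$ for some $m_k\in\bN$, while Weyl's law gives $\sum_n\lambda_n^{-M}<\infty$ for $M>d/2$. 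Put $f_n(t):=\int_\Omega f(t,x)\phi_n(x)\dd x\in C_c^\infty\big((0,T]\big)$. Since $f(t,\cdot)\in C_c^\infty(\Omega)$, integrating by parts $j$ times yields $\lambda_n^j f_n(t)=\int_\Omega(-\Delta)^j f(t,x)\phi_n(x)\dd x$, so
\begin{align*}
\sup_{t\in[0,T]}\big|\partial_t^i f_n(t)\big|\le N(i,j,f)\,\lambda_n^{-j}\qquad\text{for all }i,j\in\bN_0 ;
\end{align*}
moreover there is $t_0>0$ with $f_n\equiv0$ on $[0,t_0]$ for every $n$ (compactness of $\mathrm{supp}(f)$ in $(0,T]\times\Omega$).

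\textbf{Construction.} For each $n$ let $u_n$ solve $\partial_t^\alpha u_n=-\lambda_n u_n+f_n$, $u_n(0)=0$, i.e. $u_n(t)=\int_0^t(t-s)^{\alpha-1}E_{\alpha,\alpha}\big(-\lambda_n(t-s)^\alpha\big)f_n(s)\dd s$ with $E_{\alpha,\alpha}$ the Mittag–Leffler function. Using the standard bound $0\le E_{\alpha,\alpha}(-x)\le N_\alpha(1+x)^{-1}$ ($x\ge0$) one gets $\|g_n\|_{L_1(0,T)}=\int_0^T r^{\alpha-1}E_{\alpha,\alpha}(-\lambda_n r^\alpha)\dd r\le N_\alpha\,\lambda_n^{-1}\log(1+\lambda_n T^\alpha)$ for the kernel $g_n(r):=r^{\alpha-1}E_{\alpha,\alpha}(-\lambda_n r^\alpha)\mathbf{1}_{\{r>0\}}$; since $f_n$ together with all its derivatives vanishes on $[0,t_0]$, differentiating the convolution $u_n=g_n*f_n$ gives $\partial_t^i u_n=g_n*\partial_t^i f_n$, so $u_n\in C^\infty([0,T])$, $u_n\equiv0$ on $[0,t_0]$, and
\begin{align*}
\sup_{t\in[0,T]}\big|\partial_t^i u_n(t)\big|\le \|g_n\|_{L_1(0,T)}\sup_{t\in[0,T]}\big|\partial_t^i f_n(t)\big|\le N(i,j,f)\,\lambda_n^{-j}\qquad\text{for all }i,j\in\bN_0 .
\end{align*}
Define $u(t,x):=\sum_{n\ge1}u_n(t)\phi_n(x)$.

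\textbf{Regularity, the equation, and the integrability bound.} Combining the last display with $\|\phi_n\|_{C^k(\overline\Omega)}\lesssim\lambda_n^{m_k}$ and $\sum_n\lambda_n^{-M}<\infty$, the series $\sum_n\partial_t^i u_n(t)\,D_x^\beta\phi_n(x)$ converges absolutely and uniformly on $[0,T]\times\overline\Omega$ for every $i\in\bN_0$ and every multi-index $\beta$ (choose $j$ large). Hence $u\in C^\infty\big([0,T]\times\overline\Omega\big)$, in particular $u\in C^\infty\big((0,T]\times\Omega\big)\cap C\big([0,T]\times\overline\Omega\big)$, and $u(0,\cdot)\equiv0$, $u|_{(0,T]\times\partial\Omega}\equiv0$ because $u_n(0)=0$ and $\phi_n|_{\partial\Omega}=0$. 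The uniform convergence of $\sum_n(-\lambda_n)u_n\phi_n$ and of $\sum_n f_n\phi_n$, together with $\partial_t^\alpha u_n=-\lambda_n u_n+f_n$, let $I_t^{1-\alpha}\partial_t$ be applied term by term, so $\partial_t^\alpha u=\Delta u+f$ on $(0,T]\times\Omega$. Since $u$ and $D_x^2u$ are continuous on the compact set $[0,T]\times\overline\Omega$, they are bounded there, whence for any $p\in(1,\infty)$
\begin{align*}
\int_0^T\!\!\int_\Omega|u|^{p-1}|D_x^2u|\dd x\dd t\le T\,|\Omega|\,\Big(\sup_{[0,T]\times\overline\Omega}|u|\Big)^{p-1}\sup_{[0,T]\times\overline\Omega}|D_x^2u|<\infty .
\end{align*}
For uniqueness, if $u$ solves the homogeneous problem with zero data, then by the $p=2$ case of Lemma \ref{240426733}, $\partial_t^\alpha\|u(t,\cdot)\|_{L_2(\Omega)}^2\le 2\int_\Omega u\,\partial_t^\alpha u\dd x=-2\|\nabla u(t,\cdot)\|_{L_2(\Omega)}^2\le0$, hence $I_t^{1-\alpha}\|u(\cdot,\cdot)\|_{L_2(\Omega)}^2\le0$, forcing $u\equiv0$; alternatively one may invoke the maximum principle \cite[Theorem 2]{LUCHKO2009218}.

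\textbf{Main obstacle.} The substantive point is upgrading the scalar solutions $u_n$ to a function that is jointly $C^\infty$ up to the parabolic boundary; this relies on three ingredients working together: the super-polynomial decay of the $f_n$ (from $f\in C_c^\infty$ and repeated integration by parts against $-\Delta$), the at-most-polynomial growth of $\|\phi_n\|_{C^k(\overline\Omega)}$ in $\lambda_n$ (elliptic regularity on the smooth domain), and the uniform-in-$n$ bound $\|g_n\|_{L_1(0,T)}\lesssim\lambda_n^{-1}\log\lambda_n$ for the Mittag–Leffler kernel, which jointly make every differentiated series converge uniformly. Once this regularity is secured, the claimed $L_p$-type integrability is immediate.
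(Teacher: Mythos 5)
Your proof is correct and takes a genuinely different route from the paper's. The paper proves this lemma by invoking the weighted $L_p$-Sobolev theory for time-fractional diffusion equations in $C^1$ domains from \cite[Theorem~2.10]{HKP2021}, which delivers a solution $Rf$ in $\mathring{\cH}_{p,\theta}^{\alpha,\gamma+2}(\Omega,T)$ for all admissible $(p,\theta,\gamma)$, then bootstraps regularity via the embedding \eqref{241031713} and the commutation identity $u=I_t^n R(\partial_t^n f)$ to get smoothness in $t$ and $x$ and continuity up to $\partial\Omega$, and finally verifies the $\int |u|^{p-1}|D_x^2 u|$ bound from the weighted estimates themselves. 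You instead work completely self-containedly: diagonalize $-\Delta$ via Dirichlet eigenfunctions $\phi_n$, solve the scalar fractional ODEs with Mittag--Leffler kernels $g_n$, and read off joint $C^\infty$ regularity on $[0,T]\times\overline\Omega$ (in fact stronger than stated) from three quantitative inputs: superpolynomial decay of $|\partial_t^i f_n|$ in $\lambda_n$ (integration by parts against $(-\Delta)^j$, using $f(t,\cdot)\in C_c^\infty(\Omega)$), at-most-polynomial growth of $\|\phi_n\|_{C^k(\overline\Omega)}$ (elliptic regularity on a smooth domain), and $\|g_n\|_{L_1(0,T)}\lesssim\lambda_n^{-1}\log(1+\lambda_n T^\alpha)$. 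This is more elementary, avoids any reliance on weighted Sobolev machinery or the external reference, and actually yields the sharper conclusion $u\in C^\infty([0,T]\times\overline\Omega)$. The paper's approach has the advantage of dovetailing with the function-space framework used throughout (so the lemma is a near-free byproduct of citing \cite{HKP2021}) and would extend to less smooth domains.

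One small caveat on your uniqueness argument: the energy-method version via Lemma~\ref{240426733} with $p=2$ requires, for fixed $x$, that $t\mapsto u(t,x)$ be $C^1_{\mathrm{loc}}$ near $t=0$, which the class $C^\infty\big((0,T]\times\Omega\big)\cap C\big([0,T]\times\overline\Omega\big)$ does not by itself guarantee. The maximum-principle route via \cite[Theorem~2]{LUCHKO2009218}, which you mention as an alternative, is the safer choice and is in fact the one the paper uses (inside the proof of Lemma~\ref{21.05.25.300}). Incidentally, you can sharpen $\|g_n\|_{L_1(0,T)}$ slightly: the exact primitive $\frac{d}{dt}\big(-\lambda_n^{-1}E_{\alpha,1}(-\lambda_n t^\alpha)\big)=t^{\alpha-1}E_{\alpha,\alpha}(-\lambda_n t^\alpha)$ gives $\|g_n\|_{L_1(0,T)}=\lambda_n^{-1}\big(1-E_{\alpha,1}(-\lambda_n T^\alpha)\big)\le\lambda_n^{-1}$, dispensing with the logarithm, though this makes no difference to the conclusion.
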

	
	\begin{proof}
		The uniqueness of such $u$ follows from the maximum principle (see \cite[Theorem 2]{LUCHKO2009218}).
		Therefore, it remains to prove the existence of such $u$.

		Recall that $\Omega$ is a bounded smooth domain and $f\in C_c^{\infty}\big((0,T]\times \Omega\big)$. 
		The following result follows from \cite[Theorem 2.10]{HKP2021}:
		\begin{itemize}
			\item[] For any $k\in\bN_0$, $p\in(1,\infty)$, and $\theta\in(-p-1,-1)$, there exists a unique $u:(0,T]\times \Omega\rightarrow \bR$ such that 
			\eqref{2206301251} holds for all $\zeta\in C_c^{\infty}(\Omega)$ and $\eta\in C_c^{\infty}\big([0,T)\big)$, and
			\begin{align}\label{240429814}
				\sum_{i=0}^{k+2}\int_0^T\int_\Omega |\rho^iD_x^i u|^p\rho^{\theta-d}\dd x\dd t<\infty\,.
			\end{align}
		\end{itemize}
		Note that the solution spaces $\mathfrak{H}_{p,\theta,0}^{\alpha,\gamma+2}(\Omega,T)$ in \cite{HKP2021} coincide with the space $\mathring{\cH}_{p,\theta-p}^{\alpha,\gamma+2}(\Omega,T)$ defined in Definition~\ref{240419512}.(2) (see Lemma \ref{240911115}.(1)).
		Since $\Omega$ is bounded, it follows that for any $k,k'\in\bN_0$ with $k'\ge k$, $1<p\le p'<\infty$, and $\theta,\theta'\in\bR$ satisfying $\frac{\theta'-d}{p'}\le\frac{\theta-d}{p}$, we have
		\begin{align}\label{241108903}
			\sum_{i=0}^{k+2}\left(\int_0^T\int_\Omega |\rho^iD_x^i u|^p\rho^{\theta-d}\dd x\dd t\right)^{1/p}\lesssim \sum_{i=0}^{k'+2}\left(\int_0^T\int_\Omega |\rho^iD_x^i u|^{p'}\rho^{\theta'-d}\dd x\dd t\right)^{1/p'}\,.
		\end{align}
		In addition, $f\in C_c^{\infty}\big((0,T]\times \Omega)$, so that for any $p\in(1,\infty)$, $\theta\in (-p-1,-1)$, and $k\in\bN$,
		\begin{align}\label{241108904}
			\int_0^T\int_\Omega |\rho^kD_x^k f|^p\rho^{\theta+2p-d}\dd x\dd t<\infty\,.
		\end{align}
		Combining \eqref{241108903}, \eqref{241108904}, and \cite[Theorem 2.10]{HKP2021}, 
		we conclude that for $k=2$, $p\in(1,\infty)$, and $\theta=d-p$, the solutions satisfying \eqref{240429814} coincide. 
		Using this together with \eqref{241108903} and again invoking \cite[Theorem 2.10]{HKP2021}, we further deduce that for all $k\in\bN$, $p\in(1,\infty)$, and $\theta\in(d-p-1,d-1)$, the solutions satisfying \eqref{240429814} coincide. 
		We denote this unique solution by $Rf$.
		Observe the following results:
		\begin{itemize}
			\item[-] Since \eqref{240429814} holds for all $k\in\bN$, it is also obtained by \eqref{241031713} that
			\begin{align*}
				\sum_{i=0}^k\int_0^T\sup_{x\in\Omega}|\rho^{i-1+d/p} D_x^iu(t,x)|^p\dd t\lesssim \sum_{i=0}^{k+1}\int_0^T\int_\Omega |\rho^iD_x^i u|^p\rho^{-p}\dd x\dd t\,,
			\end{align*}
			for any $p>d$, and $k\in\bN$.
			
			\item[-] Since $f\in C_c^{\infty}\big((0,T]\times \Omega\big)$, there exists $\epsilon>0$ such that $f(t,\cdot)\equiv0$ for all $0<t\le\epsilon$.
			By the uniqueness of $Rf$ on the time interval $[0,\epsilon]$, we have $Rf(t,\cdot)=0$ for all $0<t\le\epsilon$.
		\end{itemize}

		Recall that, since $f\in C_c^{\infty}\big((0,T]\times \Omega\big)$, we have $f(t,x)=I_t^n \big(\partial_t^nf\big)(t,x)$.
		Denote $u_n:=R\big(\partial_t^nf\big)$.
		One can observe that for any $n\in\bN$, $I^n_t u_n$ is also solves \eqref{2206301251}.
		Therefore, by the uniqueness of solutions to \eqref{2206301251}, we obtain that $u(t,x)=I_t^nu_n(t,x)$.
		This implies that for any $m,\,n\in\bN_0$, $\partial_t^nD_x^m u=I_t\big(D_x^m u_{n+1}\big)$. 
		In addition, for any $p>d$,
		\begin{align*}
			&\sup_{x\in \Omega}|\rho(x)^{m-1+(d-1)/p}\partial_t^nD_x^mu(t,x)|^p\\
			\leq\,&T^{p-1}\int_0^T\sup_{x\in\Omega}|\rho(x)^{m-1+(d-1)/p}D_x^mu_{m+1}(t,x)|^p\dd t<\infty\,.
		\end{align*}
		It then follows that for any $t\in(0,T]$, $u(t,x)\rightarrow 0$ as $x\in\partial\Omega$.
		Moreover, by \eqref{240429814}, we have
		$$
		\int_0^T\int_\Omega |u|^{p-1}|D_x^2u|\dd x\dd t\leq \int_0^T\int_\Omega |u|^p\rho^{-2}\dd x\dd t\cdot \int_0^T\int_\Omega |D_x^2 u|^p\rho^{2p-2}\dd x\dd t<\infty\,.
		$$			 
	\end{proof}
	
	\begin{prop}\label{240827811}
		Let $0<\frac{1}{p}\leq \beta<\alpha\leq 1$ and $\gamma\in\bR$. 
		For any $u\in\cH_p^{\gamma+2}(\bR^d,T)$, the following estimates hold for all $0\le s<t\le T$ and $A>0$:
		\begin{align}\label{2408261012}
			\begin{aligned}
				&\big\|u(t,\cdot)-u(s,\cdot)\big\|_{B_p^{\gamma+2-2\beta/\alpha}(\bR^d)}\\
				\leq\,& N|t-s|^{\beta-1/p} A^{-\beta}\Big(\|u\|_{\bH_{p}^{\gamma+2}(\bR^d,T)}+A^{\alpha}\|\partial_t^\alpha u\|_{\bH_{p}^{\gamma}(\bR^d,T)}+A^{1/p}\|u(0,\cdot)\|_{B_p^{\gamma+2-2/(p\alpha)}(\bR^d)}\Big)\,,
			\end{aligned}
		\end{align}
		and
		\begin{align}\label{2408261013}
			\big\|u(t,\cdot)-u(s,\cdot)\big\|_{H_p^{\gamma}(\bR^d)}
			\leq N|t-s|^{\alpha-1/p}\|\partial_t^\alpha u\|_{\bH_{p}^{\gamma}(\bR^d,T)}\,.
		\end{align}
		Here, $N=N(d,p,\gamma,\alpha,\beta)$.
	\end{prop}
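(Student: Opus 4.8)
The plan is to reduce both inequalities to the single model case $\gamma=0$, $A=1$, and then to obtain that case by adapting the maximal-regularity embedding of \cite[Theorem 7.1]{Krylov2001} (for $\alpha=1$) and of \cite{DONG2019289} (for $0<\alpha<1$). The first reduction is to $\gamma=0$: the operator $(1-\Delta)^{\gamma/2}$ is an isometric isomorphism from $H_p^{\gamma+2}(\bR^d)$, $H_p^{\gamma}(\bR^d)$, $B_p^{\gamma+2-2\beta/\alpha}(\bR^d)$, $B_p^{\gamma+2-2/(p\alpha)}(\bR^d)$ onto $H_p^{2}(\bR^d)$, $L_p(\bR^d)$, $B_p^{2-2\beta/\alpha}(\bR^d)$, $B_p^{2-2/(p\alpha)}(\bR^d)$ respectively, and since it acts only in the space variable it commutes with $\partial_t^\alpha$ and $I_t^\alpha$; testing against $\phi$ shows it carries $\cH_p^{\gamma+2}(\bR^d,T)$ onto $\cH_p^{2}(\bR^d,T)$ with $\partial_t^\alpha u$ and $u(0)$ transformed accordingly. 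Hence it suffices to prove \eqref{2408261012} and \eqref{2408261013} for $\gamma=0$.

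The second reduction is to $A=1$. Fix $A>0$ and set $v(\tau,x):=u(A\tau,x)$ for $\tau\in(0,T/A]$. A change of variables in the Riemann--Liouville integral gives $I_\tau^\alpha\big[\,A^\alpha f(A\,\cdot\,)\,\big](\tau)=(I_t^\alpha f)(A\tau)$, so $v\in\cH_p^{2}(\bR^d,T/A)$ with $\partial_\tau^\alpha v=A^\alpha(\partial_t^\alpha u)(A\,\cdot\,)$ and $v(0)=u(0)$; computing the $L_p$-in-time norms yields $\|v\|_{\bH_p^{2}(\bR^d,T/A)}=A^{-1/p}\|u\|_{\bH_p^{2}(\bR^d,T)}$, $\|\partial_\tau^\alpha v\|_{\bH_p^{0}(\bR^d,T/A)}=A^{\alpha-1/p}\|\partial_t^\alpha u\|_{\bH_p^{0}(\bR^d,T)}$, and $\|v(0)\|_{B_p^{2-2/(p\alpha)}(\bR^d)}=\|u(0)\|_{B_p^{2-2/(p\alpha)}(\bR^d)}$. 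Applying the case $A=1$ to $v$ with $\tau=t/A$, $\sigma=s/A$ (so $v(\tau)-v(\sigma)=u(t)-u(s)$ and $|\tau-\sigma|=|t-s|/A$), and then using $(|t-s|/A)^{\beta-1/p}A^{-1/p}=|t-s|^{\beta-1/p}A^{-\beta}$, $(|t-s|/A)^{\beta-1/p}A^{\alpha-1/p}=|t-s|^{\beta-1/p}A^{\alpha-\beta}$, and $(|t-s|/A)^{\beta-1/p}=|t-s|^{\beta-1/p}A^{1/p-\beta}$, reproduces \eqref{2408261012} for general $A$. Inequality \eqref{2408261013} carries no parameter $A$ and transforms into itself under this substitution, so only the first reduction is needed for it.

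For the model case ($\gamma=0$, $A=1$) write $f:=\partial_t^\alpha u$, $u_0:=u(0)$, so that for $0\le s<t\le T$, $u(t)-u(s)=\Gamma(\alpha)^{-1}\int_s^t(t-r)^{\alpha-1}f(r)\dd r+\Gamma(\alpha)^{-1}\int_0^s\big[(t-r)^{\alpha-1}-(s-r)^{\alpha-1}\big]f(r)\dd r$. For \eqref{2408261013}, apply Minkowski's integral inequality to pull $\|\cdot\|_{L_p(\bR^d)}$ inside and then Hölder's inequality in $r$ with exponent $p/(p-1)$; since $\beta\ge 1/p$ and $\beta<\alpha$ force $\alpha>1/p$, the resulting kernel integrals $\int_s^t(t-r)^{(\alpha-1)p/(p-1)}\dd r$ and $\int_0^s|(t-r)^{\alpha-1}-(s-r)^{\alpha-1}|^{p/(p-1)}\dd r$ are finite and bounded by $N|t-s|^{(\alpha-1/p)p/(p-1)}$, which gives \eqref{2408261013}. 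For \eqref{2408261012} I would use the real interpolation identity $B_p^{2-2\beta/\alpha}(\bR^d)=\big(H_p^{2}(\bR^d),L_p(\bR^d)\big)_{\beta/\alpha,\,p}$ (legitimate since $\beta/\alpha\in(0,1)$, the endpoint $\beta=1/p$ giving exactly the trace exponent $2-2/(p\alpha)$) and estimate the $K$-functional of $g:=u(t)-u(s)$ through a splitting $g=g_0+g_1$ obtained by truncating the representation of $u$ at a time depending on the $K$-functional scale $\lambda$: the piece $g_0$ is controlled via $\|u\|_{\bH_p^{2}}$ and the piece $\lambda\|g_1\|_{L_p}$ via the endpoint bound \eqref{2408261013} together with $\|u_0\|_{B_p^{2-2/(p\alpha)}}$. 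This is precisely the mechanism of the proofs of \cite[Theorem 7.1]{Krylov2001} and \cite{DONG2019289}, and one tracks constants to see $N$ is independent of $T$.

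The two reductions and the kernel estimates behind \eqref{2408261013} are routine; the substance is the model Besov estimate. Note that the naive interpolation product inequality $\|g\|_{B_p^{2-2\beta/\alpha}}\lesssim\|g\|_{H_p^{2}}^{1-\beta/\alpha}\|g\|_{L_p}^{\beta/\alpha}$ combined with \eqref{2408261013} produces the wrong power $|t-s|^{(\beta/\alpha)(\alpha-1/p)}$ rather than $|t-s|^{\beta-1/p}$, so the refined $\lambda$-dependent time-truncation is genuinely required. The delicate points are the singular kernel $(t-r)^{\alpha-1}$ when $\alpha<1$, which is what pins the hypothesis $\alpha>1/p$, and the simultaneous handling of the endpoint $\beta=1/p$ — where \eqref{2408261012} is the trace embedding $\cH_p^{\gamma+2}(\bR^d,T)\hookrightarrow C\big([0,T];B_p^{\gamma+2-2/(p\alpha)}(\bR^d)\big)$ and must be argued carefully to keep $N$ independent of $T$.
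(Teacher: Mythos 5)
Your reductions to $\gamma=0$ and to $A=1$, and your Minkowski--H\"older argument with the kernel bounds on $(t-r)^{\alpha-1}$ for \eqref{2408261013}, match the paper. Where you diverge is in the Besov estimate \eqref{2408261012}. You propose to attack the $K$-functional of $g=u(t)-u(s)$ for the pair $\big(H_p^2(\bR^d),L_p(\bR^d)\big)$ directly, with a $\lambda$-dependent time truncation of the representation $u(t)=u(0)+I_t^\alpha(\partial_t^\alpha u)$. The paper instead first establishes the endpoint $\beta=1/p$ case in the form of the trace embedding $\cH_p^{\alpha,2}(\bR^d,T)\hookrightarrow C\big([0,T];B_p^{2-2/(p\alpha)}(\bR^d)\big)$, obtained not by a hand-made $K$-functional estimate but by a decomposition $u=v+w$: $v$ solves $\partial_t^\alpha v=\Delta v-v$ with data $u(0)$ and is controlled via Pr\"uss's continuity result for fractional evolution equations, while $w$ has zero initial data and is controlled by Zacher's maximal regularity for Volterra equations. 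With the trace embedding in hand, general $\beta\in[1/p,\alpha)$ follows at once from the multiplicative interpolation inequality between $L_p$ and the trace space $B_p^{2-2/(p\alpha)}$, namely $\|g\|_{B_p^{\theta(2-2/(\alpha p))}}\lesssim\|g\|_{L_p}^{1-\theta}\|g\|_{B_p^{2-2/(\alpha p)}}^\theta$ with $\theta=(\alpha-\beta)/(\alpha-1/p)$, combined with \eqref{2408261013} for the first factor and the uniform trace bound for the second.

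You correctly flag that the naive multiplicative interpolation between $H_p^2$ and $L_p$ cannot give the result; the underlying reason is that $\|u(t)-u(s)\|_{H_p^2}$ is not controlled pointwise in $t$ for $u\in\cH_p^{\alpha,2}(\bR^d,T)$. The paper's trick is to swap the smooth endpoint of the interpolation pair from $H_p^2$ to the trace space $B_p^{2-2/(p\alpha)}$, on which $u(t)$ is uniformly bounded in $t$; with this pair the naive product inequality does produce the exact power $\beta-1/p$. This avoids your $\lambda$-truncation analysis altogether, at the cost of invoking the Pr\"uss/Zacher machinery for the trace embedding. Your route is a valid classical alternative (close in spirit to the argument in the $\alpha=1$ case), but for fractional $\alpha$ the singular kernel makes the direct truncation estimate delicate and your sketch does not carry it out; in effect it would re-derive what Zacher already supplies.
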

	
	\begin{proof}
		Considering $w:=(1-\Delta)^{\gamma/2}u$, it suffices to prove the case $\gamma=0$.
		Without loss of generality, we assume $A=1$, since the dependence on $A$ (\textit{i.e.}, the exponent of $A$) can be deduced from the scaling in time.
		
		Suppose for the moment that \eqref{2408261012} holds for $\beta=0$ and that \eqref{2408261013} holds.
		Note that 
		\begin{align}\label{2408271245}
			\big[L_p(\bR^d),B_p^{2-2/(p \alpha)}(\bR^d)\big]_{\theta,p}=B_p^{\theta(2-2/(p \alpha))}\,.
		\end{align}
		Indeed, 
		\begin{align*}
			&L_p(\bR^d)\supset B_p^{0}(\bR^d)\quad\text{and}\quad H_p^{2-2/(p \alpha)}(\bR^d)\supset 	B_p^{2-2/(p \alpha)}(\bR^d)\quad\text{if $p\leq 2$}\,,\\
			&L_p(\bR^d)\subset B_p^{0}(\bR^d)\quad\text{and}\quad H_p^{2-2/(p \alpha)}(\bR^d)\subset 	B_p^{2-2/(p \alpha)}(\bR^d)\quad\text{if $p\geq 2$}\,,
		\end{align*}
		and
		$$
		\big[L_p(\bR^d),H_p^{2-2/(p \alpha)}(\bR^d)\big]_{\theta,p}=\big[B_p^0(\bR^d),B_p^{2-2/(p \alpha)}(\bR^d)\big]_{\theta,p}=B_p^{\theta(2-2/(p \alpha))}\,.
		$$			
		Here we use the standard interpolation and embedding properties of $H_p^\gamma(\bR^d)$ and $B_p^\gamma(\bR^d)$; see \cite[Sections 2.3 and 2.4]{triebel2}.
		Assuming \eqref{2408261012} for $\beta=\frac{1}{p}$ and \eqref{2408261013}, we obtain
		\begin{align*}
			\|u(t)-u(s)\|_{B_p^{\theta(2-2/(\alpha p))}(\bR^d)}\,&\lesssim\|u(t)-u(s)\|_{L_p(\bR^d)}^{1-\theta}\|u(t)-u(s)\|_{B_p^{2-2/(\alpha p)}(\bR^d)}^\theta\\
			&\lesssim (t-s)^{(1-\theta)(\alpha-1/p)}\|u\|_{\cH_p^2(\bR^d,T)}\,,
		\end{align*}
		where the first inequality follows from \eqref{2408271245} together with \cite[Theorem 1.3.3.(g)]{triebel}.
		By taking $\theta=\frac{\alpha-\beta}{\alpha-1/p}\in(0,1)$, we obtain that \eqref{2408261012} also holds for $\frac{1}{p}<\beta<\alpha$.
		Therefore, it suffices to prove \eqref{2408261012} for $\beta=0$ and \eqref{2408261013}.
		
		\textbf{Proof of \eqref{2408261012} for $\beta=\frac{1}{p}$:}
		In this step, we employ results from \cite{pruss2012evolutionary, zacher2003quasilinear,zacher2005maximal}.
		For this, we note that for any positive multiplier $m\in C^\infty(\bR^d\setminus\{0\})$ satisfying $|D_\xi^km(\xi)|\lesssim_k |\xi|^km(\xi)$, the operator $f\mapsto \big(m(\xi)\widehat{f}(\xi)\big)^{\vee}$ is $\mathcal{R}$-sectorial on $L_p(\bR^d)$ with $\mathcal{R}$-angle $0$ (see \cite[Example 10.1.3, Theorem G.5.2]{hytonen2018analysis}).

		Consider the equation
		$$
		\partial_t^\alpha v=\Delta v-v\quad\quad \text{in}\quad \bR_+\times \bR^d\quad;\quad v(0,\cdot)=u(0,\cdot)\in B_p^{2-2/(p\alpha)}(\bR^d)\,,
		$$
		where $\partial_t^\alpha v$ is understood in the sense of \eqref{240419514}.
		By \cite[Theorem 3.1]{pruss2012evolutionary}, there exists a solution $v\in C\big([0,\infty);B_p^{2-2/(p\alpha)}(\bR^d)\big)$ such that 
		\begin{align}\label{241108740}
			\|v\|_{C([0,\infty);B_p^{2-2/(p\alpha)}(\bR^d))}\lesssim \|u(0,\cdot)\|_{B_p^{2-2/(p\alpha)}(\bR^d)}\leq \|u\|_{\cH_p^{\alpha,2}(\bR^d,T)}\,.
		\end{align}
		Consider the equation
		\begin{align*}
			\partial_t^\alpha w=\Delta w-w+\big(\partial_t^\alpha u-\Delta u+u\big)1_{(0,T]\times \bR^d}\quad \text{in}\quad \bR_+\times \bR^d\quad;\quad w(0,\cdot)=0\,,
		\end{align*}
		in the sense of \eqref{240419514}.
		By combining \cite[Theorem 3.1.1 and Remark 3.1.1.(v)]{zacher2003quasilinear} and \cite[Theorem 3.6]{zacher2005maximal}, we obtain the existence of a solution $w\in C\big([0,\infty);B_p^{2-2\beta/\alpha}(\bR^d)\big)$ satisfying
		\begin{align}\label{241108741}
			\begin{aligned}
				&\big\|w\big\|_{C([0,\infty);B_p^{2-2/(p \alpha)}(\bR^d))}
				\lesssim \big\|\partial_t^\alpha u-\Delta u+u\big\|_{L_p\left((0,T]\times \bR^d\right)}\lesssim \|u\|_{\cH_{p}^{\alpha,2}(\bR^d,T)}\,.
			\end{aligned}
		\end{align}

		The proof will be complete once we show that $u=v+w$.
		Note that $u\in L_p\big((0,T)\times \bR^d\big)$ and $v+w\in C\big([0,T];B_{p}^{2-2/(p\alpha)}(\bR^d)\big)\subset L_p\big((0,T)\times \bR^d\big)$.
		Put $\widetilde{u}=(1-\Delta)^{-1}\big(u-(v+w)\big)\in \bH_p^2(\bR^d,T)$.
		Since 
		\begin{align}\label{241013339}
			\partial_t^\alpha \widetilde{u}=\Delta \widetilde{u}-\widetilde{u}\quad\text{in}\quad (0,T]\times \bR^d\quad;\quad \widetilde{u}(0,\,\cdot\,)\equiv 0
		\end{align}
		in the sense of \eqref{240419514}, we obtain that $\widetilde{u}\in \mathring{\cH}_p^{\alpha,2}(\bR^d,T)$ (see Lemma \ref{240426524}).
		As established in \cite[Theorem 2.4]{DONG2019289}, the solution to equation \eqref{241013339} in $\mathring{\cH}_p^{\alpha,2}(\bR^d,T)$ is unique.
		(Note that $\bH^{\alpha,2}_{p,0}(\bR^d_T)$ in \cite{DONG2019289} coincides with $\mathring{\cH}_p^{\alpha,2}(\bR^d,T)$ by the same argument as in the proof of Lemma \ref{240911115}.(1).)
		Therefore $\widetilde{u}(t)\equiv 0$ for almost all $t\in[0,T]$.
		Since $u(t)$ is a $\cD'(\bR^d)$-valued continuous function for $t\in[0,T]$ (see the result in Step 1), and $u=v+w$ for almost all $t\in[0,T]$, we conclude that $u(t,\cdot)\equiv v(t,\cdot)+w(t,\cdot)$ for all $t\in [0,T]$. 
		Combining \eqref{241108740} and \eqref{241108741}, we complete the proof.

		\textbf{Proof of \eqref{2408261013}:}
		When $\alpha=1$, this estimate follows from a direct calculation.
		It remains to prove the case $0<\alpha<1$.
		It follows from \eqref{240419514} and the Minkowski inequality that 
		\begin{align}
			\begin{aligned}\label{2408271240}
				&\|u(t)-u(s)\|_p\\
				\lesssim_{\alpha,p} &\|\partial_t^\alpha u\|_{L_p((0,T)\times \bR^d)}\bigg(\int_s^t(t-r)^{-(1-\alpha)p'}\dd r+\int_0^s\Big|(s-r)^{-1+\alpha}-(t-r)^{-1+\alpha}\Big|^{p'}\dd r\bigg)^{1/p'}\,,
			\end{aligned}
		\end{align}
		where $p':=\frac{p}{p-1}$.
		Since $(1-\alpha)p'<1$, we have
		\begin{align}
			\bigg(\int_s^t(t-r)^{-(1-\alpha)p'}\dd r\bigg)^{1/p'}=N(\alpha,p)(t-s)^{\alpha-1/p}\,.
		\end{align}
		For the last term in \eqref{2408271240}, we note that if $s>0$, then for any $r,\,A>0$, 
		$$
		r^{-s}-(A+r)^{-s}\lesssim_s \left(\int_0^{A}(r+\ell)^{-s-1}\dd \ell\right)\wedge r^{-s}\lesssim_s (A\wedge r)r^{-1-s}\simeq_s \frac{Ar^{-s}}{A+r}\,.
		$$
		This implies that 
		\begin{align}
			&\int_0^s\Big|r^{-1+\alpha}-(t-s+r)^{-1+\alpha}\Big|^{p'}\dd r\lesssim (t-s)^{p'}\int_0^s\frac{r^{-(1-\alpha)p'}}{(t-s+r)^{p'}}\dd r\,.
		\end{align}
		Since $(1-\alpha) p'<1$ and $(2-\alpha)p'>1$, we have 
		\begin{align}\label{2408271241}
			\int_0^\infty\frac{r^{-(1-\alpha)p'}}{(t-s+r)^{p'}}\dd r= N(\alpha,p) (t-s)^{1-(2-\alpha)p'}\,.
		\end{align}
		Combining \eqref{2408271240}-\eqref{2408271241}, we obtain \eqref{2408261012} for $\beta=\frac{1}{p}$.
	\end{proof}

		\section*{Conflict of Interest Statement}
		The author declares that there is no conflict of interest.

%

\begin{thebibliography}{10}
	
	\bibitem{convexAdo}
	{\sc V.~Adolfsson}, {\em {$L^p$}-integrability of the second order derivatives
		of {G}reen potentials in convex domains}, Pacific J. Math., 159 (1993),
	pp.~201--225.
	
	\bibitem{aikawa2002}
	{\sc H.~Aikawa}, {\em H\"{o}lder continuity of the {D}irichlet solution for a
		general domain}, Bull. London Math. Soc., 34 (2002), pp.~691--702.
	
	\bibitem{AA}
	{\sc A.~Ancona}, {\em On strong barriers and an inequality of {H}ardy for
		domains in {${\bf R}^n$}}, J. London Math. Soc. (2), 34 (1986), pp.~274--290.
	
	\bibitem{AG}
	{\sc D.~H. Armitage and S.~J. Gardiner}, {\em Classical potential theory},
	Springer Monographs in Mathematics, Springer-Verlag London, Ltd., London,
	2001.
	
	\bibitem{BCG}
	{\sc C.~Betz, G.~A. C\'{a}mera, and H.~Gzyl}, {\em Bounds for the first
		eigenvalue of a spherical cap}, Appl. Math. Optim., 10 (1983), pp.~193--202.
	
	\bibitem{BK2006}
	{\sc M.~Borsuk and V.~Kondratiev}, {\em Elliptic boundary value problems of
		second order in piecewise smooth domains}, vol.~69 of North-Holland
	Mathematical Library, Elsevier Science B.V., Amsterdam, 2006.
	
	\bibitem{Relliptic}
	{\sc S.-S. Byun and L.~Wang}, {\em Elliptic equations with {BMO} coefficients
		in {R}eifenberg domains}, Comm. Pure Appl. Math., 57 (2004), pp.~1283--1310.
	
	\bibitem{Rparabolic}
	{\sc S.-S. Byun and L.~Wang}, {\em Parabolic equations
		in {R}eifenberg domains}, Arch. Ration. Mech. Anal., 176 (2005),
	pp.~271--301.
	
	\bibitem{CKL}
	{\sc L.~Capogna, C.~E. Kenig, and L.~Lanzani}, {\em Harmonic measure. Geometric
		and analytic points of view}, vol.~35 of University Lecture Series, American
	Mathematical Society, Providence, RI, 2005.
	

		\bibitem{Reifweight2}
	{\sc J.~Choi and D.~Kim}, {\em Weighted {$L_{p,q}$}-estimates for higher order
		elliptic and parabolic systems with {${\rm BMO}_x$} coefficients on
		{R}eifenberg flat domains}, Calc. Var. Partial Differential Equations, 58
	(2019), pp.~Paper No. 90, 29.
	
	
	\bibitem{trace2023}
	{\sc J.-H. Choi, J.~B. Lee, J.~Seo, and K.~Woo}, {\em On the trace theorem to
		{V}olterra-type equations with local or non-local derivatives}, J. Evol. Equ., to appear (arXiv:2309.00370).
	
	\bibitem{DD_2008}
	{\sc D.~Daners}, {\em Domain perturbation for linear and semi-linear boundary
		value problems}, in Handbook of differential equations: stationary partial
	differential equations. {V}ol. {VI}, Handb. Differ. Equ.,
	Elsevier/North-Holland, Amsterdam, 2008, pp.~1--81.
	
	
	\bibitem{DONG2019289}
	{\sc H.~Dong and D.~Kim}, {\em {$L_p$}-estimates
		for time fractional parabolic equations with coefficients measurable in
		time}, Adv. Math., 345 (2019), pp.~289--345.
	
	\bibitem{DONG2021107494}
	{\sc H.~Dong and D.~Kim}, {\em An approach for
		weighted mixed-norm estimates for parabolic equations with local and
		non-local time derivatives}, Adv. Math., 377 (2021), pp.~Paper No. 107494,
	44.
	
	\bibitem{DongKim2021}
	{\sc H.~Dong and D.~Kim}, {\em Time fractional
		parabolic equations with measurable coefficients and embeddings for
		fractional parabolic {S}obolev spaces}, Int. Math. Res. Not. IMRN,  (2021),
	pp.~17563--17610.
	
	\bibitem{FH}
	{\sc S.~Friedland and W.~K. Hayman}, {\em Eigenvalue inequalities for the
		{D}irichlet problem on spheres and the growth of subharmonic functions},
	Comment. Math. Helv., 51 (1976), pp.~133--161.
	
	\bibitem{convexFromm}
	{\sc S.~J. Fromm}, {\em Potential space estimates for {G}reen potentials in
		convex domains}, Proc. Amer. Math. Soc., 119 (1993), pp.~225--233.
	
	\bibitem{GO}
	{\sc F.~W. Gehring and O.~Martio}, {\em Lipschitz classes and quasiconformal
		mappings}, Ann. Acad. Sci. Fenn. Ser. A I Math., 10 (1985), pp.~203--219.
	
	\bibitem{GT}
	{\sc D.~Gilbarg and N.~S. Trudinger}, {\em Elliptic partial differential
		equations of second order}, Classics in Mathematics, Springer-Verlag, Berlin,
	2001.
	
	\bibitem{AGriog}
	{\sc A.~Grigor'yan}, {\em Heat kernel and analysis on manifolds}, vol.~47 of
	AMS/IP Studies in Advanced Mathematics, American Mathematical Society,
	Providence, RI; International Press, Boston, MA, 2009.
	
	\bibitem{HAN20203515}
	{\sc B.-S. Han, K.-H. Kim, and D.~Park}, {\em Weighted {$L_q(L_p)$}-estimate
		with {M}uckenhoupt weights for the diffusion-wave equations with
		time-fractional derivatives}, J. Differential Equations, 269 (2020),
	pp.~3515--3550.
	
	\bibitem{HKP2021}
	{\sc B.-S. Han, K.-H. Kim, and D.~Park}, {\em A weighted {S}obolev
		space theory for the diffusion-wave equations with time-fractional
		derivatives on {$C^1$} domains}, Discrete Contin. Dyn. Syst., 41 (2021),
	pp.~3415--3445.
	
	\bibitem{hytonen2018analysis}
	{\sc T.~Hyt\"onen, J.~van Neerven, M.~Veraar, and L.~Weis}, {\em Analysis in
		{B}anach spaces. {V}ol. {II}. {P}robabilistic methods and operator theory},
	vol.~67 of Ergebnisse der Mathematik und ihrer Grenzgebiete. 3. Folge. A
	Series of Modern Surveys in Mathematics [Results in Mathematics and Related
	Areas. 3rd Series. A Series of Modern Surveys in Mathematics], Springer,
	Cham, 2017.
	
	\bibitem{Pseudodiff}
	{\sc N.~Jacob}, {\em Pseudo differential operators and {M}arkov processes.
		{V}ol. {III}}, Imperial College Press, London, 2005.
	
	\bibitem{kenig}
	{\sc D.~Jerison and C.~E. Kenig}, {\em The inhomogeneous {D}irichlet problem in
		{L}ipschitz domains}, J. Funct. Anal., 130 (1995), pp.~161--219.
	
	\bibitem{JJ}
	{\sc J.~Jost}, {\em Riemannian geometry and geometric analysis}, Universitext,
	Springer, Cham, seventh~ed., 2017.
	
	\bibitem{KenigToro3}
	{\sc C.~E. Kenig and T.~Toro}, {\em Harmonic measure on locally flat domains},
	Duke Math. J., 87 (1997), pp.~509--551.
	
	\bibitem{KilKos1994}
	{\sc T.~Kilpel\"ainen and P.~Koskela}, {\em Global integrability of the
		gradients of solutions to partial differential equations}, Nonlinear Anal.,
	23 (1994), pp.~899--909.
	
	\bibitem{KimWoo2023}
	{\sc D.~Kim and K.~Woo}, {\em Sobolev Spaces and Trace Theorems for Time-fractional Evolution Equations}, Potential Anal., 63 (2025), pp.~1289--1333.
	
	\bibitem{KIM2017123}
	{\sc I.~Kim, K.-H. Kim, and S.~Lim}, {\em An {$L_q(L_p)$}-theory for the time
		fractional evolution equations with variable coefficients}, Adv. Math., 306
	(2017), pp.~123--176.
	
	\bibitem{Kim2004}
	{\sc K.-H. Kim}, {\em On stochastic partial differential equations with
		variable coefficients in {$C^1$} domains}, Stochastic Process. Appl., 112
	(2004), pp.~261--283.
	
	\bibitem{Kim2014}
	{\sc K.-H. Kim}, {\em A weighted {S}obolev
		space theory of parabolic stochastic {PDE}s on non-smooth domains}, J.
	Theoret. Probab., 27 (2014), pp.~107--136.
	
	\bibitem{KK2004}
	{\sc K.-H. Kim and N.~V. Krylov}, {\em On the {S}obolev space theory of
		parabolic and elliptic equations in {$C^1$} domains}, SIAM J. Math. Anal., 36
	(2004), pp.~618--642.
	
	\bibitem{ConicPDE}
	{\sc K.-H. Kim, K.~Lee, and J.~Seo}, {\em A weighted {S}obolev regularity
		theory of the parabolic equations with measurable coefficients on conic
		domains in {$\mathbb{R}^d$}}, J. Differential Equations, 291 (2021),
	pp.~154--194.
	
	\bibitem{kinnunen2021}
	{\sc J.~Kinnunen, J.~Lehrb\"ack, and A.~V\"ah\"akangas}, {\em Maximal function
		methods for {S}obolev spaces}, vol.~257 of Mathematical Surveys and
	Monographs, American Mathematical Society, Providence, RI, 2021.
	
	\bibitem{Kinhardy}
	{\sc J.~Kinnunen and O.~Martio}, {\em Hardy's inequalities for {S}obolev
		functions}, Math. Res. Lett., 4 (1997), pp.~489--500.
	
	\bibitem{Kozlov}
	{\sc V.~Kozlov and A.~Nazarov}, {\em The {D}irichlet problem for non-divergence
		parabolic equations with discontinuous in time coefficients in a wedge},
	Math. Nachr., 287 (2014), pp.~1142--1165.
	
	
	\bibitem{Krylov1994}
	{\sc N.~V. Krylov}, {\em A {$W^n_2$}-theory of the {D}irichlet problem for {SPDE}s in
		general smooth domains}, Probab. Theory Related Fields, 98
	(1994), pp.~389--421.
	
	
	
	\bibitem{Krylov1999-0}
	{\sc N.~V. Krylov}, {\em Some properties of weighted {S}obolev spaces in $\bR^d_+$}, Ann. Scuola Norm. Sup. Pisa Cl. Sci. (4), 28	(1999), pp.~675--693.
	
	
	\bibitem{Krylov1999-1}
	{\sc N.~V. Krylov}, {\em Weighted {S}obolev spaces and {L}aplace's equation and
		the heat equations in a half space}, Comm. Partial Differential Equations, 24
	(1999), pp.~1611--1653.
	
	\bibitem{Krylov2001-1}
{\sc N.~V. Krylov}, {\em The heat equation in
		{$L_q((0,T),L_p)$}-spaces with weights}, SIAM J. Math. Anal., 32 (2001),
	pp.~1117--1141.
	
	\bibitem{Krylov2001}
{\sc N.~V. Krylov}, {\em Some properties of
		traces for stochastic and deterministic parabolic weighted {S}obolev spaces},
	J. Funct. Anal., 183 (2001), pp.~1--41.
	
	\bibitem{Krylov2007}
	{\sc N.~V. Krylov}, {\em Parabolic and elliptic equations with {VMO} coefficients}, Comm. Partial Differential Equations, 32 (2007), no. 1--3, pp.~453--475.
	
	
	\bibitem{lady2}
	{\sc O.~A. Ladyzhenskaya and N.~N. Ural'tseva}, {\em Linear and quasilinear
		elliptic equations}, Academic Press, New York-London, 1968.
	
	\bibitem{lehr}
	{\sc J.~Lehrb\"{a}ck}, {\em Weighted {H}ardy inequalities and the size of the
		boundary}, Manuscripta Math., 127 (2008), pp.~249--273.
	
	\bibitem{LT}
	{\sc J.~Lehrb\"{a}ck and H.~Tuominen}, {\em A note on the dimensions of
		{A}ssouad and {A}ikawa}, J. Math. Soc. Japan, 65 (2013), pp.~343--356.
	
	\bibitem{lewis}
	{\sc J.~L. Lewis}, {\em Uniformly fat sets}, Trans. Amer. Math. Soc., 308
	(1988), pp.~177--196.
	
	\bibitem{Lo0}
	{\sc S.~V. Lototsky}, {\em Dirichlet problem for stochastic parabolic equations in smooth domains}, Stochastics Stoch. Rep., 68 (1999), pp.~145--175.
	
	
	\bibitem{Lo1}
	{\sc S.~V. Lototsky}, {\em Sobolev spaces with weights in domains and boundary
		value problems for degenerate elliptic equations}, Methods Appl. Anal., 7
	(2000), pp.~195--204.
	
	
	
	\bibitem{LUCHKO2009218}
	{\sc Y.~Luchko}, {\em Maximum principle for the generalized time-fractional
		diffusion equation}, J. Math. Anal. Appl., 351 (2009), pp.~218--223.
	
	\bibitem{semigroup}
	{\sc A.~Lunardi}, {\em Analytic semigroups and optimal regularity in parabolic
		problems}, Modern Birkh\"{a}user Classics, Birkh\"{a}user/Springer Basel AG,
	Basel, 1995.
	
	\bibitem{Mattila}
	{\sc P.~Mattila}, {\em Geometry of sets and measures in {E}uclidean spaces},
	vol.~44 of Cambridge Studies in Advanced Mathematics, Cambridge University
	Press, Cambridge, 1995.
	
	\bibitem{MNP}
	{\sc V.~Maz'ya, S.~Nazarov, and B.~Plamenevskij}, {\em Asymptotic theory of
		elliptic boundary value problems in singularly perturbed domains. {V}ol.
		{I}}, vol.~111 of Operator Theory: Advances and Applications, Birkh\"auser
	Verlag, Basel, 2000.
	\newblock Translated from the German by Georg Heinig and Christian Posthoff.
	
	\bibitem{MR}
	{\sc V.~Maz'ya and J.~Rossmann}, {\em Elliptic equations in polyhedral
		domains}, vol.~162 of Mathematical Surveys and Monographs, American
	Mathematical Society, Providence, RI, 2010.
	
	\bibitem{metzler2014anomalous}
	{\sc R.~Metzler, J.-H. Jeon, A.~G. Cherstvy, and E.~Barkai}, {\em Anomalous
		diffusion models and their properties: non-stationarity, non-ergodicity, and
		ageing at the centenary of single particle tracking}, Phys. Chem. Chem.
	Phys., 16 (2014), pp.~24128--24164.
	
	\bibitem{metzler2000random}
	{\sc R.~Metzler and J.~Klafter}, {\em The random walk's guide to anomalous
		diffusion: a fractional dynamics approach}, Phys. Rep., 339 (2000),
	pp.~1--77.
	
	\bibitem{metzler2004restaurant}
	{\sc R.~Metzler and J.~Klafter}, {\em The restaurant at
		the end of the random walk: recent developments in the description of
		anomalous transport by fractional dynamics}, J. Phys. A: Math. Gen., 37
	(2004), p.~R161.
	
	\bibitem{Naza}
	{\sc A.~I. Nazarov}, {\em {$L_p$}-estimates for a solution to the {D}irichlet
		problem and to the {N}eumann problem for the heat equation in a wedge with
		edge of arbitrary codimension}, J. Math. Sci., 106 (2001), pp.~2989--3014.
	
	\bibitem{pruss2012evolutionary}
	{\sc J.~Pr\"uss}, {\em Evolutionary integral equations and applications},
	vol.~87 of Monographs in Mathematics, Birkh\"auser Verlag, Basel, 1993.
	
	\bibitem{Reifcondition}
	{\sc E.~R. Reifenberg}, {\em Solution of the {P}lateau {P}roblem for
		{$m$}-dimensional surfaces of varying topological type}, Acta Math., 104
	(1960), pp.~1--92.
	
	\bibitem{Seo202304}
	{\sc J.~Seo}, {\em Sobolev space theory for {P}oisson's and the heat equations
		in non-smooth domains via superharmonic functions and {H}ardy's inequality},
	arXiv preprint arXiv:2304.10451.
	
	\bibitem{Seo202404}
	{\sc J.~Seo}, {\em Weighted Sobolev space theory
		for {P}oisson's equation in non-smooth domains}, Math. Ann., to appear (arXiv:2403.18865).
	
	\bibitem{Sol}
	{\sc V.~A. Solonnikov}, {\em {$L_p$}-estimates for solutions of the heat
		equation in a dihedral angle}, Rend. Mat. Appl. (7), 21 (2001), pp.~1--15.
	
	\bibitem{TT}
	{\sc T.~Toro}, {\em Doubling and flatness: geometry of measures}, Notices Amer.
	Math. Soc., 44 (1997), pp.~1087--1094.
	
	\bibitem{triebel}
	{\sc H.~Triebel}, {\em Interpolation theory, function spaces, differential
		operators}, vol.~18 of North-Holland Mathematical Library, North-Holland
	Publishing Co., Amsterdam-New York, 1978.
	
	\bibitem{triebel2}
	{\sc H.~Triebel}, {\em Theory of function
		spaces}, vol.~78 of Monographs in Mathematics, Birkh\"{a}user Verlag, Basel,
	1983.
	
	\bibitem{VM}
	{\sc M.~Vuorinen}, {\em On the {H}arnack constant and the boundary behavior of
		{H}arnack functions}, Ann. Acad. Sci. Fenn. Ser. A I Math., 7 (1982),
	pp.~259--277.
	
	\bibitem{ward}
	{\sc A.~D. Ward}, {\em On essential self-adjointness, confining potentials
		{$\&$} the {$L_p$}-{H}ardy inequality}, PhD thesis, Massey University,
	Albany, 2014.
	
	\bibitem{IW}
	{\sc I.~Wood}, {\em Maximal {$L^p$}-regularity for the {L}aplacian on
		{L}ipschitz domains}, Math. Z., 255 (2007), pp.~855--875.
	
	\bibitem{zacher2003quasilinear}
	{\sc R.~Zacher}, {\em Quasilinear parabolic problems with nonlinear boundary
		conditions}, PhD thesis, Martin Luther University Halle-Wittenberg, 2003.
	
	\bibitem{zacher2005maximal}
	{\sc R.~Zacher}, {\em Maximal regularity
		of type {$L_p$} for abstract parabolic {V}olterra equations}, J. Evol. Equ.,
	5 (2005), pp.~79--103.
	
\end{thebibliography}

	\end{document}